\numberwithin{equation}{section}
\newtheorem{theorem}{Theorem}[section]
\newtheorem{lemma}[theorem]{Lemma}
\newtheorem{prop}[theorem]{Proposition}
\newtheorem{corollary}[theorem]{Corollary}
\theoremstyle{definition}
\newtheorem{definition}[theorem]{Definition}
\newtheorem{remark}[theorem]{Remark}
\newtheorem{question}[theorem]{Question}
\def\ints{{\mathbb Z}}
\def\rats{{\mathbb Q}}
\def\nats{{\mathbb N}}
\def\complex{{\mathbb C}}
\def\proj{{\mathbb P}}
\def\FF{{\mathbb F}}
\def\ord{{\text{ord}}}
\DeclareMathOperator{\eff}{eff}
\DeclareMathOperator{\bottom}{bot}
\def\Frac{{\text{Frac}}}
\DeclareMathOperator{\Aut}{Aut}
\def\Gal{{\text{Gal}}}
\def\Ind{{\text{Ind}}}
\def\Spec{{\mbox{Spec }}}
\def\mc#1{\mathcal{#1}}
\def\ol#1{\overline{#1}}
\def\matrix#1#2#3#4{
    \left( \begin{array}{cc} #1&#2 \\ #3&#4 \end{array} \right)}
\thanks{The author was supported by a NDSEG Graduate Research Fellowship and an NSF Postdoctoral 
Research Fellowship in the Mathematical Sciences.  Some editing was done while a guest at the Max-Planck-Institut F\"{u}r Mathematik in Bonn.
Final submission occurred while the author was supported by NSF grant DMS-1265290}
\date{\today}
\keywords{field of moduli, stable reduction, Galois cover}
\title[Fields of moduli, II]{Fields of moduli of three-point $G$-covers with cyclic $p$-Sylow, II}
\author{\sc Andrew OBUS}
\address{Andrew Obus\\
University of Virginia\\
141 Cabell Drive\\
Charlottesville, VA 22904}
\email{andrewobus@gmail.com}
\urladdr{people.virginia.edu/~aso9t/}
\begin{document}

\subjclass[2000]{14 G 20, 11 G 22, 14 H 30, 14 H 25, 14 G 25, 11 G 20, 11 S 20}

\maketitle

\begin{resume}
Nous poursuivons l'\'etude de la r\'eduction stable et des corps de modules des $G$-rev\^etements galoisiens de la droite projective sur un corps 
discr\`etement valu\'e de caract\'eristique mixte $(0, p)$, dans le cas o\`u $G$ a un $p$-sous-groupe de Sylow cyclique d'ordre $p^n$. Supposons de 
plus que le normalisateur de $P$ agit sur lui m\^eme via une involution. Sous des hypoth\`eses assez l\'eg\`eres, nous montrons que si $f : Y \to 
\proj^1$ est un $G$-rev\^etement galoisien ramifi\'e au-dessus de $3$ points, d\'efini sur $\complex$, alors les $n$-i\`emes groupes de ramification 
sup\'erieure au-dessus de $p$, en num\'erotation supŽrieure, de (la cl\^oture galoisienne de) l'extension $K/\rats$ sont triviaux, o\`u $K$ est le 
corps des modules de $f$.
\end{resume}

\begin{abstr}
We continue the examination of the stable reduction and fields of moduli of $G$-Galois covers of the projective line over a 
complete discrete valuation field of mixed characteristic $(0, p)$, where $G$ has a \emph{cyclic} $p$-Sylow subgroup $P$ of order 
$p^n$.  Suppose further that the normalizer of $P$ acts on $P$ via an involution.
Under mild assumptions, if $f: Y \to \proj^1$ is a three-point $G$-Galois cover defined over $\complex$, then the $n$th higher ramification groups above $p$ 
for the upper numbering of the (Galois closure of the) extension $K/\rats$ vanish, where $K$ is the field of moduli of $f$. 
\end{abstr}

\section{Introduction}\label{CHintro}

\subsection{Overview}\label{Soverview}
This paper continues the work of the author in \cite{Ob:fm}
about ramification of primes of $\rats$ in fields of moduli of three-point $G$-Galois covers of the
Riemann sphere.  We place bounds on the ramification of the prime $p$ when a $p$-Sylow subgroup $P$ of $G$ 
is cyclic of arbitrary order (Theorem \ref{Tmain}).  
This was done in \cite{Be:rp} for $P$ trivial, and in \cite{We:br} for $|P| = p$.  In \cite{We:br}, Wewers used a detailed 
analysis of the stable reduction of the cover to characteristic $p$, inspired by results
of Raynaud (\cite{Ra:sp}).  
However, many of the results on stable reduction in the literature (particularly, those in the second half of 
\cite{Ra:sp}) are only applicable when $|P| = p$.  

In \cite{Ob:vc}, the author generalized much of \cite{Ra:sp} to the case 
where $P$ is cyclic of any order.  
In \cite{Ob:fm}, these results were applied under the additional assumption that $G$ is \emph{$p$-solvable}
(i.e., has no nonabelian simple composition factors with order divisible by $p$) to place bounds on ramification of $p$ in the 
field of moduli of a three-point $G$-cover.  In this paper,
we drop the assumption of $p$-solvability, but we assume that the normalizer of a $p$-Sylow subgroup $P$ acts on $P$ 
via an involution.  This 
hypothesis is satisfied for many non-$p$-solvable groups (Remark \ref{Rmainaddenda} (1)), and was used as a simplifying 
assumption in \cite{BW:hu} 
(in the case where $|P| = p$) to examine the reduction of four-point $G$-covers of $\proj^1$.  It will simplify matters in 
our situation as well.

Let $f: Y \to X \cong \proj^1_{\complex}$ be a finite, connected, $G$-Galois branched cover of Riemann surfaces, branched only at
$\rats$-rational points.  Such a cover can always be defined over $\ol{\rats}$.  If there are exactly three branch points (without loss of generality, $0$, 
$1$, and $\infty$), such a cover is called a \emph{three-point cover}.
The fixed field in $\ol{\rats}$ of all elements of $\Aut(\ol{\rats}/\rats)$ fixing the isomorphism class of $f$ as a $G$-cover 
(i.e., taking into account the $G$-action) is a number field called the \emph{field of moduli} of $f$ (as a $G$-cover).
By \cite[Proposition 2.7]{CH:hu}, it is also the intersection of all fields of definition of $f$ (along with the $G$-action).
For more details, see, e.g., \cite{CH:hu} or the introduction to \cite{Ob:fm}.  

Since a branched $G$-Galois cover $f: Y \to X$ of the Riemann sphere is
given entirely in terms of algebraic data (the branch locus $C$, the Galois group $G$, 
and an element $g_i \in G$ for each $c_i \in C$ such that $\prod_i g_i = 1$ and the $g_i$ generate $G$), 
it is reasonable to try to draw inferences about the field of moduli of $f$
based on these data.  But this is a deep question, as the relation between topology of covers and their defining equations 
is given by \cite{gaga}, where the methods are non-constructive. 

\subsection{Main result}
Let $f: Y \to X = \proj^1$ be a three-point $G$-cover.  If $p \nmid |G|$, then $p$ is unramified in the field of 
moduli of $f$ (\cite{Be:rp}).
If $G$ has a $p$-Sylow group of order $p$ (thus cyclic), then $p$ is tamely ramified in the field of moduli of $f$ (\cite{We:br}).  
Furthermore, if $G$ has a \emph{cyclic} $p$-Sylow subgroup
of order $p^n$ and is $p$-solvable (i.e., has no nonabelian simple composition factors with order divisible by $p$), then
the $n$th higher ramification groups above $p$ for the upper numbering of (the Galois closure of) $K/\rats$ vanish, where $K$ is the field of 
moduli of $f$ (\cite{Ob:fm}).  Our main result extends this to many non-$p$-solvable groups.

If $H$ is a subgroup of $G$, we write $N_G(H)$
for the normalizer of $H$ in $G$ and $Z_G(H)$ for the centralizer of $H$ in $G$.

\begin{theorem}\label{Tmain}
Let $f: Y \to X$ be a three-point $G$-Galois cover of the Riemann sphere, and
suppose that a $p$-Sylow subgroup $P \leq G$ 
is cyclic of order $p^n$.  Suppose $|N_G(P)/Z_G(P)| = 2$.  Lastly, suppose that at least 
one of the three branch points has prime-to-$p$ branching index (if exactly one, then we require $p \ne 3$).
If $K/\rats$ is the field of moduli of $f$,  
then the $n$th higher ramification groups for the upper numbering of the Galois closure of $K/\rats$
vanish. 
\end{theorem}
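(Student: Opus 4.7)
The plan is to analyze the stable reduction of $f$ to characteristic $p$ and extract a bound on the conductor of $K$ at $p$ from the combinatorial and local data of that reduction, extending the strategy of \cite{We:br} and \cite{Ob:fm}. Let $R$ be the ring of integers of a sufficiently large finite extension of the completion of the maximal unramified extension of $\rats_p$ over which $f$ acquires stable reduction $\ol{f} : \ol{Y} \to \ol{X}$ in the sense of \cite{Ob:vc}. Then $\ol{X}$ is a tree of $\proj^1$'s consisting of an original component $\ol{X}_0$ (the reduction of the generic fiber) together with various \emph{tails}; these are either \emph{primitive tails} (attached to branch points with $p$-divisible branching index) or \emph{new tails}, which carry the residual wild ramification data. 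Above each new tail $\ol{W}_b$ the inertia group of $\ol{f}$ is conjugate to $P$.

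Second, I would use the hypothesis $|N_G(P)/Z_G(P)| = 2$ to pin down the structure of the decomposition group $D_b$ above $\ol{W}_b$. Since the inertia in $D_b$ equals $P$ and $D_b \subseteq N_G(P)$, the image of $D_b$ in $N_G(P)/Z_G(P)$ has order $1$ or $2$; modulo the central piece $D_b \cap Z_G(P)$, the cover above $\ol{W}_b$ is therefore a ``$p$-dihedral'' $P \rtimes \ints/2$-cover, exactly the kind studied locally in \cite{BW:hu}. This is the local substitute for the $p$-solvability assumption of \cite{Ob:fm}: instead of decomposing $G$ globally via a composition series, the involution hypothesis supplies a uniform, purely local model of the reduction near each new tail, which is enough to compute the invariants of interest.

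Third, I would bound the depth $\sigma_b$ attached to each new tail. The hypothesis that at least one branch point has prime-to-$p$ branching index forces at least one tame primitive point on $\ol{X}_0$, which feeds into the vanishing cycles formula of \cite{Ob:vc}; combined with the explicit $p$-dihedral form of the local cover above each $\ol{W}_b$, this yields upper bounds on $\sigma_b$ of the same shape as in \cite[\S 3]{Ob:fm}. The exceptional case $p = 3$ with exactly one tame branch point is the degenerate configuration where this bound is not sharp and must be excluded. These bounds then translate, via the conductor--depth comparison of \cite[\S 4]{Ob:fm} (compare \cite[Theorem 4]{We:br}), into the vanishing of the $n$th higher ramification group for the upper numbering of the Galois closure of $K/\rats$.

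The main obstacle is carrying out step two without $p$-solvability. In \cite{Ob:fm} one could peel abelian $p'$-quotients off $G$ and reduce to a manageable auxiliary cover by induction, and this is precisely what fails in our setting. Instead, one must argue that the involution acting on $P$ alone supplies enough structure to treat each new tail in isolation, so that the stable-reduction machinery of \cite{Ob:vc} -- originally developed for cyclic-by-$p'$ inertia -- still applies component by component. Verifying that $D_b$ is indeed generated by $P$ together with an involution modulo its central part, and that the resulting local auxiliary cover has a conductor computable in terms of $p^n$ alone, is the main technical content of the proof.
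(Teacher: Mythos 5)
Your proposal misses the paper's central mechanism and contains a structural error that would derail the argument. First, a factual point: you write that ``above each new tail $\ol{W}_b$ the inertia group of $\ol{f}$ is conjugate to $P$.'' This is not correct. New tails are \'{e}tale components, so the generic inertia there is trivial, and by Lemma~\ref{Lnobigjumps} the new tail meets a $p$-component (not a $p^n$-component); the inertia at the boundary node is $\ints/p \rtimes \ints/m_b$, of $p$-part order exactly $p$. Thus there is no natural ``$P \rtimes \ints/2$'' local cover sitting above a new tail, and the $p$-dihedral local model you propose to import from \cite{BW:hu} is not what one sees there.

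More fundamentally, you propose to replace the $p$-solvability hypothesis by a purely local analysis at the tails, declaring that this ``is enough to compute the invariants of interest.'' But the vanishing cycles formula (Theorem~\ref{Tvancycles}) already pins down the $\sigma_b$ exactly (for $\tau=1$: $\sigma_b = 3/2$ for the new tail, $\sigma_{b'} = 1/2$ for the primitive tail), so ``bounding $\sigma_b$'' is trivial and was never the issue. The content of the theorem is a statement about the field of moduli, i.e.\ about a global field of definition of the stable \emph{model}, and the crux is not local data at the tails. What the paper actually does is pass to the auxiliary cover $f^{aux}$ and then the strong auxiliary cover $f^{str}$ with group $\ints/p^\nu \rtimes \ints/2$ --- this is the true role of the hypothesis $m_G = 2$, which your proposal never invokes. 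This reduction is $p$-solvable, hence tractable, but it introduces a new branch point $a$ whose position (inside the disk corresponding to the new tail) has to be nailed down explicitly: the entire \S\ref{Setaletails} is devoted to showing that one may choose $a = 1 - s^2/r^2$ (or a close variant for $p=5$), a $\rats$-rational number determined by the branching indices at $1$ and $\infty$. Only once $a$ and the locations of the inseparable tails are explicit can one write down the field over which the stable model descends and compute its conductor. Your step from ``bounds on $\sigma_b$'' to ``vanishing of the $n$th ramification group'' appeals to a ``conductor--depth comparison'' that does not exist as a direct implication; in the paper the conductor estimate comes out of Proposition~\ref{Pfstrstdef} and Proposition~\ref{Pconductor}, which depend on the explicit uniformization, not on the $\sigma_b$'s alone. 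In short, the proposal omits the auxiliary cover and the localization of its new branch point, which Remark~\ref{Rmainaddenda} and \S\ref{Stauequals1} identify as the heart of the argument, and no local-at-the-tails substitute is offered.
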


\begin{remark}\label{Rlocaltoglobal} 
By \cite[Proposition 7.1]{Ob:fm}, it suffices to prove Theorem \ref{Tmain} for covers over $\ol{\rats_p^{ur}}$, rather than 
$\ol{\rats}$, and for higher ramification groups over $\rats_p^{ur}$.  Here, and throughout, $\rats_p^{ur}$ is the 
\emph{completion} of the 
maximal unramified extension of $\rats_p$.  This is the version we prove in \S\ref{CHmain}.  In fact, we prove even more, i.e.,
that the \emph{stable model} of $f$ can be defined over an extension $K/\rats_p^{ur}$ whose $n$th higher ramification groups for the 
upper numbering vanish above $p$.
\end{remark}

\begin{remark}\label{Rmainaddenda}
\begin{enumerate}
\item Many simple groups $G$ satisfy the hypotheses of Theorem \ref{Tmain}
(for instance, any $PSL_2(\ell)$ where $p \ne 2$ and $v_p(\ell^2 - 1) = n$).
\item If $N_G(P) = Z_G(P)$, then $G$ is $p$-solvable by a theorem of Burnside (\cite[Theorem 4, p.\ 169]{Za:tg}), and thus falls within the 
scope of \cite[Theorem 1.3]{Ob:fm}, which treats $p$-solvable groups.  Theorem \ref{Tmain} seems to be the next easiest case.    
\item
I expect Theorem \ref{Tmain} to hold even if $p=3$ or if all branching
indices are divisible by $p$.  See Question \ref{Qgap}.
\end{enumerate}
\end{remark}

\begin{remark}\label{Rmodular}
In the case that all three branch points of $f$ as in Theorem \ref{Tmain} have prime-to-$p$ branching index, we show (Proposition
\ref{Pm2tau3}) that $f$ actually has good reduction.  We use this in Corollary \ref{Cmodular} to give a proof that, when $N$ is prime, 
the modular curve $X(N)$ has good reduction at all primes not dividing $6N$.
In fact, Corollary \ref{Cmodular} is more general, and works (in a slightly weakened sense) for all $N$.
Our proof does not use the modular interpretation of $X(N)$, only that $X(N) \to X(1)$ can be given as a three-point $PSL_2(\ints/N)$-cover.
\end{remark}

As in \cite{Ob:fm}, our main technique for proving Theorem \ref{Tmain} will be an analysis of the 
\emph{stable reduction} of the $G$-cover $f$ to characteristic $p$.
The major difference between the methods of this paper and those of \cite{Ob:fm} is this paper's use of the \emph{auxiliary 
cover}.  This is a construction, introduced by Raynaud (\cite{Ra:sp}), to simplify the group-theoretical structure of a 
Galois cover of curves.  In particular, by replacing $f$ with its auxiliary cover $f^{aux}$, we obtain a Galois cover whose
field of moduli is related to that of $f$, but which now has a $p$-solvable Galois group.  

Unfortunately, the cover $f^{aux}$ in general has extra branch points, and it is not obvious where these extra branch points 
arise.  The crux of the proof of Theorem \ref{Tmain} is understanding where these extra branch points are located; this is the
content of \S\ref{Setaletails}, and is the reason why Theorem \ref{Tmain} is significantly more difficult than the analogous
theorem where $G$ is assumed to be $p$-solvable.  Our assumption that $|N_G(P)/Z_G(P)| = 2$ alleviates this difficulty 
somewhat, as it allows us to work with more explicit equations (see \S\ref{Stauequals1} and Question \ref{Qdeformation}).

\subsection{Section-by-section summary and walkthrough}
In \S\ref{Smupn}, we give some explicit results on the reduction of $\mu_{p^n}$-torsors.  
The purpose of \S\ref{Sdisks}, \S\ref{Sstable}, and \S\ref{Sdefdata} is to recall the relevant material from \cite{Ob:fm}.
In \S\ref{Sdisks} and \S\ref{Sstable}, we 
introduce stable reduction of $G$-covers, and state some of the basic properties.  
We also recall the \emph{vanishing cycles formula} from \cite{Ob:fm}, which is indispensable for the proof of 
Theorem \ref{Tmain}.
In \S\ref{Sdefdata}, we recall the properties of \emph{deformation data} (constructed in \cite{Ob:vc}), which give extra structure to the stable model. 

The new part of the paper begins with \S\ref{Scomb}, where we discuss \emph{monotonicity} of stable reduction 
(a property that becomes relevant when $v_p(|G|) > 1$), and show that it is satisfied for all covers in this paper.
In \S\ref{Saux}, we introduce the \emph{auxiliary cover} and the \emph{strong auxiliary cover}, and discuss why they
are useful in calculating the field of moduli.

In \S\ref{CHmain}, we prove our main result, Theorem \ref{Tmain}.  The proof is divided into \S\ref{Stauequals3}, 
\S\ref{Stauequals2}, and \S\ref{Stauequals1}, corresponding to the case of $3$, $2$, and $1$ branch point(s) with 
prime-to-$p$ branching index, respectively.  The proof for $1$ branch point with prime-to-$p$ index is by far the most difficult (as 
it involves the appearance of an extra branch point in the auxiliary cover),
and toward the beginning of \S\ref{Stauequals1}, we give an outline of the proof and of how it is split up over
\S\ref{Sprelimlemmas}--\S\ref{Shigherram}.  

In \S\ref{CHfurther}, we consider some questions arising from this work.  In Appendix \ref{Awildexample} we give an example of
a three-point cover with nontrivial wild monodromy (see the appendix for more details).  Appendix \ref{Awildexample} is not needed for the
rest of the paper.

For reasons that will become clear in \S\ref{Stauequals1}, the case $p = 5$ presents some complications.
The reader who is willing to assume $p > 5$ may skip Lemma \ref{Lartinschreier} (2), Remark \ref{R0c5}, Proposition
\ref{Pa0} (2), Remark \ref{R1c5}, Proposition \ref{Pa1} (2), Proposition \ref{Pinseptailloc} (2), (3), Lemma \ref{L5insep}, and Proposition
\ref{Pfstrstdef} (2c), which are among the more technical parts of the paper.

\subsection{Notation and conventions}\label{Snotations}
The letter $k$ will always represent an algebraically closed field of characteristic $p>0$.

If $H$ is a subgroup of a finite group $G$, then $N_G(H)$ is the normalizer of $H$ in $G$ and $Z_G(H)$ is the centralizer of $H$ in $G$.  
If $G$ has a cyclic $p$-Sylow subgroup $P$, and $p$ is understood, we write $m_G = |N_G(P)/Z_G(P)|$.  

If $K$ is a field, $\ol{K}$ is its algebraic closure.  We write $G_K$ for the absolute Galois group of $K$.  If $H \leq
G_K$, we write $\ol{K}^H$ for the fixed field of $H$ in $\ol{K}$.  Similarly, if $\Gamma$ is a group of automorphisms of a
ring $A$, we write $A^{\Gamma}$ for the fixed ring under $\Gamma$.

We use the standard theory of higher ramification groups for the upper and lower numbering from \cite[IV]{Se:lf}.  However, unlike in 
\cite{Se:lf}, if $L/K$ is a nontrivial $G$-Galois extension of complete discrete valuation rings with algebraically closed residue fields, 
then the \emph{conductor} of $L/K$, written $h_{L/K}$, will be for us the greatest upper jump (i.e., the greatest $i$ such that $G^i \neq \{id\}$).  
This is consistent with \cite{Ob:fm}.

If $R$ is any local ring, then $\hat R$ is the completion of $R$ with respect to its maximal ideal. 
If $R$ is any ring with a non-archimedean absolute value $| \cdot |$, then $R\{T\}$ 
is the ring of power series $\sum_{i=0}^{\infty} c_i T^i$ such that $\lim_{i \to \infty} |c_i| = 0$.  
If $R$ is a discrete valuation ring with fraction field $K$ of characteristic 0 and residue field $k$ of
characteristic $p$, we normalize the absolute value on $K$ and on any subring of $K$ so that $|p| = 1/p$.  We always normalize 
the valuation on $K$ so that $p$ has valuation $1$.

A \emph{branched cover} $f: Y \to X$ is a finite, surjective, generically \'{e}tale morphism of geometrically connected, smooth, 
proper curves. 
If $f$ is of degree $d$ and we choose an isomorphism $i: G \to \Aut(Y/X)$, then the datum $(f, i)$
is called a \emph{$G$-Galois cover} (or just a \emph{$G$-cover}, for short).  We will usually suppress the isomorphism
$i$, and speak of $f$ as a $G$-cover. 
 
Suppose $f: Y \to X$ is a $G$-cover of smooth curves, and $K$ is a field of definition for $X$.  Then the \emph{field of
moduli of $f$ relative to $K$ (as a $G$-cover)} is $\ol{K}^{\Gamma^{in}}$, where
$\Gamma^{in} = \{\sigma \in G_K | f^{\sigma} \cong f \text{ (as } G\text{-covers)}\}$ (see, e.g., \cite[\S1.1]{Ob:fm}).
If $X$ is $\proj^1$, then the \emph{field of moduli of $f$} means the field of moduli of $f$ (as a $G$-cover) relative to $\rats$.

Let $f: Y \to X$ be any morphism of schemes and assume $H$ is a finite group with $H \hookrightarrow \Aut(Y/X)$.  If
$G$ is a finite group containing $H$, then there is a map $\Ind_H^G f: \Ind_H^G Y \to X$, where $\Ind_H^G Y$ is a disjoint union of
$[G:H]$ copies of $Y$, indexed by the left cosets of $H$ in $G$.  The group $G$ acts on $\Ind_H^G Y$, and the stabilizer
of each copy of $Y$ in $\Ind_H^G Y$ is a conjugate of $H$.

The set $\nats$ is equal to $\{1, 2, 3, \ldots \}$.

\section*{Acknowledgements}
This material is mostly adapted from my PhD thesis, and I thank my advisor, David Harbater, for much help 
related to this work.  I also thank the referee for useful suggestions. 
\section{Reduction of $\mu_{p^n}$-torsors}\label{Smupn}

Let $R$ be a mixed characteristic $(0, p)$ complete discrete valuation ring with residue field $k$ and fraction field $K$.
Let $\pi$ be a uniformizer of $R$.  Recall that we normalize the valuation of $p$ (not $\pi$) to be 1.
For any scheme or algebra $S$ over $R$, write $S_K$ and $S_k$ for its base
changes to $K$ and $k$, respectively.

We state a partial converse of \cite[Lemma 3.1]{Ob:fm}, which will be used repeatedly in analyzing the stable
reduction of covers (see \S\ref{Stauequals1}):
\begin{lemma}\label{Lartinschreier}
Suppose $R$ contains the $p^n$th roots of unity.  Let $X = \Spec A$, where $A = R\{T\}$.  
Let $f: Y_K \to X_K$ be a $\mu_{p^n}$-torsor given by the equation $y^{p^n} = g$,
where $g = 1 + \sum_{i=1}^{\infty} c_iT^i$.  
Suppose that $v(c_i) > n + \frac{1}{p-1}$ for all $i > p$ divisible by $p$.
Suppose further that (at least) one of the following two conditions holds:
\begin{enumerate}
\item There exists $i$ such that $v(c_i) < \min(v(c_p), n + \frac{1}{p-1})$.
\item $v(c_p) > n - \frac{p-2}{2(p-1)}$ and there exists $c_p' \in R$ with $v(c'_p - c_p) > n + \frac{1}{p-1}$
and  $v\left(c_1 - \sqrt[p]{c'_p p^{(p-1)n+1}}\right) < n + \frac{1}{p-1}$.
\end{enumerate}
Then, even after a possible finite extension of $K$, the map $f: Y_K \to X_K$ does \emph{not} split into a union of $p^{n-1}$ connected disjoint 
$\mu_p$-torsors, such that if $Y$ is the normalization of $X$ in the total ring of fractions of $Y_K$, then $Y_k \to X_k$ is \'{e}tale.
\end{lemma}

\begin{proof}
Suppose we are in case (1).  Pick $b \in R$ such that $v(b) = \min_i(v(c_i))$.  Then $v(b) < n + \frac{1}{p-1},$ and $g = 1+bw$ with 
$w \in A \backslash \pi A$.  Let $a < n$ be the greatest integer such that $a + \frac{1}{p-1} < v(b)$.
Then $g$ has a $p^{a}$th root in $A$, given by the binomal expansion 
$$\sqrt[p^{a}]{g} = 1 + \frac{1/p^{a}}{1!} bw + \frac{(1/p^{a})((1/p^{a}) - 1)}{2!} (bw)^2 + \cdots.$$  
Since $v(b) > a + \frac{1}{p-1}$, this series converges, and is in $A$.  Furthermore, 
since the coefficients of all terms in this series of degree $\geq 2$ have valuation greater than  $v(b) - a$, the series can be written as
$\sqrt[p^{a}]{g} = 1 + \frac{b}{p^a}u$, where $u$ is congruent to $w$ (mod $\pi$).

Now, $v(b) - a = v(\frac{b}{p^a}) \leq 1 + \frac{1}{p-1}$.  Furthermore, by assumption (1), the reduction $\ol{u}$ of $u$ 
is not a $p$th power in $A/\pi$.  Then \cite[Proposition 1.6]{He:ht} shows that $\sqrt[p^{a}]{g}$ is not a $p$th power in $A$ (nor in $K$).  
If $a < n-1$, this proves that $f$ does not split into a disjoint union of $n-1$ torsors.  If $a = n-1$, then $v(\frac{b}{p^a}) < 1 + \frac{1}{p-1}$, and 
\cite[Proposition 1.6]{He:ht} shows that the torsor given by $y^p = \sqrt[p^{n-1}]{g}$ does not have \'{e}tale reduction.  This proves the lemma
in case (1).

Suppose we are in case (2) and not in case (1).  It then suffices to show that there exists $h \in A$ such that $h^{p^n}g$ satisfies (1).
Let $\eta = -\sqrt[p]{\frac{c'_p}{p^{n-1}}}$ (any $p$th root will do).  
Now, by assumption, $v(c'_p) - (n - 1) > \frac{p}{2(p-1)}$, so $v(\eta) > \frac{1}{2(p-1)}$.
Then there exists $\epsilon > 0$ such that
$$(1 + \eta T)^{p^n} \equiv 1 - p^n\sqrt[p]{\frac{c'_p}{p^{n-1}}}T - \binom{p^n}{p}\frac{c'_p}{p^{n-1}}T^p \pmod {p^{n + \frac{1}{p-1} + \epsilon}}.$$  
It is easy to show that $\binom{p^n}{p} \equiv p^{n-1} \pmod {p^n}$ for all $n \geq 1$.  So there exists $\epsilon > 0$ such that
$$(1 + \eta T)^{p^n} \equiv 1 - \sqrt[p]{c'_p p^{(p-1)n +1}}T - c'_pT^p \pmod {p^{n + \frac{1}{p-1} + \epsilon}}.$$
Using the assumption that $v(c_1 - \sqrt[p]{c'_p p^{(p-1)n+1}}) < n + \frac{1}{p-1}$, we leave it to the reader to verify that $(1+\eta T)^{p^n}g$ satisfies 
(1) (with $i  = 1$).
\end{proof}

\section{Semistable models of $\proj^1$}\label{Sdisks}
Let $R$ be a mixed characteristic $(0, p)$ complete discrete valuation ring with residue field $k$ and fraction field $K$.
If $X$ is a smooth curve over $K$, then a \emph{semistable}
model for $X$ is a relative flat curve $X_R \to \Spec R$ with $X_R \times_R K \cong X$ and semistable special fiber (i.e.,
the special fiber is reduced with only ordinary double points for singularities).  If $X_R$ is smooth, it is called a \emph{smooth model}.

\subsection{Models}
Let $X \cong \proj^1_K$.  Write $v$ for the valuation on $K$.
Let $X_R$ be a smooth model of $X$ over $R$.  
Then there is an element $T \in K(X)$ such that $K(T) \cong K(X)$ and the local
ring at the generic point of the special fiber of $X_R$ is the valuation ring of $K(T)$ corresponding to the
Gauss valuation (which restricts to $v$ on $K$).  We say that our model
corresponds to the Gauss valuation on $K(T)$, and we call $T$ a
\emph{coordinate} of $X_R$.  Conversely, if $T$ is
any rational function on $X$ such that $K(T) \cong K(X)$, there is a smooth
model $X_R$ of $X$ such that $T$ is a coordinate of $X_R$.
In simple terms, $T$ is a coordinate of $X_R$ iff, for all $a, b \in R$, the subvarieties of $X_R$ cut out by $T-a$ and $T-b$ intersect exactly when
$v(a-b) > 0$.  

Now, let $X_R'$ be a semistable model of $X$ over $R$.
The special fiber of $X_R'$ is a tree-like configuration of $\proj^1_k$'s.
Each irreducible component $\ol{W}$ of the special fiber $\ol{X}$ of $X_R'$ yields
a smooth model of $X$ by blowing down all other irreducible components of $\ol{X}$.  
If $T$ is a coordinate on the smooth model of $X$ with $\ol{W}$ as special fiber, we will say that $T$ corresponds to $\ol{W}$.

\subsection{Disks and annuli}\label{Sdna}

We give a brief overview here.  For more details, see \cite{He:dc}.

Let $X_R'$ be a semistable model for $X = \proj^1_K$.  Suppose $x$ is a smooth point
of the special fiber $\ol{X}$ of $X_R'$ on the 
irreducible component $\ol{W}$.  Let $T$ be a coordinate corresponding to $\ol{W}$ such
that $T = 0$ specializes to $x$.  Then the set of points of $X(\ol{K})$ which specialize
to $x$ is the \emph{open $p$-adic disk} $D$ given by  
$v(T) > 0$.  The ring of functions on the formal disk corresponding to $D$ is $\hat{\mc{O}}_{X, x} \cong R\{T\}$.

Now, let $x$ be an ordinary double point of $\ol{X}$, at the
intersection of components $\ol{W}$ and $\ol{W}'$.  Then the 
set of points of $X(\ol{K})$ which specialize to $x$ is an \emph{open annulus} $A$.  If $T$
is a coordinate corresponding to $\ol{W}$ such that $T=0$ specializes to
$\ol{W}'\backslash \ol{W}$, then $A$ is given by $0 <
v(T) < e$ for some $e \in v(K^{\times})$. The ring of functions on the formal annulus corresponding to $A$
is $\hat{\mc{O}}_{X, x} \cong R[[T,U]]/(TU - p^e)$.  Observe that $e$ is independent of the coordinate.  It is called the 
\emph{\'{e}paisseur} of the annulus.

Suppose we have a preferred coordinate $T$ on $X$ and a semistable model $X_R'$
of $X$ whose special fiber $\ol{X}$ contains an irreducible 
component $\ol{X}_0$ corresponding to the coordinate $T$.  
If $\ol{W}$ is any irreducible component of $\ol{X}$ other than $\ol{X}_0$, then
since $\ol{X}$ is a tree of $\proj^1$'s, 
there is a unique non-repeating sequence of consecutive, intersecting components $\ol{X}_0,
\ldots, \ol{W}$.  Let $\ol{W}'$ be the
component in this sequence that intersects $\ol{W}$.  Then the set of points in
$X(\ol{K})$ that specialize to the connected component of $\ol{W}$ in $\ol{X} \backslash \ol{W}'$
is a closed $p$-adic disk $D$.  If the established preferred coordinate (equivalently, the
preferred component $\ol{X}_0$) is clear, we will abuse language and refer to the component $\ol{W}$ as 
\emph{corresponding to the disk $D$}, and vice versa.  If $U$ is a coordinate corresponding to $\ol{W}$, 
and if $U = \infty$ does not specialize to the connected component of $\ol{W}$ in $\ol{X} \backslash \ol{W}'$, 
then the ring of functions on the formal disk corresponding to $D$ is $R\{U\}$.

\section{Stable reduction}\label{Sstable}

In \S\ref{Sstable}, $R$ is a mixed characteristic $(0, p)$ complete discrete valuation ring with residue field $k$ and fraction field $K$.
We set $X \cong \proj^1_K$, and we fix a \emph{smooth} model $X_R$ of $X$.
Let $f: Y \to X$ be a $G$-Galois cover defined over $K$, with $G$ any finite
group, such that the branch points of $f$ are defined over $K$ and their specializations 
do not collide on the special fiber of $X_R$.  Assume that $f$ is branched at at least three points.
Using the stable reduction theorem for curves (\cite[Corollary 2.7]{DM:ir}), one can show that there is a 
unique minimal finite extension $K^{st}/K$ with ring of integers $R^{st}$ such that 
$f_{K^{st}} := f \times_K K^{st}$ has a \emph{stable model} $f^{st}: Y^{st} \to X^{st}$
(which we will simply call the stable model of $f$).  This model has the properties that:

\begin{itemize}
\item The special fiber $\ol{Y}$ of $Y^{st}$ is semistable. 
\item The ramification points of $f_{K^{st}}$ specialize to
\emph{distinct} smooth points of $\ol{Y}$.
\item Any genus zero irreducible component of $\ol{Y}$ contains at least three
marked points (i.e., ramification points or points of intersection with the rest
of $\ol{Y}$).
\item $G$ acts on $Y^{st}$, and $X^{st} = Y^{st}/G$.
\end{itemize}
The field $K^{st}$ is called the minimal field of definition of the stable model of $f$. 
If we are working over a finite
extension $K'/K^{st}$ with ring of integers $R'$, we will sometimes abuse
language and call 
$f^{st} \times_{R^{st}} R'$ the stable model of $f$.  

\begin{remark}
Our definition of the stable model is the definition used in
\cite{We:br}.  This differs from the definition
in \cite{Ra:sp} in that \cite{Ra:sp} allows the ramification points to coalesce
on the special fiber.  
\end{remark}

\begin{remark}\label{Rblowup}
Note that $X^{st}$ can be naturally identified with a blowup of $X \times_R R^{st}$
centered at closed points.  Furthermore, the nodes of $\ol{Y}$ lie above nodes of the special fiber $\ol{X}$
of $X^{st}$ (\cite[Lemme 6.3.5]{Ra:ab}), and $Y^{st}$ is the normalization of $X^{st}$ in $K^{st}(Y)$.
\end{remark}

If $\ol{Y}$ is smooth, the cover $f: Y \to X$ is said to have \emph{potentially
good reduction}.   If $f$ does not have
potentially good reduction, it is said to have \emph{bad reduction}.  In any case, the special fiber $\ol{f}:
\ol{Y} \to \ol{X}$ of the stable model is called the \emph{stable reduction} of $f$.  
The strict transform of the special fiber of $X_{R^{st}}$ in $\ol{X}$ (Remark \ref{Rblowup}) is called the \emph{original
component}, and will be denoted $\ol{X}_0$.  

Each $\sigma \in G_K$ acts on $\ol{Y}$ (via its action on $Y$).  This action commutes with that of $G$ and is called the 
\emph{monodromy action}.  
Then it is known (see, for instance, \cite[Proposition 2.9]{Ob:vc}) that the extension $K^{st}/K$ is
the fixed field of the group $\Gamma^{st} \leq G_K$ consisting of those $\sigma
\in G_K$ such that $\sigma$ acts trivially on $\ol{Y}$.  Thus $K^{st}$ is clearly Galois over $K$.
Since $k$ is algebraically closed, the action of $G_K$ fixes $\ol{X}_0$ pointwise.

\subsection{The graph of the stable reduction}\label{Sgraph}
As in \cite{We:br}, we construct the (unordered) dual graph $\mc{G}$ of the stable reduction of $\ol{X}$. 
An \emph{unordered graph} $\mc{G}$ consists of a set of \emph{vertices} $V(\mc{G})$ and a nonempty set of \emph{edges} $E(\mc{G})$.  
Each edge has a \emph{source vertex} $s(e)$ and a \emph{target vertex} $t(e)$.  Each edge has an \emph{opposite
edge} $\ol{e}$, such that $s(e) = t(\ol{e})$ and $t(e) = s(\ol{e})$.  Also, $\ol{\ol{e}} = e$.

Given $f$, $\ol{f}$, $\ol{Y}$, and $\ol{X}$ as above, we construct two unordered graphs $\mc{G}$ and
$\mc{G}'$.  In our construction, $\mc{G}$ has a vertex $v$ for each irreducible
component of $\ol{X}$ and an edge $e$ for each ordered triple $(\ol{x}, \ol{W}', \ol{W}'')$, 
where $\ol{W}'$ and $\ol{W}''$ are irreducible components of $\ol{X}$ whose intersection is $\ol{x}$.  If $e$
corresponds to $(\ol{x}, \ol{W}', \ol{W}'')$, then
$s(e)$ is the vertex corresponding to $\ol{W}'$ and $t(e)$ is the vertex corresponding to
$\ol{W}''$.  The opposite edge of $e$ corresponds to $(\ol{x}, \ol{W}'', \ol{W}')$.
We denote by $\mc{G}'$ the \emph{augmented} graph of $\mc{G}$ constructed as follows: consider the set
$B_{\text{wild}}$ of branch points of $f$ with branching index divisible by $p$.  
For each $x \in B_{\text{wild}}$, we know that $x$ specializes to 
a unique irreducible component $\ol{W}_x$ of $\ol{X}$, corresponding to a vertex $A_x$ of $\mc{G}$.  
Then $V(\mc{G}')$ consists of the elements of $V(\mc{G})$ 
with an additional vertex $V_x$ for each $x \in B_{\text{wild}}$.  Also, $E(\mc{G}')$ consists of the elements of 
$E(\mc{G})$ with two additional opposite edges for each $x \in B_{\text{wild}}$, 
one with source $V_x$ and target $A_x$, and one with source $A_x$ and
target $V_x$.  We write $v_0$ for the vertex corresponding to the original component $\ol{X}_0$.  

If $v, w \in V(\mc{G}')$, then a \emph{path} from $v$ to $w$ is a sequence of nonrepeating vertices $\{v_i\}_{i=0}^{n}$ and edges $\{e_i\}_{i=0}^{n-1}$
such that $v = v_0$, $w = v_n$, $s(e_i) = v_i$, and $t(e_i) = v_{i+1}$.  Let $u \in V(\mc{G}')$ correspond to the original component
$\ol{X}_0$.   We partially order the vertices of $\mc{G}'$ such that $v \preceq w$ if there is a path from $u$ to $w$ passing through $v$.
The set of irreducible components of $\ol{X}$ inherits the partial order $\preceq$.  Furthermore, if $\ol{x}_1$ and $\ol{x}_2$ are points
of $\ol{X}$, we say that $\ol{x}_2$ \emph{lies outward from} $\ol{x}_1$ if $\ol{x}_1 \ne \ol{x}_2$ and 
there are irreducible components $\ol{X}_1 \prec \ol{X}_2$ of $\ol{X}$ such that $\ol{x}_1 \in \ol{X}_1$ and $\ol{x}_2 \in \ol{X}_2$.
Lastly, an irreducible component $\ol{W}$ of $\ol{X}$ \emph{lies outward from} a point $\ol{x} \in \ol{X}$ if there is an irreducible component 
$\ol{W}' \prec \ol{W}$ such that $\ol{x} \in \ol{W}'$.

\subsection{Inertia Groups of the Stable Reduction}

Recall that $G$ acts on $\ol{Y}$.  By \cite[Lemme 6.3.3]{Ra:ab}, we know that
the inertia groups of the action of $G$ on $\ol{Y}$ at generic points of $\ol{Y}$ are $p$-groups.  
Also, at each node of $\ol{Y}$, the inertia group is an extension of a cyclic,
prime-to-$p$ order group by a $p$-group generated by
the inertia groups of the generic points of the crossing components.
If $\ol{V}$ is an irreducible component of $\ol{Y}$, we will always write $I_{\ol{V}} \leq G$ for the inertia group of
the generic point of $\ol{V}$, and $D_{\ol{V}} \leq G$ for the decomposition group.

For the rest of this subsection, assume $G$ has a \emph{cyclic} $p$-Sylow subgroup.
In this case, the inertia groups above a generic point of an
irreducible component $\ol{W} \subset 
\ol{X}$ are conjugate cyclic groups of $p$-power order.  If they are of order
$p^i$, we call $\ol{W}$ a
\emph{$p^i$-component}.  If $i = 0$, we call $\ol{W}$ an \emph{\'{e}tale
component}, and if $i > 0$, we call $\ol{W}$ an
\emph{inseparable component}. 

As in \cite{Ra:sp}, we call irreducible component $\ol{W} \subseteq \ol{X}$ a \emph{tail} if it is not the original component and intersects exactly 
one other irreducible component of $\ol{X}$.
Otherwise, it is called an \emph{interior component}.  
A tail of $\ol{X}$ is called \emph{primitive} if it contains a branch
point other than the point at which it intersects the rest of $\ol{X}$.  
Otherwise it is called \emph{new}.  This follows \cite{We:br}.  
An inseparable tail that is a $p^i$-component will also be called a
\emph{$p^i$-tail}.  Thus one can speak of, for instance, ``new $p^i$-tails" or ``primitive \'{e}tale tails."

\begin{lemma}[\cite{Ob:vc}, Proposition 2.13] \label{Lcorrectspec}
If $x \in X$ is branched of index $p^as$, where $p \nmid s$, then $x$
specializes to a $p^a$-component.
\end{lemma}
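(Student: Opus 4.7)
The plan is to prove that $|I_{\ol{V}}| = p^a$, where $\ol{V}$ is the irreducible component of $\ol{Y}$ containing the specialization $\ol{y}$ of a chosen $y \in f^{-1}(x)$, and $\ol{W} = \ol{f}(\ol{V})$ is the component containing the specialization of $x$. Since the ramification points of $f_{K^{st}}$ specialize to distinct smooth points of $\ol{Y}$, the component $\ol{V}$ through $\ol{y}$ is unique. I will establish matching upper and lower bounds on $|I_{\ol{V}}|$.

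First I would identify the $G$-stabilizer $D_{\ol{y}}$ of $\ol{y}$ with $I_y$. The containment $I_y \subseteq D_{\ol{y}}$ is immediate. Conversely, if $\sigma \in D_{\ol{y}}$, then $\sigma(y) \in f^{-1}(x)$ is again a ramification point, and it specializes to $\sigma(\ol{y}) = \ol{y}$; the distinct-specialization property forces $\sigma(y) = y$, so $\sigma \in I_y$. Next I would observe that $I_{\ol{V}}$ acts trivially on the function field $k(\ol{V})$ and hence on $\ol{V}$ itself, so it fixes $\ol{y}$; thus $I_{\ol{V}} \subseteq D_{\ol{y}} = I_y$. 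Since $I_{\ol{V}}$ is a $p$-group (by the cited Lemme 6.3.3 of \cite{Ra:ab}) sitting inside the cyclic group $I_y$ of order $p^a s$, it is contained in the unique $p$-subgroup of $I_y$, yielding the upper bound $|I_{\ol{V}}| \leq p^a$.

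For the matching lower bound, I would examine the induced Galois cover $\ol{V} \to \ol{W}$ with group $D_{\ol{V}}/I_{\ol{V}}$. The inertia at $\ol{y}$ in this cover is the image of the stabilizer of $\ol{y}$ in $D_{\ol{V}}$; since $D_{\ol{y}} \subseteq D_{\ol{V}}$ (as $\ol{V}$ is the unique component through $\ol{y}$) and $I_{\ol{V}} \subseteq I_y$, this inertia equals $D_{\ol{y}}/I_{\ol{V}} = I_y/I_{\ol{V}}$. Because $G$ has a cyclic $p$-Sylow, the classification of decomposition groups on stable reductions (cf.\ \cite{Ob:vc}) gives $D_{\ol{V}} \cong I_{\ol{V}} \rtimes C_m$ with $C_m$ cyclic of prime-to-$p$ order, so $I_{\ol{V}}$ is the $p$-Sylow of $D_{\ol{V}}$. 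Therefore $D_{\ol{V}}/I_{\ol{V}}$, and hence its subgroup $I_y/I_{\ol{V}}$, has prime-to-$p$ order. Combined with $|I_y| = p^a s$, this forces $I_{\ol{V}}$ to contain the full $p$-part of $I_y$, whence $|I_{\ol{V}}| \geq p^a$.

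The crux, and the only input beyond stabilizer bookkeeping and the defining properties of the stable model, is the structural fact that $D_{\ol{V}}/I_{\ol{V}}$ is prime-to-$p$. This is precisely the part that requires the cyclic $p$-Sylow hypothesis (otherwise $D_{\ol{V}}$ could contain $p$-elements acting nontrivially on $k(\ol{V})$), and it is exactly where I would lean on the structural classification from \cite{Ob:vc}. Once this is granted, the two bounds combine to give $|I_{\ol{V}}| = p^a$, i.e.\ $\ol{W}$ is a $p^a$-component.
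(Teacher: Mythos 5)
Your reduction to computing $|I_{\ol{V}}|$ is sound, and the bookkeeping steps are all correct: the identification $D_{\ol{y}} = I_y$ via the distinct-specialization property, the inclusion $I_{\ol{V}} \subseteq D_{\ol{y}}$ via the triviality of the $I_{\ol{V}}$-action on $k(\ol{V})$, and the upper bound $|I_{\ol{V}}| \leq p^a$ using that $I_{\ol{V}}$ is a $p$-group inside the cyclic group $I_y$. The gap is precisely where you flagged it: the claim that $D_{\ol{V}} \cong I_{\ol{V}} \rtimes C_m$ with $C_m$ of prime-to-$p$ order is \emph{not} a citable structure theorem (neither the present paper nor \cite{Ob:vc} asserts it), and it is false in general. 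Nothing prevents a component $\ol{V}$ of $\ol{Y}$ from being a wildly ramified separable cover of its image $\ol{W}$; e.g., when $\ol{W}$ is a $p^i$-component sitting between a $p^{i+1}$-component and an \'{e}tale tail, the cover $\ol{V} \to \ol{W}$ is typically an Artin--Schreier cover, so $D_{\ol{V}}/I_{\ol{V}}$ has $p$-torsion. The cyclic-$p$-Sylow hypothesis gives you that $I_{\ol{V}}$ is cyclic of $p$-power order and that $D_{\ol{V}}$ has a cyclic $p$-Sylow, but it does not make $I_{\ol{V}}$ a $p$-Sylow of $D_{\ol{V}}$.

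What the argument actually requires is weaker than your claim but still genuinely nontrivial: the cover $\ol{V} \to \ol{W}$ must be \emph{tamely ramified at the specific point $\ol{y}$}, i.e., $I_y/I_{\ol{V}}$ must have prime-to-$p$ order. The paper's own proof (in \cite{Ob:vc}) gets this by analyzing the $I_y$-Galois cover of formal disks $\Spf \hat{\mc{O}}_{Y^{st},\ol{y}} \to \Spf \hat{\mc{O}}_{X^{st},\ol{x}}$, branched at the single point $x$ with full inertia. Factoring $I_y = P_y \times C_s$: the $C_s$-quotient is a tame cover of formal disks, whose reduction is tame by unique lifting of tame covers; and the $P_y$-part is a connected $\mu_{p^a}$-torsor over the punctured disk given by extracting a $p^a$th root of a function vanishing to order prime to $p$ at $x$, whose reduction is therefore purely inseparable of degree $p^a$ at the generic point of the special fiber. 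That latter statement---that the whole $p$-part of the inertia at $y$ survives as generic inseparability, with nothing spilling over into separable wild ramification of $\ol{V}/\ol{W}$---is the real content of the lemma, and it cannot be obtained by stabilizer bookkeeping alone. You need this disk-level input, not a (nonexistent) group-theoretic classification of the $D_{\ol{V}}$.
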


\begin{lemma}[\cite{Ra:sp}, Proposition 2.4.8]\label{Letaletail}
If $f$ has bad reduction and $\ol{W}$ is an \'{e}tale component of $\ol{X}$, then $\ol{W}$ is a tail.
\end{lemma}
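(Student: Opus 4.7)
The plan is to proceed by contradiction. Suppose $\ol{W}$ is an étale component of $\ol{X}$ which is not a tail; then $\ol{W}$ is an interior component, so removing it from the dual graph $\mc{G}$ leaves at least two connected components. Choose an irreducible component $\ol{V}$ of $\ol{Y}$ lying above $\ol{W}$, so that $I_{\ol{V}}=\{1\}$ by hypothesis.

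First I would analyze the restricted cover $\ol{V}\to\ol{W}$. Since $I_{\ol{V}}=\{1\}$, this cover is Galois with group $D_{\ol{V}}$ and is étale over the generic point of $\ol{W}$. All its ramification is concentrated at closed points---the nodes of $\ol{W}$ in $\ol{X}$ and the specializations of branch points of $f$ to $\ol{W}$. At a node $\ol{y}$ of $\ol{V}$ above a node $\ol{x}$ of $\ol{W}$, \cite[Lemme 6.3.3]{Ra:ab} identifies the inertia of $\ol{y}$ as an extension of a cyclic prime-to-$p$ group by the inertia $I_{\ol{V}'}$ of the opposite component $\ol{V}'$.

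The crux is then to use the hypothesis that $\ol{W}$ is interior to pick an outward neighbor $\ol{W}'$, and to derive a contradiction from the local behavior at the node $\ol{x}=\ol{W}\cap\ol{W}'$. If $\ol{W}'$ is inseparable, the formal neighborhood of $\ol{x}$ carries a nontrivial deformation datum on the $\ol{V}'$-side (in the sense of \S\ref{Sdefdatastable}), which must match the trivial deformation datum on the $\ol{V}$-side forced by $I_{\ol{V}}=\{1\}$; this matching is incompatible with the local structure of $\ints/p^n$-torsors that will be described in \S\ref{Sreduction}, because a nonzero differential on one side of the annulus cannot join a zero differential on the other. If instead $\ol{W}'$ is étale, then iterating outward along the branch one eventually meets an inseparable component (otherwise the whole outward subtree would be étale and could be contracted, contradicting minimality of the blow-up defining $X^{st}$), and the previous step applies at that inseparable--étale junction.

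The main obstacle is the local-to-global comparison at the node joining an étale component to an inseparable one: one must translate the local mismatch of deformation data on the two sides of a formal annulus into a genuine global contradiction, which requires the explicit analysis of annuli and $\ints/p^n$-torsors summarized in \S\ref{Sreduction}--\S\ref{Sdefdatastable}. This is precisely the content of Raynaud's argument in \cite[Proposition 2.4.8]{Ra:sp}.
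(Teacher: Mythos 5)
The paper does not prove this lemma; it cites it directly from \cite[Proposition 2.4.8]{Ra:sp}, so there is no internal proof to compare against.

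On the merits of your sketch, the central mechanism does not work as stated. You claim that at a node joining the \'etale component $\ol{W}$ to an inseparable outward neighbor $\ol{W}'$, the nontrivial deformation datum on the $\ol{W}'$-side ``cannot join'' the trivial one on the $\ol{W}$-side, ``because a nonzero differential on one side of the annulus cannot join a zero differential on the other.'' But that configuration---a nonzero deformation datum on an inseparable component meeting an \'etale component carrying none---is exactly what happens at every \'etale tail, and such tails always exist when $f$ has bad reduction (Theorem \ref{Tvancycles}). The local analysis of $\ints/p^n$-torsors over annuli does not forbid such a junction; it only says that the order of zero/pole of the differential at the node governs the conductor of the Artin--Schreier degeneration on the \'etale side (\cite[5, Prop.\ 1.6]{He:ht}). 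The fact you actually need is \emph{directional}: the valuation of the relative different of the torsor is non-increasing as one moves outward along $\ol{X}$ (cf.\ Corollary \ref{Cdifferentmonotone}), so an inseparable component (positive different) cannot sit outward of an \'etale one (zero different). Your argument never establishes or uses any such orientation, so the claimed local contradiction evaporates---as written it would equally ``forbid'' the genuine \'etale tails. Your fallback case, where all outward neighbors are \'etale and you ``iterate outward until contractibility contradicts minimality,'' is also incomplete: the components of an all-\'etale outward subtree can carry marked points and their preimages in $\ol{Y}$ can have positive genus from tame ramification, so they are not automatically contractible. One would actually have to show that the tame restriction of $f$ over the corresponding closed disk admits a smooth model, and then invoke uniqueness of the stable model; ``minimality of the blow-up'' alone is not enough.
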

\smallskip
\begin{lemma}[\cite{Ob:vc}, Proposition 2.16, see also \cite{Ra:sp}, Remarque 3.1.8] \label{Ltailetale}
If $f$ has bad reduction and $\ol{W}$ is a $p^i$-tail of $\ol{X}$, 
then the component $\ol{W}'$ that intersects $\ol{W}$ is a $p^j$-component with
$j > i$.
\end{lemma}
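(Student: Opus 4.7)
The plan is to work locally at the node $\ol{x} := \ol{W} \cap \ol{W}'$. I pick a point $\ol{y} \in \ol{Y}$ lying above $\ol{x}$, and let $\ol{V}$ and $\ol{V}'$ be the two irreducible components of $\ol{Y}$ meeting at $\ol{y}$ that map respectively to $\ol{W}$ and $\ol{W}'$. The structure of the inertia at a node, recalled just before the lemma, tells us that the $p$-part of $I_{\ol{y}}$ equals $\langle I_{\ol{V}}, I_{\ol{V}'}\rangle$. Since $G$ has a cyclic $p$-Sylow subgroup, this is a cyclic $p$-group, forcing the two subgroups $I_{\ol{V}}$ and $I_{\ol{V}'}$, of respective orders $p^i$ and $p^j$, to be comparable. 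The problem therefore reduces to identifying the direction of the inclusion and showing it is strict.

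For the direction, I would use that $\ol{W}$ is a tail of $\ol{X}$ while $\ol{W}'$ lies on the unique path in the dual graph from $\ol{W}$ to the original component $\ol{X}_0$. The wild inertia of the cover accumulates along chains of inseparable components moving outward from $\ol{X}_0$, so each inseparable component absorbs the inertia of every component lying outward from it. Concretely, the formal annulus around $\ol{x}$ in $X^{st}$ has a connected preimage annulus $A' \subset Y^{st}$ whose stabilizer is $I_{\ol{y}}$, and reading off the cover $A' \to A$ at its two ends recovers the local structure of $\ol{f}$ at $\ol{y}$ from the $\ol{V}$-side and the $\ol{V}'$-side. The inertia at the $\ol{V}'$-end contains that at the $\ol{V}$-end, giving $I_{\ol{V}} \subseteq I_{\ol{V}'}$, i.e.\ $i \leq j$.

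The main obstacle is the strict inequality $j > i$. Suppose for contradiction that $I_{\ol{V}} = I_{\ol{V}'}$. Then the cover $A' \to A$ has constant cyclic $p^i$-inertia across the node, with no genuine inertia jump. I would argue from the minimality of the stable model that $\ol{W}$ is then superfluous: the cover could be extended as a single cyclic $p^i$-cover over a formal neighborhood absorbing $\ol{W}$ into $\ol{W}'$, with any branch points specializing to $\ol{W}$ (which by Lemma \ref{Lcorrectspec} have branching index of the form $p^i s$ with $p \nmid s$) accommodated directly on $\ol{W}'$, whose inertia already has order $p^i$. The resulting model is still semistable and still satisfies the three-marked-points condition on genus-zero components of $\ol{Y}$, contradicting the presence of $\ol{W}$ as a tail of the stable reduction. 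Equivalently, the obstruction can be phrased via deformation data (\S\ref{Sdefdatastable}): the data attached to $\ol{W}$ and $\ol{W}'$ must differ across the node, and this is only possible when the inertia jumps strictly. Once $j > i$ is established, the conclusion of the lemma is immediate.
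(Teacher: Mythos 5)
The paper does not prove this lemma itself; it cites \cite{Ob:vc}, Proposition 2.16 (and Raynaud's Remarque 3.1.8 for the $n=1$ case). Your proposal is therefore a new argument, and it has a genuine gap in the ``direction'' step. You assert that ``wild inertia accumulates along chains of inseparable components moving outward from $\ol{X}_0$, so each inseparable component absorbs the inertia of every component lying outward from it,'' and from this deduce $I_{\ol{V}}\subseteq I_{\ol{V}'}$. But this is precisely \emph{monotonicity} of the stable reduction (Definition~\ref{Dmonotonic}), and it is a nontrivial theorem, not a formal consequence of the annulus picture: the inertia at the node contains both $I_{\ol{V}}$ and $I_{\ol{V}'}$, and cyclicity of the $p$-Sylow gives only that they are nested, not which way the inclusion goes. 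The paper establishes monotonicity only under the extra hypothesis $m_G=2$ (Proposition~\ref{Pmonotonic}), and its proof draws on the vanishing cycles machinery, which depends (through Remark~\ref{Rraminvariant} and Lemma~\ref{Lramdenominator}) on Lemma~\ref{Ltailetale} itself. So your direction step is either an appeal to an unproved (and in this generality unavailable) fact, or circular.

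The strictness step is closer in spirit but still underjustified. Ruling out $I_{\ol{V}}=I_{\ol{V}'}$ by ``absorbing $\ol{W}$ into $\ol{W}'$'' requires an actual extension of the cover across the contracted node, and for $i>0$ the cover over the annulus around $\ol{x}$ is not tame, so the tame-lifting argument you would use in the \'etale case ($i=0$, which does go through essentially as you say via Lemma~\ref{Letaletail}) is not available. The obstruction is quantified by the deformation-datum invariant at the node, which Lemma~\ref{Lcritical} guarantees is nonzero; turning that into the statement that the node is not superfluous takes work, and your last sentence (``the data attached to $\ol{W}$ and $\ol{W}'$ must differ across the node, and this is only possible when the inertia jumps strictly'') is again an assertion of what needs proving rather than a proof. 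In short, the reduction to comparability of the two inertia groups is a good first move, but both of the two remaining cases ($I_{\ol{V}'}\subsetneq I_{\ol{V}}$ and $I_{\ol{V}'}=I_{\ol{V}}$) need real arguments that your sketch does not supply.
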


\begin{definition}\label{Draminvariant} (cf.\ \cite{Ob:vc}, Definition 2.18)
Consider a component $\ol{X}_b \ne \ol{X}_0$ of $\ol{X}$.  Let $\ol{W}$ be the unique component of $\ol{X}$ such that
$\ol{W} \prec \ol{X}_b$ and $\ol{W}$ intersects $\ol{X}_b$, say at $\ol{x}_b$.
Suppose that $\ol{W}$ is a $p^j$-component and $\ol{X}_b$ is a $p^i$-component, $i < j$. 
Let $\ol{Y}_b$ be a component of $\ol{Y}$ lying above $\ol{X}_b$, and let $\ol{y}_b$ be a point
lying above $\ol{x}_b$.  Then the \emph{effective ramification invariant} $\sigma_b$ of $\ol{X}_b$ is defined as follows:
If $\ol{X}_b$ is an \'{e}tale component, then $\sigma_b$ is the conductor of higher ramification
for the extension $\hat{\mc{O}}_{\ol{Y}_b, \ol{y}_b}/\hat{\mc{O}}_{\ol{X}_b, \ol{x}_b}$.  If
$\ol{X}_b$ is a $p^i$-component ($i > 0$), then the extension $\hat{\mc{O}}_{\ol{Y}_b, \ol{y}_b}/\hat{\mc{O}}_{\ol{X}_b, \ol{x}_b}$ can be 
factored as $\hat{\mc{O}}_{\ol{X}_b, \ol{x}_b} \stackrel{\alpha}{\hookrightarrow} S \stackrel{\beta}{\hookrightarrow} 
\hat{\mc{O}}_{\ol{Y}_b, \ol{y}_b}$, where $\alpha$ is Galois and $\beta$ is purely inseparable of degree $p^i$.
Then $\sigma_b$ is the conductor of higher ramification for the extension $S/\hat{\mc{O}}_{\ol{X}_b, \ol{x}_b}$.
\end{definition}

\begin{remark}\label{Rraminvariant}
By Lemma \ref{Ltailetale}, the effective ramification invariant is defined for every tail.
\end{remark}

\begin{lemma}[\cite{Ob:vc}, Lemma 2.20]\label{Lramdenominator}
The effective ramification invariants $\sigma_b$ lie in $\frac{1}{m_G}\ints$.
\end{lemma}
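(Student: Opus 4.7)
The plan is to identify the Galois group of the local extension whose conductor is $\sigma_b$, compute its upper ramification via Hasse--Arf and Herbrand transitivity, and then bound the resulting tame denominator group-theoretically. At $\ol y_b$ the decomposition group $D \leq G$ (equal to the inertia, since $k$ is algebraically closed) is generated by the generic inertias of the two crossing components---namely $I_{\ol Y_b}$ of order $p^i$ and the inertia $I_{\ol Y'}$ of order $p^j$ on the component lying over $\ol W$---together with a cyclic prime-to-$p$ complement $C$. Both $I_{\ol Y_b}$ and $I_{\ol Y'}$ lie in a single cyclic $p$-Sylow, hence are totally ordered with $I_{\ol Y_b} \leq I_{\ol Y'}$; writing $P_D := I_{\ol Y'}$, we have $D = P_D \rtimes C$, and $I_{\ol Y_b}$ is normal in $D$ as the unique subgroup of its order in the cyclic group $P_D$. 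The Galois extension $L/K$ computing $\sigma_b$ thus has group $H := D/I_{\ol Y_b} \cong P_H \rtimes C$, with $P_H$ cyclic of order $p^{j-i}$ (the étale case being $I_{\ol Y_b} = 1$).

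Next I would compute the upper jumps of $L/K$. Setting $L_0 := L^{P_H}$, the subextension $L_0/K$ is tame of degree $|C|$, giving $\varphi_{L_0/K}(u) = u/|C|$, while $L/L_0$ is a cyclic $p$-extension whose upper jumps are integers by Hasse--Arf. Composing via $\varphi_{L/K} = \varphi_{L_0/K} \circ \varphi_{L/L_0}$, every nonzero upper jump of $L/K$ lies in $\frac{1}{|C|}\ints$. I would then sharpen this: if $C_0 \leq C$ is the kernel of the action of $C$ on $P_H$, one checks $C_0$ is normal in $H$---being the unique Hall $p'$-subgroup of the characteristic subgroup $Z_H(P_H) = P_H \times C_0$---and the tame subextension $L/L^{C_0}$ leaves the upper ramification above zero unchanged. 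Re-applying Herbrand to $L^{C_0}/K$, which has Galois group $P_H \rtimes (C/C_0)$, gives $\sigma_b \in \frac{1}{|C/C_0|}\ints$.

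The hard part will be the final group-theoretic step: showing $|C/C_0|$ divides $m_G$. Fix $c \in C$ and let $\alpha \in \Aut(P_H)$ denote its conjugation action on $P_H$. I would observe that $P$ and $cPc^{-1}$ are both $p$-Sylows of $Z_G(P_H)$ containing $P_H$: $P$ is abelian and contains $P_H$, so $P \leq Z_G(P_H)$, and being a $p$-Sylow of $G$ it is one for $Z_G(P_H)$; the analogous facts hold for $cPc^{-1}$, which contains $cP_H c^{-1} = P_H$. Sylow's theorem applied inside $Z_G(P_H)$ then produces $h \in Z_G(P_H)$ with $hPh^{-1} = cPc^{-1}$, so that $\tilde c := h^{-1}c \in N_G(P)$; since $h$ centralizes $P_H$, the conjugation action of $\tilde c$ on $P_H$ (by restriction from its action on $P$) equals $\alpha$. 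Thus $\alpha$ lies in the image of the composite $N_G(P)/Z_G(P) \hookrightarrow \Aut(P) \twoheadrightarrow \Aut(P_H)$. Since $N_G(P)/Z_G(P)$ has prime-to-$p$ order $m_G$ and the restriction $\Aut(P) \to \Aut(P_H)$ is an isomorphism on prime-to-$p$ parts (both equal $(\ints/p)^\times$ via reduction mod $p$), $|\alpha|$ divides $m_G$. As $\Aut(P_H)_{p'}$ is cyclic, the subgroup generated by all such $\alpha$'s---precisely the image of $C/C_0$---has order dividing $m_G$, so $\sigma_b \in \frac{1}{m_G}\ints$.
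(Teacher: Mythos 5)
Your argument is correct in substance. Since the paper delegates the proof to \cite[Lemma~2.20]{Ob:vc} and does not reproduce it, I can only say that your route---identifying $\Gal(S/\hat{\mc{O}}_{\ol{X}_b,\ol{x}_b})$ with $D/I_{\ol{Y}_b}$, extracting the denominator from the faithful tame quotient via Herbrand transitivity and Hasse--Arf, and bounding that quotient by a Sylow argument---is the natural one and matches the shape of the cited statement.

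There is one defect that needs repair, though it is local. You set $P_H := P_D/I_{\ol Y_b}$, so $P_H$ is a subquotient of $G$ but not a subgroup; yet in the Sylow step you write $Z_G(P_H)$, $P_H \leq P$, and $cP_Hc^{-1} = P_H$, all of which presuppose $P_H \leq G$. The fix is to run the Sylow argument with $P_D = I_{\ol Y'}$ in place of $P_H$. Choose the $p$-Sylow $P$ to contain $P_D$ (which is harmless, as $m_G$ is conjugation-invariant). Then $P$ and $cPc^{-1}$ are abelian $p$-Sylows of $G$ containing $P_D = cP_Dc^{-1}$, hence are $p$-Sylows of $Z_G(P_D)$; Sylow in $Z_G(P_D)$ produces $h$ with $hPh^{-1} = cPc^{-1}$, and $\tilde c := h^{-1}c$ normalizes $P$ while inducing on $P_D$ the same automorphism as $c$. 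Its image under $N_G(P)\twoheadrightarrow N_G(P)/Z_G(P)\hookrightarrow\Aut(P)\to\Aut(P_D)$ has order dividing $m_G$, and the action of $c$ on $P_H$ is the further reduction $\Aut(P_D)\to\Aut(P_D/I_{\ol Y_b})$, which can only shrink the order. The rest---$\varphi_{L/K}=\varphi_{L_0/K}\circ\varphi_{L/L_0}$ with $\varphi_{L_0/K}(u)=u/|C|$ and Hasse--Arf on the cyclic $p$-part; the check that $C_0\lhd H$ and that $H^v\leq P_H$ meets $C_0$ trivially for $v>0$, so passing to $L^{C_0}$ preserves the conductor while cutting $|C|$ down to $|C/C_0|$---is sound.
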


\subsection{Vanishing cycles formula}\label{Svancycles}
Assume the notation of \S\ref{Sstable}.  The \emph{vanishing cycles formula} stated below will be used repeatedly:

\begin{theorem}[Vanishing cycles formula, cf., \cite{Ob:vc}, Theorem 3.14, Corollary 3.15]\label{Tvancycles}
Let $f: Y \to X \cong \proj^1$ be a three-point $G$-Galois cover with bad reduction, where $G$ has a cyclic
$p$-Sylow subgroup.  Let $B_{\text{new}}$ be an indexing set for the new \'{e}tale tails and let
$B_{\text{prim}}$ be an indexing set for the primitive \'{e}tale tails.  Let $\sigma_b$ be the ramification invariant in Definition 
\ref{Draminvariant}.
Then we have the formula 

\begin{equation}\label{Evancycles} 
1 = \sum_{b \in B_{\text{new}}} (\sigma_b - 1) + \sum_{b \in B_{\text{prim}}} \sigma_b.
\end{equation}
\end{theorem}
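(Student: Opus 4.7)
The plan is to establish \eqref{Evancycles} by computing the genus of $Y$ in two ways and equating them. On one hand, Riemann--Hurwitz applied to the three-point cover $f\colon Y\to \proj^1$ in characteristic zero yields
\[
  2g(Y) - 2 \;=\; -2|G| + \sum_{i=1}^{3} |G|\bigl(1 - 1/e_i\bigr),
\]
where $e_0, e_1, e_\infty$ are the branching indices of $f$. On the other hand, since $\ol{Y}$ is semistable, $g(Y) = p_a(\ol{Y})$, and for the nodal curve $\ol{Y}$ one has
\[
  p_a(\ol{Y}) \;=\; \sum_{\ol{V}} g(\ol{V}) + \#(\text{nodes}) - \#(\text{components}) + 1,
\]
where the sum runs over irreducible components $\ol{V}$ of $\ol{Y}$.

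First I would analyze each component $\ol{V} \subset \ol{Y}$ lying above some $\ol{X}_v \subset \ol{X}$. Writing $\ol{V} \to \ol{X}_v$ as a Galois cover with group $D_{\ol{V}}/I_{\ol{V}}$ composed with a purely inseparable map of degree $p^i$ (when $\ol{X}_v$ is a $p^i$-component), I would apply Riemann--Hurwitz to the Galois factor. The degree of the different on $\ol{V}$ splits into local contributions at (i) specializations of the three original branch points, (ii) points above intersections of $\ol{X}_v$ with adjacent components, and, in the inseparable case, (iii) the deformation-data contributions constructed in \S\ref{Sdefdatastable} that encode how the purely inseparable factor is detected by the geometry.

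Next I would sum these local contributions across all components of $\ol{Y}$ and compare against the global Riemann--Hurwitz total. Contributions at interior nodes cancel in pairs, using the inertia structure at nodes from \cite[Lemme 6.3.3]{Ra:ab}: the local extensions from the two sides of any node share matching conductors after normalization by $|G|/|I_{\ol{V}}|$. Tame contributions from branch-point specializations to interior components match up exactly with the tame terms in the global Riemann--Hurwitz total, and the original component $\ol{X}_0$ accounts for the constant $1$ on the right-hand side of \eqref{Evancycles}. What survives at each tail $\ol{X}_b$ is a single contribution at the unique intersection point $\ol{x}_b$ with the interior; by Definition \ref{Draminvariant}, its normalized value is exactly $\sigma_b$. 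A primitive tail therefore contributes $\sigma_b$, while a new \'etale tail contributes $\sigma_b - 1$, the $-1$ reflecting the absence of the specialized original branch point that would otherwise cancel a tame term.

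The main obstacle will be the bookkeeping on \emph{inseparable} components, where the purely inseparable factor must be separated carefully from the Galois factor as in Definition \ref{Draminvariant}. In particular, one must check that any new inseparable tails make no net contribution to \eqref{Evancycles} — either because they are absorbed into the analysis of the adjacent interior component or because their separable conductor equals $1$ — and that the arithmetic of the inseparable deformation data in \S\ref{Sdefdatastable} does not introduce extra terms into the normalized sum. Once these checks are in place, equating the two expressions for $g(Y)$ and simplifying yields \eqref{Evancycles} precisely.
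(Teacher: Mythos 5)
The paper does not actually give a proof of Theorem \ref{Tvancycles}; it is quoted from \cite{Ob:vc} (Theorem~3.14, Corollary~3.15). So your proposal has to be judged against the proof given there and against what is logically needed to get formula~(\ref{Evancycles}).

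Your genus comparison is the natural first thought, and it is closely related to Raynaud's original ``formule des cycles \'evanescents'' for $|P|=p$. But as written it cannot close, because of a mismatch between what the genus of $\ol{Y}$ actually sees and what the invariants $\sigma_b$ measure. On a $p^r$-component $\ol{W}$ with $r>0$, a component $\ol{V}\to\ol{W}$ of $\ol{Y}$ factors as a purely inseparable map followed by a \emph{tame} Galois cover, so $g(\ol{V})$ is computed by tame Riemann--Hurwitz alone. The deformation data — and hence the quantities that feed into the effective invariants $\sigma^{\eff}_e$ of Definition~\ref{Dsigmaeff} — simply do not enter $p_a(\ol{Y})$ on inseparable components. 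The only place wild ramification contributes to $p_a(\ol{Y})$ is above the \'etale tails, where $\sigma_b$ is literally a conductor in the reduction. So your step ``contributions at interior nodes cancel in pairs, with matching conductors after normalization by $|G|/|I_{\ol{V}}|$'' is not a tautology of the genus bookkeeping; the matching is precisely the content of Lemma~\ref{Lsigmaeffcompatibility}(i) and (iii), which is established via the deformation data differentials, not by comparing genera on the two sides of a node.

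The treatment of new inseparable tails is also wrong as stated. You suggest they are harmless because ``their separable conductor equals~$1$,'' but in fact a new inseparable tail $\ol{X}_c$ has effective ramification invariant $\sigma_c\geq 2$ (cf.\ the discussion around (\ref{Egenvancycles}) and \cite[Lemmas~4.1, 4.2]{Ob:vc}). Such tails do carry nontrivial conductor data, and they appear explicitly in the refined formula (\ref{Egenvancycles}), which sums over \emph{all} $p^j$-components meeting $p^{j+1}$-components and includes the branch points with index divisible by $p^{j+1}$. They drop out of (\ref{Evancycles}) only after a genuine telescoping argument, not because they contribute zero locally. The proof in \cite{Ob:vc} works more intrinsically with deformation data: each $\omega_i$ on an inseparable component has divisor of degree $2g-2$, which yields a local vanishing cycles formula per component; one then averages to form $\sigma^{\eff}$, uses the node compatibilities of Lemma~\ref{Lsigmaeffcompatibility}, and telescopes outward from $\ol{X}_0$. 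Your outline identifies the right landmarks (Riemann--Hurwitz, cancellation at nodes, special role of \'etale tails), but the mechanism by which the inseparable part is accounted for is missing, and that mechanism — the deformation data and their differentials — is exactly where the real work lives.
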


\begin{corollary}\label{Chalfsigma}
If $m_G=2$, and if $f$ has bad reduction, then there are at most two \'{e}tale
tails.  Furthermore, for any \'{e}tale tail $\ol{X}_b$, $\sigma_b \in \frac{1}{2}\ints$ (see \S\ref{Sstable}).
\end{corollary}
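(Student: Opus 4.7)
The proof is essentially immediate from the vanishing cycles formula together with the preceding remark and lemma, so the plan amounts to reading off the consequences.

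First, I would handle the denominator statement. By Lemma \ref{Lramdenominator}, the effective ramification invariants satisfy $\sigma_b \in \frac{1}{m_G}\ints$ for every tail, so setting $m_G = 2$ gives $\sigma_b \in \frac{1}{2}\ints$ for every \'{e}tale tail. This handles the second assertion immediately; no hypothesis on bad reduction is actually needed beyond the fact that the stable reduction exists so that the invariants are defined.

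Next, I would bound the number of \'{e}tale tails. Since $f$ has bad reduction, Theorem \ref{Tvancycles} applies and gives
\begin{equation*}
1 = \sum_{b \in B_{\text{new}}} (\sigma_b - 1) + \sum_{b \in B_{\text{prim}}} \sigma_b.
\end{equation*}
By Remark \ref{Rpositiveterms}, every summand on the right is at least $\frac{1}{m_G} = \frac{1}{2}$. Consequently
\begin{equation*}
1 \;\geq\; \tfrac{1}{2}\bigl(|B_{\text{new}}| + |B_{\text{prim}}|\bigr),
\end{equation*}
so $|B_{\text{new}}| + |B_{\text{prim}}| \leq 2$. Since $B_{\text{new}} \sqcup B_{\text{prim}}$ indexes all the \'{e}tale tails of $\ol{X}$, this is exactly the claim that there are at most two \'{e}tale tails.

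There is no real obstacle here: the work has all been done in establishing the vanishing cycles formula and the uniform lower bound of $1/m_G$ for each contribution, so the corollary is a two-line consequence. The only thing to double-check is that $B_{\text{new}}$ in the statement of Theorem \ref{Tvancycles} genuinely refers to new \'{e}tale tails (rather than new tails of any type), which is confirmed by the parenthetical in the theorem's statement; new inseparable tails do not appear in (\ref{Evancycles}) and therefore are not counted in the bound, but they are also not \'{e}tale, so this is consistent with the corollary.
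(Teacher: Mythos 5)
Your proof is correct and follows the same route the paper intends: the bound on the number of \'{e}tale tails comes directly from the vanishing cycles formula (\ref{Evancycles}) together with the lower bound $1/m_G$ from Remark \ref{Rpositiveterms}, and the denominator statement is just Lemma \ref{Lramdenominator} with $m_G = 2$. The paper simply labels the corollary as immediate from the remark, and your write-up spells out exactly that reasoning.
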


\begin{proof}
By Lemma \cite[Lemma 4.2 (i)]{Ob:vc}, each term on the right hand side of (\ref{Evancycles}) is at least 
$\frac{1}{m_G} = \frac{1}{2}$, so both parts of the corollary follow immediately.  
\end{proof}

\section{Deformation data}\label{Sdefdata}

Deformation data arise naturally from the stable reduction of covers.  Much information is lost when we pass from the stable model of a cover to its stable reduction, 
and deformation data provide a way to retain some of this information.  This process is described in detail in
\cite[\S3.2]{Ob:vc}, and we recall some facts here.

\subsection{Generalities}\label{Sgeneralities}
Let $\ol{W}$ be any connected smooth proper curve over $k$.  
Let $H$ be a finite group and $\chi$ a 1-dimensional character 
$H \to \FF_p^{\times}.$  A \emph{deformation datum} over
$\ol{W}$ of type $(H, \chi)$ is an ordered pair $(\ol{V}, \omega)$ such that: $\ol{V} \to \ol{W}$ is
an $H$-cover;
$\omega$ is a meromorphic differential form on $\ol{V}$ that is either logarithmic or
exact (i.e., $\omega = du/u$ or $du$ for 
$u \in k(\ol{V})$); and $\eta^*\omega = \chi(\eta)\omega$ for all $\eta \in H$.  If $\omega$
is logarithmic (resp.\ exact), the deformation datum is called
multiplicative (resp.\ additive).  When $\ol{V}$ is understood, we will sometimes
speak of the deformation datum $\omega$.  

If $(\ol{V}, \omega)$ is a deformation datum, and $w \in \ol{W}$ is a closed point, we
define $m_w$ to be the order of the 
prime-to-$p$ part of the ramification index of $\ol{V} \to \ol{W}$ at $w$.  Define $h_w$
to be $\ord_v(\omega) + 1$, where $v \in
\ol{V}$ is any point which maps to $w \in \ol{W}$.  This is well-defined because $\eta^*\omega$ is a nonzero scalar multiple of $\omega$ for
$\eta \in H$.

Lastly, define $\sigma_w = h_w/m_w$.  We call $w$ a \emph{critical point} of the
deformation datum $(\ol{V}, \omega)$ if 
$(h_w, m_w) \ne (1, 1)$.  Note that every deformation datum contains only a
finite number of critical points.  The
ordered pair $(h_w, m_w)$ is called the \emph{signature} of $(\ol{V}, \omega)$ (or of
$\omega$, if $\ol{V}$ is understood) at $w$, and
$\sigma_w$ is called the \emph{invariant} of the deformation datum at $w$.

\subsection{Deformation data arising from stable reduction.}\label{Sdefdatastable}
We use the notation of \S\ref{Sstable}.  Assume that a $p$-Sylow subgroup of $G$ is cyclic.  
For each irreducible component of $\ol{Y}$ lying above a
$p^r$-component of $\ol{X}$ with $r > 0$, we construct $r$ different deformation data.  The details of this construction are given in 
\cite[Construction 3.4]{Ob:vc}, and we do not give them here.  Rather, we recall the important properties.

Suppose $\ol{V}$ is an irreducible component of $\ol{Y}$ with 
nontrivial generic inertia group $I_{\ol{V}} \cong \ints/p^r \subset G$.  If $\ol{V}'$ is the smooth projective model of the function field
of $k(\ol{V})^{p^r}$, then \cite[Construction 3.4]{Ob:vc} constructs $r$ meromorphic differential forms $\omega_1, \ldots, \omega_r$ 
on $\ol{V}'$ (well defined either up to scalar multiplication by $k^{\times}$ or by $\FF_p^{\times}$, depending on whether the 
differential form is exact or logarithmic).  
Furthermore, if $H = D_{\ol{V}}/I_{\ol{V}}$, then $H$ acts faithfully on $\ol{V}'$, and $\ol{W} \cong \ol{V}/H$.
It is shown in \cite[Construction 3.4]{Ob:vc} that
$(\ol{V}', \omega_i)$ is in fact a deformation datum of type $(H, \chi)$ over $\ol{W}$ for $1 \leq i \leq r$, where $\chi$ 
is given by the conjugation action of $H$ on $I_{\ol{V}}$.  The invariant of $\sigma_i$ at a point $w \in 
\ol{W}$ will be denoted $\sigma_{i,w}$.  Since $f$ is Galois, these invariants do not depend on which component 
$\ol{V}$ above $\ol{W}$ is chosen. 
The differential forms $\omega_1, \ldots, \omega_r$ correspond, in some sense, to the successive degree $p$ extensions building a tower 
between $\ol{V}'$ and $\ol{V}$.  For this reason, we will sometimes call the deformation datum $(\ol{V}', \omega_1)$ the \emph{bottom 
deformation datum} for $\ol{V}$.  

Furthermore, for $1 \leq i \leq r$, we associate a rational number $\delta_{i}$ (see \cite[\S5.2]{Ob:fm}).  If 
$\omega_i$ is multiplicative, then $\delta_{i} = 1$.  Otherwise, $0 < \delta_{i} < 1$.  
The \emph{effective different} $\delta^{\eff}_{\ol{W}}$ 
above $\ol{W}$ is defined by $$\delta^{\eff}_{\ol{W}} = \left( \sum_{i=1}^{r-1} \delta_{i} \right) + \frac{p}
{p-1}\delta_{r}.$$  By convention, if $\ol{W}$ is an \'{e}tale component, we set $\delta^{\eff}_{\ol{W}} = 0$.

The following lemma will be very important in the main proof.
\begin{lemma}[\cite{Ob:vc}, Lemma 3.5, cf.\ \cite{We:br}, Proposition 1.7]\label{Lcritical}
Say $(\ol{V}', \omega)$ is a deformation datum arising from the stable reduction
of a cover, and let $\ol{W}$ be the component of $\ol{X}$ lying under
$\ol{V}'$.  Then a critical point $w$ of the
deformation datum on $\ol{W}$ is either a singular point of $\ol{X}$ or the
specialization of a branch point of $Y \to
X$ with ramification index divisible by $p$.  In the first case, $\sigma_w \ne
0$, and in the second case, $\sigma_w = 0$
and $\omega$ is logarithmic.
\end{lemma}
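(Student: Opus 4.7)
The plan is to fix a closed point $w\in\ol{W}$ and analyze the signature $(h_w,m_w)$ of the deformation datum $(\ol{V}',\omega_i)$ by reading off the local Kummer data for the tower $C = C_1 \hookrightarrow \cdots \hookrightarrow C_{r+1} = B$ at a preimage of $w$ in $\ol{V}'$. There are three geometric cases to dispatch: $w$ is a smooth non-branched point of $\ol{X}$, $w$ is the specialization of a branch point of $f$, or $w$ is a node of $\ol{X}$.

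First, suppose $w$ is smooth on $\ol{X}$ and is not the specialization of any branch point. Then in the formal neighborhood $\hat{\mc{O}}_{X^{st},w}\cong R^{st}[[T]]$, the stable model $Y^{st}\to X^{st}$ is \'{e}tale modulo the generic inertia $I_{\ol{V}}$. Uniqueness of such liftings allows each Kummer element $z_i\in C_i$ defining $C_i\hookrightarrow C_{i+1}$ to be chosen a unit in the local ring at $w$, so $\omega_i = dz_i/z_i$ is regular and nonvanishing at a preimage of $w$ on $\ol{V}'$, giving $h_w = 1$; moreover the induced $H$-cover $\ol{V}'\to\ol{W}$ is \'{e}tale at $w$, so $m_w = 1$, and $w$ is not critical.

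Second, suppose $w$ is the specialization of a branch point $x$ of $f$. By Lemma \ref{Lcorrectspec}, since $\ol{W}$ is a $p^r$-component with $r>0$, the branching index of $x$ is $p^r s$ with $p\nmid s$, and in particular divisible by $p$. Taking $T$ a local coordinate on $\ol{W}$ vanishing at $w$, each Kummer element can be written $z_i = u_i T^{a_i}$ for a unit $u_i$ and an integer $a_i\ne 0$ encoding the branching, so $\omega_i = dz_i/z_i = a_i\,dT/T + du_i/u_i$ is logarithmic with $\ord\omega_i = -1$ at $w$; hence $h_w = 0$, $\sigma_w = 0$, and the deformation datum is multiplicative at $w$.

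Third, suppose $w$ is a node of $\ol{X}$ at the intersection of $\ol{W}$ with another component $\ol{W}'$, so $\hat{\mc{O}}_{X^{st},w}$ is of the form $R^{st}[[T,U]]/(TU-\pi^\epsilon)$ and a neighborhood of $w$ in $X$ is an open annulus (\S\ref{Sdna}). To show $\sigma_w\ne 0$, the plan is to argue by contradiction using the compatibility of deformation data across this annulus: if $\sigma_w=0$ on the $\ol{W}$-side, then $\omega_i$ has at worst a simple logarithmic pole at $w$, and stitching with the analogous datum on the $\ol{W}'$-side would force the relevant $\mu_p$-torsor to extend with \'{e}tale reduction across the whole annulus, contradicting the fact that $w$ is a genuine node produced by the blow-up forming the stable model. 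The main obstacle is precisely this third case: cases one and two reduce, after choosing suitable representative Kummer elements, to the standard interpretation of $\omega_i$ in \cite[5, D\'{e}finition 1.9]{He:ht}, whereas the nodal case requires either a direct annular computation or the above minimality argument; throughout one must remember that $z_i$ is determined only up to prime-to-$p$ powers and units, so only its valuation data is intrinsic to the signature.
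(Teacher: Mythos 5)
The statement here is not proved in the paper at all — it is quoted from \cite{Ob:vc}, Lemma 3.5 (cf.\ \cite{We:br}, Proposition 1.7), so there is no internal proof to compare against. Your proposal therefore has to be judged on its own merits, and while the three-way case split (smooth non-branch point, specialization of a branch point, node) and the use of Lemma \ref{Lcorrectspec} to force $p \mid$ (branching index) are the right scaffolding, two of the three cases have real gaps.

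In your second case, the step ``$\omega_i = dz_i/z_i$'' is circular. Whether the differential form attached by \cite[5, D\'efinition 1.9]{He:ht} to a degree-$p$ subquotient is logarithmic is exactly the multiplicative/additive dichotomy that the lemma is asserting, so you cannot assume the logarithmic formula at the outset. The usual route is to first compute the signature: near a branch point with wild ramification the local Kummer data forces $\ord_v(\omega) = -1$, i.e.\ $h_w = 0$ and hence $\sigma_w = 0$. Once one knows $\omega$ has a simple pole, logarithmicity follows for free from the fact that in characteristic $p$ an exact form $\omega = du = \sum_i i\,c_i t^{i-1}\,dt$ can never have order exactly $-1$ (the $t^{-1}\,dt$ coefficient is $0 \cdot c_0 = 0$), so $\omega$ cannot be exact, hence is logarithmic. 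That deduction is cleaner and avoids begging the question. In your third case you yourself flag the argument as ``the main obstacle'' and offer only a sketch; the intended contradiction — that $\sigma_w = 0$ at a node would allow the torsor to extend over the annulus, making the node contractible and contradicting minimality of the stable model — is the right intuition, but the implication ``$\sigma_w = 0 \Rightarrow$ torsor extends with good reduction'' is precisely the content that needs a local computation on the annulus and is not established. As written, the nodal case is not a proof. A complete argument should supply the annular Kummer analysis (or cite the relevant compatibility of the effective different/invariant across the node, in the spirit of Lemma \ref{Lsigmaeffcompatibility}) rather than appeal to it.

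Finally, a minor point: in your first case the assertion that ``each Kummer element $z_i$ can be chosen a unit in the local ring at $w$'' so that $\omega_i$ is regular and nonvanishing would also need justification, since the $z_i$ live on the generic fibre and only their class modulo $p$th powers is intrinsic. The cleaner formulation is that the restricted cover $\hat Y \times_{\hat X} \Spec \hat{\mathcal O}_{X^{st}, w}$ is a twist of a constant $I_{\ol V}$-torsor over the formal disk, which directly gives $(h_w, m_w) = (1,1)$.
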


Recall that $\mc{G}'$ is the augmented dual graph of $\ol{X}$ (\S\ref{Sgraph}).  To each $e \in E(\mc{G}')$ we will associate 
the \emph{effective invariant} $\sigma^{\eff}_b$, and to each vertex of $\mc{G}$ we will associate the 
\emph{effective different} $\delta^{\eff}_v$.  

\begin{definition}[cf.\ \cite{Ob:vc}, Definition 3.10]\label{Dsigmaeff} 
\rm Let $e \in E(\mc{G}')$.
\begin{enumerate}
\item Suppose $e$ corresponds to the triplet $(w, \ol{W}, \ol{W}')$, where $\ol{W}$ is a $p^r$-component and $\ol{W}'$ is a $p^{r'}$-component
with $r \geq r'$.  Then $r \geq 1$ by Lemma \ref{Letaletail}. 
Let $\omega_i$, $1 \leq i \leq r$, be the deformation data above $\ol{W}$.  Then 
$$\sigma^{\eff}_e := \left( \sum_{i=1}^{r-1} \frac{p-1}{p^{i}}\sigma_{i,w} \right) + \frac{1}{p^{r-1}}\sigma_{r,w}.$$
Note that this is a weighted average of the $\sigma_{i,w}$'s.
\item If $s(e)$ corresponds to a $p^r$-component and $t(e)$ corresponds to a $p^{r'}$-component with $r < r'$, then
$\sigma^{\eff}_e := -\sigma^{\eff}_{\ol{e}}$.  
\item If either $s(e)$ or $t(e)$ is in $V(\mc{G}') \backslash V(\mc{G})$, then $\sigma^{\eff}_e := 0$.  
\item For all $v \in V(\mc{G})$, define $\delta^{\eff}_v = \delta^{\eff}_{\ol{W}}$, where $v$ corresponds to $\ol{W}$. 
\item For all $e \in E(\mc{G})$, define $\epsilon_e$ to be the \'{e}paisseur of the formal annulus corresponding to $e$ 
(\S\ref{Sdisks}).
\end{enumerate}
\end{definition}

\begin{remark}\label{Roort}
In the paper \cite{OW:ce}, similar ideas of deformation data are used, but the notation and method of calculation is somewhat different.
Suppose $\ol{V}$ is an irreducible component of $\ol{Y}$ as in this section such that $D_{\ol{V}} = I_{\ol{V}} = \ints/p^r$, and let $\ol{W}$ be the 
component of $\ol{X}$ lying below it.  If $\eta_{\ol{V}}$, $\eta_{\ol{W}}$ are the generic points of $\ol{V}$, $\ol{W}$, then 
$\hat{\mc{O}}_{Y^{st}, \ol{V}}/\hat{\mc{O}}_{X^{st}, \ol{W}}$ is a $\ints/p^r$-extension of complete discrete valuation rings, corresponding to
a character $\chi$ in the language of \cite[\S5]{OW:ce}.  Our $\delta^{\eff}_{\ol{W}}$ is equal to $\text{sw}(\chi)$ in \cite[\S5.3]{OW:ce}, 
and if $e \in E(\mc{G}')$ corresponds to $(w, \ol{W}, \ol{W}')$ as in Definition \ref{Dsigmaeff}, 
then our $\sigma^{\eff}_e$ is equal to $\ord_w(\text{dsw}(\chi)) + 1$ in
\cite[\S5.3]{OW:ce}.
\end{remark}

\begin{lemma}[\cite{Ob:vc}, Lemma 3.11 (i), (iii), \cite{Ob:fm}, Lemma 5.10]\label{Lsigmaeffcompatibility}
\ 
\begin{enumerate}
\item For any $e \in E(\mc{G}')$, we have $\sigma^{\eff}_e = -\sigma^{\eff}_{\ol{e}}$.
\item  If $t(e)$ corresponds to an \'{e}tale tail $\ol{X}_b$, then $\sigma^{\eff}_e = \sigma_b$.
\item If $e \in E(\mc{G})$, then $\delta^{\eff}_{s(e)} - \delta^{\eff}_{t(e)} = \sigma^{\eff}_e \epsilon_e$.
\end{enumerate}
\end{lemma}

The following lemma is very important for \S\ref{Stauequals1}:
\begin{lemma}[\cite{Ob:fm}, Lemma 5.7]\label{Lsigmaeff}
Let $e \in E(\mc{G})$ such that $s(e) \prec t(e)$.  Let $\ol{w} \in \ol{X}$ be the point corresponding to $e$.
Let $\Pi_e$ be the set of branch points of $f$ with branching index divisible by $p$ that specialize outward from $\ol{w}$.  
Let $B_e$ index the set of \'{e}tale tails $\ol{X}_b$ lying outward from $\ol{w}$.  Then 
$$\sigma^{\eff}_e - 1 = \sum_{b \in B_e} (\sigma_b - 1) - |\Pi_e|.$$
\end{lemma}

\begin{corollary}[Monotonicity of the effective different]\label{Cdifferentmonotone}
If $v, v' \in V(\mc{G})$, and $v \prec v'$, then $\delta^{\eff}_v \geq \delta^{\eff}_{v'}$.
\end{corollary}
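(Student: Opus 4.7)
My plan is to reduce the corollary to showing $\sigma^{\eff}_e \geq 0$ for every $e \in E(\mc G)$ with $s(e) \prec t(e)$: by Lemma \ref{Lsigmaeffcompatibility}(iii) we have $\delta^{\eff}_{s(e)} - \delta^{\eff}_{t(e)} = \sigma^{\eff}_e \epsilon_e$ with $\epsilon_e > 0$, so the per-edge inequality is equivalent, and the general inequality $\delta^{\eff}_v \geq \delta^{\eff}_{v'}$ then follows by summing along the unique path in the tree $\mc G$ from $v$ to $v'$.

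For a fixed such edge $e$ with corresponding node $\ol w$, I would start from Lemma \ref{Lsigmaeff},
\[
\sigma^{\eff}_e \;=\; 1 + \sum_{b \in B_e}(\sigma_b - 1) - |\Pi_e|,
\]
and aim to bound the right side from below by $0$. The main obstacle, and really the heart of the argument, is to extract from the three-point hypothesis a uniform bound on the branching inside the outward subtree $T_e \subseteq \mc G$. Since $s(e) \prec t(e)$ we have $\ol X_0 \notin T_e$, so the sub-disk $\mc D_e \subset \proj^1_{\ol K}$ of $\ol K$-points specializing to the components of $T_e$ is a \emph{proper} closed sub-disk; and the three branch points $\{0,1,\infty\}$ of $f$ lie at maximal pairwise distance on $\proj^1$ (any $a \in \ol K$ with $v(a) > 0$ automatically has $v(a-1) = 0$, and the other two pairs reduce to this after an obvious change of coordinate), so no proper closed sub-disk of $\proj^1_{\ol K}$ can contain two of them. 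Combining this with Lemma \ref{Lcorrectspec} (tame branch points specialize to \'etale tails, while $\Pi_e$ collects the wild ones), I get the crucial inequality
\[
|B_{e,\text{prim}}| + |\Pi_e| \;\leq\; 1, \qquad \text{where } B_{e,\text{prim}} := B_e \cap B_{\text{prim}}.
\]

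Given this bound, the proof finishes by a short case split on $(|B_{e,\text{prim}}|,|\Pi_e|) \in \{(0,0),(0,1),(1,0)\}$. Splitting $B_e = B_{e,\text{new}} \sqcup B_{e,\text{prim}}$ and using Remark \ref{Rpositiveterms} (giving $\sigma_b - 1 \geq 1/m_G > 0$ for new tails and $\sigma_b \geq 1/m_G > 0$ for primitive ones) makes the remaining contributions nonnegative, yielding $\sigma^{\eff}_e \geq 1$, $\sigma^{\eff}_e \geq 0$, or $\sigma^{\eff}_e \geq 1/m_G$, respectively. Thus the only genuinely new input is the sub-disk observation; once it is in hand, Lemma \ref{Lsigmaeff} and vanishing-cycles positivity do all the remaining work.
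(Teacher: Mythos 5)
Your proof is correct and takes essentially the same route as the paper: reduce to adjacent vertices and sum over the path, apply Lemma \ref{Lsigmaeff} to the corresponding edge, observe that at most one branch point specializes outward, and finish with the positivity $\sigma_b > 1$ for new tails (and $\sigma_b > 0$ for primitive tails). The one small over-complication is your disk argument for the ``at most one branch point outward'' bound: the paper just quotes the standing hypothesis of \S\ref{Sstable} that the branch points' specializations do not collide on the special fiber of $X_R$, which is shorter and holds at the corollary's full generality (covers with arbitrarily many branch points, not only the three-point case your $\{0,1,\infty\}$ computation covers).
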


\begin{proof}
Clearly we may assume that $v$ and $v'$ are adjacent, i.e., there is an edge $e$ such that $s(e) = v$ and $t(e) = v'$.  
Since the branch points of $f$ are assumed not to collide on the special fiber of our original smooth model $X_R$, there is at 
most one branch point of 
$f$ specializing outward from the node $\ol{x}_e$ corresponding to $e$.  That is, there is at most either one primitive tail or one 
branch point of index divisible 
by $p$ lying outward from $\ol{x}_e$.  Since $\sigma_b > 1$ for all new tails $\ol{X}_b$ (\cite[Lemma 4.2 (i)]{Ob:vc}), we see by 
Lemma \ref{Lsigmaeff} 
that $\sigma^{\eff}_e \geq 0$.  We conclude using Lemma \ref{Lsigmaeffcompatibility} (3).
\end{proof}

\section{Monotonicity} \label{Scomb}
We maintain the assumptions and notation of \S\ref{Sstable}, along with the assumption that a $p$-Sylow subgroup of $G$ is cyclic 
of order $p^n$.

\begin{lemma}\label{Lwildspec}
Let $x$ be a branch point of $f$ with branching index exactly divisible by $p^r$.  
If $x$ specializes to an irreducible component $\ol{W}$ of $\ol{X}$, then either $\ol{W}$ is the original
component, or the unique component $\ol{W}'$ such that $\ol{W}' \prec \ol{W}$ and $\ol{W}'$ intersects $\ol{W}$ is a $p^{s}$-component, for some 
$s > r$.
\end{lemma}

\begin{proof}
By Lemma \ref{Lcritical} the deformation data above $\ol{W}$ are all multiplicative, and by \cite[Proposition 5.2]{Ob:fm} they
are all identical.
Thus $\delta^{\eff}_{\ol{W}} = r + \frac{1}{p-1}$, as $\delta_{\omega_i} = 1$ for all $\omega_i$ above $\ol{W}$.  Assume $\ol{W}$ 
is not the original
component.  Then, by Corollary \ref{Cdifferentmonotone}, $\delta^{\eff}_{\ol{W}'} \geq r + \frac{1}{p-1}$.  This is impossible 
unless $\ol{W}$ is a $p^s$-component with $s \geq r$.  If $s = r$, we must have $\delta^{\eff}_{\ol{W}'} = \delta^{\eff}_{\ol{W}} 
= r + \frac{1}{p-1}$.  
By Corollary \ref{Cdifferentmonotone}, 
$\sigma^{\eff}_e = 0$ for $e$ either edge corresponding to $\{w\} = \ol{W} \cap \ol{W}'$.  Since the deformation data above 
$\ol{W}$ are identical and 
$\sigma_e^{\eff}$ is a weighted average of invariants, we have $\sigma_{i,w} = 0$ for all $\omega_i$ above $\ol{W}$.  But this contradicts Lemma 
\ref{Lcritical}.  
So $s > r$.
\end{proof}

\begin{lemma}\label{Lnonintegral}
Let $\ol{x}$ be a singular point of $\ol{X}$ such that there are no \'{e}tale tails $\ol{X}_b$ lying outward from $\ol{x}$.
If $I_{\ol{x}} \leq G$ is an inertia group above $\ol{x}$, then $m_{I_{\ol{x}}} = 1$.
\end{lemma}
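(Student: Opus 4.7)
The plan is to derive $m_{I_{\ol{x}}} = 1$ from the vanishing-cycles identity of Lemma \ref{Lsigmaeff}, combined with a denominator analysis of the effective invariant $\sigma^{\eff}_e$ in terms of the deformation data above $\ol{x}$.

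Let $e \in E(\mc{G})$ be the edge corresponding to $\ol{x}$, oriented with $s(e) \prec t(e)$. Because no \'etale tail lies outward from $\ol{x}$ by hypothesis, the indexing set $B_e$ of Lemma \ref{Lsigmaeff} is empty, and the formula there gives
$$\sigma^{\eff}_e \;=\; 1 - |\Pi_e| \;\in\; \ints.$$
Moreover, the component outward from $\ol{x}$ cannot be \'etale, since by Lemma \ref{Letaletail} it would then be an \'etale tail, contradicting the hypothesis; so it is a $p^{r'}$-component with $r' \geq 1$. In particular, one side of $\ol{x}$ carries a nontrivial collection of deformation data $\omega_1,\ldots,\omega_r$ (with $r \geq 1$) on a $p^r$-component $\ol{W}$, and (possibly after replacing $e$ by $\ol{e}$ via Lemma \ref{Lsigmaeffcompatibility}(i)) Definition \ref{Dsigmaeff} expresses $\sigma^{\eff}_e$ (up to sign) as the weighted average $\sum_{i=1}^{r-1}\frac{p-1}{p^i}\sigma_{i,w} + \frac{1}{p^{r-1}}\sigma_{r,w}$ of the critical invariants at the point $w \in \ol{W}$ corresponding to $\ol{x}$.

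Write $\sigma_{i,w} = h_{i,w}/m_w$, where $m_w$ is the prime-to-$p$ part of the ramification of the deformation-datum cover $\ol{V}' \to \ol{W}$ at $w$. Inspection of the construction of $H = D_{\ol{V}}/I_{\ol{V}}$ in \S\ref{Sdefdatastable} identifies $m_w$ with $m_{I_{\ol{x}}}$. The transformation law $\eta^*\omega_i = \chi(\eta)\omega_i$, together with the identification of $\chi$ as the conjugation action of $H$ on $I_{\ol{V}}$, pins each $h_{i,w}$ to a fixed unit residue class modulo $m_w$, determined by how the tame generator of $I_{\ol{x}}$ acts on the $i$th stage of the $\mu_p$-tower over $\ol{W}$. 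Inserting these residues into the weighted average and using the identity $\sum_{i=1}^{r-1}\frac{p-1}{p^i} + \frac{1}{p^{r-1}} = 1$, one obtains
$$\sigma^{\eff}_e \;\equiv\; \frac{c}{m_w} \pmod{\ints}$$
for some unit $c \in (\ints/m_w\ints)^\times$. Combined with the integrality $\sigma^{\eff}_e \in \ints$ from the first paragraph, this forces $m_w = 1$, i.e.\ $m_{I_{\ol{x}}} = 1$.

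The main obstacle is the denominator analysis in the last step: one must track the residue class $h_{i,w} \pmod{m_w}$ carefully through the tower construction of \S\ref{Sdefdatastable} at the node $\ol{x}$, and verify that the weighted combination of these residues is a unit in $\ints/m_w\ints$ rather than accidentally vanishing. This hinges on the precise description of how the tame cyclic quotient of $I_{\ol{x}}$ acts on successive stages of the $\mu_p$-tower above $\ol{W}$.
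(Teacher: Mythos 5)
Your strategy (force integrality of $\sigma^{\eff}_e$ via Lemma \ref{Lsigmaeff}, then extract a denominator obstruction from the deformation data) is a genuinely different route from the paper, and with some repair it can be made to work. However, as written it has two problems, one of which is substantive.

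First, the identification ``$m_w = m_{I_{\ol{x}}}$'' is not correct. The quantity $m_w$ in the deformation-datum construction is the order of the full tame quotient $I_{\ol{x}}/I_{\ol{V}}$ (the stabilizer in $H$ of the point $v$ above $w$), whereas $m_{I_{\ol{x}}} = |I_{\ol{x}}/Z_{I_{\ol{x}}}(I_{\ol{V}})|$ is the order of the image of $I_{\ol{x}}$ acting by conjugation on $I_{\ol{V}}$. Since $Z_{I_{\ol{x}}}(I_{\ol{V}}) \supseteq I_{\ol{V}}$, we only have $m_{I_{\ol{x}}} \mid m_w$, and equality is not known in advance. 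This matters because your claim that the residue $c$ is a \emph{unit} modulo $m_w$ is precisely equivalent to $m_{I_{\ol{x}}} = m_w$. What the transformation law $\eta^* \omega_i = \chi(\eta)\omega_i$ actually gives is $\zeta^{h_{i,w}} = \chi(\eta)$ for $\zeta$ a primitive $m_w$-th root of unity and $\eta$ a generator of the stabilizer; the order of $\chi(\eta)$ in $k^\times$ is $m_{I_{\ol{x}}}$, not $m_w$. Carrying this through, the lowest-terms denominator of $h_{i,w}/m_w$ is $m_{I_{\ol{x}}}$, so the weighted average in Definition \ref{Dsigmaeff} is $\equiv h'/m_{I_{\ol{x}}} \pmod{\ints}$ with $\gcd(h', m_{I_{\ol{x}}}) = 1$ (using that $p\nmid m_w$ to separate the $p$-power and $m_w$-denominators, and that all $\omega_i$ share the same $\chi$). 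Integrality then forces $m_{I_{\ol{x}}} = 1$, which is the desired conclusion --- but the intermediate claim $m_w = 1$ that you state is too strong and is not what the argument yields.

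Second, you explicitly flag that the residue-class bookkeeping is ``the main obstacle'' and leave it unverified. That is the heart of the matter here: without pinning down the order of $\chi(\eta)$ and checking that the $p$-power weights in Definition \ref{Dsigmaeff} cannot interfere with the prime-to-$p$ denominator, the argument is incomplete.

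For comparison, the paper's proof avoids deformation data entirely. It passes to the strong auxiliary cover $f^{str}$, whose Galois group is $\ints/p^\nu \rtimes \ints/m_{G^{str}}$, observes that $m_I$ is insensitive to quotienting by the prime-to-$p$ normal subgroup $N$, and then argues by inward induction from the inseparable tails: above tail nodes the inertia groups for $f^{str}$ are cyclic $p$-groups, and since a tamely ramified cover of $\proj^1$ branched at exactly one point must be trivial, no component between the tails and $\ol{x}$ can acquire non-$p$ tame inertia at only one node. This is cleaner and sidesteps the denominator analysis. If you do pursue your route, fix the distinction between $m_w$ and $m_{I_{\ol{x}}}$ and supply the residue computation; the skeleton is sound.
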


\begin{proof}
We first claim that, given an inseparable component $\ol{W} \subseteq \ol{X}$, there cannot be exactly one point $\ol{w} \in \ol{W}$ such
that $m_{I_{\ol{w}}} > 1$.  To prove the claim, let $\ol{V} \in \ol{Y}$ be a component above $\ol{W}$.  Since $\ol{W}$ is inseparable,
it follows that $D_{\ol{V}}$ has a normal subgroup of order $p$ (namely, the order $p$ subgroup of $I_{\ol{V}}$).  By \cite[Corollary 2.4]{Ob:vc},
$D_{\ol{V}}$ has a quotient of the form $\ints/p^{\nu} \rtimes \ints/m_{D_{\ol{V}}}$, where the action of $\ints/m_{D_{\ol{V}}}$ on $\ints/p^{\nu}$ 
is faithful, and $\ints/p^{\nu}$ is a $p$-Sylow subgroup of $D_{\ol{V}}$.  If $m_{D_{\ol{V}}} = 1$, then $m_{I_{\ol{w}}} = 1$ for all $\ol{w} \in \ol{W}$,
so assume $m_{D_{\ol{V}}} > 1$.  Then $\ol{V} \to \ol{W}$ has a quotient $\ints/m_{D_{\ol{V}}}$-cover $\ol{V}' \to \ol{W}$, which
must be branched at at least two points, say $\ol{w}_1$ and $\ol{w}_2$.  Then
$I_{\ol{w}_1}$ and $I_{\ol{w}_2}$ are non-abelian subgroups of $\ints/p^{\nu} \rtimes \ints/m_{D_{\ol{V}}}$, meaning that
$m_{I_{\ol{w}_1}}$ and $m_{I_{\ol{w}_2}}$ are greater than 1.  This proves the claim.

Now, if $\ol{W}$ is an inseparable tail, then there is only one possible point $\ol{w} \in \ol{W}$ where $m_{I_{\ol{w}}}$ might not be $1$ (the intersection
point with the rest of $\ol{X}$), and the claim shows that we do, in fact, have $m_{I_{\ol{w}}} = 1$.  The lemma then follows by inward induction.
\end{proof}

\begin{definition}\label{Dmonotonic}
We call the stable reduction $\ol{f}$ of $f$ \emph{monotonic} 
if for every $\ol{W} \preceq \ol{W}'$, the inertia group of $\ol{W}'$ is contained in the inertia group of $\ol{W}$.  
In other words, the stable reduction is monotonic if the generic inertia does not increase as we move outward from $\ol{X}_0$ along $\ol{X}$.  
\end{definition}

In the situation of Theorem \ref{Tmain}, all covers are monotonic:
\begin{prop} \label{Pmonotonic}
If $f$ is a three-point $G$-cover of $\proj^1$, where $G$ has a cyclic $p$-Sylow subgroup of order $p^n$, and $m_G = 2$, then 
$\ol{f}$ is monotonic.
\end{prop}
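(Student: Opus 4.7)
The plan is to argue by contradiction. Suppose $\ol{f}$ is non-monotonic, so there exists an edge $e \in E(\mc{G})$ with $s(e) \prec t(e)$, where $s(e)$ is a $p^i$-component and $t(e)$ is a $p^j$-component with $j > i$; choose such $e$ with $t(e)$ minimal in $\preceq$, so that the subtree of $\mc{G}$ inward from $t(e)$ is monotonic. Let $\ol{w}$ denote the node corresponding to $e$. I would first pass to the strong auxiliary cover $f^{str}: Y^{str} \to X$: by its construction, the inseparable components of $\ol{f}^{str}$ coincide with those of $\ol{f}$ with the same $p$-power of generic inertia (the kernel $N$ being of order prime to $p$), so monotonicity of $\ol{f}$ is equivalent to monotonicity of $\ol{f}^{str}$. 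The advantage is that $G^{str} \cong \ints/p^{\nu} \rtimes \ints/m_{G^{str}}$ with $m_{G^{str}} \in \{1,2\}$ is metacyclic and very explicit, and the branch locus of $f^{str}$ consists of the three original branch points together with (by Corollary \ref{Chalfsigma}) at most two new prime-to-$p$ branch points, one per new \'{e}tale tail.

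Combining Corollary \ref{Cdifferentmonotone} with Lemma \ref{Lsigmaeffcompatibility}(iii) gives $\sigma^{\eff}_e \geq 0$; by the sign convention in Definition \ref{Dsigmaeff} (applicable since $r(s(e)) < r(t(e))$), this reads $\sigma^{\eff}_{\ol e} \leq 0$, where $\sigma^{\eff}_{\ol e}$ is a positively-weighted average of the invariants $\sigma_{k,\ol w}$ of the $j$ deformation data on $t(e)$ at $\ol w$. At the same time, Lemma \ref{Lsigmaeff} applied to $e$ gives
\[
\sigma^{\eff}_e - 1 = \sum_{b \in B_e}(\sigma_b - 1) - |\Pi_e|,
\]
and the three-point hypothesis together with Corollary \ref{Chalfsigma} (at most two \'{e}tale tails, each with $\sigma_b \in \tfrac{1}{2}\ints$) forces the collection of \'{e}tale tails and wild branch points lying outward from $\ol w$ into a short finite list of configurations.

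In each remaining configuration, I would use the explicit metacyclic structure of $f^{str}$ -- its underlying tower of $\mu_p$-torsors admits equations of the type treated in \S\ref{Sreduction} -- to compute the deformation data above $t(e)$ near $\ol w$ directly. A direct analysis (invoking Lemma \ref{Lartinschreier} where the torsor equations require estimates on their coefficients) should show that the invariants $\sigma_{k,\ol w}$ of the ``extra'' $j-i$ top deformation data on $t(e)$ are forced to be strictly positive, contradicting $\sigma^{\eff}_{\ol e} \leq 0$. The main obstacle is this last step: translating the numerical constraints furnished by vanishing cycles and effective-different monotonicity into a sharp incompatibility with the explicit structure of $f^{str}$ near $\ol w$. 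The hypothesis $m_G = 2$ is essential here, as it forces the characters of the deformation data to take values in $\{\pm 1\}$ and the invariants to lie in $\tfrac{1}{2}\ints$, supplying the arithmetic rigidity needed to rule out each surviving configuration.
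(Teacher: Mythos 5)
Your plan diverges from the paper's and, as written, has a genuine logical gap at its crux.

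The paper does not work edge by edge. It instead considers a connected \emph{plateau} $\Sigma$ of $p^j$-components (with $\ol{X}_0 \notin \Sigma$) all of whose neighboring components have strictly smaller inertia. Since (by Lemmas \ref{Lcorrectspec} and \ref{Lwildspec}) no branch point specializes into such a plateau, the bottom deformation data above the components of $\Sigma$ patch together over a tree of $\proj^1$'s, and a Riemann--Hurwitz count (the analogue of \cite[Corollary 1.11]{We:br}) gives
$\sum_{e \in \Phi'}(\sigma^{\bottom}_e - 1) = -2$
over the boundary edges $\Phi'$. The positivity of each $\sigma^{\bottom}_e$ and the denominator bound $\sigma^{\bottom}_e \in \tfrac{1}{m_e}\ints$ come essentially for free from Lemma \ref{Lramdenominator} and \cite[Lemma 3.11(ii)]{Ob:vc}; the $m_G = 2$ hypothesis enters only combinatorially, via Lemma \ref{Lnonintegral} (forcing $m_e = 1$, hence $\sigma^{\bottom}_e \in \ints$, for edges with no \'{e}tale tail outward) and Corollary \ref{Chalfsigma} (at most two \'{e}tale tails, so at most three boundary edges can contribute $-\tfrac{1}{2}$). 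The sum is then $\geq -\tfrac{3}{2} > -2$, contradiction. No explicit torsor equations are used.

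The gap in your plan is in the last step. You correctly reduce to $\sigma^{\eff}_{\ol e} \leq 0$, where $\sigma^{\eff}_{\ol e}$ is, by Definition \ref{Dsigmaeff}, a positively-weighted average of \emph{all} $j$ invariants $\sigma_{1,\ol w}, \ldots, \sigma_{j,\ol w}$ of the deformation data above $t(e)$. But you then propose to show only that the ``extra'' top $j-i$ of these are positive. That alone gives no contradiction: nothing you've said prevents the remaining $i$ invariants from being negative enough to pull the weighted average down to $\leq 0$. To close the argument along these lines you would need positivity of all $j$ invariants at $\ol w$, which is exactly the sort of statement the paper extracts from the global vanishing-cycles/Riemann--Hurwitz count rather than a local coefficient estimate. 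Beyond that, the use of Lemma \ref{Lartinschreier} (which concerns torsors over a disk $R\{T\}$) to control a deformation datum at an \emph{interior} node $\ol w$ of $\ol X$ is not straightforward; the component $t(e)$ need not be a tail, and one cannot simply write its boundary annulus as a disk. So the ``direct analysis'' invoked at the end is not only unexecuted but would need a different framework. The reduction to $f^{str}$ and the observation that $\sigma^{\eff}_{\ol e} \leq 0$ are sound; the missing ingredient is the plateau-level genus-zero count that the paper uses instead of a local estimate.
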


\begin{proof} 
Suppose $f$ is not monotonic.  Then there exist $j \leq n$ and 
a set $\Sigma$ of $p^j$-components of $\ol{X}$ with the following properties: $\ol{X}_0 \notin \Sigma$; 
the union $\ol{U}$ of the components in $\Sigma$ (viewed as a closed subset of $\ol{X}$) is connected; and each irreducible 
component of 
$\ol{X}$ that intersects $\ol{U}$ but is not in $\Sigma$ is a $p^i$-component, $i < j$ (think of $\ol{U}$ as being a ``plateau" 
for inertia).  In particular, $j > 0$.
Note that, by Lemmas \ref{Lcorrectspec} and \ref{Lwildspec}, no branch point of $f$ specializes to $\ol{U}$ (this is the only place where we
use $\ol{X}_0 \notin \Sigma$).

Recall that $\mc{G}$ (resp.\ $\mc{G}'$) is the dual graph (resp.\ augmented dual graph) of $\ol{X}$ (\S\ref{Sgraph}).
Let $\Phi$ be the set of all edges $e \in E(\mc{G}')$ such that $s(e)$ corresponds to a component in $\Sigma$, and let $\Phi'$ be 
the set of those $e \in \Phi$ such that $t(e)$ does not correspond to a component in $\Sigma$ (in particular, the inertia of $t(e)$ has order
smaller than $p^j$).  If $e \in \Phi$ 
corresponds to a point $\ol{x} \in \ol{X}$, then we write $\sigma^{\bottom}_e$ to mean the invariant of any bottom 
deformation datum above the component corresponding to $s(e)$ at $\ol{x}$ (this is equivalent to $\sigma^{\eff, j-1}_e$ in the 
language of \cite[Definition 3.10]{Ob:vc}).  By \cite[Lemma 3.11 (i)]{Ob:vc} we have $\sigma^{\bottom}_e = - \sigma^{\bottom}_{\ol{e}}$ for
all $e \in \Phi \backslash \Phi'$.  By \cite[Lemma 3.12]{Ob:vc} (and the fact that no branch point of $f$ specializes to $\ol{U}$) 
we have, for any vertex $v$ representing a component in $\Sigma$, that 
$$\sum_{\substack{e \in E(\mc{G'}) \\ s(e) = v}} (\sigma^{\bottom}_e - 1) = \sum_{\substack{e \in E(\mc{G}) \\ s(e) = v}} (\sigma^{\bottom}_e - 1)= -2.$$  
A simple induction argument (cf.\ the proof of \cite[Corollary 1.11]{We:br} or 
\cite[Theorem 3.14]{Ob:vc}) shows that 
\begin{equation}\label{Emonotonic}
\sum_{e \in \Phi'} (\sigma^{\bottom}_e - 1) = -2.
\end{equation}

Suppose $e \in \Phi'$ corresponds to a point $\ol{x} \in \ol{X}$, and let $m_e = m_{I_{\ol{x}}}$, where $I_{\ol{x}}$ is an inertia 
group of $\ol{f}$ above $\ol{X}$.  By Lemma \ref{Lramdenominator}, we have $\sigma^{\bottom}_e \in \frac{1}{m_e}\ints$. 
Since $t(e)$ has smaller inertia order than $s(e)$, \cite[Lemma 3.11 (ii)]{Ob:vc}
shows that $\sigma^{\bottom}_e > 0$.  Clearly, $m_e \in \{1, 2\}$, and $m_e = 1$ if no \'{e}tale tails of $\ol{X}$
lie outward from $\ol{x}$ (Lemma \ref{Lnonintegral}).  

In particular, $\sigma^{\bottom}_e - 1 \geq -\frac{1}{2}$, and 
$\sigma^{\bottom}_e - 1 \geq 0$ if there are no \'{e}tale tails lying outward from $\ol{x}$.
Since $m_G = 2$, Corollary \ref{Chalfsigma} shows that there can be at most two \'{e}tale tails.  
Thus there are at most three $e \in \Phi'$ such that  
an \'{e}tale tail lies outward from the node corresponding to $e$ (at worst, the outermost $e$ preceding each 
of the \'{e}tale tails and the innermost $e$).  
So $\sigma^{\bottom}_e - 1 \geq 0$ for all but at most three edges $e \in \Phi'$.  This contradicts 
(\ref{Emonotonic}), proving the proposition.
\end{proof}

\section{The auxiliary cover}\label{Saux}
We maintain the notation of \S\ref{Sstable}.  Assume that $f: Y \to X$ is a $G$-cover 
defined over $K$ as in \S\ref{Sstable} with bad reduction, so that $\ol{X}$ is not just the original component
($G$ need not have a cyclic $p$-Sylow group).
Following \cite[\S 3.2]{Ra:sp}, 
we can construct an \emph{auxiliary cover} $f^{aux}: Y^{aux} \to X$ with (modified)
stable model 
$(f^{aux})^{st}: (Y^{aux})^{st} \to X^{st}$ and (modified) stable reduction
$\ol{f}^{aux}: \ol{Y}^{aux} \to \ol{X}$, 
defined over some finite extension $R'$ of $R$.
We will explain what ``modified" means in a remark following the construction.
The construction is almost entirely the same as in \cite[\S3.2]{Ra:sp}, and we
will not repeat the details.  
Instead, we give an overview, and we mention where our construction differs from
Raynaud's.

Let $B_{\text{\'{e}t}}$ index the \'{e}tale tails of $\ol{X}$.  
Subdividing $B_{\text{\'{e}t}}$, we index the set of primitive tails by
$B_{\text{prim}}$ and the set of new tails by 
$B_{\text{new}}$. 
We will write $\ol{X}_b$ for the tail indexed by $b \in B_{\text{\'{e}t}}$.

The construction proceeds as follows:  From $\ol{Y}$ remove all of the
components that lie above the \emph{\'{e}tale}
tails of $\ol{X}$ (as opposed to \emph{all} the tails---this is the only thing
that needs to be done differently than in \cite{Ra:sp}, where all tails are \'{e}tale).
Now, what remains of $\ol{Y}$ is possibly disconnected.  
We choose one connected component, and call it $\ol{V}$.  

For each $b \in B_{\text{prim}}$, let $a_b$ be the branch point of
$f$ specializing to $\ol{X}_b$, let 
$\ol{x}_b$ be the point where $\ol{X}_b$ intersects 
the rest of $\ol{X}$, and let $p^rm_b$ be the index of ramification above $\ol{X}_b$ at $\ol{x}_b$,
with $m_b$ prime-to-$p$.  Then $\ol{X}_b$ intersects a $p^r$-component.
At each point $\ol{v}_b$ of $\ol{V}$ above $\ol{x}_b$, we attach to $\ol{V}$ a
\emph{Katz-Gabber} cover of $\ol{X}_b$ (cf.\ \cite[Theorem 1.4.1]{Ka:lg}, \cite[Th\'{e}or\`{e}me 3.2.1]{Ra:sp}), 
branched of order $m_b$ (with inertia groups isomorphic to $\ints/m_b$) at the specialization $\ol{a}_b$ of $a_b$ and of 
order $p^{r}m_b$ (with inertia groups isomorphic to $\ints/p^r \rtimes \ints/m_b$) at $\ol{x}_b$.
We choose our Katz-Gabber cover so that above the complete local ring of $\ol{x}_b$
on $\ol{X}_b$, it is isomorphic to the original cover.  
It is the composition of a cyclic cover of order $m_b$ branched at $\ol{x}_b$ and $\ol{a}_b$ with a cyclic cover of order
$p^r$ branched at one point.  Note that if $m_b = 1$, we have eliminated the branch point $\ol{a}_b$ of the original cover.

For each $b \in B_{\text{new}}$, we carry out the same procedure, except that we
\emph{introduce} an (arbitrary) branch point 
$\ol{a}_b \ne \ol{x}_b$ of ramification index $m_b$ on the new tail $\ol{X}_b$. 

Let $\ol{f}^{aux}: \ol{Y}^{aux} \to \ol{X}$ be the cover of $k$-schemes we have just
constructed.  Let $G^{aux} \leq G$ be the decomposition group of $\ol{V}$.
As in \cite[\S3.2]{Ra:sp}, one shows that, after a possible finite extension $R'$ of $R$, 
we can lift $\ol{f}^{aux}$ to a map $(f^{aux})^{st}: (Y^{aux})^{st} \to X^{st}$ over $R'$, satisfying the following 
properties: 
\begin{enumerate}
\item Above an \'{e}tale neighborhood of the union of those components of $\ol{X}$ other than \'{e}tale tails, 
the cover $f^{st}: Y^{st} \to X^{st}$ is $\Ind_{G^{aux}}^G (f^{aux})^{st}$ (see \S\ref{Snotations}).
\item The generic fiber $f^{aux}: Y^{aux} \to X$ is a $G^{aux}$-cover branched exactly at the branch points of $f$ and at a new 
point $a_b$ of index $m_b$ for each new tail $b \in B_{\text{new}}$ (unless $m_b = 1$, as noted above).  Each $a_b$ specializes to the corresponding 
branch point $\ol{a}_b$ introduced above.  
\end{enumerate}
Keep in mind that there is some choice here in how to pick the new branch
points---for a new tail $\ol{X}_b$, depending on the choice of $\ol{a}_b$, 
we can choose $a_b$ to be \emph{any} point of $X$ that specializes
to $\ol{X}_b \backslash \ol{x}_b$.  The set of such points forms a closed $p$-adic disk (\S\ref{Sdna})

The generic fiber $f^{aux}$ of $(f^{aux})^{st}$ is called the \emph{auxiliary cover}, and $(f^{aux})^{st}$ is 
called the \emph{modified stable model} of the auxiliary cover.  The special fiber $\ol{f}^{aux}$ is called the \emph{modified 
stable reduction} of the auxiliary cover.

\begin{remark}\label{Rmodifiedstable}
Usually, the stable model of $f^{aux}$ is same as the modified stable model $(f^{aux})^{st}$.
However, it may happen that the stable model of $f^{aux}$ is a contraction of $(f^{aux})^{st}$
(or that it is not even defined, as we may have eliminated a
branch point by passing to the auxiliary cover).  This happens only if $\ol{X}$ has a primitive tail $\ol{X}_b$ for which
$m_b = 1$, and for which the Katz-Gabber cover inserted above $\ol{X}_b$ has genus zero.  Then this tail, and
possibly some components inward, would
be contracted in the stable model of $f^{aux}$.  We use the term
\emph{modified} stable model to mean that we do
not perform this contraction.  Alternatively, we can think of $(f^{aux})^{st}$ as the stable model of $f^{aux}$, if we count 
specializations of all points of $Y^{aux}$ above branch points of $f$ (as opposed to $f^{aux}$) as marked points.
\end{remark}

If we are interested in understanding the field of moduli of a $G$-cover (or more generally,
the minimal field of definition of the stable model), it
is in some sense good enough to understand the auxiliary cover, as the following
lemma shows.

\begin{lemma}\label{Laux2orig}
If the modified stable model
$(f^{aux})^{st}: (Y^{aux})^{st} \to X^{st}$ 
of the auxiliary cover $f^{aux}$ is defined over a Galois extension $K^{aux}/K_0$, then the stable model $f^{st}$ of
$f$ can also be defined over $K^{aux}$.
\end{lemma}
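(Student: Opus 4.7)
The plan is to reconstruct the stable model $f^{st}$ from $(f^{aux})^{st}$ together with tame cover data over the étale tails, and then to descend this reconstruction to the ring of integers $R^{aux}$ of $K^{aux}$ via formal patching. Let $\mathfrak{U}_1 \subset X^{st}_{R^{aux}}$ denote a formal neighborhood of the union of the non-étale-tail components of $\ol{X}$, so that $\mathfrak{U}_1$ pinches off at each node $\ol{x}_b$ where an étale tail $\ol{X}_b$ attaches to the rest of $\ol{X}$; for each $b \in B_{\text{\'{e}t}}$ let $\mathfrak{U}_2^b$ be a formal closed neighborhood of $\ol{X}_b$, so that $\mathfrak{U}_1 \cap \mathfrak{U}_2^b$ is a formal annulus at $\ol{x}_b$.

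Over $\mathfrak{U}_1$, the first bullet following the construction of the auxiliary cover identifies $f^{st}|_{\mathfrak{U}_1}$ with $\Ind_{G^{aux}}^G (f^{aux})^{st}|_{\mathfrak{U}_1}$. Since induction commutes with base change, this piece descends to $R^{aux}$ as soon as $(f^{aux})^{st}$ does. Over each $\mathfrak{U}_2^b$, the restriction of $f^{st}$ is a $G$-cover of a formal disk whose inertia at the generic point of $\ol{X}_b$ is prime to $p$, hence tame. Tame covers of formal $p$-adic disks are rigid: they are determined up to unique isomorphism by their (at most one) primitive branch point $a_b$ in the interior and by the residual monodromy along the boundary annulus. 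The branch points of $f$ are $K_0$-rational, and the boundary monodromy can be read off from $(f^{aux})^{st}$ at $\ol{x}_b$; both data are therefore already available over $K^{aux}$, so the tame piece descends to $R^{aux}$.

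To glue, note that on the overlap annulus $\mathfrak{U}_1 \cap \mathfrak{U}_2^b$ both descents arise by restriction from $f^{st}_{K^{aux}}$ itself, so they are canonically isomorphic as $G$-covers of that annulus. Formal patching now produces a $G$-equivariant model of $f^{st}$ over $R^{aux}$, which proves the lemma.

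The main obstacle is verifying the $G$-equivariance of the patching on each overlap annulus: one must identify the correct coset of $G^{aux}$ on the $\mathfrak{U}_1$ side with the correct connected component of the tame cover on the $\mathfrak{U}_2^b$ side. This is forced by the identity $G^{aux} = D_{\ol{V}}$, which puts the cosets $G/G^{aux}$ in bijection with the connected components of $Y^{st}|_{\mathfrak{U}_1}$; each such component meets, at its boundary annulus, a uniquely determined tame piece over the adjacent étale tail, so the pairing is rigid and survives descent to $R^{aux}$.
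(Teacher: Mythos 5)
Your decomposition of the problem is the same as the paper's: cover $\ol{X}$ by a formal neighborhood of the non-étale-tail part (where $f^{st} = \Ind_{G^{aux}}^G (f^{aux})^{st}$) and by formal disks over the étale tails (where the cover is tame, hence rigid), then glue along boundary annuli. Where you diverge is in framing this as a direct construction of a model over $R^{aux}$, rather than as Galois descent for $\sigma \in G_{K^{aux}}$, but this is cosmetic.

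The gap is in the gluing step. You write that on each overlap annulus the two descents ``arise by restriction from $f^{st}_{K^{aux}}$ itself, so they are canonically isomorphic,'' but $f^{st}$ having a model over $R^{aux}$ is exactly the conclusion you are trying to reach, so this is circular. What is actually true a priori is that the patching isomorphism $\varphi_b$ exists over $R^{st}$ (for the larger field $K^{st}$), and the whole point is to show that $\varphi_b$ descends to $R^{aux}$. Your ``main obstacle'' paragraph addresses a different and essentially automatic concern (matching $G/G^{aux}$-cosets to components of the tame piece), not this one. The missing input is the paper's key observation: $\varphi_b$ is determined by its restriction $\ol{\varphi}_b$ to the special fiber, and the special fiber $\ol{Y}_{b,\infty}$ of both sides of the annulus is $\Ind_{G^{aux}}^G \ol{Y}^{aux}_{b,\infty}$, on which $\Gal(K^{st}/K^{aux})$ acts trivially because $(f^{aux})^{st}$ is defined over $K^{aux}$. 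Hence $\ol{\varphi}_b$, and so $\varphi_b$, is $\Gal(K^{st}/K^{aux})$-invariant and descends. Without this step, you have constructed the two local models over $R^{aux}$ but have not produced a gluing datum over $R^{aux}$, so formal patching does not apply. (You would also still need to check, as the paper does in its last paragraph, that the resulting model has the right special fiber, i.e.\ that $\sigma$ acts trivially on $\ol{f}$ over the étale tails, not just that $f^{\sigma} \cong f$.)
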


\begin{proof} (cf.\ \cite{We:br}, Theorem 4.5)
Take $\sigma \in \Gamma^{aux}$, the absolute Galois group of $K^{aux}$.  We must
show that $f^{\sigma} \cong f$ and that
$\sigma$ acts trivially on the stable reduction $\ol{f}: \ol{Y} \to \ol{X}$ of
$f$.  
Let $\hat{f}: \hat{Y} \to \hat{X}$ be the formal completion of $f^{st}$ at the
special fiber and let $\hat{f}^{aux}:
\hat{Y}^{aux} \to \hat{X}$ be the formal completion of $(f^{aux})^{st}$ at the
special fiber.
For each \'{e}tale tail $\ol{X}_b$ of $\ol{X}$, let $\ol{x}_b$ be
the intersection of $\ol{X}_b$ with the rest of $\ol{X}$.  Write $\mc{D}_b$
for the formal completion of $\ol{X}_b \backslash \{\ol{x}_b\}$ in $X_{R^{st}}$. 
Then $\mc{D}_b$ is a closed formal disk, which is
certainly preserved by $\sigma$.  Also, let $\mc{U}$ be the disjoint union of the formal completion of
$\ol{X} \ \backslash \bigcup_b \ol{X}_b$ with the formal completions of the $\ol{x}_b$ in $X_{R^{st}}$.  

Write $\mc{V} = \hat{Y} \times_{\hat{X}} \mc{U}$.  We know from the construction
of the auxiliary cover that 
$$\mc{V} = \Ind_{G^{aux}}^G \ \hat{Y}^{aux} \times_{\hat{X}} \mc{U}.$$
Since $\sigma$ preserves the auxiliary cover and acts trivially on its special
fiber, $\sigma$ acts as an automorphism on 
$\mc{V}$ and acts trivially on its special fiber.  By uniqueness of tame
lifting, 
$\mc{E}_b := \hat{Y} \times_{\hat{X}} \mc{D}_b$
is the unique lift of $\ol{Y} \times_{\ol{X}} (\ol{X}_b \backslash \{\ol{x}_b\})$ to
a cover of $\mc{D}_b$ (where the branching is compatible with that of $f$, if $\ol{X}_b$ is primitive).  
This means that $\sigma$ acts as an automorphism on $\hat{Y} \times_{\hat{X}} \mc{D}_b$ as well.

Define $\mc{B}_b := \mc{U} \times_{\hat{X}} \mc{D}_b$, the boundary of the disk
$\mc{D}_b$.  A $G$-cover of formal
schemes $\hat{Y} \to \hat{X}$ such that $\hat{Y} \times_{\hat{X}} \mc{U} \cong
\mc{V}$ and $\hat{Y} \times_{\hat{X}}
\mc{D}_b \cong \mc{E}_b$ is determined by a patching isomorphism
$$\varphi_b: \mc{V} \times_{\mc{U}} \mc{B}_b \stackrel{\sim}{\to} \mc{E}_b
\times_{\mc{D}_b} \mc{B}_b$$ for each $b$.
The isomorphism $\varphi_b$ is determined by its restriction
$\ol{\varphi}_b$ to the special fiber.  

Let $\ol{X}_{b, \infty}$ be the generic point of $\Spec \hat{\mc{O}}_{\ol{X}_b, \ol{x}_b}$, and define
$\ol{Y}_{b, \infty}$ (resp.\ $\ol{Y}^{aux}_{b, \infty}$) to be $\ol{Y}
\times_{\ol{X}} \ol{X}_{b, \infty}$ (resp.\ 
$\ol{Y}^{aux} \times_{\ol{X}} \ol{X}_{b, \infty}$).  Then $\ol{Y}_{b, \infty} = \Ind_{G^{aux}}^G
\ol{Y}^{aux}_{b, \infty}$.  Since $\sigma$
acts trivially on $\ol{Y}^{aux}_{b, \infty}$, it acts trivially on $\ol{Y}_{b,
\infty}$, which is the special fiber of both $\mc{V} \times_{\mc{U}} \mc{B}_b$ and $\mc{E}_b
\times_{\mc{D}_b} \mc{B}_b$.  So $\sigma$ acts trivially on $\ol{\varphi_b}$, and thus on 
$\varphi_b$.  Thus $\hat{f^{\sigma}} \cong \hat{f}$, and by Grothendieck's
Existence Theorem, $f^{\sigma} \cong f$.

Lastly, we must check that $\sigma$ acts trivially on $\ol{f}$.  This is clear
away from the \'{e}tale tails.  Now, 
for each \'{e}tale tail $\ol{X}_b$, we know $\sigma$ acts trivially on
$\ol{X}_b$, so it must act vertically on $\ol{Y}_b := \ol{Y} \times_{\ol{X}} \ol{X}_b$.  
But $\sigma$ also acts trivially on  $\ol{Y}^{aux}_{b, \infty}$.  
Since $\ol{Y}_{b, \infty}$ is induced from $\ol{Y}^{aux}_{b, \infty}$, $\sigma$ acts trivially on 
$\ol{Y}_{b, \infty}$.  Therefore, $\sigma$ acts trivially on $\ol{Y}_b$.  
\end{proof}

The auxiliary cover $f^{aux}$ is often simpler to work with than $f$ due to the following:

\begin{prop}\label{Pauxgroups}
If we assume that a $p$-Sylow subgroup of $G$ is \emph{cyclic}, then the group
$G^{aux}$ has a normal subgroup of order $p$. 
\end{prop}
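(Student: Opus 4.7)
The plan is to exhibit an explicit normal subgroup of order $p$ inside $G^{aux}$, namely a common order-$p$ subgroup shared by the inertia groups of all irreducible components of the chosen connected piece $\ol{V}$ of $\ol{Y}$.

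First, I observe that every irreducible component $\ol{V}_0$ of $\ol{V}$ has nontrivial $p$-power inertia in $G$. Indeed, by construction $\ol{V}$ is a connected component of what remains after deleting all components of $\ol{Y}$ lying above \'{e}tale tails of $\ol{X}$. By Lemma \ref{Letaletail}, every \'{e}tale component of $\ol{X}$ is a tail, so each component of $\ol{V}$ maps to a component of $\ol{X}$ that is either an interior component or an inseparable tail; in both cases the generic inertia is a nontrivial cyclic $p$-group $I_{\ol{V}_0} \leq G$. Since a $p$-Sylow of $G$ is cyclic, any $p$-subgroup of $G$ is cyclic and thus contains a unique subgroup of order $p$; let $P^*_{\ol{V}_0}$ denote this subgroup of $I_{\ol{V}_0}$.

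Next, I claim that $P^*_{\ol{V}_0}$ is independent of the choice of component $\ol{V}_0 \subseteq \ol{V}$. It suffices to check this for two adjacent components $\ol{V}_0, \ol{V}_1$ meeting at a node $\ol{y}$. By the description of inertia at nodes recalled in \S\ref{Sstable} (from \cite[Lemme 6.3.3]{Ra:ab}), the inertia at $\ol{y}$ is an extension of a cyclic prime-to-$p$ group by the $p$-group $\langle I_{\ol{V}_0}, I_{\ol{V}_1} \rangle$. This is a $p$-subgroup of $G$, hence cyclic by the cyclic $p$-Sylow hypothesis. In a cyclic $p$-group the subgroups are totally ordered by inclusion, so $I_{\ol{V}_0}$ and $I_{\ol{V}_1}$ share their unique subgroup of order $p$: $P^*_{\ol{V}_0} = P^*_{\ol{V}_1}$. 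Since $\ol{V}$ is connected, chaining this equality through a sequence of pairwise adjacent components shows that there is a single subgroup $P^* \leq G$ of order $p$ with $P^* \leq I_{\ol{V}_0}$ for every component $\ol{V}_0 \subseteq \ol{V}$.

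Finally I show $P^* \trianglelefteq G^{aux}$. By definition, any $g \in G^{aux}$ stabilizes $\ol{V}$ setwise and therefore permutes its irreducible components: if $g(\ol{V}_0) = \ol{V}_0'$, then $g I_{\ol{V}_0} g^{-1} = I_{\ol{V}_0'}$. Since conjugation sends the unique order-$p$ subgroup of $I_{\ol{V}_0}$ to the unique order-$p$ subgroup of $I_{\ol{V}_0'}$, and both are equal to $P^*$ by the previous step, we obtain $g P^* g^{-1} = P^*$. Thus $P^*$ is a normal subgroup of order $p$ in $G^{aux}$, as desired.

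The one place where care is needed is the inductive step linking inertia groups of adjacent components; the argument hinges entirely on the fact that the $p$-subgroup generated by two adjacent generic inertia groups actually lives inside one cyclic $p$-group, which is exactly what the cyclic $p$-Sylow hypothesis provides. Everything else is formal once connectedness of $\ol{V}$ is used.
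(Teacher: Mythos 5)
Your proof is correct, and it is essentially the ``unpacked'' version of what the paper does: the paper's proof of Proposition \ref{Pauxgroups} simply identifies $G^{aux}$ as the decomposition group of the connected preimage $\ol{V}$ of the union of inseparable components and then cites \cite[Corollary 2.12]{Ob:vc} for the existence of the normal order-$p$ subgroup, whereas you reprove that step from scratch. Your chain argument (adjacent generic inertia groups generate a cyclic $p$-group, hence share the unique order-$p$ subgroup; propagate by connectedness; conjugation by $G^{aux}$ permutes components of $\ol{V}$ and therefore fixes the common order-$p$ subgroup) is exactly the right mechanism, and you correctly justify the two inputs it needs: that all components of $\ol{V}$ have nontrivial $p$-power generic inertia (via Lemma \ref{Letaletail}, using bad reduction), and that the $p$-part of nodal inertia is generated by the two generic inertia groups. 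The one point worth making explicit is that $P^*$ actually lies inside $G^{aux}$ (not merely that it is normalized by $G^{aux}$): this is immediate since $P^* \leq I_{\ol{V}_0}$ stabilizes $\ol{V}_0 \subseteq \ol{V}$ and hence the connected component $\ol{V}$. With that one-line remark added, your argument gives a clean, self-contained replacement for the external citation.
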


\begin{proof} 
Let $\ol{S}$ be the union of all inseparable components of $\ol{X}$.  By construction, the inverse image $\ol{V}$
of $\ol{S}$ in $\ol{Y}^{aux}$ is connected, and its decomposition group is
$G^{aux}$.  By \cite[Corollary 2.12]{Ob:vc}, $G^{aux}$ has a normal subgroup of order $p$.
\end{proof}

Lastly, in the case that a $p$-Sylow subgroup of $G$ is cyclic, we make a further simplification of the auxiliary cover, as in
\cite[Remarque 3.1.8]{Ra:sp}.  
Since $G^{aux}$ has a normal subgroup of order $p$, \cite[Corollary 2.4 (i)]{Ob:vc} shows that  
the quotient $G^{str}$ of $G^{aux}$ by its maximal normal prime-to-$p$ subgroup $N$ is isomorphic to $\ints/p^n \rtimes \ints/m_{G_{aux}}$, 
where the action of $\ints/m_{G_{aux}}$ on $\ints/p^n$ is faithful.  Note that $m_{G_{aux}} | m_G$.
Then $Y^{str} := Y^{aux}/N$ is a branched $G^{str}$-cover of $X$, called the \emph{strong auxiliary cover}.  
Constructing the strong auxiliary cover is one of the key places where it is essential to assume that a $p$-Sylow
subgroup of $G$ is cyclic, as otherwise $G^{aux}$ does not necessarily have such nice group-theoretical
properties.

The branching on the generic fiber of the strong auxiliary cover is as follows:  
At each point of $X$ where the branching index of $f$ was divisible by $p$,
the branching index of $f^{str}$ is a power 
of $p$ (as $G^{str}$ has only elements of $p$-power order and of 
prime-to-$p$ order).  
At each branch point specializing to an \'{e}tale tail $b \in
B_{\text{\'{e}t}}$, the ramification index is $m_b$, where $m_b | (p-1)$ (cf.
\cite[\S3.3.2]{Ra:sp}).  

The following lemma shows that it will generally suffice to look at the strong auxiliary
cover instead of the auxiliary cover.   

\begin{lemma}\label{Lstr2orig}
Let $f: Y \to X$ be as in \S\ref{Sstable}.  Let $L$ be a field over which both the stable model of $f^{str}$ and all the branch 
points of $Y^{aux} \to Y^{str}$ are defined.  
Then the stable model $f^{st}$ of $f$ can be defined over a tame extension of $L$.
\end{lemma}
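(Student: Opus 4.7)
The plan is to pass from the stable model of $f^{str}$ to the modified stable model $(f^{aux})^{st}$ over a tame extension $L'/L$, and then invoke Lemma \ref{Laux2orig} to descend the stable model of $f$ itself.

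The key observation is that $\pi: Y^{aux} \to Y^{str} = Y^{aux}/N$ is a Galois cover with group $N$, the maximal normal prime-to-$p$ subgroup of $G^{aux}$, and so is tame. By hypothesis, the branch locus of $\pi$ on $Y^{str}$ is defined over $L$. I would construct $(Y^{aux})^{st}$ as the normalization of $(Y^{str})^{st} \times_{R_L} R_{L'}$ in the function field $K(Y^{aux}) \otimes_L L'$, where $L'/L$ is a tame extension chosen large enough so that (i) $L'$ contains a primitive $|N|$-th root of unity (a tame extension since $\gcd(|N|,p) = 1$), making the $N$-action defined over $L'$, and (ii) Abhyankar's Lemma applies at each tame branch point of $\pi$ specializing on $(Y^{str})^{st}$, so that the normalization is semistable and the $G^{aux}$-action extends. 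After a possible further blowup of $(Y^{str})^{st}$ at finitely many $L$-rational smooth points or nodes where branch points collide on the special fiber, we obtain the modified stable model $(f^{aux})^{st}$ over $R_{L'}$, without contracting any of the components required by Remark \ref{Rmodifiedstable}.

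Having $(f^{aux})^{st}$ defined over $L'$, I would then choose a subfield $K_0$ (for instance, $\rats_p^{ur}$ or the common base over which everything is ultimately defined) so that $L'/K_0$ is Galois, and apply Lemma \ref{Laux2orig} directly: the stable model $f^{st}$ of $f$ is then defined over the same field $L'$, which is tamely ramified over $L$ by construction.

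The main obstacle is the verification that the normalization in the first step actually yields a semistable model whose special fiber retains the structure of the modified stable reduction $\ol{f}^{aux}$, and that no wild extension is needed. This reduces to a local analysis at points of the special fiber of $(Y^{str})^{st}$ where $\pi$ is branched: since all such branching is tame and the points are defined over $L$, standard results on tame lifting of covers of semistable curves (Abhyankar's Lemma, together with uniqueness of tame lifting as used in the proof of Lemma \ref{Laux2orig}) handle the situation over a tame extension of $L$, so no wild ramification is introduced anywhere.
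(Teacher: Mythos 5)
Your proposal follows the same two-step structure as the paper's proof: first pass from the stable model of $f^{str}$ to the modified stable model $(f^{aux})^{st}$ over a tame extension of $L$, then invoke Lemma \ref{Laux2orig}. Where the paper dispatches the first step with a single citation to \cite[Corollary 2.16]{Ob:fm} (which handles precisely the situation of a prime-to-$p$ Galois quotient $Y^{aux}/N = Y^{str}$ with $L$-rational branch locus), you instead sketch a direct construction via normalization and Abhyankar's Lemma. That sketch is plausible and captures the right idea (tame descent), but you correctly identify — without resolving — the verification that the normalized model is semistable and recovers the modified stable reduction; that local analysis is exactly the content of the cited corollary, so your argument is a partial unfolding of it rather than a genuinely different route. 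One small wrinkle: you say you would ``choose a subfield $K_0$'' so that $L'/K_0$ is Galois, but $K_0$ (the base field of \S\ref{Sstable}) is fixed in context and cannot be chosen at will. This is harmless in the end — one can replace $L'$ by its Galois closure over $K_0$, or observe that the proof of Lemma \ref{Laux2orig} never actually uses Galois-ness of $K^{aux}/K_0$ beyond having an absolute Galois group act — but as written that step needs a sentence of justification rather than a choice of base field.
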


\begin{proof}
Since $Y^{str} = Y^{aux}/N$, and $p \nmid |N|$, it follows from \cite[Proposition 6.2]{Ob:fm} that $(f^{aux})^{st}$ can be 
defined over a tame extension of $L$.  By Lemma \ref{Laux2orig}, so can $f^{st}$.
\end{proof}

While the Galois group of the (strong) auxiliary cover is simpler than the
original Galois group of $f$, we generally are made to pay 
for this with the introduction of new branch points.  
Understanding where these branch points appear is key to understanding the
minimal field of definition of the stable reduction of the auxiliary cover.

\section{Proof of the Main Theorem}\label{CHmain}
In this section, we will prove Theorem \ref{Tmain}.  Let $k$ be an algebraically closed field of characteristic $p$, let $R_0 = 
W(k)$, and let $K_0 = \Frac(R_0)$.  Note that if $k \cong \ol{\FF}_p$, then $K_0 \cong \rats_p^{ur}$.
Also, for all $i > 0$, we set $K_i = K_0(\zeta_{p^i})$, where $\zeta_{p^i}$ is a primitive $p^i$th root of unity.

Let $f:Y \to X \cong \proj^1$ be a three-point $G$-cover defined over
$\ol{K_0}$, where $G$ has a cyclic $p$-Sylow subgroup of order $p^n$ and $m_G = 2$.
Since $m_G |(p-1)$ (\cite[Lemma 2.1]{Ob:vc}), we may (and do) assume throughout this section that $p \ne 2$.
We break this section up into the cases where the number $\tau$ of branch points
of $f: Y \to X=\proj^1$ with prime-to-$p$
branching index is $1$, $2$, or $3$.  By Lemma \ref{Lcorrectspec}, if $f$ has bad reduction, then $\tau$ is the number of primitive tails 
of the stable reduction.
The cases $\tau = 2$ and $\tau = 3$ are
quite easy, whereas the case $\tau = 1$ is much more involved.  This stems from the
appearance of new tails in the stable reduction of $f$ in the case $\tau = 1$.
The ideas in the proof of the $\tau = 1$ case should work as
well in the $\tau = 0$ case, but the computations will be more difficult.  See Question \ref{Qgap}.

We mention that, because any finite extension of $K/K_0$ has cohomological dimension 1, then if
$K$ is the field of moduli of a $G$-Galois cover relative to $K_0$, it is also a field of definition 
(\cite[Proposition 2.5]{CH:hu}).

\subsection{The case $\tau = 3$}\label{Stauequals3}
\begin{prop}\label{Pm2tau3}
Assume $f: Y \to X$ is a three-point $G$-cover defined over $\ol{K_0}$ where $G$
has a cyclic $p$-Sylow subgroup $P$ with 
$m_G=|N_G(P)/Z_G(P)| = 2.$  Suppose that all three branch points of $f$ have
prime-to-$p$ branching index.
Then $f$ has potentially good reduction.  Additionally, $f$ has a model defined
over $K_0$, and thus the field of moduli of $f$ relative to $K_0$ is $K_0$.
\end{prop}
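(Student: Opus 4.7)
The plan is to exploit the fact that every branching index is prime to $p$, which forces $f$ to be a tame cover of $\proj^1_{\ol{K_0}}$ branched only at $\{0, 1, \infty\}$. Both assertions then follow almost immediately from Grothendieck--Murre's theorem on lifting tame covers of the marked curve $(\proj^1, \{0, 1, \infty\})$, without needing any of the heavier stable-reduction machinery.

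For potentially good reduction, I would invoke the equivalence between the category of tame $G$-covers of $\proj^1_{\ol{R_0}}$ branched along the sections $\{0, 1, \infty\}$ and the analogous category over $\proj^1_k$, given by reduction to the special fiber. The cover $f$ is such a tame $G$-cover on the generic fiber, so it extends uniquely to a tame $G$-cover $\tilde f: \tilde Y \to \proj^1_{\ol{R_0}}$, whose special fiber is necessarily a smooth tame $G$-cover of $\proj^1_k$. In particular $\ol{Y}$ is smooth, so $f$ has good, and a fortiori potentially good, reduction.

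For the descent, the observation is that the branch locus $\{0, 1, \infty\}$ is already $K_0$-rational and $k$ is algebraically closed, so the same equivalence is available over $R_0$ itself. Applying it to the special fiber $\ol f$ of $\tilde f$ produces a tame $G$-cover $f_0: Y_0 \to \proj^1_{R_0}$ lifting $\ol f$ over $R_0$. Its base change $f_0 \times_{R_0} \ol{R_0}$ is then a second tame lift of $\ol f$, and uniqueness of the tame lift (now over $\ol{R_0}$) forces a $G$-equivariant isomorphism $f_0 \times_{R_0} \ol{R_0} \cong \tilde f$. Taking generic fibers, the generic fiber of $f_0$ is a model of $f$ defined over $K_0$, so the field of moduli of $f$ relative to $K_0$ is $K_0$ itself.

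The only point that requires care is the $G$-equivariance of the tame-lifting equivalence, i.e., that one genuinely obtains a bijection on isomorphism classes of $G$-covers (not merely of tame Galois covers with some forgotten marking $G \cong \Aut$). This is a routine functoriality check: the $\Aut$-sheaf of a tame Galois cover is preserved under reduction and lifting, so the chosen isomorphism $G \to \Aut$ transports along. There is essentially no other obstacle---it is notable that a direct attack via the vanishing cycles formula does \emph{not} suffice here, because the three prime-to-$p$ branch points could in principle all specialize to the original component, making $|B_{\text{prim}}| = 0$ compatible with $1 = \sum_{b \in B_{\text{new}}}(\sigma_b - 1)$. The tame-lifting argument sidesteps this issue entirely.
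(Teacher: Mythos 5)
Your argument for potentially good reduction has a genuine gap, and a telling symptom is that it never uses the hypothesis $m_G = 2$. The Grothendieck--Murre (or Fulton) tame-lifting theorem gives an equivalence between the category of tame covers of $(\proj^1_{R}, \{0,1,\infty\})$ and the category of tame covers of $(\proj^1_k, \{0,1,\infty\})$; it does \emph{not} assert that a cover of the geometric generic fiber over $\ol{K_0}$ branched at $\{0,1,\infty\}$ with prime-to-$p$ indices automatically extends to a tame cover over $R$. That extension step is precisely where bad reduction can occur: when $p \mid |G|$ but all three branching indices are prime to $p$, the cover need not have good reduction. Indeed the vanishing cycles formula (\ref{Evancycles}) is consistent with, e.g., three primitive tails each with $\sigma_b = 1/m_G$ as soon as $m_G \geq 3$, and such bad reduction does happen. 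The hypothesis $m_G = 2$ is exactly what rules this out, so no proof that omits it can be correct.

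Your stated reason for avoiding the vanishing-cycles route---that the three branch points ``could in principle all specialize to the original component, making $|B_{\text{prim}}| = 0$''---is also incorrect. When $f$ has bad reduction, Lemma~\ref{Lcorrectspec} shows a branch point of prime-to-$p$ index specializes to an \'{e}tale component, Lemma~\ref{Letaletail} shows every \'{e}tale component is a tail, and $\ol{X}_0$ is by definition not a tail; hence each branch point specializes to a distinct \'{e}tale (necessarily primitive) tail, so $|B_{\text{prim}}| = 3$. Corollary~\ref{Chalfsigma} (this is where $m_G=2$ enters) allows at most two \'{e}tale tails, giving the contradiction --- which is exactly the paper's proof of potentially good reduction. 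Your second step, producing a model over $K_0$ by lifting $\ol{f}$ tamely over $R_0$ and identifying it with the model over $\ol{R_0}$ via uniqueness of the tame lift, is fine and coincides with the paper's argument via \cite[Theorem 4.10]{Fu:hs}.
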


\begin{proof}
Suppose $f$ has bad reduction.  We know that the stable reduction must have three primitive tails.
But this contradicts Corollary \ref{Chalfsigma}.  So $f$ has potentially good reduction.

Let $\ol{f}: \ol{Y} \to \ol{X}$ be the reduction of $f$ over $k$.  Then $f$ is tamely
ramified.  By \cite[Theorem 4.10]{Fu:hs}, if $R$ is the ring of integers of any finite extension
$K/K_0$, then there exists a unique deformation $f_R$ of $\ol{f}$ to a cover defined over $R$.
It follows that $f_{R_0}$ exists, and $f_{R_0} \otimes_{R_0} R \cong f_R$.  Thus
$f_{R_0} \otimes_{R_0} K_0$ is the
model we seek.
\end{proof}

Proposition \ref{Pm2tau3}, while an easy consequence of the vanishing cycles formula, 
gives a proof that the modular curve $X(N)$ has good reduction to
characteristic $p$ for many $p$, without relying on its modular interpretation
(see, for instance, \cite{DR:sm}).

\begin{corollary}\label{Cmodular}
Let $N \in \nats$ have prime factorization $N = \prod_{i=1}^r p_i^{a_i}$.
Let $M$ be the product of all primes that divide $p_i^2-1$ for more than one $i$.
Then the modular curve $X(N)$ has good reduction at all primes not dividing $6NM$.
\end{corollary}

\begin{proof}
The modular curve $X(N)$ can be realized (via the $j$-function) as a $PSL_2(\ints/N)$-cover 
$f: X(N) \to X(1) \cong \proj^1$, branched at three points of index $2$, $3$, and $N$, respectively.   
Let $G = PSL_2(\ints/N)$, and let $p$ be a prime dividing $|G|$ but not dividing $6NM$. 
By the Chinese remainder theorem, one sees that 
$G \cong \prod_i PSL_2(\ints/p_i^{a_i})$.  Furthermore, for each $i$, we have an exact sequence
$$1 \to P_i \to PSL_2(\ints/p_i^{a_i}) \to PSL_2(p_i) \to 1,$$ where $P_i$ is a $p_i$-group and the third map is the modulo $p_i$ projection
on matrix entries.  The order of $PSL_2(p_i)$ is $p_i(p_i^2-1)/2$, and it is well known that $PSL_2(p_i)$ contains cyclic subgroups of
order $\frac{p_i + 1}{2}$ and $\frac{p_i - 1}{2}$.  By the Schur-Zassenhaus theorem, these subgroups lift to $PSL_2(\ints/p_i^{a_i})$.
In particular, the $p$-Sylow subgroup of $PSL_2(\ints/p_i^{a_i})$ is cyclic.  

Since $p \nmid 6NM$, we have that $p$ divides the order of exactly one $PSL_2(p_i)$.  In particular, the $p$-Sylow subgroup of
$G$ is cyclic.  It is well known that $m_{PSL_2(p_i)} = 2$ for this $p_i$ (relative to the prime $p$), and thus the same is true for $m_G$.
Since $p \nmid 6N$, Proposition \ref{Pm2tau3} shows that the cover $f$, and thus $X(N)$, has good reduction to characteristic $p$.
\end{proof}

\subsection{The case $\tau = 2$}\label{Stauequals2}
If there are exactly two branch points with prime-to-$p$ branching index, then $f$ has bad reduction ($f$ cannot have good reduction because it will have a branch point with $p$ dividing the branching index).  We use the notation of \S\ref{Sstable}.  In particular, 
$f^{st}: Y^{st} \to X^{st}$ is the stable model of $f$ and $\ol{f}: \ol{Y} \to \ol{X}$ is the stable reduction.

\begin{prop}\label{Pm2tau2}
Assume $f: Y \to X$ is a three-point $G$-cover defined over $\ol{K_0}$ where $G$
has a cyclic $p$-Sylow subgroup $P$ of order $p^n$ with 
$m_G=|N_G(P)/Z_G(P)| = 2.$  Suppose that two of the three branch points of $f$
have prime-to-$p$ branching index.
Then the stable model of $f$ can be defined over a tame extension $K$ of $K_n$. 
In particular, $f$ can be defined over $K$.  
Thus the field of moduli of $f$ relative to $K_0$ is contained in a tame extension of $K_n$.
\end{prop}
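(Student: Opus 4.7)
The plan is to combine the combinatorial constraints coming from the vanishing cycles formula with a reduction to the strong auxiliary cover, which in this case lives in a very restricted metacyclic world. First I would use Lemma \ref{Lcorrectspec} to note that the two prime-to-$p$ branch points specialize to primitive \'{e}tale tails, so $|B_{\text{prim}}|=2$. Corollary \ref{Chalfsigma} then forces the total number of \'{e}tale tails to be at most $2$, hence $B_{\text{new}}=\emptyset$. Feeding this into Theorem \ref{Tvancycles} gives
$$1 = \sigma_{b_1}+\sigma_{b_2},\qquad \sigma_{b_i}\in \tfrac{1}{2}\ints_{>0},$$
which forces $\sigma_{b_1}=\sigma_{b_2}=\tfrac{1}{2}$. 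In particular the stable reduction has a very rigid skeleton: two primitive \'{e}tale tails, no new tails, and an inseparable ``core'' carrying the unique wild branch point.

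Next I would pass to the strong auxiliary cover $f^{str}:Y^{str}\to X$ of \S\ref{Saux}. Because $B_{\text{new}}=\emptyset$, no extra branch points are created, so $f^{str}$ is genuinely a three-point cover with the same branch locus as $f$. Its Galois group $G^{str}\cong \ints/p^n\rtimes \ints/m_{G^{str}}$ satisfies $m_{G^{str}}\mid m_G=2$, so $G^{str}$ is $p$-solvable with cyclic $p$-Sylow. The branching of $f^{str}$ at the two tame branch points has order dividing $p-1$, while the remaining branch point has branching index a $p$-power, namely $p^{\nu}$ where $\nu$ is the $p$-part of the branching index of the wild branch point of $f$.

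At this point I would invoke the $p$-solvable analysis of \cite{Ob:fm} (Theorem~1.3 and its proof) applied to $f^{str}$. Because $f^{str}$ factors as a $\ints/p^n$-cover of a tame $\ints/m_{G^{str}}$-cover of $\proj^1$, and the tame sub-cover descends to a tame extension of $K_0$, the only source of wild ramification in the field of definition of the stable model comes from the cyclic $p^n$-step, whose conductor data (controlled via the deformation data of \S\ref{Sdefdatastable} and Lemma \ref{Lartinschreier}) forces nothing worse than adjoining $p^{\nu}$-th roots of unity. This gives a tame extension of $K_\nu$ over which the stable model of $f^{str}$ is defined. Finally, Lemma \ref{Lstr2orig} transports the conclusion to $f$ itself, at the cost of at most a further tame extension. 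The last statement about fields of moduli then follows from the fact that over $K_0$ any finite extension has cohomological dimension $1$, so fields of moduli relative to $K_0$ are also fields of definition (\cite[Proposition 2.5]{CH:hu}).

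The main obstacle in this plan is the precise descent step for $f^{str}$: one must verify that only the $p^{\nu}$-th roots of unity (and not higher ones) are needed, and that the remaining ramification in the field of definition is tame. This is what forces the combinatorial preparation in the first paragraph---the equality $\sigma_{b_1}=\sigma_{b_2}=\tfrac12$ together with the absence of new tails pins down the structure of the inseparable core of $\ol{X}$ tightly enough that the relevant deformation data can be controlled by Lemma \ref{Lsigmaeffcompatibility} and Corollary \ref{Cdifferentmonotone}, reducing everything to a single cyclic $\ints/p^n$-extension with explicitly bounded conductor.
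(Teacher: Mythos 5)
Your proposal follows essentially the same route as the paper: use the vanishing cycles formula and Corollary \ref{Chalfsigma} to rule out new tails and pin down $\sigma_{b_1}=\sigma_{b_2}=\tfrac12$; observe that the strong auxiliary cover is therefore a genuine three-point $\ints/p^{\nu}\rtimes\ints/2$-cover with the same branch locus; descend that cover to a tame extension of $K_\nu$ by a $p$-solvable result from \cite{Ob:fm}; and transport back to $f$ via Lemma \ref{Lstr2orig}. Two small remarks. First, the paper invokes the specific \cite[Proposition 4.6]{Ob:fm} for the metacyclic three-point case rather than re-deriving the conductor bound via deformation data as you sketch; either is fine, but the citation is cleaner. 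Second, and more substantively, you apply Lemma \ref{Lstr2orig} without verifying its hypothesis that the branch points of $Y^{aux}\to Y^{str}$ are themselves defined over the field $K^{str}$ over which $(f^{str})^{st}$ is defined. The paper supplies the missing sentence: because the branch loci of $f$, $f^{aux}$, and $f^{str}$ coincide, those branch points are ramification points of $f^{str}$, which specialize to distinct points of $\ol{Y}^{str}$; hence $G_{K^{str}}$ permutes them trivially and they are $K^{str}$-rational. You should add this check before invoking Lemma \ref{Lstr2orig}.
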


\begin{proof}
We know that $\ol{X}$ must have two primitive tails, and Corollary \ref{Chalfsigma} shows that there are no new tails. 
The effective ramification invariant for each of the primitive tails is $\frac{1}{2}$ by the
vanishing cycles formula (\ref{Evancycles}).  Then the strong auxiliary
cover $f^{str}: Y^{str} \to X$ is a three-point $\ints/p^{\nu} \rtimes \ints/2$-cover, for some $\nu \leq n$.  

By \cite[Proposition 7.6]{Ob:fm}, the stable model of $f^{str}$ is defined over a tame extension $K^{str}$ of $K_{\nu} \subseteq K_n$.
Since the branch loci of $f$, $f^{aux}$, and $f^{str}$ are each the same three points, all branch points of the 
canonical map $Y^{aux} \to Y^{str}$ are ramification points of $f^{str}$.  The ramification points of $f^{str}$ specialize to 
distinct points on $\ol{Y}^{str}$, so $G_{K^{str}}$ permutes them trivially.  Thus they are defined over $K^{str}$.
By Lemma \ref{Lstr2orig}, the stable model of $f$ is defined over a tame extension $K$ of $K^{str}$.
\end{proof}

\subsection{The case $\tau = 1$}\label{Stauequals1}
Now we consider the case where only one point, say $0$, has prime-to-$p$
branching index.  As in the case $\tau = 2$, the cover $f$ has bad reduction.
The goal of this (rather lengthy) section is to prove the following proposition:

\begin{prop}\label{Pm2tau1}
Assume $f: Y \to X$ is a three-point $G$-cover defined over $\ol{K_0}$ where $G$
has a cyclic $p$-Sylow subgroup $P$ with 
$m_G=|N_G(P)/Z_G(P)| = 2$ and $p \ne 3$.  Suppose that exactly one of the three branch points of $f$
has prime-to-$p$ branching index.
Then the stable model of $f$ can be defined over a finite extension $K/K_0$ such that the $n$th higher ramification groups 
for the upper numbering for (the Galois closure of) $K/K_0$ vanish.  In particular, $f$ can be defined over such a $K$. 
Thus the $n$th higher 
ramification group for the upper numbering for the field of moduli of $f$ relative to $K_0$ vanishes.
\end{prop}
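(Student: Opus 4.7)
The plan is to leverage the (strong) auxiliary cover machinery from \S\ref{Saux} to reduce the problem to a $p$-solvable one (already handled in \cite{Ob:fm}), then carefully track the position of any new branch points introduced by that construction. First, I would use the vanishing cycles formula (Theorem \ref{Tvancycles}) together with Corollary \ref{Chalfsigma}: because $m_G = 2$ forces each $\sigma_b \in \tfrac12\ints_{>0}$ and $\sigma_b > 1$ for new tails (Remark \ref{Rpositiveterms} and \cite[Lemma 4.2 (i)]{Ob:vc}), while Lemma \ref{Lcorrectspec} gives exactly one primitive tail, the equation
\begin{equation*}
1 = \sigma_{\text{prim}} + \sum_{b \in B_{\text{new}}} (\sigma_b - 1)
\end{equation*}
leaves only two possibilities: either there is no new tail and $\sigma_{\text{prim}} = 1$, or there is exactly one new tail with $(\sigma_{\text{prim}}, \sigma_{\text{new}}) = (\tfrac12, \tfrac32)$. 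The first case should succumb to essentially the $\tau = 2$ argument, because the auxiliary cover then introduces no new branch points.

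Second, in the remaining case I would pass to the strong auxiliary cover $f^{\text{str}}: Y^{\text{str}} \to X$, which by Proposition \ref{Pauxgroups} and the discussion at the end of \S\ref{Saux} has Galois group of the shape $\ints/p^{\nu} \rtimes \ints/2$ (hence $p$-solvable), and is branched at the three original points plus a single new point $a_b$ specializing to $\ol{X}_b \setminus \{\ol{x}_b\}$. The disk of possible specializations for $a_b$ is defined over $K_0$, so the question becomes: can $a_b$ be chosen inside this disk so that the stable model of $f^{\text{str}}$ is defined over a field whose $n$th upper-numbering ramification groups vanish? Once this is arranged, \cite[Proposition 4.6]{Ob:fm} applied to $f^{\text{str}}$, followed by Lemma \ref{Lstr2orig} and Lemma \ref{Laux2orig}, returns a field of definition for the stable model of $f$ with the same ramification property, using the invariance of the upper numbering under quotients (\cite[IV, Proposition 14]{Se:lf}) and the fact that $K_\nu/K_0$ has trivial $n$th upper ramification group.

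Third, and this is the real work, I would pin down the location of $a_b$ using the explicit structure available when $m_G = 2$. Writing the bottom deformation datum on the new tail (whose signature at $\ol{x}_b$ is determined by $\sigma_{\text{new}} = 3/2$, namely $(h, m) = (3, 2)$) produces explicit Kummer/Artin--Schreier--Witt equations for the $\ints/p^{\nu} \rtimes \ints/2$-cover above $\ol{X}_b$ via the framework of \S\ref{Sdefdatastable}. The partial converse to the reduction criterion given by Lemma \ref{Lartinschreier} then lets me read off which coefficients in a lift of these equations correspond to genuine changes in the stable reduction, and thus which Galois conjugates of $a_b$ produce the \emph{same} stable model as $f$. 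The exclusion $p \neq 3$ enters exactly here: for $p = 3$ the bottom deformation datum $(h,m) = (3,2)$ becomes resonant with the characteristic and Lemma \ref{Lartinschreier} (ii) no longer gives the needed control. Carrying out the resulting estimate should show $a_b$ lies in a closed disk defined over an extension of $K_\nu$ whose $n$th upper ramification groups vanish.

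The main obstacle is this third step: controlling the $G_{K_0}$-orbit of $a_b$, equivalently the field of definition of the deformation-datum lift on the new tail, and showing that the accumulated wild ramification contributed by the position of $a_b$ does not exceed the $n$th upper-numbering jump. This is precisely the reason the case $\tau = 1$ occupies the bulk of \S\ref{Stauequals1} and subdivides into the technical sub-lemmas (\S\ref{Sprelimlemmas}--\S\ref{Shigherram}) flagged in the introduction, and it is why the hypothesis $p \ne 3$ (and the auxiliary $p = 5$ complications mentioned in the introduction) appear in the statement.
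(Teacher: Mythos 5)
Your high-level outline is the right one, and matches the paper's proof at the level of case analysis: the vanishing cycles formula plus $m_G=2$ does force either (a) one primitive tail with $\sigma=1$ and no new tail, or (b) one primitive tail with $\sigma_{b'}=\tfrac12$ and one new tail with $\sigma_b=\tfrac32$; case (a) is indeed dispatched quickly (though via $m_{G^{\mathrm{aux}}}=1$ and \cite[Proposition 4.15]{Ob:fm} or a two-point cyclic cover, not the $\tau=2$ argument, since the auxiliary group there has no $\ints/2$ factor); and the heart of case (b) is pinning down where the extra branch point $a$ of $f^{\mathrm{str}}$ lies, using explicit Kummer equations and Lemma \ref{Lartinschreier}. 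That much is faithful to \S\ref{Setaletails}.

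The genuine gap is twofold. First, your plan to finish by applying \cite[Proposition 4.6]{Ob:fm} to $f^{\mathrm{str}}$ does not work: that result is about three-point covers, while $f^{\mathrm{str}}$ is a four-point cover in case (b). What the paper actually uses is the criterion of \cite[Proposition 2.12]{Ob:fm}, which requires exhibiting a field $L$ such that $G_L$ fixes a smooth point of $\ol{X}$ on \emph{each tail} of $\ol{X}$ (\'{e}tale and inseparable alike) and a point of $\ol{Y}^{\mathrm{str}}$ above each tail; then $(f^{\mathrm{str}})^{\mathrm{st}}$ descends to a tame extension of $L$. Second, and more substantively, you have no mechanism for controlling the \emph{inseparable tails} of $\ol{X}$. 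When $a \equiv 0$ or $a \equiv 1 \pmod{\pi}$ (which happens whenever $v(a)$ or $v(1-a)$ is positive), new inseparable tails appear (\S\ref{Sinseparabletails}), and fixing points above them requires adjoining elements like $\sqrt[p]{1+u}$ with $v(u)=1$, or even $\sqrt[p]{\eta}$ with $p\nmid v(\eta)$ when $p=5$. These extensions are genuinely wildly ramified and are the reason the final conductor bound in Proposition \ref{Pconductor} is $\max(\nu-1, \tfrac{p}{p-1})$ rather than just $\nu-1$. Without locating the inseparable tails (Propositions \ref{Ponly0insep} and \ref{Pinseptailloc}) and bounding the resulting extension (Proposition \ref{Pfstrstdef} and the computation in \S\ref{Shigherram}), the argument does not close. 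Pinning down $a$ alone, as your step three does, only handles the \'{e}tale new tail; the inseparable tails are an independent obstruction that must be analyzed explicitly.
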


We mention that, because $m_G = 2$, the stable reduction of $f$ is monotonic (Proposition \ref{Pmonotonic}).

\subsubsection{}
We first deal with the case where there is one primitive tail $\ol{X}_b$, but no new
\'{e}tale tails.  Then the vanishing cycles formula (\ref{Evancycles}) shows that $\sigma_b = 1$.  Furthermore, we claim that 
$m_{G^{aux}} = 1$.  If this were not the
case, then the strong auxiliary cover would have Galois group $G^{str} \cong \ints/p^{\nu} \rtimes \ints/2$, for some $\nu \leq 
n$, but only one branch point with prime-to-$p$ branching index.  Then taking the quotient by $\ints/p^{\nu}$ would 
yield a contradiction.

Since we are assuming that the stable reduction of $f$ has no new tails, the
auxiliary cover $f^{aux}: Y^{aux} \to X$ is branched at either two or three points.  If it is branched at three points,
we conclude using \cite[Proposition 7.15]{Ob:fm} that the stable model of $f^{aux}$
(which is the modified stable model) is defined over some $K$ such that the $n$th higher ramification groups of the
extension $K/K_0$ vanish.  If $f^{aux}$ 
is branched at two points (without loss of generality, $0$ and $\infty$), it is a cyclic cover, and thus clearly defined 
over $K_n$.  Furthermore, the points in the fiber above $1$ 
are defined over $K_n$.  We conclude that the modified stable model of $f^{aux}$ is defined over $K_n$ (Remark 
\ref{Rmodifiedstable}).  
By \cite[IV, Corollary to Proposition 18]{Se:lf}, the $n$th higher ramification groups of $K_n/K_0$
vanish.  By Lemma \ref{Laux2orig}, Proposition \ref{Pm2tau1} is true in this case.

\subsubsection{}
We now come to the main case, where there is a new \'{e}tale tail $\ol{X}_b$ and a
primitive tail $\ol{X}_{b'}$.  We will assume for the remainder of \S\ref{Stauequals1} that $p \ne 3$ (although it is likely that the 
main result should hold in the case $p=3$, see Question \ref{Qgap}).  

Fix, once and for all, a coordinate $x$ corresponding to the smooth model $X_{R_0}$ with special fiber $\ol{X}_0$, so that
$f$ is branched at $x=0$, $x=1$, and $x= \infty$.
By the vanishing cycles formula (\ref{Evancycles}), the new tail $\ol{X}_b$ has
$\sigma_b = 3/2$ and the primitive tail $\ol{X}_{b'}$ has 
$\sigma_{b'} = 1/2$.  It is then clear that the auxiliary cover has four branch
points: at $x=0$, $x=1$, $x=\infty$, and $x=a$, where $a$ is in the disk corresponding to $\ol{X}_b$. 
Keep in mind that, by the construction of the auxiliary cover (\S\ref{Saux}), we may always replace $a$ by any other point in the disk 
corresponding to $\ol{X}_b$.  Also, the modified
stable model of the auxiliary cover is, in fact, the stable model.  The
strong auxiliary cover then has Galois group $G^{str} \cong \ints/p^{\nu}
\rtimes \ints/2$ for some $\nu \leq n$.  
Without loss of generality, we can assume that
$0$ and $a$ are branched of index $2$, and $1$ and $\infty$ are branched
of $p$-power index.  After a possible application of the transformation $x \to \frac{x}{x-1}$ of $\proj^1$, 
which interchanges $1$ and $\infty$ while fixing $0$, we may and do further assume that $a$ does not collide with $\infty$ on the 
smooth model of $X$ corresponding to the coordinate $x$ (i.e., $|a| \leq 1$).

\begin{lemma}\label{Lfullbranch}
At least one point of $f^{str}$ is branched of index $p^{\nu}$.  Such a point specializes to the original component.
\end{lemma}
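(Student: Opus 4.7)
The plan is two-step: first, establish via a connectedness argument on an intermediate quotient that at least one of the branching indices at $1$ or $\infty$ for $f^{str}$ is the full $p^\nu$; second, apply Lemma~\ref{Lwildspec} to $f^{str}$ to pin down the specialization.

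For the first step, I would quotient $Y^{str}$ by the normal subgroup $\ints/p^\nu \leq G^{str}$, producing an intermediate $\ints/2$-cover $Y' := Y^{str}/(\ints/p^\nu) \to X$. Since the inertia at $1$ and $\infty$ lies in $\ints/p^\nu$ while the inertia at $0$ and $a$ does not (it projects isomorphically onto the $\ints/2$-quotient), the map $Y' \to X$ is branched exactly at $0$ and $a$, each of index $2$; Riemann--Hurwitz then gives $Y' \cong \proj^1$. The map $Y^{str} \to Y'$ is a connected $\ints/p^\nu$-cover of $\proj^1$ branched precisely at the two preimages of $1$ and the two preimages of $\infty$ in $Y'$. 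Writing $p^{a_1}$ and $p^{a_\infty}$ for the branching indices of $f^{str}$ at $1$ and $\infty$, the four inertia generators in $\ints/p^\nu$ have orders $p^{a_1}, p^{a_1}, p^{a_\infty}, p^{a_\infty}$. Because the nontrivial element of $\Gal(Y'/X)$ acts by inversion on $\ints/p^\nu$ (the defining action of the semidirect product $G^{str}$), the two generators above $1$ (resp.\ above $\infty$) are negatives of one another, so they jointly generate the subgroup $\langle b_1, b_\infty \rangle \leq \ints/p^\nu$ for some $b_1$ of order $p^{a_1}$ and $b_\infty$ of order $p^{a_\infty}$. Connectedness of $Y^{str} \to Y'$ forces this subgroup to equal $\ints/p^\nu$, so at least one of $b_1, b_\infty$ is a unit in $\ints/p^\nu$, i.e., $\max(a_1, a_\infty) = \nu$.

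For the second step, let $P \in \{1, \infty\}$ be the branch point of $f^{str}$ with branching index $p^\nu$, and suppose toward contradiction that $P$ specializes to some $\ol{W} \ne \ol{X}_0$ in the stable reduction of $f^{str}$. Applying Lemma~\ref{Lwildspec} to $f^{str}$ at $P$ with $r = \nu$, the unique inward neighbor $\ol{W}' \prec \ol{W}$ must then be a $p^s$-component with $s > \nu$. However, every generic inertia group of $\ol{f}^{str}$ is a $p$-subgroup of $G^{str}$, hence contained in $\ints/p^\nu$ and of order at most $p^\nu$, so no such $\ol{W}'$ exists. Therefore $P$ specializes to $\ol{X}_0$. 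The main (though still routine) obstacle is the inertia bookkeeping in Step~1: one needs the semidirect product structure to conclude that the two preimages of each branch point of $Y^{str} \to Y'$ carry inertia generators differing by a sign, so that generation of $\ints/p^\nu$ really does constrain the larger of $a_1, a_\infty$ rather than the smaller.
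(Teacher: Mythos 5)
Your proof is correct, and the first step takes a genuinely different route from the paper. You quotient by the full $\ints/p^\nu$ to land on an intermediate $\proj^1$ (your $Y'$, which is the paper's $Z^{str}$), then show that the connected cyclic cover $Y^{str} \to Y'$ forces $\max(a_1, a_\infty) = \nu$ by explicit bookkeeping of the four inertia generators and the fact that a generating set of $\ints/p^\nu$ must contain a unit. The paper instead quotients by the subgroup $Q \leq \ints/p^\nu$ of order $p^{\nu-1}$, producing a $(\ints/p \rtimes \ints/2)$-cover of $X$; since this Galois group is non-cyclic, the cover must have at least three branch points, and as $0$ and $a$ already account for two, at least one of $1, \infty$ must survive the quotient --- which happens exactly when its branching index in $f^{str}$ is the full $p^\nu$. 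The paper's route packages the inertia-generation constraint into the standard fact that $\leq 2$ branch points implies cyclic Galois group, avoiding the explicit sign bookkeeping you do; your version makes the obstruction very concrete. For the second step, the paper cites Lemma~\ref{Lcorrectspec} (to place the specialization on a $p^\nu$-component) before applying Lemma~\ref{Lwildspec}; you invoke Lemma~\ref{Lwildspec} directly and rule out the $s > \nu$ alternative by the size of the $p$-Sylow of $G^{str}$, which is a clean shortcut since Lemma~\ref{Lcorrectspec} is only used inside the proof of Lemma~\ref{Lwildspec}.
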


\begin{proof} 
Consider the $\ints/p \rtimes \ints/2$-cover $f' := Y^{str}/Q \to X$, where $Q$ has order $p^{\nu - 1}$.  This must be branched
at at least three points, thus at $1$ or $\infty$.  If $1$ or $\infty$ is a branch point of $f'$, then its branching index in $f$
is $p^{\nu}$.  By Lemma \ref{Lcorrectspec}, any branch point of index $p^{\nu}$ specializes to a $p^{\nu}$-component of
$\ol{X}$.  By Lemma \ref{Lwildspec}, it specializes to $\ol{X}_0$.
\end{proof}

Let us fix some additional notation for \S\ref{CHmain} by writing down the equations of the cover $f^{str}: Y^{str}
\to X^{str}$.  Let $Z^{str} = Y^{str}/(\ints/p^{\nu})$.
Then $Z^{str} \to X^{str}$ is a degree 2 cover of ${\proj}^1$'s,
branched at $0$ and $a$.  Therefore, $Z^{str}$ can be given (birationally) over $\ol{K_0}$ by the
equation
\begin{equation}\label{Efstr1}
z^2 = \frac{x-a}{x}.
\end{equation}
Fix a choice of $\sqrt{1-a}$.  Since $z = \pm 1$ (resp.\ $\pm \sqrt{1-a}$) corresponds to $x = \infty$ (resp.\ $x = 1$), then
$Y^{str} \to Z^{str}$ can be given 
(birationally) over $\ol{K_0}$ by the equation
\begin{equation}\label{Efstr2}
y^{p^{\nu}} = g(z) := \left(\frac{z+1}{z-1}\right)^r \left(\frac{z + \sqrt{1-a}}{z- \sqrt{1-a}}\right)^s
\end{equation} 
for some integers $r$ and $s$, which are well-defined modulo $p^{\nu}$.  Without loss of generality, we take
$0 < r, s < p^{\nu}$.  The branching index of
$f^{str}$ at $\infty$ is $p^{\nu - v(r)}$, and at $1$ it is $p^{\nu - v(s)}$.

Write $\ol{Z}^{str}$ for $\ol{Y}^{str}/(\ints/p^{\nu})$, and 
let $\ol{Z}_b$ (resp.\ $\ol{Z}_{b'}$) be the unique irreducible component of $\ol{Z}^{str}$ above the new tail $\ol{X}_b$ (resp.\ the primitive 
tail $\ol{X}_{b'}$).

We will work over a large enough finite extension $K/K_0$ (i.e., we assume the stable model of $f^{str}$ is defined over $K$
and we replace $K$ by a finite extension whenever convenient).  
Let $e \in K$ be such that $|e|$ is the radius of the disk $\mc{D}$ corresponding to $\ol{Z}_b$.
Since $x = a$ corresponds to $z=0$, we can choose a coordinate $t$ on the disk $\mc{D}$ such that $z = et$.  If $\hat{Y}$,
$\hat{Z}$ are the formal completions of $(Y^{str})^{st}$ and $(Z^{str})^{st}$ along their special fibers, then the
torsor $\hat{Y} \times_{\hat{Z}} \mc{D} \to \mc{D}$ can be given generically, after a possible finite extension of $K$, by the equation
\begin{equation}\label{Edisk}
y^{p^{\nu}} = 1 + \frac{g'(0)}{1!}(et) + \frac{g''(0)}{2!}(et)^2 + \cdots.
\end{equation}  
Now, since $\sigma_b = \frac{3}{2}$, and since $\ol{X}_b$ intersects a $p$-component (see Lemma \ref{Lnobigjumps} below), 
we know that the generic fiber of this torsor must split into
$p^{\nu-1}$ connected components, each of which has \'{e}tale reduction and is birationally equivalent to a $\ints/p$-cover 
branched at one point with conductor $3$. 
Let $c_i = \frac{g^{(i)}(0)}{i!}e^i$.  Then 
\begin{equation}\label{Edisk2}
y^{p^{\nu}} = 1 + c_1t + c_2t^2 + \cdots.
\end{equation}

Note that we have fixed the meaning of the symbols $f$, $p$, $k$, $a$, $r$, $s$, $n$, $\nu$, $x$, $y$, $z$, $e$, $t$, $c_i$, $g(z)$, $G$, 
$\sigma_b$, $\sigma_{b'}$, $\ol{X}_b$, $\ol{X}_{b'}$, $\ol{X}_0$, $\ol{Z}_b$, and $\ol{Z}_{b'}$.  We will also use the notation $\mc{G}$ and $\mc{G}'$
for the dual graph and augmented dual graph of $\ol{X}$ (\S\ref{Sgraph}).

The idea of the proof is as follows: In order to place bounds on the higher ramification filtration of the field of moduli of $f$, it suffices by the results 
of \S\ref{Saux} to understand the minimal field of definition of the stable model of $f^{str}$.  In order to do this, we must first calculate 
the disk corresponding to the new tail $\ol{X}_b$.  Since we can take $a$ to be any value in this disk, we choose the value defined over the ``smallest"
field possible to be our $a$, and then $K_n(a, \sqrt{1-a})$ will be a field of definition of $f^{str}$.  This is done in the first large subsection,
\S\ref{Setaletails}.

Since understanding the monodromy action is enough to pin down a field of definition of the stable model of $f^{str}$, 
the goal is then to determine the monodromy action of $\Gal(\ol{K_0}/K_0)$ on $\ol{f}^{str}$. 
We use the criterion of \cite[Proposition 4.9]{Ob:fm}, which essentially says that if this action fixes the tails of $\ol{Y}^{str}$, then it fixes all of
$\ol{Y}^{str}$.  To this end, in \S\ref{Sinseparabletails} (our second large subsection), we show exactly which disks correspond to inseparable tails of
$\ol{X}$ (if there are any).  

In \S\ref{Sstablemodel}, we put all of the information from \S\ref{Setaletails} and \S\ref{Sinseparabletails} together to determine 
a field of definition for the stable model of $f$ (up to tame extension).  
Lastly, in \S\ref{Shigherram}, we show that the appropriate higher ramification groups vanish.  

We start with \S\ref{Sprelimlemmas}, where we show some basic properties of $(f^{str})^{st}$ and prove a couple
of algebraic results that will be used later.
\subsubsection{Preliminary lemmas}\label{Sprelimlemmas}

\begin{lemma}\label{Lnobigjumps}
Every \'{e}tale tail $\ol{X}_c$ of $\ol{X}$ intersects a $p$-component.
\end{lemma}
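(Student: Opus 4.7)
The plan is to argue by contradiction. Suppose $\ol{X}_c$ is an \'{e}tale tail intersecting a $p^j$-component $\ol{W}$ at a node $\ol{w}$, with $j\ge 2$ (some $j\ge 1$ already exists by Lemma \ref{Ltailetale}). Let $e\in E(\mc{G})$ be the edge with $s(e)=[\ol{W}]$ and $t(e)=[\ol{X}_c]$. By Lemma \ref{Lsigmaeffcompatibility}(ii), $\sigma^{\eff}_e=\sigma_c$; and in the present case $\tau=1$, the only \'{e}tale tails are $\ol{X}_b$ and $\ol{X}_{b'}$, with $\sigma_b=3/2$ and $\sigma_{b'}=1/2$ by (\ref{Evancycles}), so $\sigma_c\in\{1/2,3/2\}$.

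On the other hand, Definition \ref{Dsigmaeff} writes
$$\sigma^{\eff}_e=\sum_{i=1}^{j-1}\frac{p-1}{p^i}\sigma_{i,w}+\frac{1}{p^{j-1}}\sigma_{j,w},$$
a convex combination of the $j$ deformation datum invariants at $\ol{w}$ above $\ol{W}$. Since $\ol{w}$ is a singular point, Lemma \ref{Lcritical} gives $\sigma_{i,w}\ne 0$; the positivity at nodes used in the proof of Proposition \ref{Pmonotonic} (via \cite[Lemma 3.11(ii)]{Ob:vc}) then gives $\sigma_{i,w}>0$ for every $i$. Combined with $\sigma_{i,w}\in\tfrac{1}{m_w}\ints$ and $m_w\mid m_G=2$, each $\sigma_{i,w}\ge 1/2$, so $\sigma_c\ge 1/2$, with equality forcing every $\sigma_{i,w}=1/2$ and $m_w=2$.

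This already handles the primitive-tail case $\sigma_c=1/2$: the forced rigidity of all invariants at $\ol{w}$, together with Lemma \ref{Lsigmaeff} applied to the edge one step inward from $\ol{W}$ (whose source, by Proposition \ref{Pmonotonic}, is a $p^{j^*}$-component with $j^*\ge j\ge 2$) and the fact that $\ol{X}_{b'}$ carries the specialization of the tame branch point $x=0$, overdetermines the global accounting of effective invariants and produces a contradiction. For $\sigma_c=3/2$ the weighted-average bound is too loose to conclude directly, so here I would instead use the explicit local form (\ref{Edisk2}) of the $\mu_{p^\nu}$-torsor on the disk corresponding to $\ol{Z}_b$, and apply Lemma \ref{Lartinschreier} to show that the valuations of the $c_i$ forced by adjacency to a $p^j$-component with $j\ge 2$ are incompatible with the torsor's \'{e}tale reduction of conductor exactly $3$. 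The hard part is this new-tail case: the lower bound $\sigma_c\ge 1/2$ falls well short of $3/2$, so the contradiction must be squeezed from the algebraic structure of the torsor itself, leveraging the tight global budget of only two \'{e}tale tails total (Corollary \ref{Chalfsigma}) and one unit on the right-hand side of (\ref{Evancycles}).
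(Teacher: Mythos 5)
Your proof has a genuine gap, and your route makes the problem harder than it needs to be. The paper's proof is essentially a single citation: by \cite[Lemma 4.2]{Ob:vc}, if an \'{e}tale tail $\ol{X}_c$ intersects a $p^\alpha$-component with $\alpha > 1$, then $\sigma_c \geq p/2$. Since $p \neq 2, 3$ in this section, $p/2 > 2$, which immediately contradicts the vanishing cycles formula (\ref{Evancycles}): its right-hand terms are positive and sum to $1$, so no $\sigma_c$ can exceed $2$. That one reference does the whole job.

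Your weighted-average decomposition of $\sigma^{\eff}_e = \sigma_c$ into the $\sigma_{i,w}$ only yields the bound $\sigma_c \geq 1/2$, which you correctly note is far too weak. More to the point, that bound holds identically when $j = 1$, so it carries no information whatsoever about the hypothesis $j \geq 2$ and cannot by itself produce a contradiction. You acknowledge this for the new tail ($\sigma_c = 3/2$), where your proposed fix via Lemma \ref{Lartinschreier} and the explicit torsor equation (\ref{Edisk2}) is left entirely unexecuted. For the primitive tail ($\sigma_c = 1/2$), the claim that the forced rigidity $\sigma_{i,w} = 1/2$ for all $i$ ``overdetermines the global accounting'' is not an argument, and again there is no mechanism there that uses $j \geq 2$: the same rigidity would be perfectly consistent with $j = 1$. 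What is missing is precisely a quantitative lower bound on $\sigma_c$ that grows with the inertia order $p^\alpha$ of the adjacent component, rather than merely being nonzero --- and that is exactly what \cite[Lemma 4.2]{Ob:vc} supplies.
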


\begin{proof}
By Lemma \ref{Ltailetale}, $\ol{X}_c$ intersects an inseparable component.
If $\ol{X}_c$ intersects a $p^{\alpha}$-component, with $\alpha > 1$, then \cite[Lemma 4.2]{Ob:vc} shows that $\sigma_b
\geq p/2$.  Since $p/2 > 2$, this contradicts the vanishing cycles formula (\ref{Evancycles}).
\end{proof}

\begin{lemma}\label{Lfullwildspec}
The map $f^{str}$ is branched at $x = \infty$ of index $p^{\nu}$, and $x = \infty$ specializes to the original component $\ol{X}_0$.  
If $v(a-1) = 0$, then $f^{str}$ is branched at $x = 1$ of index $p^{\nu}$, and 
$x=1$ also specializes to $\ol{X}_0$.  
\end{lemma}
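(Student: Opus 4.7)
The plan is to combine the branching index formulas from equations (\ref{Efstr1}) and (\ref{Efstr2}) with Lemma \ref{Lfullbranch} and a stability argument for the semistable model. From (\ref{Efstr2}), the branching indices of $f^{str}$ at $\infty$ and $1$ are $p^{\nu-v(r)}$ and $p^{\nu-v(s)}$ respectively, where $v$ is the $p$-adic valuation. Since the other two branch points $0$ and $a$ of $f^{str}$ have index $2 \ne p^{\nu}$, Lemma \ref{Lfullbranch} already gives $\min(v(r), v(s)) = 0$. The specialization part of the claim will then follow once we know the index is $p^\nu$: Lemma \ref{Lcorrectspec} forces specialization to a $p^\nu$-component, and Lemma \ref{Lwildspec} forces that component to be $\ol{X}_0$ since there is no $p^s$-component with $s > \nu$.

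For the first assertion, I would argue that the point $\bar\infty$ on the smooth original model $\ol{X}_0$ is not blown up in the stable model. By the hypothesis $|a| \leq 1$, the new branch point $a$ does not reduce to $\bar\infty$, and the original branch points $0$ and $1$ are distinct from $\bar\infty$ on $\ol{X}_0$ by the fixed coordinate choice. So no branch point of $f^{str}$ other than $\infty$ itself lies in any formal disk around $\bar\infty$. Were $\bar\infty$ blown up into a chain of components ending at the component $\ol{X}_\infty$ where $\infty$ specializes, the outermost component would be a $\proj^1$ with only two marked points on the corresponding components of $\ol{Y}^{str}$ (the node attaching it to the preceding component, and the ramification above $\infty$), violating stability of $\ol{Y}^{str}$. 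Hence $\infty$ specializes to $\ol{X}_0$ directly. Since $f$ has bad reduction and $\ol{X}_0$ is not a tail, Lemma \ref{Letaletail} combined with monotonicity (Proposition \ref{Pmonotonic}) forces $\ol{X}_0$ to be a $p^{\nu}$-component; therefore by Lemma \ref{Lcorrectspec} the $p$-part of the branching index of $\infty$ in $f^{str}$ equals $p^\nu$, i.e., $v(r) = 0$.

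For the second assertion, the same argument applies to $\bar 1$. The extra hypothesis $a \not\equiv 1 \pmod{\pi}$ ensures that $a$ does not reduce to $\bar 1$ either, so again no branch point of $f^{str}$ other than $1$ itself lies in any disk around $\bar 1$. The stability argument then forces $1$ to specialize to $\ol{X}_0$, and Lemma \ref{Lcorrectspec} gives $v(s) = 0$.

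The main obstacle is making the stability argument rigorous: one must show that if $\bar\infty$ (respectively $\bar 1$) were blown up, the resulting tail-like components of $\ol{Y}^{str}$ above it would be genus zero $\proj^1$'s with only two marked points. This reduces to a Riemann--Hurwitz computation using the fact that the cover $\ol{Y}^{str}$ above such a $p^i$-component is controlled by its generic $p$-inertia and a tame quotient, so a component with exactly two ramification/node marks has genus zero, contradicting the three-point condition in the stability definition from \S\ref{Sstable}.
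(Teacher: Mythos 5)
Your proposal has a genuine gap in the stability argument, which is the crux of the proof. You argue that if $\infty$ specialized to a component $\ol{X}_\infty \ne \ol{X}_0$, then the components of $\ol{Y}^{str}$ above $\ol{X}_\infty$ would be genus-zero $\proj^1$'s with only two marked points, and you claim this follows from a Riemann--Hurwitz computation ``using the fact that the cover $\ol{Y}^{str}$ above such a $p^i$-component is controlled by its generic $p$-inertia and a tame quotient.'' This last claim is false: by Lemma \ref{Ltailetale}, the $p^a$-tail $\ol{X}_\infty$ (where $p^a$ is the branching index of $\infty$) meets a $p^{a'}$-component with $a' > a$, so the inertia above the node contains $\ints/p^{a'}$, and after quotienting $\ol{Y}^{str}_\infty$ by its generic inertia $\ints/p^a$, the residual separable cover of $\ol{X}_\infty$ is \emph{wildly} ramified at the node. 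A one-point wildly ramified cover of $\proj^1$ easily has positive genus (e.g., an Artin--Schreier cover $y^p - y = x^m$ has genus $(p-1)(m-1)/2$), in which case the stability condition imposes no constraint even with only two marked points. So the stability argument does not rule out $\ol{\infty}$ being blown off of the original component. (As a smaller quibble, you cite Lemma \ref{Letaletail} plus monotonicity to conclude $\ol{X}_0$ is a $p^\nu$-component, but that step really needs Lemma \ref{Lfullbranch} together with Lemma \ref{Lcorrectspec}.)

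The paper's proof avoids this issue entirely and uses a different mechanism: it supposes $\infty$ is not branched of index $p^\nu$, so by Lemma \ref{Lfullbranch} the point $1$ is and specializes to $\ol{X}_0$, which forces the deformation data above $\ol{X}_0$ to be multiplicative and identical. Then, since $|a| \le 1$ ensures no \'{e}tale tail lies outward from the node $\ol{x}$ separating $\ol{X}_0$ from $\ol{\infty}$, Lemma \ref{Lsigmaeff} gives $\sigma^{\eff}_\varepsilon = 0$ at that node, hence each $\sigma_{i,\ol{x}} = 0$, contradicting Lemma \ref{Lcritical} (which says the invariant is nonzero at a singular critical point). If you want to salvage a proof, you need some argument of this kind that tracks the invariants of the deformation data; the näıve stability count is insufficient precisely because wild ramification at the node can supply arbitrary genus.
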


\begin{proof} 
Assume for a contradiction that $\infty$ is not branched of index $p^{\nu}$.  
Then $1$ is branched of index $p^{\nu}$, and specializes to $\ol{X}_0$ by Lemma \ref{Lwildspec}.  
Thus, the deformation data above $\ol{X}_0$ are multiplicative by Lemma \ref{Lcritical}, and identical by \cite[Proposition 5.2]{Ob:fm}.  
By assumption and by Lemma \ref{Lcorrectspec}, $\infty$ does not specialize to the original component.  Then consider
the unique point $\ol{x} \in \ol{X}_0$ such that the specialization $\ol{\infty}$ of $\infty$ lies outward from $\ol{x}$.
Since $|a| \leq 1$, there is no \'{e}tale tail lying outward from $\ol{x}$.  
If $\varepsilon \in E(\mc{G}')$ corresponds to $(\ol{x}, \ol{X}_0, \ol{W})$, for some $\ol{W}$, then
Lemma \ref{Lsigmaeff} shows that $\sigma_{\varepsilon}^{\eff} = 0$.  
But this means that $\sigma_{\varepsilon} = 0$
for each deformation datum above $\ol{X}_0$, which contradicts Lemma \ref{Lcritical}.  We have thus shown that
$\infty$ is branched of index $p^{\nu}$.  By Lemma \ref{Lwildspec}, $\infty$ specializes to $\ol{X}_0$.

Now suppose $v(a-1) = 0$.  Assume for a contradiction that $1$ does not specialize to the original
component.  Consider the unique point $\ol{x} \in \ol{X}_0$ such that the specialization $\ol{1}$ of $1$ lies outward from $\ol{x}$, and let 
$\varepsilon \in E(\mc{G}')$
correspond to $(\ol{x}, \ol{X}_0, \ol{W})$ for some $\ol{W}$.
As in the previous paragraph, $\sigma_{\varepsilon} = 0$ for each deformation datum above $\ol{X}_0$, and we get a
contradiction.
\end{proof}

\begin{corollary}\label{Corigmult}
All deformation data above the original component are multiplicative.  
In particular, $\delta^{\eff}_{\ol{X}_0} = \nu + \frac{1}{p-1}$.
\end{corollary}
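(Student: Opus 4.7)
The plan is that this corollary is a direct consequence of the existence of a ``fully wild'' branch point on $\ol{X}_0$, exactly paralleling the argument already used in the proof of Lemma \ref{Lwildspec}. By Lemma \ref{Lfullwildspec}, the point $x = \infty$ is a branch point of $f^{str}$ with branching index $p^{\nu}$, and its specialization $\ol{\infty}$ lies on the original component $\ol{X}_0$. Since the ramification indices above $\ol{X}_0$ already have a $p^{\nu}$-part (as $\ol{X}_0$ is a $p^{\nu}$-component carrying $\ol{\infty}$), this is the maximal-wildness situation to which \cite[Proposition 2.22]{Ob:fm} applies.

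First I would invoke \cite[Proposition 2.22]{Ob:fm} at the specialization $\ol{\infty}$. Because the branching index there is exactly $p^{\nu}$ with trivial prime-to-$p$ part, that proposition forces each of the $\nu$ deformation data $\omega_i$ above $\ol{X}_0$ to be multiplicative (i.e.\ logarithmic), and moreover identical up to a scalar. This is precisely the sentence ``the deformation data above $\ol{W}$ are all multiplicative and identical'' that was used inside the proof of Lemma \ref{Lwildspec}. Lemma \ref{Lcritical} enters here to confirm that $\ol{\infty}$ is indeed a critical point of each $\omega_i$ with $\sigma_{\ol{\infty}} = 0$ and $\omega_i$ logarithmic, which is what triggers the hypothesis of \cite[Proposition 2.22]{Ob:fm}.

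Once multiplicativity of every $\omega_i$ is in hand, the computation of $\delta^{\eff}_{\ol{X}_0}$ is mechanical. By \cite[5, Proposition 1.6]{He:ht} (as recalled in \S\ref{Sdefdatastable}), a multiplicative deformation datum has $\delta_i = 1$. Plugging into the formula
\[
\delta^{\eff}_{\ol{X}_0} \;=\; \Bigl(\sum_{i=1}^{\nu-1} \delta_i\Bigr) + \frac{p}{p-1}\,\delta_{\nu}
\]
from the definition of the effective different gives
\[
\delta^{\eff}_{\ol{X}_0} \;=\; (\nu-1) + \frac{p}{p-1} \;=\; \nu + \frac{1}{p-1},
\]
which is the claimed value. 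There is no real obstacle here: the only thing to verify carefully is that the $p^{\nu}$-branching hypothesis of \cite[Proposition 2.22]{Ob:fm} is met, and this is exactly the content of Lemma \ref{Lfullwildspec}.
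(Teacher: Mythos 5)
Your proposal is correct and follows exactly the route the paper intends: the paper declares the corollary ``immediate, using \cite[Proposition 2.22]{Ob:fm}, Lemma \ref{Lcritical}, and Lemma \ref{Lfullwildspec},'' which are precisely the three ingredients you invoke, and your computation of $\delta^{\eff}_{\ol{X}_0}$ from $\delta_i = 1$ matches the identical calculation already carried out inside the proof of Lemma \ref{Lwildspec}.
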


\begin{proof}
By Lemma \ref{Lfullwildspec}, $x = \infty$ specializes to the original component $\ol{X}_0$.  By Lemma \ref{Lcritical}, all deformation data
above $\ol{X}_0$ are multiplicative.
\end{proof}

\begin{lemma}\label{Lk0root}
Let $c = \alpha + \frac{\beta}{\sqrt{1-a}}$, where $\alpha, \beta, a \in K$ and $\sqrt{1-a}$ means either square root.
Let $a_0 = 1 - \left(\frac{\beta}{\alpha}\right)^2$.  
If $v(c) > 0$ and $v(\alpha) = 0$, then $v(a - a_0) = v(c) + 2v(\beta)$.  Note that $a_0 \in K_0(\alpha, \beta)$. 
\end{lemma}

\begin{proof}
Solving for $a$, we find that $a = 1 - \left(\frac{\beta}{c - \alpha}\right)^2$. 
Choose $a_0 = 1 - \left(\frac{\beta}{\alpha}\right)^2$.  
Then 
$$a - a_0 = \beta^2\left(\frac{2c\alpha - c^2}{\alpha^2(\alpha - c)^2}\right).$$
Clearly, $v(a - a_0) = 2v(\beta) + v(c)$.
\end{proof}

For positive integers $q$ and $r_1, \ldots, r_n$ such that $\sum_{i} r_i = q$, define 
$$\binom{q}{r_1, \ldots, r_n} = \frac{q!}{r_1!\cdots r_n!}.$$ 
We leave the proof of the following lemma to the reader:
\begin{lemma}\label{Lbinomcoeffs}
For any prime $p$, $$v_p\left( \binom{q}{r_1, \ldots, r_n} \right) \geq \max_i\left(v_p\left( \binom{q}{r_i} \right)\right) = v_p(q) - \min_i(v_p(r_i))$$
(here $\binom{q}{r_i}$ is the standard binomial coefficient).
\end{lemma}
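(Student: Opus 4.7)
The plan is to prove the lemma by two elementary reductions: first from the multinomial coefficient down to a single binomial, and then from the binomial to the simple expression $v_p(q) - v_p(r)$ via the classical identity $r\binom{q}{r} = q\binom{q-1}{r-1}$. Both steps are routine once the right factorization is chosen.

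For the first inequality $v_p\binom{q}{r_1, \ldots, r_n} \geq \max_i v_p\binom{q}{r_i}$, I would fix an arbitrary index $i$ and use the standard multinomial factorization
$$\binom{q}{r_1, \ldots, r_n} = \binom{q}{r_i}\binom{q - r_i}{r_1, \ldots, r_{i-1}, r_{i+1}, \ldots, r_n},$$
which just records choosing the positions of the $i$th block first and then splitting the remaining $q - r_i$ positions among the other blocks. The rightmost factor is a positive integer, so its $p$-adic valuation is nonnegative. Taking $v_p$ of both sides and then maximizing over $i$ yields the first inequality.

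For the identification $\max_i v_p\binom{q}{r_i} = v_p(q) - \min_i v_p(r_i)$, I would apply the basic identity $r\binom{q}{r} = q\binom{q-1}{r-1}$, which gives
$$v_p\binom{q}{r} = v_p(q) - v_p(r) + v_p\binom{q-1}{r-1} \geq v_p(q) - v_p(r),$$
since $\binom{q-1}{r-1}$ is a positive integer. Choosing $j$ with $v_p(r_j) = \min_i v_p(r_i)$ and plugging in $r = r_j$ then gives $\max_i v_p\binom{q}{r_i} \geq v_p(q) - \min_i v_p(r_i)$. For the reverse inequality one invokes Kummer's theorem, which expresses $v_p\binom{q}{r}$ as the number of carries when adding $r$ and $q - r$ in base $p$; the claim reduces to showing that at the index $j$ realizing the minimum, those carries are controlled by $v_p(q) - v_p(r_j)$ via a direct digit-by-digit analysis near the bottom of the base-$p$ expansion of $q$.

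The argument is genuinely elementary, with no serious obstacle expected. The only slightly fiddly point is the Kummer-theoretic bookkeeping in the reverse direction of the equality; for the downstream applications of this lemma (notably in the proof of Lemma \ref{Lartinschreier}), only the combined lower bound $v_p\binom{q}{r_1, \ldots, r_n} \geq v_p(q) - \min_i v_p(r_i)$ is invoked, and that bound is delivered directly by chaining the two $\geq$ inequalities established above.
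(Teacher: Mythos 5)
Your two lower bounds are correct and already deliver everything the paper needs: the multinomial factorization $\binom{q}{r_1,\ldots,r_n} = \binom{q}{r_i}\cdot\binom{q-r_i}{r_1,\ldots,r_{i-1},r_{i+1},\ldots,r_n}$ gives $v_p\bigl(\binom{q}{r_1,\ldots,r_n}\bigr) \geq v_p\bigl(\binom{q}{r_i}\bigr)$ for every $i$, and the identity $r\binom{q}{r} = q\binom{q-1}{r-1}$ gives $v_p\bigl(\binom{q}{r_i}\bigr) \geq v_p(q) - v_p(r_i)$ for every $i$; maximizing over $i$ in both steps yields the chain of inequalities, which is the only content of the lemma invoked downstream (in Lemmas \ref{L0coeffval} and \ref{L1coeffval}). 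Since the paper leaves the proof to the reader, this is precisely the intended elementary argument.

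However, your final paragraph on the reverse inequality should be struck: the equality $\max_i v_p\bigl(\binom{q}{r_i}\bigr) = v_p(q) - \min_i v_p(r_i)$ asserted in the lemma is in fact false, so no amount of Kummer-theoretic bookkeeping will establish it. Take $p = 2$, $q = 6$, and $(r_1, r_2) = (3, 3)$: then $\max_i v_2\bigl(\binom{6}{r_i}\bigr) = v_2\bigl(\binom{6}{3}\bigr) = v_2(20) = 2$, whereas $v_2(6) - \min_i v_2(r_i) = 1 - 0 = 1$. The obstruction is that $v_p\bigl(\binom{q-1}{r-1}\bigr)$ need not vanish at the index realizing $\min_i v_p(r_i)$, so $r\binom{q}{r} = q\binom{q-1}{r-1}$ is genuinely one-sided; equivalently, the number of base-$p$ carries in $r_j + (q - r_j)$ can strictly exceed $v_p(q) - v_p(r_j)$, as the example shows. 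The displayed ``$=$'' in the lemma statement is presumably a typo for ``$\geq$''; with that correction, your argument proves exactly what is stated and what is used.
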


\subsubsection{The new (\'{e}tale) tail}\label{Setaletails}
An open $p$-adic disk is determined by its radius and any point inside.  
The disk corresponding to the new tail $\ol{X}_b$ is centered at $a$.  The following lemma determines its radius.

\begin{lemma}\label{Ltailradius}
Let $\rho$ (resp.\ $e$) be an element of $K$ such that $|\rho|$ (resp.\ $|e|$) is the radius of the disk centered at
$x=a$ corresponding to $\ol{X}_b$ (resp.\ the disk centered at $z=0$ corresponding to $\ol{Z}_b$).
\begin{enumerate}
\item 
If $v(a) = v(a-1) = 0$, then $v(\rho) = \frac{2}{3}( \nu + \frac{1}{p-1} )$ and 
$v(e) = \frac{1}{3}(\nu + \frac{1}{p-1})$.
\item 
If $v(a) > 0$, then $v(\rho) = \frac{2}{3}(\nu + \frac{1}{p-1}) + \frac{1}{3}v(a)$ and
$v(e) = \frac{1}{3}(\nu + \frac{1}{p-1} - v(a))$.
\item 
If $v(a-1) > 0$, then $v(\rho) = \frac{2}{3}(\nu + \frac{1}{p-1} + v(1-a))$ and $v(e) = \frac{1}{3}(\nu
+ \frac{1}{p-1} + v(1-a))$.
\end{enumerate}
\end{lemma}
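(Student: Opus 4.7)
The plan is to bypass direct analysis of the torsor equation (\ref{Edisk2}) by instead exploiting the telescoping identity for effective differents (Lemma \ref{Lsigmaeffcompatibility}(iii)) together with Lemma \ref{Lsigmaeff}, applied to the chain of components connecting $\ol{X}_0$ to $\ol{X}_b$ in the dual graph of $\ol{X}$. Writing the degree-two cover $z^2 = (x-a)/x$ as $x - a = a z^2 / (1 - z^2)$ gives $|x - a| = |a|\cdot|z|^2$ on $|z| < 1$, hence the universal identity $v(\rho) = v(a) + 2 v(e)$; it thus suffices to determine $v(\rho)$. By Corollary \ref{Corigmult}, $\delta^{\eff}_{\ol{X}_0} = \nu + \tfrac{1}{p-1}$, and $\delta^{\eff}_{\ol{X}_b} = 0$ since $\ol{X}_b$ is \'etale, so the telescoping identity reduces to $\sum_i \sigma^{\eff}_{e_i}\,\epsilon_{e_i} = \nu + \tfrac{1}{p-1}$ summed over the edges $e_i$ on the path from $\ol{X}_0$ to $\ol{X}_b$.

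In case (i), all four branch points of $f^{str}$ reduce to distinct points of $\ol{X}_0$, so $\ol{X}_b$ is attached to $\ol{X}_0$ by a single edge $e$ with $\epsilon_e = v(\rho)$; the only \'etale tail outward from $e$'s node is $\ol{X}_b$, and no $p$-divisibly branched point of $f$ lies outward, so Lemma \ref{Lsigmaeff} (or directly Lemma \ref{Lsigmaeffcompatibility}(ii)) gives $\sigma^{\eff}_e = \sigma_b = \tfrac{3}{2}$, forcing $v(\rho) = \tfrac{2}{3}(\nu + \tfrac{1}{p-1})$. In case (ii), $v(a) > 0$ makes $x = 0$ and $x = a$ coalesce on $\ol{X}_0$; a blowup introduces an intermediate component $\ol{X}_1$ with coordinate $y = x/a$, and the first edge $e$ from $\ol{X}_0$ has $\epsilon_e = v(a)$ with both \'etale tails $\ol{X}_b, \ol{X}_{b'}$ outward, so $\sigma^{\eff}_e - 1 = (\tfrac{3}{2}-1) + (\tfrac{1}{2}-1) = 0$ and $\sigma^{\eff}_e = 1$; along the remaining path $\sigma^{\eff} = \tfrac{3}{2}$ on each edge (no $p$-divisibly branched point of $f$ intervenes) with total thickness $v(\rho) - v(a)$, and solving $v(a) + \tfrac{3}{2}(v(\rho) - v(a)) = \nu + \tfrac{1}{p-1}$ gives $v(\rho) = \tfrac{2}{3}(\nu + \tfrac{1}{p-1}) + \tfrac{1}{3} v(a)$. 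In case (iii), $v(1-a) > 0$ makes $x = 1$ and $x = a$ coalesce; the analogous blowup with coordinate $w = (x-1)/(a-1)$ yields the intermediate $\ol{X}_1$, and the first edge has $\epsilon_e = v(1-a)$ with only $\ol{X}_b$ as an outward \'etale tail but now with the $p$-divisibly branched point $x = 1$ of $f$ outward, so $\sigma^{\eff}_e - 1 = \tfrac{1}{2} - 1 = -\tfrac{1}{2}$ and $\sigma^{\eff}_e = \tfrac{1}{2}$; along the remaining path $\sigma^{\eff} = \tfrac{3}{2}$ with total thickness $v(\rho) - v(1-a)$, and solving $\tfrac{1}{2}v(1-a) + \tfrac{3}{2}(v(\rho) - v(1-a)) = \nu + \tfrac{1}{p-1}$ gives $v(\rho) = \tfrac{2}{3}(\nu + \tfrac{1}{p-1} + v(1-a))$. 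Substituting each into $v(\rho) = v(a) + 2 v(e)$ recovers the stated formulas for $v(e)$.

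The main subtlety is verifying the dual-graph structure claimed above---in particular, that in cases (ii) and (iii) exactly one coalescence of branch points occurs on $\ol{X}_0$, and that beyond the first intermediate blowup $\ol{X}_1$ no further $p$-divisibly branched points of $f$ intervene on the path to $\ol{X}_b$. The latter allows $\sigma^{\eff}$ to remain constant at $\tfrac{3}{2}$ along the rest of the subpath regardless of how many intermediate components appear, so that the contribution $\sum \sigma^{\eff}_{e_i} \epsilon_{e_i}$ along this subpath collapses to $\tfrac{3}{2}$ times the total thickness (which is intrinsic to the geometry). These facts rely on Lemma \ref{Lfullwildspec}, which places $x = \infty$ (in all cases) and $x = 1$ (in cases (i) and (ii)) on $\ol{X}_0$, together with the standing assumption that the branch points of $f$ do not collide on the smooth model $X_R$.
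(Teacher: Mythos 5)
Your proof takes essentially the same route as the paper's: reduce to computing $v(\rho)$ via the relation $v(\rho) = v(a) + 2v(e)$ coming from the degree-two cover, invoke Corollary \ref{Corigmult} for $\delta^{\eff}_{\ol{X}_0}$, and telescope the identity of Lemma \ref{Lsigmaeffcompatibility}(iii) along the path from $\ol{X}_0$ to $\ol{X}_b$, computing the effective invariants from Lemma \ref{Lsigmaeff} with a case split according to which branch points coalesce on $\ol{X}_0$. The values of $\sigma^{\eff}$ you compute on each subpath (namely $\tfrac{3}{2}$, $1$, and $\tfrac{1}{2}$ in the relevant segments) are exactly those the paper obtains, and the resulting arithmetic matches. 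One imprecision: in cases (i) and (ii) you speak of ``a single edge'' from $\ol{X}_0$ (or from $\ol{X}_0$ to the separating component), but the path in $\mc{G}$ may pass through several intermediate $p^i$-components (indeed, by Lemma \ref{Lnobigjumps}, $\ol{X}_b$ meets a $p$-component, so when $\nu>1$ there must be intermediate components before $\ol{X}_b$). The paper handles this by summing $\sigma^{\eff}_i \epsilon_i$ over all edges of the relevant subpath, using that $\sigma^{\eff}_i$ is constant there; your argument implicitly does the same (you acknowledge this for the outer subpath in your final paragraph), so the conclusion is unaffected, but the ``single edge'' phrasing should be replaced by ``a subpath with total thickness $v(a)$ (resp.\ $v(1-a)$) on which $\sigma^{\eff}$ is constant.''
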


\begin{proof}
Since $z^2 = \frac{x-a}{x}$, then for any $z$, $v(z) = \frac{1}{2}(v(x - a) - v(x))$.
Since $\ol{X}_b$ is a new tail, $x=0$ does not specialize to the corresponding disk.  
So for any $x$ in this disk, $v(x-a) > v(a)$, thus $v(x) = v(a)$.  This shows that $v(z) = \frac{1}{2}(v(x-a) - v(a))$ in this disk,
and thus $v(e) = \frac{1}{2}(v(\rho) - v(a))$.  Therefore, it suffices to prove the statements about $v(\rho)$.

Consider the path $\{v_i\}_{i=0}^{j}$, $\{e_i\}_{i=0}^{j-1}$, where $v_0$ corresponds to $\ol{X}_0$ and $v_j$ corresponds to $\ol{X}_b$ (\S\ref{Sgraph}).
Write $\epsilon_i$ (resp.\ $\sigma^{\eff}_i$, $\delta^{\eff}_i$) for $\epsilon_{e_i}$ (resp.\ $\sigma^{\eff}_{e_i}$, 
$\delta^{\eff}_{v_i}$) (see Definition \ref {Dsigmaeff}).  Then $\delta_0 = \nu + \frac{1}{p-1}$, whereas $\delta_j = 0$.

\emph{To (1):}
Suppose $v(a) = v(a-1) = 0$.  Then $\ol{X}_b$ is the only \'{e}tale tail lying outward from the point $\ol{x}_0$ corresponding to $e_0$.  
No branch points with branching index divisible by $p$ lie outward from $\ol{x}_0$, either.  
By Lemma \ref{Lsigmaeff}, $\sigma^{\eff}_i =
\frac{3}{2}$ for all $0 \leq i < j$.
By applying Lemma \ref{Lsigmaeffcompatibility} (3) to each $e_i$, $0 \leq i <
j$, we obtain $v(\rho) = \frac{2}{3}(\nu + \frac{1}{p-1})$.

\emph{To (2):}
Suppose $v(a) > 0$.  In order to separate the specializations of $x=a$ and $x=0$ on the special fiber, 
there must be a component $\ol{W}$ of $\ol{X}$ corresponding to the closed disk of radius $|a|$ and center $0$ (or
equivalently, center $a$).  Suppose $\ol{W}$ corresponds to $v_{i_0}$.  Then, for $i < i_0$, Lemma \ref{Lsigmaeff} shows that
$\sigma^{\eff}_i = 1$.  For $i \geq i_0$, Lemma \ref{Lsigmaeff} shows that $\sigma_i^{\eff} = \frac{3}{2}$.  By
construction, we have $\sum_{i=0}^{i_0-1} \epsilon_i = v(a)$.  Applying Lemma
\ref{Lsigmaeffcompatibility} (3) to each of the edges $e_0, \ldots, e_{i_0-1}$, 
we see that $\delta^{\eff}_{\ol{W}} = \nu + \frac{1}{p-1} - v(a)$.  Then, applying
Lemma \ref{Lsigmaeffcompatibility} (3) to each of the edges $e_{i_0}, \ldots, e_{j-1}$, we see that $\sum_{i=i_0}^{j-1} \epsilon_i =
\frac{2}{3} (\nu + \frac{1}{p-1} - v(a))$.  So $v(\rho) = \sum_{i=0}^{j-1} \epsilon_i = \frac{2}{3}(\nu + \frac{1}{p-1}) + \frac{1}{3}v(a)$.

\emph{To (3):}
Suppose $v(a - 1) > 0$. 
In order to separate the specializations of $x=a$ and $x=1$ on the special fiber, 
there must be a component $\ol{W}$ of $\ol{X}$ corresponding to the closed disk of radius $|1-a|$ and center $1$ (or
equivalently, center $a$).  Suppose $\ol{W}$ corresponds to $v_{i_0}$.  Then, for $i < i_0$, Lemma \ref{Lsigmaeff} shows that
$\sigma^{\eff}_i = \frac{1}{2}$.  For $i \geq i_0$, Lemma \ref{Lsigmaeff} shows that $\sigma_i^{\eff} = \frac{3}{2}$.  By
construction, we have $\sum_{i=0}^{i_0-1} \epsilon_i = v(1-a)$.  Applying Lemma
\ref{Lsigmaeffcompatibility} (3) to each of the edges $e_0, \ldots, e_{i_0-1}$, 
we see that $\delta^{\eff}_{\ol{W}} =\nu + \frac{1}{p-1} - \frac{1}{2}v(1-a)$.  
Then, applying Lemma \ref{Lsigmaeffcompatibility} to each of the edges $e_{i_0}, \ldots, e_{j-1}$, we see that $\sum_{i=i_0}^{j-1} 
\epsilon_i = \frac{2}{3} (\nu + \frac{1}{p-1} - \frac{1}{2}v(1-a))$.  So $v(\rho) = \sum_{i=0}^{j-1} \epsilon_i = 
\frac{2}{3}(\nu + \frac{1}{p-1} + v(1-a))$.  
\end{proof}

We now determine a point $a_0$ inside the disk corresponding to $\ol{X}_b$.  
It will turn out that $a_0$ (thus the disk) is uniquely determined by $p$, $r$, and $s$.  
We choose $a_0$ so that it is defined over as small an extension of $K_0$ as possible.
Our strategy will be to look at equations (\ref{Edisk}) and (\ref{Edisk2}), understand the dependence of the coefficients
$c_i = \frac{g^{(i)}(0)}{i!}e^i$ on $a$, and use Lemma \ref{Lartinschreier} to show that only for certain choices of $a$ can the torsor
given by (\ref{Edisk2}) split into $p^{\nu-1}$ copies of an Artin-Schreier cover.

The main idea of the argument is completely present when $v(a) = v(a-1) = 0$ (Proposition \ref{Panot01}, the simplest case).
Unfortunately, calculational difficulties make this idea more difficult to implement when $v(a) > 0$ or $v(a-1) >  0$ (Propositions \ref{Pa0} and \ref{Pa1}), 
and the arguments are much longer.  Furthermore, when $p=5$, we will have to use Lemma \ref{Lartinschreier} (2) instead of Lemma 
\ref{Lartinschreier} (1), which obscures the main idea even more.  Thus, on a first reading, the reader might choose to read only 
\S\ref{Setaletails}.1, as well as the statements of Propositions \ref{Pa0} and \ref{Pa1}, before moving on to \S\ref{Sinseparabletails}.

Note that our choice of $a$ does not affect which case we are in, as the radius of 
the disk corresponding to $\ol{X}_b$ is less than 1.   

\paragraph{The case $v(a) = v(a-1) = 0$}  

\begin{prop}\label{Panot01}
If $v(a) = v(a-1) = 0$, then the disk $\Delta$ corresponding to $\ol{X}_b$ contains the $K_0$-rational point 
$x = a_0$, where $a_0 = 1 - \frac{s^2}{r^2}$.
\end{prop}

\begin{proof}
Recall that we use the notation of (\ref{Edisk}) and (\ref{Edisk2}).
We know $v(e) = \frac{1}{3}(\nu + \frac{1}{p-1})$ by Lemma \ref{Ltailradius}.
Since $g^{(i)}(0)/i!$ is the coefficient of $z^i$ in the Maclaurin series expansion of $g$, and since $v(\sqrt{1-a}) = 0$, we obtain that 
$v(\frac{g^{(i)}(0)}{i!}) \geq 0$.  Since $c_i = \frac{g^{(i)}(0)}{i!}e^i$, we have $v(c_i) \geq iv(e) = \frac{i}{3}(\nu + \frac{1}{p-1})$.  It follows
that for $p | i$ (and $i \neq 0$), $v(c_i) > \nu + 
\frac{1}{p-1}$.  So for the torsor given by (\ref{Edisk2}) to split into $p^{\nu - 1}$ disjoint 
$\mu_p$-torsors with \'{e}tale reduction, Lemma
\ref{Lartinschreier} (1) says that we must have $v(c_1) \geq \nu + \frac{1}{p-1}$.  
In particular, we must have $v(g'(0)) \geq \frac{2}{3}(\nu + \frac{1}{p-1})$.  A calculation shows that 
$$g'(0) = 2r + \frac{2s}{\sqrt{1-a}}.$$

Since the branching index of $1$ is $p^{\nu}$, $s$ is a unit modulo $p$ and
$v(2s) = 0$.  Since $v(g'(0)) \geq \frac{2}{3}(\nu + \frac{1}{p-1})$, Lemma \ref{Lk0root} 
(with $c = g'(0)$, $\alpha = 2r$, $\beta = 2s$) shows that 
$a_0 = 1 - \frac{s^2}{r^2} \in K_0$ satisfies
$v(a-a_0) \geq \frac{2}{3}(\nu + \frac{1}{p-1})$.  
But by Lemma \ref{Ltailradius}, $v(\rho) = \frac{2}{3}(\nu + \frac{1}{p-1})$, where 
$|\rho|$ is the radius of $\Delta$.  So $a_0 \in \Delta$.
\end{proof}

\begin{remark}\label{Rnot01conductor3}
In fact, if $a$ is as in Proposition \ref{Panot01}, 
we have $v(c_3) = \nu + \frac{1}{p-1}$ and $v(c_i) > \nu+ \frac{1}{p-1}$ for $i \ne 3$.
By \cite[Lemma 3.1 (i)]{Ob:fm}, the torsor given by (\ref{Edisk2}) indeed splits into $p^{\nu-1}$ disjoint $\mu_p$-torsors with
\'{e}tale reduction birational to an Artin-Schreier cover branched at one point of conductor 3.  
\end{remark}

\paragraph{The case $v(a) > 0$}  

When $v(a) > 0$, the required calculations are somewhat more involved.
By Lemma \ref{Ltailradius}, we have $v(e) = \frac{1}{3}(\nu + \frac{1}{p-1} - v(a))$ in this case.

\begin{lemma}\label{Lvaint}
We have $v(a) = v(r+s) \leq \nu-1$.  In particular, $v(a) \in \ints$.
\end{lemma}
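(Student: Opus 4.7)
The plan is to extract both assertions from the conductor-$3$ structure of the reduction of the $\mu_{p^\nu}$-torsor $Y^{str} \to Z^{str}$ on the disk $\mc{D}$ above $\ol{X}_b$. First I would set up the relevant local expansion. Lemma \ref{Ltailradius}(ii) gives $v(e) = \tfrac{1}{3}(\nu + \tfrac{1}{p-1} - v(a))$, and after normalizing the torsor to $y^{p^\nu} = 1 + c_1 t + c_2 t^2 + \cdots$ on $\mc{D}$, a logarithmic-derivative computation from \eqref{Efstr2} gives
$$c_1 \;=\; \left(2r + \frac{2s}{\sqrt{1-a}}\right) e \;=\; \bigl(2(r+s) + sa + O(a^2)\bigr)\, e,$$
where the second equality uses the binomial expansion $(1-a)^{-1/2} = 1 + a/2 + O(a^2)$, valid since $v(a) > 0$. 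Applying Lemma \ref{Lfullwildspec}, which is available because $a \equiv 0 \not\equiv 1 \pmod \pi$, forces $x=1$ to have full branching index $p^\nu$ in $f^{str}$, so $v(s) = 0$.

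The key step is to show $v(c_1) \geq \nu + \tfrac{1}{p-1}$. Because $\sigma_b = 3/2$ and $\ol{X}_b$ meets a $p$-component (Lemma \ref{Lnobigjumps}), the torsor reduces \'etalely with conductor $3$ on $\ol{Z}_b$, which by \cite[Lemma 2.18]{Ob:fm} imposes exactly this lower bound on $v(c_1)$. Translating via $v(e)$ gives $v(A_1) \geq \tfrac{2}{3}(\nu + \tfrac{1}{p-1}) + \tfrac{1}{3}v(a)$ for $A_1 := 2r + 2s/\sqrt{1-a}$. Since $v(a) < \nu + \tfrac{1}{p-1}$ (else $\ol{X}_b$ would contain the specialization of $0$ and hence be primitive), this bound strictly exceeds $v(a)$, so $v(A_1) > v(a)$. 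In the expansion $A_1 = 2(r+s) + sa + O(a^2)$, we have $v(sa) = v(a)$ (from $v(s)=0$) and $v(O(a^2)) \geq 2v(a) > v(a)$, so the only way $v(A_1) > v(a)$ can hold is by partial cancellation of the leading terms $2(r+s)$ and $sa$, forcing $v(2(r+s)) = v(sa) = v(a)$, i.e., $v(r+s) = v(a)$. In particular, $v(a)\in\ints$.

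For the bound $v(r+s) \leq \nu - 1$, I would argue by contradiction. Suppose $v(r+s) \geq \nu$, so $s \equiv -r \pmod{p^\nu}$, and write $s = -r + p^\nu m$ for some $m \in \ints$. Absorbing the factor $v^{p^\nu m}$ (with $v = (z+\sqrt{1-a})/(z-\sqrt{1-a})$) into $y$ transforms \eqref{Efstr2} birationally into $(y')^{p^\nu} = h(z)^r$, where
$$h(z) \;=\; \frac{(z+1)(z-\sqrt{1-a})}{(z-1)(z+\sqrt{1-a})},$$
and since $\gcd(r,p) = 1$ one may further replace this by $(y'')^{p^\nu} = h(z)$. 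Direct computation yields $h(0) = 1$ and $h'(0)/h(0) = 2 - 2/\sqrt{1-a} = -a + O(a^2)$, so the new $c_1$ has valuation $v(a) + v(e) = \tfrac{2}{3}v(a) + \tfrac{1}{3}(\nu + \tfrac{1}{p-1})$, strictly less than $\nu + \tfrac{1}{p-1}$ since $v(a) < \nu + \tfrac{1}{p-1}$. This contradicts the key step applied to the rewritten torsor.

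The main obstacle is the key step itself: translating the geometric datum $\sigma_b = 3/2$ into the clean lower bound $v(c_1) \geq \nu + \tfrac{1}{p-1}$. Once that is in hand via \cite[Lemma 2.18]{Ob:fm}, the remainder is a careful but routine valuation bookkeeping.
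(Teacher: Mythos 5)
There is a genuine gap at what you yourself flag as ``the key step.'' The inequality $v(c_1) \geq \nu + \tfrac{1}{p-1}$ does not follow from the geometric fact that the torsor splits into $p^{\nu-1}$ disjoint $\mu_p$-torsors with \'etale reduction of conductor $3$. The citation \cite[Lemma 2.18]{Ob:fm} is the \emph{sufficient} direction (conditions on the $c_i$ imply the desired splitting); the necessary direction you need is precisely Lemma \ref{Lartinschreier}, and that lemma has as a standing hypothesis that $v(c_i) > \nu + \tfrac{1}{p-1}$ for all $i > p$ divisible by $p$. Without some such normalizing hypothesis the implication is simply false: if $g(et) = (1+t)^{p^{\nu}}\bigl(1 + p^{\nu + 1/(p-1)}t^3\bigr)$, then $v(c_1) = \nu$, yet the torsor splits with \'etale reduction of conductor $3$. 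In the paper the required bound on the higher coefficients is established in Lemma \ref{L0coeffval} and Corollary \ref{C0coeffval}, and \emph{those use Lemma \ref{Lvaint}} (specifically $v(r+s) = v(a)$). So your argument is circular.

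The paper uses a weaker but unconditional bound: since $z = 0$ (i.e.\ $x = a$) is a branch point of index $2$, its $p^{\nu}$ preimages in $Y^{str}$ specialize to distinct points, hence the cover $\ol{Y}^{str} \to \ol{Z}^{str}$ splits completely over $\ol{z}$ and $g(et)$ is a $p^{\nu}$th power in $R[[t]]$, giving only $v(c_1) \geq \nu$. This is worth a full $\tfrac{1}{p-1}$ less than what you claim, which is exactly why the clean one-shot argument you propose does not close: with the weaker bound, your cancellation argument only works when $v(a) < \nu - \tfrac{1}{2(p-1)}$, and your contradiction argument in the second half likewise only bites in that range. The remaining case ($v(a) \geq \nu - \tfrac{1}{2(p-1)}$ together with $v(r+s) \geq \nu$) is what the paper rules out by an entirely different, geometric argument: it rewrites $g$ (multiplying by the $p^{\nu}$th power $\bigl(\tfrac{z-\sqrt{1-a}}{z+1}\bigr)^{r+s}$), shows the result is a $p^{\nu-1}$st power in $R\{(z-1)^{-1},(z+1)^{-1}\}$, deduces that a deformation datum above the component $\ol{W}$ separating $0$ and $a$ would have invariant $-1$, hence be additive, and derives a contradiction from \cite[Proposition 2.8]{We:mc}. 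That last step has no analogue in your proposal, and it is precisely the content that your unjustified strengthening of the $c_1$-bound papers over.
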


\begin{proof}
The cover $\ol{Y}^{str} \to \ol{Z}^{str}$ splits completely above the
specialization $\ol{z}$ of $z = 0$.
Recall that $t$ is a coordinate on the disk corresponding to $\ol{Z}_b$, so that $z = et$.  
Then $\ol{z}$ corresponds to the open disk $|t| < 1$, and \cite[Proposition
3.2.3 (2)]{Ra:ab} shows that this disk splits into $p^{\nu}$ disjoint copies in
$\ol{Y}^{str}$.  In particular, $g(et)$ is a $p^{\nu}$th power in $R[[t]]$.  
If $\sum \alpha_it^i$ is a power series in $R[[t]]$ that is a $p^{\nu}$th power,
the coefficient of $t$ must be divisible by $p^{\nu}$.  
So the coefficient $c_1$ of $t$ in $g(et)$, which is $g'(0)e$, has valuation at least
$\nu$, and thus $v(g'(0)) \geq \nu - v(e) = \frac{2}{3}\nu + \frac{1}{3}v(a) - \frac{1}{3(p-1)}$. 

On the other hand, $g'(0) = 2r  + \frac{2s}{\sqrt{1-a}}$, which can be written as
\begin{equation}\label{Ethisone}
g'(0) = 2r + 2s(1 + \frac{a}{2} + O(a^2)) = 2(r + s) + sa + s(O(a^2)),
\end{equation} 
where $O(a^2)$ represents terms whose valuation is at least $2v(a)$.  
If we assume for the moment that $v(a) < \nu - \frac{1}{2(p-1)}$, then we must have 
$$v(g'(0)) \geq
\frac{2}{3}\nu + \frac{1}{3}v(a) - \frac{1}{3(p-1)} > v(a).$$  Since $v(a^2) > v(a)$, this means 
$v(2(r+s) + sa) > v(a)$, so $$v(r+s) = v(sa) = v(a)$$ ($v(s) = 0$, by Lemma \ref{Lfullwildspec}).  
Since $v(r+s) \in \ints$, we have
$$v(a) = v(r+s) \leq \nu - 1.$$  

If instead, we assume that $v(a) \geq \nu - \frac{1}{2(p-1)}$, then 
$$v(g'(0)) \geq \frac{2}{3}\nu + \frac{1}{3}v(a) - \frac{1}{3(p-1)} > \nu - 1.$$  
So $v(2(r + s)) = v(r+s) \geq \nu$ by (\ref{Ethisone}).

It remains to show that we cannot have both $v(a) \geq \nu - \frac{1}{2(p-1)}$ and $v(r+s) \geq \nu$.  
Suppose, for a contradiction, that this is the case.  Then, multiplying $g(z)$ by
$\left(\frac{z-\sqrt{1-a}}{z + 1}\right)^{r+s}$, which is a $p^{\nu}$th power, we obtain the alternative equation
$$y^{p^{\nu}} = \left(\frac{z+ \sqrt{1-a}}{z+1}\right)^s\left(\frac{z- \sqrt{1-a}}{z-1}\right)^r$$ to (\ref{Efstr2}).
Consider the unique component $\ol{V}$ of $\ol{Z}^{str}$ above $\ol{W}$, the component of $\ol{X}$ corresponding to 
the disk of radius $|a|$ around $x=0$.  Then $\ol{V}$ corresponds to the coordinate
$z$.  The formal completion of $\ol{V} \backslash \{z = \pm 1\}$ in $Z^{str}$ is isomorphic to $\Spec C$ where 
$$C := R\{(z-1)^{-1}, (z+1)^{-1}\}.$$  
We have $$\frac{z+ \sqrt{1-a}}{z+1} = 1 + (\sqrt{1-a} - 1)(z+1)^{-1}.$$  Since 
$$v(\sqrt{1-a} - 1) = v(a) > \nu - 1 + \frac{1}{p-1},$$ this is a $p^{\nu - 1}$st power in $C$ (which follows from the binomial expansion).  
Likewise, $\left(\frac{z- \sqrt{1-a}}{z-1}\right)$ is a $p^{\nu -1}$st power in $C$.
So $$\left(\frac{z+ \sqrt{1-a}}{z+1}\right)^s\left(\frac{z- \sqrt{1-a}}{z-1}\right)^r$$ is a $p^{\nu -1}$st power in $C$.
But this means that there are at least $p^{\nu - 1}$ irreducible components in the inverse image of $\ol{V} \backslash 
\{z = \pm 1\}$ in $\ol{Y}^{str}$, and thus that many irreducible components of $\ol{Y}^{str}$ above $\ol{V}$.  

Now, $\ol{W}$ is not a tail, so it is not an \'{e}tale component by Lemma \ref{Letaletail}.  So $\ol{W}$ must be a $p$-component.  Furthermore, it 
cannot intersect a $p^2$-component, because the inertia groups above the intersection point would have order divisible by $p^2$, and then there could 
not be $p^{\nu-1}$ irreducible components above $\ol{V}$.   So $\ol{Y}$ has $p^{\nu-1}$ irreducible components above $\ol{V}$, each a radicial extension of degree $p$.  Associated to each is one deformation datum.  It has 
three critical points: two at the intersection of $\ol{W}$ with outward-lying components, and one at the intersection of $\ol{W}$ with inward lying 
components.  By Lemma \ref{Lsigmaeff}, the first two critical points have invariants $3/2$ and $1/2$, and \cite[p.\ 998, (2)]{We:br} shows that 
the third has invariant $-1$. Since no multiplicative deformation datum can have $-1$ for an invariant, the deformation datum must be additive.
But this contradicts \cite[Proposition 2.8]{We:mc}, proving the lemma.
\end{proof}

\begin{remark}\label{Rwsplit}
Armed with the knowledge that $v(a) = v(r+s)$, we can run the argument of the second-to-last paragraph of the proof again to see
that $g(z)$ is a $p^{v(a) - 1}$st power in $C$, and thus there are at least $p^{v(a) - 1}$ irreducible components of $\ol{Y}^{str}$ above $\ol{V}$.
In particular, $\ol{W}$ is a $p^i$-component for $i \leq \nu - v(a) + 1$.
\end{remark}

\begin{lemma}\label{L0coeffval}
The valuation $v\left(\frac{g^{(i)}(0)}{i!}\right)$ is at least $v(a) - v(i).$
\end{lemma}

\begin{proof}
Note that $\frac{g^{(i)}(0)}{i!}$ is the
coefficient of $z^i$ in the Maclaurin series expansion of $g(z)$.  
Since $v(a) \geq v(a) - v(i)$, it suffices to look modulo $a$.  By (\ref{Efstr2}), $g(z)$ is
congruent (mod $a$) to 
\begin{equation}\label{Emessy1}
\left(\frac{z+1}{z-1}\right)^{r+s} = \left(1 + \frac{2}{z-1}\right)^{r+s} = (- 1 - 2z - 2z^2 - 2z^3 - 
\cdots)^{r+s}.
\end{equation}
Expanding out the above expression gives 
\begin{equation}\label{Emessy2}
(-1)^{r+s}\left(1 + \sum_{i=1}^{\infty} \sum_{\substack{I = \{i_1, \ldots, i_q\} \subset \nats \\ A = (a_1, \ldots, a_q) \in \nats^q \\ \sum_{j=1}^q a_j \leq r+s \\ \sum_{j=1}^q a_ji_j = 
i}} 2^{|\sum_{j=1}^q a_j|} \binom{r+s}{a_1, \ldots, a_q, r+s - \sum_{j=1}^q a_j}z^i\right)
\end{equation}
(the contributions to the $z^i$ term come from taking $a_j$ different $z^{i_j}$ terms for $j = 1$ to $q$).

Now, if $\sum_{j=1}^q a_j i_j = i$, then there exists $j$ such that $v(a_j) \leq v(i)$.  By Lemma \ref{Lbinomcoeffs}, the coefficient of $z^i$ has 
valuation at least $v(r+s) - v(i)$, which is $v(a) - v(i)$, by Lemma \ref{Lvaint}.
\end{proof}

Recall that $c_i := \frac{g^{(i)}(0)}{i!}e^i$.
\begin{corollary}\label{C0coeffval}
For $i > 3$, we have $v(c_i) > \nu + \frac{1}{p-1}$, unless $p = i = 5$ and $v(a) = \nu - 1$. 
\end{corollary}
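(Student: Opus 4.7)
The plan is to combine Lemma \ref{L0coeffval} with the explicit value of $v(e)$ from Lemma \ref{Ltailradius} (ii), and then identify the finitely many small cases in which the resulting bound fails. Recall that $c_i = \frac{g^{(i)}(0)}{i!} e^i$, and write $M := \nu + \frac{1}{p-1}$. Since Lemma \ref{L0coeffval} gives $v\!\left(\frac{g^{(i)}(0)}{i!}\right) \geq v(a) - v(i)$ and Lemma \ref{Ltailradius} (ii) gives $v(e) = \tfrac{1}{3}(M - v(a))$, the immediate lower bound is
$$v(c_i) \;\geq\; v(a) - v(i) + \tfrac{i}{3}\bigl(M - v(a)\bigr).$$
For $i>3$ we have $1-i/3<0$, so the desired inequality $v(c_i) > M$ follows once we verify $M - v(a) > \frac{3 v(i)}{i-3}$.

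Next, I would invoke Lemma \ref{Lvaint}, which yields $v(a) \leq \nu - 1$ and hence $M - v(a) \geq 1 + \tfrac{1}{p-1} = \tfrac{p}{p-1}$. If $p \nmid i$ then $v(i)=0$ and there is nothing to check, so we may assume $v(i) = k \geq 1$, which forces $i \geq p^k$. It suffices to prove $\tfrac{p}{p-1} > \tfrac{3k}{p^k - 3}$. For $k \geq 2$ this is easy (the right side is small once $p \geq 5$), and for $k = 1$ it rearranges to $p^2 - 6p + 3 > 0$, which holds for every prime $p \geq 7$ but fails at $p = 5$.

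Finally, I would handle the leftover regime $p = 5$, $v(i) = 1$ by hand. Writing $i = 5m$ with $5 \nmid m$, the condition becomes $M - v(a) > \tfrac{3}{5m - 3}$. For $m \geq 2$ the right side is at most $\tfrac{3}{7}$, which is safely below $\tfrac{p}{p-1} = \tfrac{5}{4}$, so the bound holds. Only $i = 5$ remains. There the required inequality is $M - v(a) > \tfrac{3}{2}$; the bound $M - v(a) \geq \tfrac{5}{4}$ is too weak, but if $v(a) \leq \nu - 2$ we instead obtain $M - v(a) \geq \tfrac{9}{4} > \tfrac{3}{2}$, so the estimate still works. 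Thus the only genuine failure occurs precisely when $p = i = 5$ and $v(a) = \nu - 1$, matching the exception in the statement.

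The proof is essentially a bookkeeping argument, and there is no deep obstacle; the one point requiring attention is tracking the inequality tightly enough to isolate the $p = i = 5$, $v(a) = \nu - 1$ exception rather than discarding it together with other small cases. Since Lemmas \ref{L0coeffval} and \ref{Lvaint} give integrality of $v(a)$ and an integer upper bound $v(a) \leq \nu - 1$, this discreteness is exactly what makes the case analysis finite and permits identifying the single exceptional configuration.
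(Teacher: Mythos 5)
Your proof is correct and follows essentially the same approach as the paper's: both substitute the bound of Lemma \ref{L0coeffval} and the value of $v(e)$ from Lemma \ref{Ltailradius} (ii) to reach the sufficient condition $\frac{i-3}{3}\bigl(\nu + \frac{1}{p-1} - v(a)\bigr) > v(i)$, then use $v(a) \leq \nu - 1$ from Lemma \ref{Lvaint} to verify it except when $p = i = 5$ and $v(a) = \nu - 1$. You have merely written out the case check that the paper compresses into the phrase ``one checks,'' and your bookkeeping is accurate.
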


\begin{proof}
By Lemma \ref{L0coeffval}, $v(c_i) \geq iv(e) + v(a) - v(i)$.
By Lemma \ref{Ltailradius} (ii), $v(e) = \frac{1}{3}(\nu + \frac{1}{p-1} - v(a))$.
So
\begin{align}
v(c_i) \geq&\  \frac{i}{3}\left(\nu + \frac{1}{p-1} - v(a)\right) + v(a) - v(i) \\
=&\  \nu + \frac{1}{p-1} + \frac{i-3}{3}\left(\nu + \frac{1}{p-1} - v(a)\right) - v(i). \label{Erushed0}
\end{align}
Therefore, $v(c_i) > \nu + \frac{1}{p-1}$ whenever $\frac{(i-3)}{3}(\nu + \frac{1}{p-1} - v(a)) > v(i)$.  
By Lemma \ref{Lvaint}, $\nu - v(a) \geq 1$.
One checks that $\frac{(i-3)}{3}(\nu + \frac{1}{p-1} - v(a)) > v(i)$ 
always holds, except when $p = i =  5$ and $v(a) = \nu - 1$.  
This proves the corollary.
\end{proof}

\begin{remark}\label{R0c5}
In the case $p = 5$ and $v(a) = \nu - 1$, one uses (\ref{Emessy2}) to see that 
$\frac{g^{(5)}(0)}{5!}$ is congruent to $32\binom{r+s}{5} \pmod{a}$ (this is the term where $I = \{1\}$ and $A = (5)$; all 
other terms are $0$ modulo $a$).  Thus $c_5$ is equal to $32\binom{r+s}{5}e^5$ plus terms of valuation greater than $v(ae^5) = \nu 
+ \frac{13}{12} > \nu + \frac{1}{4} = \nu + \frac{1}{p-1}$.
Also, $v(c_5) = \nu + \frac{1}{12}$.  
\end{remark}

\begin{prop}\label{Pa0}
Suppose $v(a) > 0$. 
\begin{enumerate}
\item If $p > 5$ or $v(a) < \nu - 1$, then 
the disk $\Delta$ corresponding to $\ol{X}_b$ contains the $K_0$-rational point $x = a_0$, where 
$a_0 = 1 - \frac{s^2}{r^2}$.
\item If $p = 5$ and $v(a) = \nu -1$, then $\Delta$ contains the point 
$x = a_0$, where $a_0 = 1 - \left(\frac{s - \sqrt[5]{5^{4\nu +1}\binom{r+s}{5}}}{r}\right)^2$ (for all choices of 5th root).
\end{enumerate}
\end{prop}

\begin{proof}
\emph{To (1)}: By Corollary \ref{C0coeffval}, $v(c_i) > \nu + \frac{1}{p-1}$ for all $i$ such that $p | i$.
By Lemma \ref{Lartinschreier} (1), the torsor given by (\ref{Edisk2}) can split into $p^{\nu -1}$ disjoint $\mu_p$-torsors
with \'{e}tale reduction only if $v(c_1) \geq \nu + \frac{1}{p-1}$. 
In particular, we must have
$$v(g'(0)) = v(2r + \frac{2s}{\sqrt{1-a}}) \geq \nu + \frac{1}{p-1} - v(e) = 
\frac{2}{3}(\nu + \frac{1}{p-1}) + \frac{1}{3}v(a).$$  By Lemma \ref{Lk0root}, 
$a_0 = 1 - \frac{s^2}{r^2} \in K_0$ satisfies
$v(a-a_0) \geq \frac{2}{3}(\nu + \frac{1}{p-1}) + \frac{1}{3} v(a)$.  
But by Lemma \ref{Ltailradius}, $v(\rho) = \frac{2}{3}(\nu + \frac{1}{p-1}) + \frac{1}{3} v(a)$, where 
$|\rho|$ is the radius of $\Delta$.  So $a_0 \in \Delta$.

\emph{To (2)}: Let $c_5' = 32\binom{r+s}{5}e^5$.  By Remark \ref{R0c5}, $v(c_5' - c_5) > \nu + \frac{1}{4}$ and $v(c_5) > n$.
Also, Corollary \ref{C0coeffval} shows that $v(c_i) > \nu + \frac{1}{4}$ for all $i > 5$ such that $5 | i$.  
By Lemma \ref{Lartinschreier} (2), the torsor given by (\ref{Edisk2}) can split into $5^{\nu -1}$ disjoint $\mu_5$-torsors
with \'{e}tale reduction only if $$v\left(c_1 - \sqrt[5]{32 \cdot 5^{4\nu + 1}\binom{r+s}{5}e^5}\right) \geq \nu + \frac{1}{4}.$$ 

In particular, we must have
\begin{equation}\label{E05}
v\left(g'(0) - \sqrt[5]{32 \cdot 5^{4\nu + 1}\binom{r+s}{5}}\right) \geq \nu + \frac{1}{4} - v(e) = 
\nu - \frac{1}{6}.
\end{equation}  
Recall that $g'(0) = 2r + \frac{2s}{\sqrt{1-a}}$.
Since $$v\left((1 - \frac{1}{\sqrt{1-a}})\sqrt[5]{32 \cdot 5^{4\nu + 1}\binom{r+s}{5}}\right) = v(a) + \nu - \frac{1}{5} >
\nu - \frac{1}{6}$$ (as $v(a) \geq 1$), Equation (\ref{E05}) is equivalent to 
$$v\left(2r + \frac{2s - \sqrt[5]{32 \cdot 5^{4\nu + 1}\binom{r+s}{5}}}{\sqrt{1-a}}\right) \geq \nu - \frac{1}{6}.$$
By Lemma \ref{Lk0root}, we have $v(a - a_0) \geq \nu - \frac{1}{6}$, where $a_0 = 1 - \left(\frac{s - \sqrt[5]{5^{4\nu +1}\binom{r+s}{5}}}{r}\right)^2$.
But $v(\rho) = \nu - \frac{1}{6}$, so $a_0$ specializes to $\ol{X}_b$, and we can take $a = a_0$.
\end{proof}

\begin{remark}\label{R0conductor3}
As in Remark \ref{Rnot01conductor3}, one shows that if $a$ is as in Proposition \ref{Pa0}, then
the torsor given by (\ref{Edisk2}) indeed splits into $p^{\nu-1}$ disjoint $\mu_p$-torsors with
\'{e}tale reduction birational to an Artin-Schreier cover branched at one point of conductor 3.  
Also, if we are in case (1) of Proposition \ref{Pa0} and $a = 1 - \frac{s^2}{r^2}$, 
then since $v(g'(0)) > 0$, we must choose $\sqrt{1-a} = -\frac{s}{r}$.
\end{remark}

\paragraph{The case $v(a-1) > 0$}  
This case will be quite parallel to the $v(a) > 0$ case.  
We have $v(e) = \frac{1}{3}(\nu + \frac{1}{p-1} + v(1-a))$ by Lemma \ref{Ltailradius}.
We claim that $x=1$ is branched of index strictly less than $p^{\nu}$.  Indeed, if $x=1$ is branched of index $p^{\nu}$,
its specialization $\ol{1}$ would lie on $\ol{X}_0$ by Lemma \ref{Lwildspec}.  But $\ol{1}$ must be a smooth
point of $\ol{X}$.  So $\ol{1}$ corresponds to an open disk of radius $1$.   Since $v(a-1) > 0$, then $a$ would specialize to $\ol{1}$
as well, contradicting the fact that $a$ specializes to an \'{e}tale tail.  Let $p^{\nu_1} < p^{\nu}$ be the branching index of $x=1$.  
Then we know $v(s) = \nu - \nu_1$.  

Since the specializations of $1$ and $a$ cannot collide on $\ol{X}$, we must have a
component $\ol{W}$ of $\ol{X}$ corresponding to the disk of radius $|1-a|$ centered at $1$ (or equivalently, at
$a$).  

The next two lemmas play the role of Lemma \ref{Lvaint} for the case $v(a-1) > 0$.
\begin{lemma}\label{Lvsmall}
We have $v(1-a) \leq 2(\nu - 1+ \frac{1}{p-1})$.
\end{lemma}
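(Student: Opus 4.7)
The plan is to combine the identity
\begin{equation*}
  \delta^{\eff}_{\ol{W}} \;=\; \nu + \tfrac{1}{p-1} - \tfrac{1}{2}v(1-a),
\end{equation*}
established in the proof of Lemma \ref{Ltailradius}(iii), with the lower bound $\delta^{\eff}_{\ol{W}} \geq 1 + \tfrac{1}{p-1}$, which I obtain via monotonicity of the effective different applied to a component that I can control exactly.

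First I introduce $\ol{W}_1$, the component of $\ol{X}$ to which $x=1$ specializes. By Lemma \ref{Lcorrectspec}, $\ol{W}_1$ is a $p^{\nu_1}$-component, and since $1$ is a wild branch point of $f$ one has $\nu_1 \geq 1$. The component $\ol{W}$ corresponds to the smallest closed $p$-adic disk containing both $x=1$ and $x=a$, so $\ol{W}_1$ either equals $\ol{W}$ or lies strictly outward from $\ol{W}$ along the path toward $\ol{1}$; either way, $\ol{X}_0 \preceq \ol{W} \preceq \ol{W}_1$ in the partial order on components.

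Next I compute $\delta^{\eff}_{\ol{W}_1}$ exactly. Because the branching of $f$ at $x=1$ has $p$-part equal to $p^{\nu_1}$, the local tower of $p$-cyclic extensions of complete discrete valuation rings at the unique point of $\ol{Y}$ lying over $\ol{1}$ is totally ramified at each of its $\nu_1$ steps. Consequently every deformation datum $\omega_i$ above $\ol{W}_1$ has $\ol{1}$ as a critical point, and Lemma \ref{Lcritical} forces each $\omega_i$ to be logarithmic (multiplicative). Hence $\delta_i = 1$ for every $i$, so
\begin{equation*}
  \delta^{\eff}_{\ol{W}_1} \;=\; (\nu_1 - 1) + \tfrac{p}{p-1} \;=\; \nu_1 + \tfrac{1}{p-1} \;\geq\; 1 + \tfrac{1}{p-1}.
\end{equation*}

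Now Corollary \ref{Cdifferentmonotone} gives $\delta^{\eff}_{\ol{W}} \geq \delta^{\eff}_{\ol{W}_1} \geq 1 + \tfrac{1}{p-1}$. Substituting into the displayed identity yields $\nu - \tfrac{1}{2}v(1-a) \geq 1$, whence $v(1-a) \leq 2(\nu - 1) \leq 2\bigl(\nu - 1 + \tfrac{1}{p-1}\bigr)$, which is stronger than the required bound.

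The only genuinely delicate step is justifying that each of the $\nu_1$ deformation data above $\ol{W}_1$ has $\ol{1}$ as a critical point. This boils down to a local statement: at the unique point of $\ol{Y}$ over $\ol{1}$, the extension of complete local rings is totally ramified of degree divisible by $p^{\nu_1}$, so each intermediate degree-$p$ subextension in the tower used to build the $\omega_i$ contributes a nontrivial different at $\ol{1}$, making $\ol{1}$ critical for that datum. Once this is in hand, the rest of the argument is a two-line combination of Lemma \ref{Lcritical}, Corollary \ref{Cdifferentmonotone}, and the identity from the proof of Lemma \ref{Ltailradius}(iii).
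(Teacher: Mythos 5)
Your proposal is correct, and it takes a genuinely different route from the paper's. The paper's own proof passes to the quotient cover $f' = f^{str}/Q$ with $|Q| = p^{\nu_1}$, which kills the wild branching at $x=1$; it then starts from $(\delta^{\eff}_0)' = \nu - \nu_1 + \frac{1}{p-1}$, shows each edge on the path from $\ol{X}_0$ to $\ol{W}$ has $(\sigma^{\eff}_i)' \geq \frac{1}{2}$ for $f'$, and closes with the crude bound $(\delta^{\eff}_j)' \geq 0$. You instead stay with $f^{str}$ itself: you pin down $\delta^{\eff}_{\ol{W}_1}$ exactly, where $\ol{W}_1$ is the component receiving the specialization of $x=1$, and then import the identity $\delta^{\eff}_{\ol{W}} = \nu + \frac{1}{p-1} - \frac{1}{2}v(1-a)$ already established in the proof of Lemma \ref{Ltailradius}(iii), combining the two via Corollary \ref{Cdifferentmonotone}. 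This avoids the quotient cover altogether and, incidentally, yields the sharper bound $v(1-a) \leq 2(\nu - \nu_1) \leq 2(\nu - 1)$; the paper's argument only gives $2(\nu - 1 + \frac{1}{p-1})$ because it throws away information by bounding $(\delta^{\eff}_j)'$ below by zero rather than by something positive. Both proofs rest on the same two underlying ingredients — the effective-different machinery and the fact that a component receiving a wildly branched point has purely multiplicative deformation data — they just package them differently.

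One small point to tighten: your claim that ``every deformation datum $\omega_i$ above $\ol{W}_1$ has $\ol{1}$ as a critical point'' (and hence is multiplicative by Lemma \ref{Lcritical}) is the step the paper itself does not treat as automatic. In the parallel computation in the proof of Lemma \ref{Lwildspec}, the paper invokes Lemma \ref{Lcritical} \emph{together with} \cite[Proposition 2.22]{Ob:fm} to conclude that the deformation data above such a component are multiplicative and identical, which is what gives $\delta^{\eff}_{\ol{W}_1} = \nu_1 + \frac{1}{p-1}$. You should cite that proposition rather than the informal ``totally ramified tower'' heuristic; with that citation in place the argument is complete.
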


\begin{proof}
Let $Q \leq G^{str}$ be the unique subgroup of order $p^{\nu_1}$.  Consider the cover 
$(f^{st})': (Y^{str})^{st}/Q \to X^{st}$, with generic fiber $f'$.  
Consider the path $\{v_i\}_{i=0}^{j}$, $\{e_i\}_{i=0}^{j-1}$ in $\mc{G}$, where $v_0$ corresponds to $\ol{X}_0$ and $v_j$ corresponds to $\ol{W}$.
Write $(\sigma^{\eff}_i)'$ (resp.\ $(\delta^{\eff}_i)'$) for the effective invariant at $e_i$ for $(f^{st})'$ (resp.\ the effective different at $v_i$ for $(f^{st})'$).  Then $(\delta^{\eff}_0)' = \nu - \nu_1 + \frac{1}{p-1}$.

Since no branch point of $f'$ with index divisible by $p$, and
only one branch point with index 2, specializes outward from the point corresponding to any $e_i$, Lemma 
\ref{Lsigmaeff} shows that $(\sigma_i^{\eff})' - 1$ is a sum of elements of the form 
$\sigma-1$, where $\sigma > 0$, $\sigma \in \frac{1}{2}\ints$, and $\sigma \in \ints$ for all but one term in the sum.  
Therefore, $(\sigma_i^{\eff})' - 1 \geq -\frac{1}{2}$, so $(\sigma_i^{\eff})' \geq \frac{1}{2}$.  

By Lemma \ref{Lwildspec} and monotonicity, $x=1$ specializes to a component which intersects a component which is 
inseparable for $(f^{st})'$.  In particular, since $x=1$ specializes on or outward from $\ol{W}$, it
must be the case that any component of $\ol{X}$ lying inward from $\ol{W}$ is inseparable for $(f^{st})'$.  Then, we
can apply Lemma \ref{Lsigmaeffcompatibility} to each $e_i$, $0 \leq i < j$, to show
that $$\nu - \nu_1 + \frac{1}{p-1} - (\delta^{\eff}_j)' \geq
\frac{1}{2}v(1-a).$$  Since $(\delta^{\eff}_j)' \geq 0$ and $\nu_1 \geq 1$, this yields 
$v(1-a) \leq 2(\nu - 1 + \frac{1}{p-1})$.
\end{proof}

\begin{lemma}\label{Lvcorrect}
We have $v(\sqrt{1-a}) = v(s)$.  In particular, $v(\sqrt{1-a}) \in \ints$.
\end{lemma}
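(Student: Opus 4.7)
The plan is to rewrite the desired equality in terms of $v(g'(0))$ and then verify the resulting inequality using the torsor equation on the disk corresponding to $\ol{Z}_b$. Direct computation from \eqref{Efstr2} gives, up to sign,
\[
g'(0) = 2r + \frac{2s}{\sqrt{1-a}}.
\]
Since $v(r) = 0$ (Lemma \ref{Lfullwildspec} guarantees $x=\infty$ is branched of full index $p^\nu$), it suffices to show $v(g'(0)) > 0$; for then the two summands must have equal valuation, giving $v(\sqrt{1-a}) = v(s)$, which lies in $\ints$.

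To see that $v(g'(0)) > 0$, I would show $v(c_1) > v(e)$, where $c_1 = g'(0)\,e$. Consider the $\mu_{p^\nu}$-torsor $y^{p^\nu} = g(et) = 1 + c_1 t + c_2 t^2 + \cdots$ from \eqref{Edisk2}. Since $\sigma_b = 3/2$ and $\ol{X}_b$ intersects a $p$-component by Lemma \ref{Lnobigjumps}, the generic fiber of this torsor must split into $p^{\nu-1}$ disjoint connected $\mu_p$-torsors, each with \'etale reduction. I would then apply Lemma \ref{Lartinschreier}(i) in contrapositive form: provided $v(c_i) > \nu + \tfrac{1}{p-1}$ for every $i > p$ divisible by $p$, the splitting forces $v(c_1) \geq \min\bigl(v(c_p), \nu + \tfrac{1}{p-1}\bigr)$. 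In either branch of this minimum, Lemma \ref{Lvsmall} (which bounds $v(1-a)$ and hence $v(e) = \tfrac{1}{3}(\nu + \tfrac{1}{p-1} + v(1-a))$ from Lemma \ref{Ltailradius}(iii)) yields $v(c_1) > v(e)$, and thus $v(g'(0)) > 0$.

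The principal obstacle is verifying the estimate $v(c_i) > \nu + \tfrac{1}{p-1}$ for $i > p$ with $p \mid i$, an analog of Corollary \ref{C0coeffval} in the present setting. Unlike the case $a \equiv 0 \pmod \pi$, one cannot simply reduce modulo $a$, since the unknown $u := \sqrt{1-a}$ appears in denominators of the expansion of $\bigl(\tfrac{z+u}{z-u}\bigr)^s$ around $z=0$. Instead one expands
\[
g(z) = (-1)^{r+s}\Bigl(1 + 2\sum_{k \geq 1} z^k\Bigr)^r\Bigl(1 + 2\sum_{k \geq 1} (z/u)^k\Bigr)^s,
\]
extracts the coefficient of $z^i$ as a double sum of multinomial expressions in $r$, $s$, and $u^{-1}$, and controls its valuation via Lemma \ref{Lbinomcoeffs}, using $v(s) \leq \nu - 1$ and the bound $v(1-a) \leq 2(\nu - 1 + \tfrac{1}{p-1})$ from Lemma \ref{Lvsmall} to absorb the negative contributions coming from powers of $u^{-1}$. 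As in Proposition \ref{Pa0}(ii) and Remark \ref{R0c5}, an exceptional case for small $p$ (likely $p=5$) may need separate treatment via Lemma \ref{Lartinschreier}(ii), but the conclusion $v(g'(0)) > 0$, and hence $v(\sqrt{1-a}) = v(s)$, should be obtained uniformly.
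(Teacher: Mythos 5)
Your reduction to showing $v(g'(0)) > 0$, via $g'(0) = 2r + \frac{2s}{\sqrt{1-a}}$ and $v(2r)=0$, matches the paper. The route from there, however, has a genuine gap. Applying the contrapositive of Lemma \ref{Lartinschreier}(i) requires first establishing $v(c_i) > \nu + \frac{1}{p-1}$ for all $i>p$ with $p\mid i$; you rightly flag this as the ``principal obstacle,'' but the sketched resolution is effectively circular. The needed estimate is the one driving Corollary \ref{C1coeffval}, which rests on Lemma \ref{L1coeffval}'s bound $v(g^{(i)}(0)/i!) \geq (1-i)v(\sqrt{1-a}) - v(i)$, and that bound is derived precisely by \emph{using} $v(\sqrt{1-a})=v(s)$ to make the second factor dominate modulo $R$. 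With only the one-sided information available a priori --- $v(s)\leq \nu-1$ and, from Lemma \ref{Lvsmall}, $v(\sqrt{1-a}) \leq \nu-1+\frac{1}{p-1}$ --- the double-sum expansion you propose yields no better than $v(c_{2p}) \geq \frac{2p}{3}(\nu + \frac{1}{p-1} - v(\sqrt{1-a}))$ when $v(s)=1$ and $v(\sqrt{1-a})$ sits at its permitted maximum, and this is $\leq \nu+\frac{1}{p-1}$ as soon as $\nu \geq \frac{2p}{3}-\frac{1}{p-1}$ (already $\nu\geq 4$ when $p=5$). So the hypothesis of Lemma \ref{Lartinschreier}(i) cannot be verified by these means, and the argument does not close.

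The paper avoids the higher $c_i$ entirely. As in the opening of the proof of Lemma \ref{Lvaint} (which it cross-references), $z=0$ is unramified in $Y^{str}\to Z^{str}$ and specializes to a smooth point of the \'etale tail $\ol{Z}_b$, so $\ol{Y}^{str}\to\ol{Z}^{str}$ splits completely there; by \cite[Proposition 3.2.3(2)]{Ra:ab}, $g(et)$ is then a $p^\nu$th power in $R[[t]]$. Since the $t$-coefficient of any $p^\nu$th power in $R[[t]]$ is divisible by $p^\nu$, one gets $v(c_1)=v(g'(0)e)\geq\nu$ outright, without touching $c_p, c_{2p},\ldots$. Combined with $v(e)=\frac{1}{3}(\nu+\frac{1}{p-1}+v(1-a))$ and Lemma \ref{Lvsmall}, this gives $v(g'(0)) \geq \frac{2}{3}-\frac{1}{p-1}>0$ for $p\geq 5$, and the conclusion $v(\sqrt{1-a})=v(s)$ follows as in your proposal. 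You should replace the appeal to Lemma \ref{Lartinschreier} with this direct observation.
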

\begin{proof}
As in the case $v(a) > 0$, we must have that $v(g'(0)e) \geq \nu$ (see beginning of proof of Lemma \ref{Lvaint}), so 
$$v(g'(0)) \geq \frac{2}{3}\nu - \frac{1}{3}(\frac{1}{p-1} + v(1-a)).$$  
Since $v(1-a) \leq 2(\nu - 1 + \frac{1}{p-1})$ (Lemma
\ref{Lvsmall}), we see that $$v(g'(0)) \geq \frac{2}{3} - \frac{1}{p-1} > 0.$$  Recall that 
$g'(0) = 2r + \frac{2s}{\sqrt{1-a}}.$  Since $v(2r) = 0$ ($f$ is totally ramified above $x=\infty$), it follows that
$v(\frac{2s}{\sqrt{1-a}}) = 0$.  Therefore $v(\sqrt{1-a}) = v(2s) = v(s)$.  
\end{proof}

The following lemma plays the role of Lemma \ref{L0coeffval} when $v(a-1) > 0$.
\begin{lemma}\label{L1coeffval}
The valuation $v\left( \frac{g^{(i)}(0)}{i!} \right) \geq (1-i)v(\sqrt{1-a}) - v(i)$.
\end{lemma}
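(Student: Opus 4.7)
The plan is to mimic the proof of Lemma \ref{L0coeffval}, with the roles of the two factors of $g(z)$ swapped: the ``small'' factor is now the one involving $\sqrt{1-a}$, since $v(\sqrt{1-a})>0$ in the case $a \equiv 1 \pmod{\pi}$. First I would rewrite
$$\left(\frac{z+\sqrt{1-a}}{z-\sqrt{1-a}}\right)^s = (-1)^s\left(1 + \frac{2z}{\sqrt{1-a}} + \frac{2z^2}{(\sqrt{1-a})^2} + \cdots\right)^s$$
as a power series in $z/\sqrt{1-a}$; expanding by the multinomial theorem, its coefficient of $z^i$ equals $(-1)^s(\sqrt{1-a})^{-i}$ times
$$\sum_{\substack{(a_1,a_2,\ldots) \\ \sum_j j a_j = i}}\binom{s}{a_1,a_2,\ldots, s-q}\,2^q, \qquad q=\sum_j a_j.$$

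Next I would bound each term of this sum. By Lemma \ref{Lbinomcoeffs}, each multinomial coefficient has valuation at least $v(s) - \min_j v(a_j)$, and, as in the proof of Lemma \ref{L0coeffval}, the constraint $\sum_j j a_j = i$ forces some $a_j$ with $v(a_j) \le v(i)$ (otherwise every $j a_j$ would be divisible by $p^{v(i)+1}$, contradicting $\sum_j j a_j = i$). Thus the $z^i$-coefficient of the second factor has valuation at least $v(s) - v(i) - i\,v(\sqrt{1-a})$, and Lemma \ref{Lvcorrect} (via $v(s) = v(\sqrt{1-a})$) rewrites this as $(1-i)\,v(\sqrt{1-a}) - v(i)$.

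Finally, I would pass from the second factor to $g(z)$ itself: since $\left(\frac{z+1}{z-1}\right)^r$ has integer Maclaurin coefficients (hence nonnegative valuation), the convolution
$$\frac{g^{(i)}(0)}{i!} = \sum_{j=0}^{i} [z^{i-j}]\left(\frac{z+1}{z-1}\right)^r \cdot [z^j]\left(\frac{z+\sqrt{1-a}}{z-\sqrt{1-a}}\right)^s$$
inherits the desired bound from its $j = i$ term. The main obstacle will be handling the cross terms $[z^{i-j}]\cdot[z^j]$ for $0 < j < i$: their naive per-term bound $(1-j)\,v(\sqrt{1-a}) - v(j)$ can be weaker than the claimed bound in arithmetic configurations where $v(i-j) > v(i)$. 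Here one exploits that $v(\sqrt{1-a}) = v(s) \in \ints_{\geq 1}$ (positive because $a \equiv 1 \pmod{\pi}$, integral because $s$ is an integer) so that the excess $j\,v(\sqrt{1-a})$ dominates the deficit $v(i-j) - v(i) \leq \log_p i$ for the relevant ranges of $j$, in the same spirit as the arithmetic bookkeeping already used to prove Lemma \ref{L0coeffval}.
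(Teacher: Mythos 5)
Your computation of the second factor (expand in $w = z/\sqrt{1-a}$, apply Lemma \ref{Lbinomcoeffs} to the multinomial coefficients, invoke Lemma \ref{Lvcorrect}) is the same as the paper's, and you are right to flag that one must then say something about passing from this factor to the product $g = \left(\frac{z+1}{z-1}\right)^r \left(\frac{z+\sqrt{1-a}}{z-\sqrt{1-a}}\right)^s$; the paper compresses this into the phrase ``$g(z)$ is congruent (mod $R$) to $\left(\frac{z+\sqrt{1-a}}{z-\sqrt{1-a}}\right)^s$.''

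However, your sketched resolution of the cross-term problem does not work. You claim the excess $j\,v(\sqrt{1-a})$ (from a first-factor index $j \geq 1$) dominates the deficit $v(i-j) - v(i)$; this fails already at $p=5$, $v(\sqrt{1-a})=1$, $i = p^2 + 1 = 26$, $j = 1$, $i - j = 25$: the excess is $1$ while the deficit is $v(25) - v(26) = 2$, and indeed the naive bound $(1-25)\cdot 1 - v(25) = -26$ on the $z^{25}$-coefficient of the second factor falls one short of the target $(1-26)\cdot 1 - v(26) = -25$. The missing ingredient is a \emph{second} bound on those coefficients, coming from a stronger use of $s$ being a positive integer than the one you make: the power series $(-1 - 2w - 2w^2 - \cdots)^s$ has coefficients in $\ints$, so $v\left([z^k]\left(\frac{z+\sqrt{1-a}}{z-\sqrt{1-a}}\right)^s\right) \geq -k\,v(\sqrt{1-a})$. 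For a cross term with $j \geq 1$ (so $k = i - j \leq i-1$), this already gives valuation $\geq (1-i)v(\sqrt{1-a}) \geq (1-i)v(\sqrt{1-a}) - v(i)$, while the $j=0$ term is handled by your multinomial bound. This integrality observation is also what underlies the paper's ``mod $R$'' shortcut: the $z^i$-coefficient of $g - (\pm 1)\left(\frac{z+\sqrt{1-a}}{z-\sqrt{1-a}}\right)^s$ has valuation at least $(1-i)v(\sqrt{1-a})$, which meets the target even though it need not actually lie in $R$.
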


\begin{proof}
Again, $\frac{g^{(i)}(0)}{i!}$ is the coefficient of $z^i$ in the Maclaurin series expansion of $g(z)$.  
Since $(1-i)v(\sqrt{1-a}) - v(i) \leq 0$, it suffices to look at coefficients modulo $R$ (as an $R$-submodule of $K$).
Then, $g(z)$ is congruent (mod $R$) to 
\begin{equation}\label{Emessy3}
\left(\frac{z+\sqrt{1-a}}{z-\sqrt{1-a}}\right)^{s} = 
\left(1 + \frac{2}{\frac{z}{\sqrt{1-a}}-1}\right)^{s} = (- 1 - 2w - 2w^2 - 2w^3 - 
\cdots)^{s},
\end{equation} 
where $w = \frac{z}{\sqrt{1-a}}$.
This is analogous to (\ref{Emessy1}), and we conclude as we do in Lemma \ref{L0coeffval} that
the coefficient of $w^i$ has valuation at least $v(s) - v(i)$.  Thus the coefficient of $z^i$ has valuation 
at least $v(s) - v(i) - iv(\sqrt{1-a})$.  By Lemma \ref{Lvcorrect}, this is $(1-i)v(\sqrt{1-a}) - v(i)$.
\end{proof}

Recall that $c_i := \frac{g^{(i)}(0)}{i!}e^i$.  Parallel to Corollary \ref{C0coeffval}, we have:
\begin{corollary}\label{C1coeffval}
For $i > 3$, we have $v(c_i) > \nu + \frac{1}{p-1}$, unless $p = i = 5$ and $v(\sqrt{1-a}) = \nu - 1$. 
\end{corollary}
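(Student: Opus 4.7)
The plan is to combine the valuation bound from Lemma \ref{L1coeffval} with the formula for $v(e)$ given in Lemma \ref{Ltailradius} (iii) to get a lower bound on $v(c_i) = v\!\left( \tfrac{g^{(i)}(0)}{i!} \right) + i \cdot v(e)$. Using $v(1-a) = 2v(\sqrt{1-a})$, the $v(\sqrt{1-a})$ contributions combine nicely and I expect an inequality of the form
\[
v(c_i) \;\geq\; \frac{i}{3}\!\left(\nu + \frac{1}{p-1}\right) + \left(1 - \frac{i}{3}\right) v(\sqrt{1-a}) - v(i).
\]
The desired conclusion $v(c_i) > \nu + \frac{1}{p-1}$ is then equivalent to
\[
\frac{i-3}{3}\,\alpha \;>\; v(i), \qquad \alpha := \left(\nu + \frac{1}{p-1}\right) - v(\sqrt{1-a}).
\]
By Lemma \ref{Lvsmall} (halving to pass from $v(1-a)$ to $v(\sqrt{1-a})$), one has $\alpha \geq 1$.

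Now I would do a case analysis on $k := v_p(i)$. If $p \nmid i$, then $v(i) = 0$ and the inequality is immediate for $i > 3$. If $k \geq 1$, then $i \geq p^k$, so it suffices to check $\frac{p^k - 3}{3}\,\alpha > k$. For $p \geq 7$, the factor $\frac{p^k - 3}{3}$ grows quickly enough that even $\alpha = 1$ suffices for every $k \geq 1$. For $p = 5$ and $k \geq 2$, the bound $\frac{5^k - 3}{3} \geq \frac{22}{3}$ comfortably beats $k$ (with $\alpha \geq 1$), and an easy induction handles $k \geq 3$. This leaves the single delicate case $p = i = 5$, where the inequality becomes $\frac{2\alpha}{3} > 1$, i.e., $\alpha > \frac{3}{2}$. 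Since $v(\sqrt{1-a}) \in \ints$ by Lemma \ref{Lvcorrect}, and $\alpha = \nu + \frac{1}{4} - v(\sqrt{1-a})$, this is equivalent to $v(\sqrt{1-a}) \leq \nu - 2$. The exception in the statement, $v(\sqrt{1-a}) = \nu - 1$, is therefore precisely the configuration in which $\alpha = \frac{5}{4}$ and the required strict inequality fails.

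The only mildly obstructive step is the bookkeeping at $p = 5$, which one could easily miss if one only looked at the prime-to-$p$ coefficients; it is this very case that the paper's introduction warns the reader about and that parallels Remark \ref{R0c5} in the $a \equiv 0$ analysis. Assuming $p > 5$, the whole argument collapses to the trivial observation that $(i-3)/3 > v(i)$ whenever $p \nmid i$ or $i = p^k$ with $k \geq 1$, and no further analysis is needed.
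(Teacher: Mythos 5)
Your proposal is correct and takes essentially the same route as the paper: substitute the bound from Lemma \ref{L1coeffval} and the value of $v(e)$ from Lemma \ref{Ltailradius}(iii) into $v(c_i) = v(g^{(i)}(0)/i!) + iv(e)$, reorganize to $\nu + \frac{1}{p-1} + \frac{i-3}{3}\alpha - v(i)$ with $\alpha = \nu + \frac{1}{p-1} - v(\sqrt{1-a})$, and then check $\frac{i-3}{3}\alpha > v(i)$ using $\alpha \geq 1$ (from Lemma \ref{Lvsmall}) together with integrality of $v(\sqrt{1-a})$ (Lemma \ref{Lvcorrect}) in the borderline case $p=i=5$. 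The paper compresses the final verification into ``one checks that\dots,'' and you have simply carried it out; the only small caveat is that your blanket reduction ``it suffices to check $\frac{p^k-3}{3}\alpha > k$'' breaks for $p=5$, $k=1$ (where one must separate $i=5$ from $i=10,15,20,\dots$), but you do in fact make that separation before concluding.
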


\begin{proof}
By Lemma \ref{L1coeffval}, $v(c_i) \geq iv(e) + (1-i)v(\sqrt{1-a}) - v(i)$.
By Lemma \ref{Ltailradius} (3), $v(e) = \frac{1}{3}(\nu + \frac{1}{p-1} + v(1 - a))$.
So \begin{align}
v(c_i) \geq&\  \frac{i}{3}\left(\nu + \frac{1}{p-1} + v(1 - a)\right) + (1-i)v(\sqrt{1-a}) - v(i) \\
=&\  \nu + \frac{1}{p-1} + \frac{i-3}{3}\left(\nu + \frac{1}{p-1} - v(\sqrt{1-a})\right) - v(i). \label{Erushed1} 
\end{align}
Therefore, $v(c_i) > \nu + \frac{1}{p-1}$ whenever $\frac{(i-3)}{3}(\nu + \frac{1}{p-1} - v(\sqrt{1-a})) > v(i)$.  
By Lemma \ref{Lvsmall}, $\nu - v(\sqrt{1-a}) \geq 1$.
One checks that $\frac{(i-3)}{3}(\nu + \frac{1}{p-1} - v(\sqrt{1-a})) \geq v(i)$ 
always holds, except when $p = i = 5$ and $v(\sqrt{1-a}) = \nu - 1$.  
This proves the corollary.
\end{proof}

Parallel to Remark \ref{R0c5}, we have:
\begin{remark}\label{R1c5}
In the case $p = 5$ and $v(s) = v(\sqrt{1-a}) = \nu - 1$, one sees (using (\ref{Emessy2}) and (\ref{Emessy3})) that
$\frac{g^{(5)}(0)}{5!}$ is 
congruent to $\left(\sqrt{1-a}\right)^{-5} 32\binom{s}{5}$ modulo ${\left(\sqrt{1-a}\right)^{-5}sR}$ 
(viewed as an $R$-submodule of $K$).  That is, $\frac{g^{(5)}(0)}{5!}$ is equal to 
$\left(\sqrt{1-a}\right)^{-5} 32\binom{s}{5}$ 
plus terms of valuation more than $-4v(\sqrt{1-a})$, which is  $4(1-\nu)$.
So $c_5$ is equal to $\left(\sqrt{1-a}\right)^{-5} 32\binom{s}{5}e^5$ plus terms of valuation greater than 
$4 - 4\nu + 5v(e) = \nu + \frac{13}{12} > \nu + \frac{1}{4} = \nu + \frac{1}{p-1}$.
Also, $v(c_5) = \nu + \frac{1}{12}$.  
\end{remark}

The following proposition plays the role of Proposition \ref{Pa0} when $v(a-1) > 0$.
\begin{prop}\label{Pa1}
Suppose $v(a-1) > 0$. 
\begin{enumerate}
\item If $p > 5$ or $v(\sqrt{1-a}) < \nu - 1$, then 
the disk $\Delta$ corresponding to $\ol{X}_b$ contains the $K_0$-rational point $x=a_0$, where $a_0 = 1 - \frac{s^2}{r^2}$.
\item If $p = 5$ and $v(\sqrt{1-a}) = \nu -1$, then $\Delta$ contains the point $x=a_0$, where
$a_0 = 1 - \left(\frac{s - \sqrt[5]{5^{4\nu +1}\binom{s}{5}}}{r}\right)^2$.
\end{enumerate}
\end{prop}

\begin{proof}
\emph{To (1)}: By Corollary \ref{C1coeffval}, $v(c_i) > \nu + \frac{1}{p-1}$ for all $i$ such that $p | i$.
By Lemma \ref{Lartinschreier} (1), the torsor given by (\ref{Edisk2}) can split into $p^{\nu -1}$ disjoint $\mu_p$-torsors
with \'{e}tale reduction only if $v(c_1) \geq \nu + \frac{1}{p-1}$. 
In particular, we must have
$$v(g'(0)) = v(2r + \frac{2s}{\sqrt{1-a}}) \geq \nu + \frac{1}{p-1} - v(e) = 
\frac{2}{3}(\nu + \frac{1}{p-1} - v(\sqrt{1-a})).$$  By Lemma \ref{Lk0root},
$a_0 = 1 - \frac{s^2}{r^2} \in K_0$ satisfies
$v(a-a_0) \geq \frac{2}{3}(\nu + \frac{1}{p-1} + v(1-a))$.  
But by Lemma \ref{Ltailradius}, $v(\rho) = \frac{2}{3}(\nu + \frac{1}{p-1} + v(1-a))$, where 
$|\rho|$ is the radius of $\Delta$.  So $a_0 \in \Delta$.

\emph{To (2)}: Let $c_5' = \left(\sqrt{1-a}\right)^{-5} 32\binom{s}{5}e^5$.  By Remark \ref{R1c5}, $v(c_5' - c_5) > \nu + \frac{1}{4}$ and $v(c_5) > n$.
Also, Corollary \ref{C1coeffval} shows that $v(c_i) > \nu + \frac{1}{4}$ for all $i > 5$ such that $5 | i$.  
By Lemma \ref{Lartinschreier} (2), the torsor given by (\ref{Edisk2}) can split into $5^{\nu -1}$ disjoint $\mu_5$-torsors
with \'{e}tale reduction only if 
$$v\left(c_1 - \sqrt[5]{\left(\sqrt{1-a}\right)^{-5} 32 \cdot 
5^{4\nu + 1}\binom{s}{5}e^5}\right) \geq \nu + \frac{1}{4}.$$ 

In particular, we must have
$$v\left(g'(0) - \sqrt[5]{\left(\sqrt{1-a}\right)^{-5} 32 \cdot 5^{4\nu + 1}\binom{s}{5}}\right) \geq \nu + \frac{1}{4} - v(e) = 
\frac{5}{6},$$
which can be rewritten as 
$$v\left(2r + \frac{2s - \sqrt[5]{32 \cdot 5^{4\nu + 1}\binom{s}{5}}}{\sqrt{1-a}}\right) \geq \frac{5}{6}.$$
By Lemma \ref{Lk0root}, we have $v(a - a_0) \geq 2\nu - \frac{7}{6}$, where $a_0 = 1 - \left(\frac{s - \sqrt[5]{5^{4\nu +1}\binom{s}{5}}}{r}\right)^2$.
But $v(\rho) = 2\nu - \frac{7}{6}$, so $a_0 \in \Delta$.
\end{proof}

\begin{remark}\label{R1conductor3}
As in Remarks \ref{Rnot01conductor3} and \ref{R0conductor3}, one shows that if $a$ is as in Proposition \ref{Pa1}, then
the torsor given by (\ref{Edisk2}) indeed splits into $p^{\nu-1}$ disjoint $\mu_p$-torsors with
\'{e}tale reduction birational to an Artin-Schreier cover branched at one point of conductor $3$.  
Also, if we are in case (1) of Proposition \ref{Pa1} and $a = 1 - \frac{s^2}{r^2}$, 
then since $v(g'(0)) > 0$, we must choose $\sqrt{1-a} = -\frac{s}{r}$.
\end{remark}

\subsubsection{The inseparable tails}\label{Sinseparabletails}
For the rest of the proof, we replace $a$ with the $a_0$ calculated in Proposition \ref{Panot01}, \ref{Pa0}, or \ref{Pa1} of \S\ref{Setaletails},
depending on the congruency class of $a$ (see the paragraph preceding Lemma \ref{Lfullbranch}).
Maintain the notations of \S\ref{Setaletails}.  Throughout \S\ref{Sinseparabletails}, we assume that $\nu > 1$ 
(if $\nu = 1$, there can be no inseparable tails by Lemma \ref{Ltailetale}).  Our goal is to calculate exactly where the inseparable tails of
$\ol{X}$ lie.  We first place restrictions on what kinds of inseparable tails we can have 
and where they lie (Lemma \ref{Loneinsep}, Proposition \ref{Ponly0insep}), and then explicitly exhibit an inseparable tail in each allowable situation
(Proposition \ref{Pinseptailloc}).

The next lemma shows us that we are not looking for too many tails.
 
\begin{lemma}\label{Loneinsep}
Suppose $\ol{X}$ has a new inseparable $p^j$-tail $\ol{X}_c$.
\begin{enumerate}
\item The only new inseparable $p^j$-tail is $\ol{X}_c$.  Its effective ramification invariant is $\sigma_c = 2$.
\item There are two $p^j$-components $\ol{X}_{\beta}$ and $\ol{X}_{\beta'}$ of $\ol{X}$ that 
intersect $p^{j+1}$-components and have non-integral effective ramification invariant (see Definition \ref{Draminvariant}).  
We have $\sigma_{\beta} = \sigma_{\beta'} = \frac{1}{2}$.  Also, up to switching
indices $\beta$ and $\beta'$, we have $\ol{X}_0 \prec \ol{X}_{\beta} \prec \ol{X}_b$ and $\ol{X}_0 \prec \ol{X}_{\beta'} 
\prec \ol{X}_{b'}$.
\item  If $v(a) > 0$, then $j \leq \nu - v(a)$.  
If $v(a-1) > 0$, then $j \leq \nu - v(\sqrt{1-a}) - 1$.  
\end{enumerate}
\end{lemma}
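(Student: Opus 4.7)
The plan is to analyze the structure of the stable reduction $\ol{X}$ under the assumption that a new inseparable $p^j$-tail $\ol{X}_c$ exists, by tracing effective ramification invariants along paths in the dual graph and exploiting the monotonicity results already established.

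For part (i), I would begin by applying Lemma \ref{Lsigmaeff} to the unique edge $e$ joining the inner $p^{j'}$-component $\ol{X}_{c'}$ (where $j' > j$ by Lemma \ref{Ltailetale}) to $\ol{X}_c$. Since no \'{e}tale tails and no branch points of $f$ with wild ramification can lie outward from $\ol{X}_c$, the lemma gives $\sigma^{\eff}_e = 1$, and Definition \ref{Dsigmaeff} then reads this as a weighted average of the invariants $\sigma_{i,\ol{x}_c}$. To pin down $\sigma_c$ and to rule out additional new $p^j$-tails, I would pass to the quotient of $f^{str}$ by the unique subgroup $H \leq \ints/p^\nu$ of order $p^j$: after this quotient, every $p^j$-component of $\ol{X}$ becomes \'{e}tale, so any new $p^j$-tail becomes a new \'{e}tale tail with the same effective invariant, while $\ol{X}_b$ and $\ol{X}_{b'}$ remain \'{e}tale with their original $\sigma_b = 3/2$ and $\sigma_{b'} = 1/2$. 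Combining this with the vanishing cycles identity (Theorem \ref{Tvancycles}, applied to the three-point cover underlying the quotient) and using Remark \ref{Rpositiveterms} together with Corollary \ref{Chalfsigma} to bound each new tail's contribution from below by $1/2$, one is forced to conclude that there is a unique new $p^j$-tail and that $\sigma_c - 1 = 1$, i.e., $\sigma_c = 2$.

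For part (ii), I would track the $(j+1)$-st level deformation datum $\omega_{j+1}$, which is defined precisely on the connected inward subgraph $\Sigma_{\geq j+1}$ of components with inertia at least $p^{j+1}$ (connectedness follows from monotonicity, Proposition \ref{Pmonotonic}). The datum $\omega_{j+1}$ must have critical points at the boundary of $\Sigma_{\geq j+1}$ by Lemma \ref{Lcritical}, and its invariants at these boundary edges can be computed from Lemma \ref{Lsigmaeff} together with the known $\sigma_b$, $\sigma_{b'}$. The paths from $\ol{X}_0$ to the two \'{e}tale tails $\ol{X}_b$ and $\ol{X}_{b'}$ must each leave $\Sigma_{\geq j+1}$ at some point; the outermost $p^{j+1}$-component on each of these paths lies directly inside a $p^j$-component, which I would identify as $\ol{X}_\beta$ and $\ol{X}_{\beta'}$, respectively. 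The non-integral values $\sigma_\beta = \sigma_{\beta'} = 1/2$ arise from Lemma \ref{Lsigmaeff} at the relevant edges, with $\sigma_b - 1 = 1/2$ (resp.\ $\sigma_{b'} = 1/2$) being the only non-integral contribution; the third branch (toward $\ol{X}_c$) contributes only the integer $\sigma_c - 1 = 1$ from part (i), and Lemma \ref{Lfullwildspec} ensures that the wild branch points $1$ and $\infty$ do not disturb the count. For part (iii), I would combine the monotonicity of the effective different (Corollary \ref{Cdifferentmonotone}) with Remark \ref{Rwsplit} in the case $a \equiv 0 \pmod \pi$: Remark \ref{Rwsplit} shows that the component $\ol{W}$ corresponding to the disk of radius $|a|$ about $x=0$ has inertia at most $p^{\nu - v(a) + 1}$, so by monotonicity any component lying on or outward from $\ol{W}$ has inertia at most $p^{\nu - v(a) + 1}$. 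Since $\ol{X}_{c'}$ lies on or outward from $\ol{W}$ and is a $p^{j'}$-component with $j' > j$, we obtain $j \leq j' - 1 \leq \nu - v(a)$. The case $a \equiv 1 \pmod \pi$ is parallel, using an analog of Remark \ref{Rwsplit} at the component corresponding to the disk of radius $|1-a|$ about $x=1$ together with Lemma \ref{Lvcorrect}, to get $j \leq \nu - v(\sqrt{1-a}) - 1$.

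The main obstacle, I expect, is part (ii): the dual-graph bookkeeping requires one to verify that $\ol{X}_\beta$ and $\ol{X}_{\beta'}$ are the \emph{only} $p^j$-components meeting $p^{j+1}$-components with non-integral $\sigma$, and that their invariants are exactly $1/2$ rather than some larger half-integer. This demands a careful accounting of the weighted-average formula of Definition \ref{Dsigmaeff} across the successive inertia jumps, together with the precise specialization behavior of the wildly ramified branch points guaranteed by Lemma \ref{Lfullwildspec}.
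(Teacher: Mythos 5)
Your overall framework — quotienting by the subgroup of order $p^j$, applying a vanishing-cycles identity, exploiting monotonicity to bound inertia — is in the right spirit, but there are concrete gaps in all three parts.

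For (i) and (ii): your plan invokes Theorem \ref{Tvancycles}, which is stated for \emph{three-point} covers, applied to "the three-point cover underlying the quotient." But $f^{str}/H$ is a \emph{four-point} cover (branched at $0,1,\infty,a$), so Theorem \ref{Tvancycles} does not apply, and it is not clear what three-point cover you mean. The wild branch points at $1$ and $\infty$ do not merely "not disturb the count" — they enter the identity explicitly. What the paper uses is the generalized vanishing cycles formula from \cite[Proposition 3.17]{Ob:vc}, which gives directly
\[
|\Pi_{j+1}| - 2 = \sum_{\alpha \in A}(\sigma_\alpha - 1),
\]
where $A$ indexes \emph{all} $p^j$-components meeting $p^{j+1}$-components and $\Pi_{j+1}$ is the set of branch points of $f^{str}$ with branching index divisible by $p^{j+1}$. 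From $|\Pi_{j+1}| \leq 2$ (there are only two wild branch points) and the bounds $\sigma_\alpha \geq 2$ for new tails (from \cite[Lemmas 4.1, 4.2]{Ob:vc}), $\sigma_\alpha \geq 1/2$ for non-tails with non-integral invariant (Lemmas \ref{Lnonintegral}, \ref{Lramdenominator}), one forces $|\Pi_{j+1}| = 2$, uniqueness of the new tail, $\sigma_c = 2$, and the existence of $\ol{X}_\beta$, $\ol{X}_{\beta'}$ with $\sigma_\beta = \sigma_{\beta'} = 1/2$, in one shot. Your level-$(j+1)$ deformation datum argument is not wrong in spirit, but reproving the generalized formula from Lemma \ref{Lcritical} is considerably more work than citing it, and you would still need to account for $|\Pi_{j+1}|$ explicitly.

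For (iii): in the case $a \equiv 0 \pmod{\pi}$ you use $\ol{X}_{c'}$, the component that the new inseparable tail $\ol{X}_c$ meets, and assert it lies on or outward from $\ol{W}$. That location is not established at this point in the argument — indeed, pinning down where $\ol{X}_c$ sits is the content of the later Proposition \ref{Pinseptailloc}. The paper instead uses $\ol{X}_\beta$ from part (ii), whose position ($\ol{X}_0 \prec \ol{X}_\beta \prec \ol{X}_b$, hence $\ol{W} \prec \ol{X}_\beta$) \emph{is} already known, and combines this with Remark \ref{Rwsplit} and monotonicity to get $j \leq \nu - v(a)$. For $a \equiv 1 \pmod{\pi}$, the claimed "analog of Remark \ref{Rwsplit}" for the disk around $x=1$ is not established anywhere and would not straightforwardly give an \emph{upper} bound on the inertia of $\ol{W}$ (the natural estimate goes the wrong way). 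The paper's actual argument here is much simpler and unlike yours: from (ii) one has $|\Pi_{j+1}| = 2$, so $x=1$ must be branched of index at least $p^{j+1}$; Lemma \ref{Lvcorrect} gives that index as $p^{\nu - v(\sqrt{1-a})}$, whence $j \leq \nu - v(\sqrt{1-a}) - 1$.
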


\begin{proof} \emph{To (1) and (2):} 
Let $\{\ol{X}_{\alpha}$, $\alpha \in A\}$ be the set of all $p^j$-components that intersect $p^{j'}$-components with $j' > j$.  
By Lemma \ref{Ltailetale}, this includes all the $p^j$-tails.
Also, for $i > 0$, let $\Pi_i$ be the set of all branch points of $f$ (equivalently $f^{str}$) with branching index divisible by $p^i$.  As a consequence
of \cite[Proposition 3.17]{Ob:vc} (setting the ``$\alpha$" of \cite{Ob:vc} equal to our $j$) and monotonicity, 
we have the following equation relating the effective ramification invariants of the $\ol{X}_{\alpha}$
(see Definition \ref{Draminvariant}):
\begin{equation}\label{Egenvancycles}
|\Pi_{j+1}| - 2 = \sum_{\alpha \in A} (\sigma_{\alpha} - 1).
\end{equation} 

By definition, $\sigma_{\alpha} > 0$ for any $\ol{X}_{\alpha}$.  Furthermore, by \cite[Lemmas 4.1, 4.2 (i)]{Ob:vc}, any new inseparable \emph{tail} 
$\ol{X}_{\alpha}$ has invariant $\sigma_{\alpha} \geq 2$. 
By Lemmas \ref{Lnonintegral} and \ref{Lramdenominator}, any $\ol{X}_{\alpha}$ that has non-integral effective ramification invariant 
and is not a tail must lie between 
$\ol{X}_0$ and either $\ol{X}_b$ or $\ol{X}_{b'}$, and must have $\sigma_{\alpha} \geq \frac{1}{2}$.  
By monotonicity, there are at most two such components: Call them $\ol{X}_{\beta}$ and possibly
$\ol{X}_{\beta'}$.  
If there is a new inseparable $p^j$-tail $\ol{X}_c$, then the only way that (\ref{Egenvancycles}) can be satisfied is if $|\Pi_{j+1}| = 2$, if $\ol{X}_c$ is 
the only new $p^j$-tail (and $\sigma_c = 2$), and if $\ol{X}_{\beta}$ and $\ol{X}_{\beta'}$ both exist (with $\sigma_{\beta} = \sigma_{\beta'}  
= \frac{1}{2}$).  Since the conductor of a $\ints/p^a$-extension of $k[[t]]$ is always at least $p^{a-1}$ (see, e.g., \cite[Lemma 19]{Pr:lg}), 
we see that $\ol{X}_{\beta}$ (resp. $\ol{X}_{\beta'}$) intersects a $p^{j+1}$-component, because otherwise the invariant
$\sigma_{\beta}$ (resp.\ $\sigma_{\beta'}$) would be at least $\frac{p}{2}$. 

\emph{To (3):}
If $v(a) > 0$, recall that $\ol{W}$ is the component of $\ol{X}$ separating $0$ and $a$.  Assume that $\beta$ and $\beta'$ are as in
the statement of (2).
Then $\ol{W} \prec \ol{X}_{\beta}$ (and $\ol{W} \prec \ol{X}_{\beta'}$).  
Now, by Remark \ref{Rwsplit}, the order of generic inertia above $\ol{W}$ is at most $p^{\nu - v(a) + 1}$.   By monotonicity, 
$\ol{X}_{\beta}$ has less inertia than $\ol{W}$.  Since $\ol{X}_{\beta}$ is a $p^j$-component, we see that $j \leq \nu - v(a)$.

If $v(a-1) > 0$, then Lemma \ref{Lvcorrect} shows that $f^{str}$ is branched above $1$ of index $p^{\nu - v(\sqrt{1-a})}$. 
From the proof of (2), $|\Pi_{j + 1}| = 2$, which means that $1$ is branched of index at least $p^{j+1}$.  It follows that $j \leq \nu - v(\sqrt{1-a}) - 1$.
\end{proof}

Not only does Lemma \ref{Loneinsep} show us that we are not looking for too many tails, but it (in particular, part (2)) also gives information on 
the inseparable \emph{interior} components $\ol{X}_{\beta}$ and $\ol{X}_{\beta'}$ of $\ol{X}$.  The next lemma gives further information on the 
corresponding disks.

\begin{lemma}[cf.\ Lemma \ref{Ltailradius}]\label{Lnewinsepradius}
Suppose $\ol{X}$ has a new inseparable $p^j$-tail $\ol{X}_c$, and maintain the notation $\ol{X}_{\beta}$ from Lemma 
\ref{Loneinsep}.  Let $\ol{Z}_{\beta}$ be the unique component of $\ol{Z}^{str}$ above $\ol{X}_\beta$.  Let $e'$ (resp.\ $\rho'$) $\in \ol{K}$
be such that the radius
of the disk corresponding to $\ol{Z}_{\beta}$ (resp.\ $\ol{X}_{\beta}$) is $|e'|$ (resp. $|\rho'|$).

\begin{enumerate}
\item If $v(a) = v(a-1) = 0$, then $v(\rho') > \frac{2}{3}(\nu - j + \frac{1}{p-1})$ and  $v(e') > \frac{1}{3}(\nu - j + \frac{1}{p-1})$.
\item If $v(a) > 0$, then $v(\rho') > \frac{2}{3}(\nu - j + \frac{1}{p-1}) + \frac{1}{3}v(a)$ and 
$v(e') > \frac{1}{3}(\nu - j + \frac{1}{p-1} - v(a))$.
\item If $v(a-1) > 0$, then $v(\rho') > \frac{2}{3}(\nu - j + \frac{1}{p-1} - v(1-a))$ and $v(e') > \frac{1}{3}(\nu - j + \frac{1}{p-1} + v(1-a))$.
\end{enumerate}
\end{lemma}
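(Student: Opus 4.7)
I propose to adapt the dual-graph argument of Lemma \ref{Ltailradius}. Consider the path in $\mc{G}$ from the vertex corresponding to $\ol{X}_0$ to the vertex corresponding to $\ol{X}_\beta$. By Lemma \ref{Loneinsep} (ii), $\ol{X}_\beta$ lies on the path from $\ol{X}_0$ to $\ol{X}_b$, so this path is an initial segment of the one used in the proof of Lemma \ref{Ltailradius}. Consequently, the effective invariants along it, computed via Lemma \ref{Lsigmaeff}, are exactly as in that proof: in case (i), $\sigma^{\eff}_e = \frac{3}{2}$ on every edge $e$; in case (ii), $\sigma^{\eff}_e = 1$ on the initial sub-path of total épaisseur $v(a)$ (through the component separating $x=0$ from $x=a$) and $\frac{3}{2}$ afterward; in case (iii), $\sigma^{\eff}_e = \frac{1}{2}$ on the initial sub-path of total épaisseur $v(1-a)$ and $\frac{3}{2}$ afterward.

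Applying Lemma \ref{Lsigmaeffcompatibility} (iii) edge by edge and summing, using $\delta^{\eff}_{\ol{X}_0} = \nu + \frac{1}{p-1}$ (Corollary \ref{Corigmult}) and $v(\rho') = \sum_i \epsilon_{e_i}$, I will obtain expressions for $v(\rho')$ of the same shape as in Lemma \ref{Ltailradius} but augmented by a term $-\frac{2}{3}\delta^{\eff}_{\ol{X}_\beta}$. The bound on $v(e')$ then follows from the relations $v(e') = \frac{1}{2}(v(\rho') - v(a))$ in case (ii) and $v(e') = \frac{1}{2}v(\rho')$ in cases (i), (iii), exactly as in Lemma \ref{Ltailradius}. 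In each case the desired strict inequality reduces to a strict upper bound on $\delta^{\eff}_{\ol{X}_\beta}$: specifically, $\delta^{\eff}_{\ol{X}_\beta} < j$ in cases (i) and (ii), and $\delta^{\eff}_{\ol{X}_\beta} < j + 2v(1-a)$ in case (iii). The last bound is automatic from the general inequality $\delta^{\eff}_{\ol{X}_\beta} \leq j + \frac{1}{p-1}$ together with $v(1-a) = 2v(s) \geq 2$ (Lemma \ref{Lvcorrect}).

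The hard part will be the strict bound $\delta^{\eff}_{\ol{X}_\beta} < j$ in cases (i) and (ii). Writing $\delta^{\eff}_{\ol{X}_\beta} = \sum_{i=1}^{j-1}\delta_i + \frac{p}{p-1}\delta_j$ with $\delta_i \in (0,1]$ (and $\delta_i = 1$ iff the $i$-th deformation datum above $\ol{X}_\beta$ is multiplicative), the hypothetical maximum $j + \frac{1}{p-1}$ occurs exactly when all $\omega_i$ above $\ol{X}_\beta$ are simultaneously multiplicative. I intend to rule this (and all nearby) configurations out by combining the non-integrality $\sigma_\beta = \frac{1}{2}$ from Lemma \ref{Loneinsep} (ii) with the explicit torsor description (\ref{Efstr2}) of $Y^{str} \to Z^{str}$: analyzing the local form (\ref{Edisk2}) over the disk corresponding to $\ol{Z}_\beta$ with coordinate $t'$ such that $z = e't'$, and applying a variant of Lemma \ref{Lartinschreier} that characterizes when the torsor splits into $p^{\nu-j}$ connected $\mu_{p^j}$-torsors with the expected reduction, should force enough of the $\delta_i$ to be strictly below $1$ (equivalently, the top datum $\omega_j$ to be additive with $\delta_j < \frac{p-1}{p}$) to give $\delta^{\eff}_{\ol{X}_\beta} < j$.
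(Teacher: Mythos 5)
Your algebraic reduction is correct: working on the path from $\ol{X}_0$ to $\ol{X}_\beta$ for the original cover $f^{str}$, the effective invariants are indeed as in Lemma \ref{Ltailradius} (the new inseparable tail $\ol{X}_c$ does not enter the $\sigma^{\eff}$ computation of Lemma \ref{Lsigmaeff} because it is not an \'{e}tale tail), and summing Lemma \ref{Lsigmaeffcompatibility}(iii) gives $v(\rho') = (\text{expression from Lemma \ref{Ltailradius}}) - \frac{2}{3}\delta^{\eff}_{\ol{X}_\beta}$, reducing cases (i) and (ii) to $\delta^{\eff}_{\ol{X}_\beta} < j$ and case (iii) to $\delta^{\eff}_{\ol{X}_\beta} < j + 2v(1-a)$. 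Your observation that case (iii) is automatic from $\delta^{\eff}_{\ol{X}_\beta} \leq j + \frac{1}{p-1}$ and $v(1-a) = 2v(s) \geq 2$ is also correct.

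The genuine gap is that you have not established $\delta^{\eff}_{\ol{X}_\beta} < j$ in cases (i) and (ii), and your sketch for doing so — analyzing the torsor (\ref{Edisk2}) over the disk for $\ol{Z}_\beta$ via a variant of Lemma \ref{Lartinschreier} — is not a proof, nor is it at all clear that it would become one without essentially redoing the work of the paper. Note that $\delta^{\eff}_{\ol{X}_\beta} = \sum_{i=1}^{j-1}\delta_i + \frac{p}{p-1}\delta_j$ can a priori be as large as $j + \frac{1}{p-1}$; getting below $j$ requires, for instance, $\delta_j < \frac{p-1}{p}$, which is strictly more than ``the top datum is additive,'' and none of the lemmas you cite gives this. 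The non-integrality $\sigma_\beta = \frac{1}{2}$ constrains the behavior at the node $\ol{x}_\beta$, but there is no established bridge in the paper's toolkit between that ramification invariant and the effective different of $\ol{X}_\beta$.

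The paper sidesteps this entirely by passing to the quotient $(f^{st})' \colon (Y^{str})^{st}/Q \to X^{st}$ where $Q$ is the subgroup of order $p^j$. For the quotient, $\ol{X}_\beta$ becomes an \'{e}tale component, so its effective different is $0$ by convention and no bound needs to be proved; meanwhile the effective different at $\ol{X}_0$ drops to $\nu - j + \frac{1}{p-1}$ by Corollary \ref{Corigmult}. The strictness is then supplied by a different mechanism: $\ol{X}_c$ becomes an \'{e}tale tail for the quotient with $\sigma_c = 2$, and one defines $\Delta_i$ to be $1$ or $0$ according to whether $\ol{X}_c$ lies outward from the $i$-th edge. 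Since $\ol{X}_\beta$ lies on the path to $\ol{X}_b$ and not to $\ol{X}_c$ (Lemma \ref{Loneinsep}(ii)), one has $\Delta_{\ell-1} = 0$, so $(\sigma^{\eff}_i)'$ is not always $\frac{3}{2}$ along the path, giving the strict inequality directly. If you want to complete your alternative route you would still need a self-contained argument for $\delta^{\eff}_{\ol{X}_\beta} < j$; as written, the step is missing.
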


\begin{proof}
We give only a sketch.
As in Lemma \ref{Ltailradius}, we have $v(e') = \frac{1}{2}(v(\rho') - v(a))$, so it suffices to prove the statements about $\rho'$. 

Let $Q < G^{str}$ be the unique subgroup of order $p^j$, and let $$(f^{st})': (Y^{str})^{st}/Q \to X^{st}$$ 
be the canonical map.  

Consider the path $\{v_i\}_{i=0}^{\ell}$, $\{e_i\}_{i=0}^{\ell-1}$, where $v_0$ corresponds to $\ol{X}_0$ and $v_{\ell}$ corresponds to 
$\ol{X}_{\beta}$.  Write
$(\sigma^{\eff}_i)'$ for the effective invariant for $(f^{st})'$ at $e_i$. 
The effective different for  $(f^{st})'$ at $v_0$ is $\nu - j + \frac{1}{p-1}$, whereas at $v_{\ell}$ it is 0.
Lastly, write $\Delta_i = 1$ (resp.\ $0$) if $\ol{X}_c$ lies (resp.\ does not lie) outward from the point corresponding to $e_i$.
In particular, $\Delta_{\ell - 1} = 0$, so $\Delta_i$ is not always 1.

\emph{To (1)} (cf.\ Proof of Lemma \ref{Ltailradius} (1)):
By Lemmas \ref{Lsigmaeff} and \ref{Loneinsep}, $(\sigma^{\eff}_i)' = \frac{1}{2} + \Delta_i$.  In 
particular, $\Delta_{\ell - 1} = 0$, so $(\sigma^{\eff}_i)' \leq \frac{3}{2}$ with equality not holding for some $i$.
By applying Lemma \ref{Lsigmaeffcompatibility} (3) to each $e_i$, $0 \leq i < \ell$, we obtain $v(\rho') > \frac{2}{3}(\nu - j + \frac{1}{p-1})$.

\emph{To (2):}
Recall that $\ol{W}$ is the component of $\ol{X}$ corresponding to the closed disk of radius $|a|$ and center $a$.  
Suppose $\ol{W}$ corresponds to $v_{i_0}$.  Then, for $i < i_0$, Lemma \ref{Lsigmaeff} shows that
$(\sigma^{\eff}_i)' = \Delta_i$.  For $i > i_0$, Lemma \ref{Lsigmaeff} shows that $(\sigma_i^{\eff})' = \frac{1}{2} + \Delta_i$. 
If $\Delta_i$ were equal to $1$ for all $i$, we would have $v(\rho') = \frac{2}{3}(\nu - j + \frac{1}{p-1}) + \frac{1}{3}v(a)$ (cf.\ Proof of Lemma \ref
{Ltailradius} (2)).  But since, for large enough $i$, we have $\Delta_i = 0$, we can see from
Lemma \ref{Lsigmaeffcompatibility} (3) that $v(\rho') > \frac{2}{3}(\nu - j + \frac{1}{p-1}) + \frac{1}{3}v(a)$.

\emph{To (3):} 
In this case, $\ol{W}$ is the component of $\ol{X}$ corresponding to the closed disk of radius $|1-a|$ and center $a$.  
Suppose $\ol{W}$ corresponds to $v_{i_0}$.  Then, for $i < i_0$, Lemma \ref{Lsigmaeff} shows that
$(\sigma^{\eff}_i)' = -\frac{1}{2} + \Delta_i$.  For $i > i_0$, Lemma \ref{Lsigmaeff} shows that $(\sigma_i^{\eff})' = \frac{1}{2} + \Delta_i$. 
If $\Delta_i$ were equal to $1$ for all $i$, we would have $v(\rho') = \frac{2}{3}(\nu - j + \frac{1}{p-1} + v(1-a))$ (cf.\ Proof of Lemma \ref
{Ltailradius} (3)).  But since, for large enough $i$, we have $\Delta_i = 0$, we can see from
Lemma \ref{Lsigmaeffcompatibility} (3) that $v(\rho') > \frac{2}{3}(\nu - j + \frac{1}{p-1} + v(1-a))$.
\end{proof}

Knowledge of the inertia groups above $\ol{X}_{\beta}$, $\ol{X}_{\beta'}$, as well as the disks to which they correspond, gives further
restrictions on the inseparable tails, as we see below.

\begin{prop}\label{Ponly0insep}
Suppose that $\ol{X}$ has a new inseparable $p^j$-tail.
Then $v(a) > 0$ or $v(a-1) > 0$.
Furthermore, if $v(a) > 0$, then either $j = \nu - v(a)$, or both $p = 5$ and $j = \nu - v(a) - 1$.
If $v(a-1) > 0$, then $p = 5$ and $j = \nu - v(\sqrt{1-a})-1$.
\end{prop}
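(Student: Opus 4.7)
My strategy is to argue by contradiction. Suppose $\ol{X}$ has a new inseparable $p^j$-tail with $j$ outside the allowed range in the statement; I will derive that the $\mu_{p^\nu}$-torsor above the intermediate component $\ol{X}_\beta$ produced by Lemma \ref{Loneinsep} (ii) must split further than its $p^j$-generic-inertia allows. Recall $\ol{X}_\beta$ sits strictly between $\ol{X}_0$ and the new \'etale tail $\ol{X}_b$, has $\sigma_\beta = 1/2$, and (being an interior component) is not a tail.

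Let $\ol{Z}_\beta$ be the unique component of $\ol{Z}^{str}$ above $\ol{X}_\beta$, and choose a coordinate $t'$ on the corresponding formal disk with $z = e' t'$, where $|e'|$ is its radius. Restricting the torsor (\ref{Efstr2}) to this disk gives
$$y^{p^\nu} = 1 + \sum_{i \geq 1} c_i' (t')^i, \qquad c_i' = \frac{g^{(i)}(0)}{i!}(e')^i.$$
Lemma \ref{Lnewinsepradius} supplies \emph{strict} lower bounds on $v(e')$, and Lemmas \ref{L0coeffval} and \ref{L1coeffval} (together with Remarks \ref{R0c5} and \ref{R1c5} when $p = 5$) bound $v(g^{(i)}(0)/i!)$; these combine to estimate every $v(c_i')$. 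I would then show that, for the disallowed values of $j$, these estimates satisfy the hypotheses (i) or (ii) of the partial converse of Lemma \ref{Lartinschreier}, namely \cite[Lemma 2.18]{Ob:fm}, applied at the appropriate depth. The converse forces the torsor to split into too many disjoint $\mu_p$-pieces with \'etale reduction, so the inertia on $\ol{X}_\beta$ drops below $p^j$, contradicting the hypothesis that $\ol{X}_\beta$ is a $p^j$-component.

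The three cases of the proposition correspond to the three cases of Lemma \ref{Lnewinsepradius}. For $a \not\equiv 0, 1 \pmod{\pi}$, the strict bound $v(e') > \frac{1}{3}(\nu - j + \frac{1}{p-1})$ combined with Remark \ref{Rnot01conductor3} (which pins $v(c_3)$ at exactly $\nu + \frac{1}{p-1}$ on the inner $\ol{Z}_b$ disk, every other $v(c_i)$ being strictly larger) rules out every $j \geq 1$, so no new inseparable tail exists. For $a \equiv 0 \pmod{\pi}$, Lemma \ref{Loneinsep} (iii) already gives $j \leq \nu - v(a)$; the estimate then permits only $j = \nu - v(a)$, with the exceptional subcase $p = 5$, $j = \nu - v(a) - 1$ saved by part (ii) of Lemma \ref{Lartinschreier} applied to the explicit $c_5$-term from Remark \ref{R0c5}. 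The case $a \equiv 1 \pmod{\pi}$ is parallel via Corollary \ref{C1coeffval} and Remark \ref{R1c5}; here even $j = \nu - v(\sqrt{1-a})$ is excluded (because branching at $1$ already has index $p^{\nu - v(\sqrt{1-a})}$ by Lemma \ref{Lvcorrect}, so that value would give no room for an inseparable tail outward from $\ol{X}_\beta$), leaving only $p = 5$ and $j = \nu - v(\sqrt{1-a}) - 1$.

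The main difficulty is the $p = 5$ bookkeeping: one cannot merely estimate $v(c_5')$, but must, as in Remarks \ref{R0c5} and \ref{R1c5}, isolate $c_5'$ modulo high-valuation error and verify the delicate condition of Lemma \ref{Lartinschreier} (ii) involving $\sqrt[5]{c_p' p^{(p-1)n+1}}$. This mirrors, and borrows directly from, the calculations in Propositions \ref{Pa0} (ii) and \ref{Pa1} (ii), and is the reason $p = 5$ was singled out for extra care in the introduction.
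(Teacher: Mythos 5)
Your overall strategy — quotient by $Q$, pass to the disk of $\ol{Z}_\beta$, estimate the $v(c_i')$ via Lemma \ref{Lnewinsepradius} and the coefficient bounds, and derive a contradiction when $j$ is disallowed — is exactly the skeleton of the paper's proof. But the mechanism of contradiction you propose is not the one the estimates actually deliver, and this is a genuine gap.

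The coefficient estimates one obtains are of the form $v(c_i') > (\nu - j) + \frac{1}{p-1}$ for \emph{every} $i$ (in the non-exceptional cases). Such uniform strict bounds do not place you in the hypotheses of \cite[Lemma 2.18]{Ob:fm}; that lemma produces $p^{\nu-j-1}$ disjoint $\mu_p$-torsors \emph{with \'etale reduction}, which requires a coefficient sitting at the boundary (as in Remark \ref{Rnot01conductor3}, where $v(c_3)$ is pinned \emph{exactly} at $\nu + \frac{1}{p-1}$ on the $\ol{Z}_b$-disk). Uniformly high valuation gives something stronger and simpler: by the binomial expansion, $g(e't')$ is a genuine $p^{\nu-j}$th power in $\Frac(R\{t'\})$. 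The conclusion is that the $\mu_{p^{\nu-j}}$-torsor is \emph{trivial} over the whole disk of $\ol{Z}_\beta$ — it splits into $p^{\nu-j}$ disjoint copies of the disk, each mapping isomorphically — not that $\ol{X}_\beta$ carries an \'etale Artin-Schreier structure. Correspondingly, the contradiction is not that the generic inertia on $\ol{X}_\beta$ drops below $p^j$ (it remains $p^j$, as it must): it is that the triviality of $(Y^{str})^{st}/Q$ over the entire closed disk, including its inner boundary, forces $\ol{X}_\beta$ to \emph{not} intersect a $p^{j+1}$-component, contradicting the very definition of $\ol{X}_\beta$ from Lemma \ref{Loneinsep}(ii).

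Your account of the $p=5$ bookkeeping is also aimed at the wrong target. The delicate analysis of Lemma \ref{Lartinschreier}(ii) and the $\sqrt[5]{c_p' p^{(p-1)\nu+1}}$ condition is what is needed in Propositions \ref{Pa0}(ii), \ref{Pa1}(ii), and \ref{Pinseptailloc} (locating a center, or showing a tail \emph{does} exist). For the present proposition (nonexistence of disallowed $j$), the role of $p=5$ is much tamer: one simply observes that the estimate $\frac{i-3}{3}(\nu - j + \frac{1}{p-1} - v(\cdot)) > v(i)$ fails for $p = i = 5$ at the boundary value of $j$, so the binomial-triviality argument cannot be run there; those cases are therefore left as possibilities, to be shown to actually occur in Proposition \ref{Pinseptailloc}. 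You would not, and should not, try to verify the condition of Lemma \ref{Lartinschreier}(ii) here — the proposition is an ``only if'' statement, and all the ``if'' content lives in Proposition \ref{Pinseptailloc}.
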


\begin{proof}
Maintain the notation $\ol{X}_{\beta}$ and $\ol{Z}_{\beta}$ from Lemma \ref{Lnewinsepradius}.   
Let $e'$ be such that the radius of the disk corresponding to $\ol{Z}_{\beta}$ is $|e'|$, and let $t'$ be a coordinate on this disk.  
Let $Q < G^{str}$ be the unique subgroup of order $p^j$. 
Write $q: (Y^{str})' := Y/Q \to Z^{str}$ for the canonical map.  We can write the equation of $q$ in terms of 
$t'$ as $$y^{p^{\nu-j}} = g(e't') = 1 + \frac{g'(0)}{1!}(e't') +
\frac{g''(0)}{2!}(e't')^2 + \cdots.$$  We claim that, unless $j$ and $a$ are as in the statement of the proposition,
the right-hand side is a $p^{\nu - j}$th power in $\Frac(R\{t'\})$.  This means that $(Y^{str})^{st}/Q$ splits into $p^{\nu - j}$
irreducible components above $\ol{Z}_{\beta}$, each mapping isomorphically to $\ol{Z}_{\beta}$.  
This implies that $\ol{X}_{\beta}$ is a $p^j$-component for $f$ that does \emph{not} intersect a $p^{j+1}$-component, which contradicts 
the definition of $\ol{X}_{\beta}$.

Let $c_i'$ be the coefficient of $(t')^i$ in $g(e't')$. 
Then $c_i' = c_i(\frac{e'}{e})^i$, so $v(c_i') = v(c_i) - i(v(e) - v(e'))$, which is greater than $v(c_i) - \frac{i}{3}j$, by Lemmas \ref{Ltailradius} and 
\ref{Lnewinsepradius}.

In the case $v(a) = v(a-1) = 0$, it is clear from Lemma \ref{Lnewinsepradius} that
$v(c'_i) > \nu - j + \frac{1}{p-1}$ for $i \geq 3$.
Also, by Lemma \ref{Lartinschreier} (1), we have $v(c_i) \geq  \nu + \frac{1}{p-1}$ for $i = 1, 2$.  Then
$$v(c_i') > \nu + \frac{1}{p-1} - \frac{i}{3}j > \nu 
- j + \frac{1}{p-1}$$ for $i = 1, 2$.  Since $v(c_i') > \nu - j + \frac{1}{p-1}$ for all $i$, the binomial theorem shows that $g(e't')$ is a $p^{\nu - j}$th 
power in $\Frac(R\{t'\})$, finishing this case.

Now, suppose $v(a) > 0$.  Since $v(c_i') > v(c_i) - \frac{i}{3}j$, Equation (\ref{Erushed0}) shows that 
$$v(c_i') > \nu - j + \frac{1}{p-1} + \frac{i-3}{3} (\nu - j + \frac{1}{p-1} - v(a)) - v(i).$$ 
By Lemma \ref{Loneinsep} (3), $\nu - j - v(a) \geq 0$.  We obtain that $v(c_i') > \nu - j + \frac{1}{p-1}$ for $i \geq 3$, unless $j = \nu - v(a)$ or both
$j = \nu - v(a) - 1$ and $p = i = 5$.  Barring these possibilities, we conclude as in the case $v(a) = v(a-1) = 0$
that $v(c_i') > \nu - j  + \frac{1}{p-1}$ for $i = 1, 2$. 
Since $v(c_i') > \nu - j + \frac{1}{p-1}$ for all $i$, the binomial theorem shows that $g(e't')$ is a $p^{\nu - j}$th power in $\Frac(R\{t'\})$, 
finishing this case.

Lastly, if $v(a-1) > 0$, then since $v(c_i') > v(c_i) - \frac{i}{3}j$, Equation (\ref{Erushed1}) shows that 
$$v(c_i') > \nu - j + \frac{1}{p-1} + \frac{i-3}{3} (\nu - j + \frac{1}{p-1} - v(\sqrt{1-a})) - v(i).$$
By Lemma \ref{Loneinsep} (3),  $\nu - j - v(\sqrt{1-a}) \geq 1$.  We obtain that $v(c_i') > \nu - j + \frac{1}{p-1}$ for $i \geq 3$, unless $p=i=5$ and 
$j = \nu - v(\sqrt{1-a}) - 1$.  We conclude as in the case $v(a) > 0$.

\end{proof}

Proposition \ref{Ponly0insep} has narrowed down the possibilities for inseparable tails to the point that we can now explicitly exhibit 
every possible inseparable tail.

\begin{prop}\label{Pinseptailloc}
A new inseparable tail in fact exists in all cases allowed by Proposition \ref{Ponly0insep}.  In particular:
\begin{enumerate}
\item If $v(a) > 0$, then $\ol{X}$ has a new inseparable $p^{\nu - v(a)}$-tail
corresponding to the disk of radius $p^{-(v(a) + \frac{1}{(p-1)})}$ around $x = \frac{a}{2}$.
The two components of $\ol{Z}^{str}$ lying above correspond to the disks of radius $p^{-(\frac{1}{2(p-1)})}$ 
around $z = \pm\sqrt{-1}$.
\item If $v(a) > 0$, $p = 5$, and $v(a) < \nu - 1$, then $\ol{X}$ has a new inseparable $p^{\nu - v(a) - 1}$-tail 
corresponding to the disk of radius $p^{-(v(a) + \frac{17}{20})}$ around $x = \frac{a}{1-d^2}$, where 
$d := \pm \left(\frac{5^{v(a) + 1}}{r+s}\right)^{2/5}$ (for any choice of $5$th root).
The two components of $\ol{Z}^{str}$ lying above correspond to the disks of radius $p^{-\frac{17}{40}}$ 
around the two possible choices of $d$. 
\item If $v(a-1) > 0$, $p = 5$, and $v(\sqrt{1-a}) < \nu - 1$, then $\ol{X}$ has a new inseparable $p^{\nu - v(\sqrt{1-a}) - 1}$-tail 
corresponding to the disk of radius $p^{-(v(1-a) + \frac{17}{20})}$ around $x = \frac{a}{1-d^2}$, where 
$d := \pm 2\frac{s}{r}\left(\frac{5^{v(\sqrt{1-a}) + 1}}{s}\right)^{2/5}$ (for any choice of $5$th root).
The two components of $\ol{Z}^{str}$ lying above correspond to the disks of radius $p^{-(v(\sqrt{1-a}) + \frac{17}{40})}$ 
around the two possible choices of $d$.
\end{enumerate}
\end{prop}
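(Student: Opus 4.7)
The plan is to verify, in each of the three cases listed, that the proposed disk really supports a component of $\ol{X}$ whose generic inertia is of the order claimed, and that this component is in fact a new inseparable tail of the promised degree. Proposition \ref{Ponly0insep} has already produced the only possible shape (the degree $p^j$) for a new inseparable tail, and Lemma \ref{Loneinsep} has pinned down where it must lie within the graph of $\ol{X}$, so the remaining work is to \emph{construct} the tail concretely on the candidate disk and verify its generic inertia.

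The core computation in every case is the following. Using the equation $z^2=(x-a)/x$, translate the candidate $x$-disk (around $a/2$ or $a/(1-d^2)$) to two $z$-disks (around $\pm\sqrt{-1}$ or around $\pm d$), and introduce a uniformizer $t'$ on one of these $z$-disks so that $z = z_0 + e't'$. Substituting into the $\mu_{p^\nu}$-torsor equation (\ref{Efstr2}) gives
\[
y^{p^\nu} \;=\; g(z_0 + e't') \;=\; 1 + c'_1 t' + c'_2 (t')^2 + \cdots,
\]
with $c'_i = \frac{g^{(i)}(z_0)}{i!}(e')^i$. What has to be shown is that $g(z_0+e't')$ is a $p^{\nu-j-1}$st power in $R\{t'\}$ but \emph{not} a $p^{\nu-j}$th power, and moreover the resulting degree-$p^{j+1}$ torsor does not admit étale reduction. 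Equivalently, one checks one of the non-splitting hypotheses of Lemma \ref{Lartinschreier}: for cases (i) and (ii/iii) with $p>5$ style arithmetic, criterion (i) with $i=3$ works; for the $p=5$ edge cases one is forced into criterion (ii), which is precisely why the centers $d$ in (ii) and (iii) involve the fifth-root corrections $\sqrt[5]{5^{4\nu+1}\binom{r+s}{5}}$ and $\sqrt[5]{5^{4\nu+1}\binom{s}{5}}$ that also appeared in Propositions \ref{Pa0}(ii) and \ref{Pa1}(ii). The identification of the appropriate $c'_p$ uses the Maclaurin expansions (\ref{Emessy1})--(\ref{Emessy3}) together with Lemmas \ref{L0coeffval} and \ref{L1coeffval}, copied almost verbatim but now expanded around $z_0\in\{\pm\sqrt{-1},\pm d\}$ rather than around $z=0$.

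Once the non-splitting is established, the component $\ol{X}_c$ associated with the chosen disk carries generic inertia of order at least $p^{\nu-v(a)}$ (resp.\ $p^{\nu-v(a)-1}$, $p^{\nu-v(\sqrt{1-a})-1}$). Combining this with Proposition \ref{Ponly0insep} (which forbids any strictly larger inertia) and with monotonicity (Proposition \ref{Pmonotonic}) shows equality. To see that $\ol{X}_c$ is a \emph{tail} and not an interior component, note that by Lemma \ref{Loneinsep}(i) there can be at most one new inseparable tail, and its invariant $\sigma_c=2$ together with the radius computation of Lemma \ref{Lnewinsepradius} matches up with the radius of our disk via Lemma \ref{Lsigmaeffcompatibility}(iii); any component strictly outward from $\ol{X}_c$ would force an additional critical point with $\sigma>0$ and contradict the vanishing cycles accounting. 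That $\ol{X}_c$ is new (and not primitive) is automatic because the only branch point of $f^{str}$ that could lie on the corresponding disk, namely $a$, already specializes to the étale tail $\ol{X}_b$ at a different center.

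The main obstacle I expect is the $p=5$ sub-cases (ii) and (iii). There, criterion (i) of Lemma \ref{Lartinschreier} is unavailable because $v(c_5)$ is not sufficiently large, so one must locate the center by solving the more delicate equation of criterion (ii), matching the $t'$-coefficient of $g(z_0+e't')$ against $\sqrt[5]{c_p'\,p^{(p-1)\nu+1}}$ up to error of valuation exceeding $\nu-j+\frac{1}{p-1}$. This is exactly what dictates the explicit centers $d$ in the statement, and it is the reason the hypothesis $p\ne 3$ is not enough to eliminate $p=5$ from a separate treatment. The remaining bookkeeping (that the $x$-radii and $z$-radii given in the statement are consistent with $z^2=(x-a)/x$ via $dx/dz$ at $z_0$, and that the components appear at the correct height in the tree $\ol{X}$) is a mechanical verification from Lemmas \ref{Ltailradius} and \ref{Lnewinsepradius}.
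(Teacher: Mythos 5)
Your outline captures the right mechanics (passing to $z$-disks, power-series expansion around the proposed center, matching radii via the effective-different bookkeeping), but the verification step you describe has the argument running backward, and this matters.

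You propose to confirm the candidate disk by invoking the \emph{non-splitting} criterion of Lemma~\ref{Lartinschreier}, with criterion~(i) at $i=3$. This is not what is needed here, on two counts. First, the paper's proof requires a positive splitting result: one quotients by the subgroup $Q \leq G^{str}$ of order $p^j$ and shows that the resulting degree-$p^{\nu-j}$ torsor over the disk splits into $p^{\nu-j-1}$ disjoint $\mu_p$-torsors, each with \'{e}tale reduction of conductor~$2$. The lemma that does this is \cite[Lemma~2.18]{Ob:fm} (of which Lemma~\ref{Lartinschreier} is described as the partial converse); using the non-splitting direction instead cannot pin down the generic inertia, nor tell you the conductor. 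Second, the critical coefficient is $c''_2$, not $c''_3$: the new inseparable tail has $\sigma_c = 2$ by Lemma~\ref{Loneinsep}(i), so the Artin--Schreier piece of the reduction has conductor~$2$. (The coefficient $c_3$ and conductor~$3$ belong to the \'{e}tale tail $\ol{X}_b$, with $\sigma_b = 3/2$.) You also omit the step that actually gets $\ol{X}_c$ into the stable reduction: the paper observes that an Artin--Schreier cover of conductor~$2$ has genus $\tfrac{p-1}{2} > 0$ and then cites \cite[Lemma~2.6]{Ob:fm}. Without computing that conductor, you cannot rule out contraction of the component, so the whole existence claim is unsupported. Your alternative phrasing that ``the degree-$p^{j+1}$ torsor does not admit \'{e}tale reduction'' only bounds the inertia from below; to get inertia exactly $p^j$ one needs that the degree-$p$ quotient \emph{does} have \'{e}tale reduction, with the specific conductor.

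Your final step --- that an outward component ``would force an additional critical point with $\sigma>0$ and contradict the vanishing cycles accounting'' --- also does not go through as stated: formula~(\ref{Evancycles}) counts only \'{e}tale tails, so an additional inseparable tail outward from $\ol{X}_c$ is not ruled out by it. The paper instead uses the constraints on the level $j$ and location of any new inseparable tail coming from Proposition~\ref{Ponly0insep}, Lemma~\ref{Ltailetale}, and monotonicity, plus (in case~(i)) explicit inspection of the candidate disks, to exclude the possibility of a more-outward inseparable tail. You would need an argument of this shape rather than an appeal to vanishing cycles.
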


Combining this with Lemma \ref{Loneinsep} (1), we obtain:
\begin{corollary}\label{Cinseptailloc}
The new inseparable tails mentioned in Proposition \ref{Pinseptailloc} are all the new inseparable tails of $\ol{X}$.
\end{corollary}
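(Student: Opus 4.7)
The plan is to mirror the analysis of \S\ref{Setaletails}: for each of the three cases, I would propose a candidate disk $\mathcal{D} \subset X$ (and its preimage disks in $Z^{str}$), pull back the $\mu_{p^{\nu-j}}$-torsor $q:(Y^{str})^{st}/Q \to (Z^{str})^{st}$ (where $Q \leq G^{str}$ is the unique subgroup of order $p^j$) to a torsor over $\mathcal{D}$, and verify via the converse direction of Lemma \ref{Lartinschreier} that this pullback does \emph{not} split into disjoint $\mu_{p^{\nu-j-1}}$-torsors with \'{e}tale reduction. Combined with the upper bound on $j$ from Lemma \ref{Loneinsep}(iii) and the fact (Lemma \ref{Loneinsep}(i)) that a new inseparable $p^j$-tail must have $\sigma_c = 2$, this will force the component corresponding to $\mathcal{D}$ to be a genuine $p^j$-tail with all the claimed properties. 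The radii are determined, just as in Lemma \ref{Ltailradius}, by the monotonicity relation Lemma \ref{Lsigmaeffcompatibility}(iii) applied along the chain from $\ol{X}_0$ to the candidate tail using $\sigma_c = 2$ (this forces precisely the radii stated).

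For case (i), I would substitute $x = a/2 + u$, so $z^2 = (-1+2u/a)/(1+2u/a)$ is an $\epsilon$-perturbation of $-1$; choosing either square root, one has $z \approx \pm\sqrt{-1}$ with $v(z \mp \sqrt{-1}) > 1/(2(p-1))$, matching the proposed $Z^{str}$-disks. I would expand $g(z)$ around $z = \sqrt{-1}$ as a Taylor series in the centered coordinate $t'$; the key point is to verify that the leading coefficient $c_1'$ in the $t'$-expansion satisfies $v(c_1') < (\nu - j) + 1/(p-1)$, while $v(c_i') \geq (\nu-j) + 1/(p-1)$ for every $i > 1$ with $p \mid i$, so that Lemma \ref{Lartinschreier}(i) applies and the torsor fails to split. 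This calculation is analogous to the one yielding Remarks \ref{Rnot01conductor3}, \ref{R0conductor3}, but reflected through the substitution $z \mapsto z \mp \sqrt{-1}$; the estimate is much cleaner here because we are away from the $p=5$ subtlety.

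For cases (ii) and (iii), with $p = 5$ and $j$ one below the threshold in case (i), I would use Lemma \ref{Lartinschreier}(ii) in place of (i). The centers $d$ are forced: they are precisely the choices of $z$ at which $c_1$ and $c_p$ in the pulled-back expansion become related by the condition $v\bigl(c_1 - \sqrt[5]{c_p' \, p^{(p-1)(\nu-j)+1}}\bigr) < (\nu - j) + 1/(p-1)$ of Lemma \ref{Lartinschreier}(ii). Explicit calculation, using (\ref{Efstr2}) and the Taylor expansion of $g$ around $z = d$ together with Lemma \ref{Lvaint} (in case (ii)) and Lemma \ref{Lvcorrect} (in case (iii)), inverts this constraint to yield $d = \pm(5^{v(a)+1}/(r+s))^{2/5}$ and $d = \pm 2(s/r)(5^{v(\sqrt{1-a})+1}/s)^{2/5}$ respectively; the two signs correspond to the two claimed components of $\ol{Z}^{str}$. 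Finally one pushes $d$ down to $x$ via $z^2 = (x-a)/x$ (so $x = a/(1-d^2)$) and reads off the $X$-radius from the $Z$-radius via $v(e) = \tfrac{1}{2}(v(\rho) - v(a))$, exactly as in Lemma \ref{Ltailradius}.

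The main obstacle is the $p = 5$ analysis: one must track, in the Taylor expansions around the candidate centers, two nearly-cancelling contributions in degrees $1$ and $5$ and verify that the precise 5th-root relation of Lemma \ref{Lartinschreier}(ii) holds with the exact valuations required to detect the inseparable component (and no larger). This requires a careful rerun of the estimates in Corollary \ref{C0coeffval}/Remark \ref{R0c5} and Corollary \ref{C1coeffval}/Remark \ref{R1c5} after the coordinate change, confirming both that the auxiliary coefficient $c_p' := c_p$ modulo higher-valuation terms has the correct leading form ($32\binom{r+s}{5}e^5$ in (ii), $(\sqrt{1-a})^{-5}32\binom{s}{5}e^5$ in (iii)) and that the two signs of the 5th root of this quantity produce the two distinct components above the tail. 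Once this delicate bookkeeping is dispatched, Lemmas \ref{Loneinsep}(i) and \ref{Lnewinsepradius} close the argument, and Corollary \ref{Cinseptailloc} follows immediately because Proposition \ref{Ponly0insep} rules out any other possibilities.
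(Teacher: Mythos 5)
The corollary in the paper is a one\mbox{-}line consequence of three already\mbox{-}established results, and your final sentence is exactly that argument: Proposition \ref{Ponly0insep} forces any new inseparable $p^j$-tail to have $j$ among the listed values (and forces $a \equiv 0$ or $1 \pmod{\pi}$), Lemma \ref{Loneinsep}(i) says there is at most one new inseparable $p^j$-tail for each $j$, and Proposition \ref{Pinseptailloc} exhibits one for each allowed $j$; hence they are all of them. That is the whole proof, and the paper dispatches it with the single phrase ``Combining this with Lemma \ref{Loneinsep} (i).''

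Everything preceding your last sentence is a sketch of how to re\mbox{-}derive Proposition \ref{Pinseptailloc} itself, which is already proved in the paper and does not need to be re\mbox{-}established for the corollary. Moreover, that sketch contains a directional slip you should be aware of: to \emph{exhibit} a new inseparable $p^j$-tail at a candidate disk, the paper shows that the pulled\mbox{-}back $\mu_{p^{\nu-j}}$-torsor \emph{does} split into $p^{\nu-j-1}$ disjoint $\mu_p$-torsors with \'etale reduction of conductor $2$, using the forward direction (\cite[Lemma 2.18 (i)]{Ob:fm}) and then \cite[Lemma 2.6]{Ob:fm} to conclude the component survives in the stable model. Lemma \ref{Lartinschreier} is the converse (a non\mbox{-}splitting criterion) and is used elsewhere contrapositively, to extract necessary conditions on the coefficients given that splitting is known; invoking it to ``verify non\mbox{-}splitting'' at the candidate disks would prove the wrong thing for existence. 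The citation of Lemma \ref{Lnewinsepradius} in the closing step is also unneeded for the corollary; what closes the argument is Lemma \ref{Loneinsep}(i) together with Proposition \ref{Ponly0insep}.
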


Before proving Proposition \ref{Pinseptailloc} we prove a lemma.  Recall that 
$$g(z) = \left(\frac{z+1}{z-1}\right)^r \left(\frac{z + \sqrt{1-a}}{z- \sqrt{1-a}}\right)^s.$$

\begin{lemma}\label{L5insep}
Suppose $p = 5$.

\begin{enumerate}
\item Suppose $v(a) > 0$, $v(a) < \nu - 1$, and $a = 1 - \frac{s^2}{r^2}$.  
Let $d, e'' \in R$ with $v(d) = \frac{2}{5}$ and $v(e'') = \frac{17}{40}$.  Then, if we expand
$g(d + e''t'')$ as a power series in $R\{t''\}$, we will have $g(d + e''t'') \equiv$ 
$$g(d) + (-1)^{r+s}\left(2(r+s)d^2e''t'' + 2(r+s)d(e''t'')^2 + \frac{32(r+s)}{5}(e''t'')^5\right)$$
modulo $5^{v(a) + \frac{5}{4} + \epsilon}R\{t''\}$ for some $\epsilon > 0$.  Furthermore, $$v(g(d) - (-1)^{r+s}) = v(a) + 1.$$
\item Suppose $v(a-1) > 0$, $v(\sqrt{1-a}) < \nu - 1$, and $a = 1 - \frac{s^2}{r^2}$.  
Let $d, e'' \in R$ with $v(d) = v(\sqrt{1-a}) + \frac{2}{5}$ and $v(e'') = v(\sqrt{1-a}) + \frac{17}{40}$.  Then, if we expand
$g(d + e''t'')$ as a power series in $R\{t''\}$, we will have
$g(d + e''t'') \equiv$ $$g(d) + (-1)^{r+s} \left(- 8\frac{r^3}{s^2}d^2e''t'' - 8 \frac{r^3}{s^2}d(e''t'')^2 - \frac{32r^5}{5s^4}(e''t'')^5\right)$$ 
modulo $5^{v(\sqrt{1-a}) +  \frac{5}{4} + \epsilon}R\{t''\}$ for some $\epsilon > 0$.  Furthermore, 
$$v(g(d) - (-1)^{r+s}) = v(\sqrt{1-a}) + 1.$$
\end{enumerate}
\end{lemma}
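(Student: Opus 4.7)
My plan is to prove both parts of Lemma \ref{L5insep} by direct Taylor expansion, organized through the logarithmic derivative $L(z) := g'(z)/g(z)$. Writing $g = \alpha^r \beta^s$ with $\alpha(z) = (z+1)/(z-1)$ and $\beta(z) = (z+\lambda)/(z-\lambda)$, $\lambda = \sqrt{1-a}$, partial fraction decomposition gives
$$L(z) = \frac{2r}{1-z^2} + \frac{2s\lambda}{\lambda^2 - z^2}.$$
In case (i) the sign choice $\lambda = -s/r$ from Remark \ref{R0conductor3} forces $L(0) = 2r + 2s/\lambda = 0$, so that $L(z) = \sum_{k \geq 1} 2r(s^{2k} - r^{2k})/s^{2k} \cdot z^{2k}$. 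The Lifting-the-Exponent lemma, applied to $s^{2k} = ((-r) + (s+r))^{2k}$ (using $v(s+r) = v(a) \geq 1$ and $p = 5 \neq 2$), yields $v(s^{2k} - r^{2k}) = v(a) + v(k)$; hence the $z^{2k}$-coefficient of $L$ has valuation exactly $v(a) + v(k)$.

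To prove the auxiliary claim $v(g(d) - (-1)^{r+s}) = v(a) + 1$, I integrate and write $g(z) = g(0)\exp(\phi(z))$ with $\phi(z) = \int_0^z L(\zeta)\, d\zeta$. The $k$-th monomial of $\phi(d)$ has valuation $v(a) + v(k) - v(2k+1) + (2k+1)\cdot 2/5$, which is minimized at $k = 2$ (where the denominator $2k+1 = 5$ contributes the crucial saving $-v(5) = -1$), yielding exactly $v(a) + 1$; the $k = 1$ term gives the slightly larger value $v(a) + 6/5$. The higher-order corrections $\phi(d)^j/j!$ in the exponential have valuation at least $2(v(a)+1)$, which exceeds the cut-off $v(a) + 5/4$ once $v(a) \geq 1$, so the linear approximation suffices and $v(g(d) - (-1)^{r+s}) = v(\phi(d)) = v(a) + 1$.

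For the power series expansion of $g(d + e''t'')$, I would write $g(d+h)/g(d) = \exp(\Psi(h))$ with $\Psi(h) = \sum_{m \geq 1} L^{(m-1)}(d) h^m/m!$. The point is that only the $m = 1, 2, 5$ monomials of $\Psi$ itself contribute modulo the precision $5^{v(a) + 5/4 + \epsilon}$: the coefficient of $h^m$ in the final expansion is $g(d) \cdot L^{(m-1)}(d)/m!$, whose leading piece comes from the $k = 1$ term of the Maclaurin series of $L$ for $m \in \{1,2\}$, and from the $k = 2$ term for $m = 5$. The $1/5!$ factor combined with $v(5!) = 1$ is precisely what produces the fractional coefficient $32(r+s)/5$ in the $(e''t'')^5$ piece, whereas the $1/1!$ and $1/2!$ factors at lower order are units in $R$. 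One then checks that the products $\Psi(h)^j/j!$ for $j \geq 2$ lie beyond the cut-off, using $v(L(d)) = v(a) + 4/5$, and similarly that the $k \geq 3$ terms of $L(d), L'(d), L^{(4)}(d)$ are negligible.

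Part (ii) proceeds identically after an implicit rescaling: setting $w = z/\lambda$ moves the ``small'' parameter from $v(a)$ to $v(\sqrt{1-a})$, and the same LTE argument drives the computation. The extra powers of $\sqrt{1-a}$ floating around explain the coefficients $8r^3/s^2$ and $32r^5/(5s^4)$ in place of $2(r+s)$ and $32(r+s)/5$. The main obstacle throughout is precision bookkeeping: one must verify that every discarded term---be it a cross-term in $\exp(\Psi)$, a higher-order monomial of $L$ evaluated at $d$, or a correction to $g(d)$---has valuation exceeding the cut-off. The delicate interplay between $v(5!) = 1$ and the position of $d$ (with $v(d) = 2/5$, so that $(2k+1)v(d)$ hits an integer precisely when $k = 2$) is exactly what makes $p = 5$ exceptional, as foreshadowed in Remarks \ref{R0c5} and \ref{R1c5}.
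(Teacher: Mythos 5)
Your organizing idea — pass to the logarithmic derivative $L(z) = g'(z)/g(z)$, use partial fractions to get $L(z) = \tfrac{2r}{1-z^2} + \tfrac{2s\lambda}{\lambda^2 - z^2}$ with $\lambda = -s/r$, and read off $L(z) = \sum_{k\geq 1} c_k z^{2k}$ with $c_k = 2r(s^{2k}-r^{2k})/s^{2k}$ so that LTE gives $v(c_k) = v(a) + v_5(k)$ — is a genuinely different and in some ways tidier route than the paper's, which instead factors $g = \left(\tfrac{z+1}{z-1}\right)^{r+s}\bigl(\cdots\bigr)^s$ and expands each factor by hand. Your valuation bookkeeping is correct: the $k=1$ term of $L(d)$ dominates the $h$ and $h^2$ coefficients, the $k=2$ term of $L^{(4)}(d)/5!$ dominates the $h^5$ coefficient (the $5!$ in the denominator supplying the crucial $v_5 = 1$), the products $\Psi(h)^j/j!$ for $j\geq 2$ fall below the cut-off, and the derivation of $v(g(d)-(-1)^{r+s}) = v(a)+1$ (minimum of $\phi(d) = \sum_k c_k d^{2k+1}/(2k+1)$ at $k=2$) is exactly right.

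The gap is in the very last step, which you dispatch with ``one then checks.'' The constants must actually be computed, and this is where the content of the lemma lives. From your own formula, the $h^1$ coefficient of $g(d+h)/g(d)$ is $L(d)\approx c_1 d^2$, so the $e''t''$ coefficient of $g(d+e''t'')$ is $(-1)^{r+s}c_1 d^2 e''$ up to negligible error. Now $c_1 = 2r(s^2-r^2)/s^2 = 2r(s-r)(s+r)/s^2$; since $v(r+s)=v(a)$ and $s\equiv -r \pmod{5^{v(a)}}$ with $r,s$ units, reducing the unit factor $2r(s-r)/s^2$ modulo $5$ gives $2(-s)(2s)/s^2 = -4$, hence $c_1 \equiv -4(r+s) \pmod{5^{v(a)+1}}$, not $2(r+s)$. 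The resulting discrepancy $-6(r+s)d^2 e''$ has valuation $v(a) + \tfrac{33}{40}$, well below the stated cut-off $v(a)+\tfrac{5}{4}$, so it cannot be absorbed into the error term. (Likewise the $h^2$ coefficient $c_1 d (e'')^2$ disagrees with $2(r+s)d(e'')^2$; only the $h^5$ coefficient $c_2/5\equiv -8(r+s)/5 \equiv 32(r+s)/5 \pmod{5^{v(a)}}$ is within tolerance, because the required precision for that coefficient is only $v(a)-\tfrac{7}{8}$.) So the verification you skip does not merely confirm the stated constants — carried out, it disagrees with them. You need to either locate the sign/constant error in your computation (I do not see one, and a direct numerical check with $r=1$, $s=4$, $a=-15$ confirms $c_1 = 15/8 \equiv -20 \pmod{25}$) or reconcile the discrepancy with the lemma's statement before the proposal can be accepted.
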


\begin{proof}
Write $\sqrt{1-a} = -\frac{s}{r}$ (Remarks \ref{R0conductor3}, \ref{R1conductor3}). 

\emph{To (1)}:
We can write
\begin{equation}\label{E501}
g(z) = \left(\frac{z+1}{z-1}\right)^{r+s}\left(\left(\frac{z+ \sqrt{1-a}}{z+1}\right)\left(\frac{z-1}{z-\sqrt{1-a}}\right)\right)^s.
\end{equation}
Recall (Lemma \ref{Lvaint}) that $v(r + s) = v(a)$.  We will calculate $g(d+e''t'')$ by first expanding
$g(z)$ as a power series, and then plugging in $z = d + e''t''$.  Note that, since $\min(v(d), v(e'')) = \frac{2}{5}$, we 
can essentially think of $z$ as having valuation $\frac{2}{5}$, and thus can ignore terms of the form $cz^i$ such that
$v(c) + \frac{2}{5}i > v(a) + \frac{5}{4}$.  Throughout, we use the notation $\sigma \approx \tau$ if, thinking of
$z$ as a generic element of $R$ having valuation $\frac{2}{5}$, we have $\sigma = \tau$ modulo terms with valuation 
$> v(a) + \frac{5}{4}$.  Since $v(a) \geq 1$, note that $a^2z \approx 0$.  This, as well as the fact that $v(a) = v(r+s)$ (Lemma \ref{Lvaint}), 
will be used repeatedly (without mention) below.

Using (\ref{Emessy1}), (\ref{Emessy2}), and simplifying, we can write
\begin{equation}\label{E502}
\left(\frac{z+1}{z-1}\right)^{r+s} \approx (-1)^{r+s}\left(1 + 2(r+s)z + \frac{8}{3}(r+s)z^3 + \frac{32}{5}(r+s)z^5\right).
\end{equation}

On the other hand, since $\sqrt{1-a} = -\frac{s}{r}$, if we let $\mu = \sqrt{1-a} - 1 = -\frac{r+s}{r}$, then $v(\mu) = v(a)$.   
We can write
\begin{align*}
\left(\left(\frac{z+ \sqrt{1-a}}{z+1}\right)\left(\frac{z-1}{z-\sqrt{1-a}}\right)\right)^s =&\  
\left((1 + \frac{\mu}{z+1})(1 + \frac{\mu}{z-1-\mu})\right)^s \\ \approx&\  1 + 2s\mu\frac{z}{z^2 -1},
\end{align*}
and thus 
\begin{equation}\label{E503}
\begin{split}
\left(\left(\frac{z+ \sqrt{1-a}}{z+1}\right)\right.&\left.\left(\frac{z-1}{z-\sqrt{1-a}}\right)\right)^s \approx \\ &  1 + 2\frac{s(r+s)}{r}(z + z^3)
\approx 1 - 2(r+s)(z+z^3).
\end{split}
\end{equation}
Combining (\ref{E501}), (\ref{E502}), and (\ref{E503}) yields
$$g(z) \approx (-1)^{r+s}\left(1 + \frac{2}{3}(r+s)z^3 + \frac{32}{5}(r+s)z^5\right).$$
Now, if we plug in $z = d + e''t''$ (and ignore all $t''$ terms where the coefficient has valuation $> v(a) + \frac{5}{4}$), 
then $z^3$ becomes $d^3 + 3d^2e''t'' + 3d(e''t'')^2$ and $z^5$ becomes $d^5 + (e''t'')^5$.  We obtain
the expression in the lemma.  

Since $v(\frac{32}{5}(r+s)d^5) = v(a) + 1$ and $v(\frac{2}{3}(r+s)d^3) = v(a) + \frac{6}{5}$, we 
see that $v(g(d) - (-1)^{r+s}) = v(a) +1$.

\emph{To (2):} 
Recall by Lemma \ref{Lvcorrect} that $v(s) = v(\sqrt{1-a})$.  
Let $w = \frac{z}{\sqrt{1-a}} = -\frac{r}{s}z$.
Again, we calculate $g(d+e''t'')$ by first expanding $g(z)$, and then plugging in $z = d + e''t''$.  
Since $\min(v(d), v(e'')) = v(\sqrt{1-a})+ \frac{2}{5}$, we 
can essentially think of $z$ as having valuation $v(\sqrt{1-a}) +\frac{2}{5}$ and $w$ as having valuation $\frac{2}{5}$. 
We will write $\sigma \approx \tau$ if, thinking of
$z$ (resp.\ $w$) as a generic element of $R$ having valuation $v(\sqrt{1-a}) + \frac{2}{5}$ (resp.\ $\frac{2}{5}$), 
we have $\sigma = \tau$ modulo terms with valuation $> v(\sqrt{1-a}) + \frac{5}{4}$.  Note also that $v(s) = v(\sqrt{1-a})$ (Lemma \ref{Lvcorrect}).

Using (\ref{Emessy2}), we have
\begin{equation}\label{E511}
\left(\frac{z+1}{z-1}\right)^r \approx (-1)^r(1 + 2rz)
\end{equation}
(note that $z^2 \approx 0$).

On the other hand, using (\ref{Emessy3}), (\ref{Emessy2}), and simplifying, we can write
\begin{equation}\label{E512}
\left(\frac{z+\sqrt{1-a}}{z-\sqrt{1-a}}\right)^{s} \approx (-1)^{s}\left(1 + 2sw + \frac{8}{3}sw^3 + \frac{32}{5}sw^5\right).
\end{equation}

Combining (\ref{E511}), (\ref{E512}), and substituting $w = -\frac{r}{s}z$ yields
\begin{equation}\label{E513}
g(z) \approx (-1)^{r+s}\left(1 - \frac{8r^3}{3s^2}z^3 -\frac{32r^5}{5s^4}z^5\right).
\end{equation}
If we plug in $z = d + e''t''$ (and ignore all $t''$ terms where the coefficient has valuation $> v(\sqrt{1-a}) + \frac{5}{4}$), 
then $z^3$ becomes $d^3 + 3d^2e''t'' + 3d(e''t'')^2$ and $z^5$ becomes $d^5 + (e''t'')^5$.  We obtain
the expression in the lemma.   

Since $v(\frac{32r^5}{5s^4}d^5) = v(\sqrt{1-a}) + 1$ and 
$v(\frac{8r^3}{3s^2}d^3) = v(\sqrt{1-a}) + \frac{6}{5}$, we 
see that $v(g(d)- (-1)^{r+s}) = v(\sqrt{1-a}) +1$.
\end{proof}

We now prove Proposition \ref{Pinseptailloc}:
\begin{proof}
Since $z^2 = \frac{x-a}{x}$, it is easy to check (as in Lemma \ref{Ltailradius}) that it suffices to prove the statements about $\ol{Z}^{str}$.

Say that we wish to show that a disk $\mc{D}$ in $\proj^1_z$ corresponds to a component of $\ol{Z}^{str}$
lying above a new inseparable $p^j$-tail $\ol{X}_c$ of $\ol{X}$.  
Let $Q < G^{str}$ be the unique subgroup of order $p^j$.
If $\hat{Y}$, $\hat{Z}$ are the formal completions of $(Y^{str})^{st}/Q$ and $(Z^{str})^{st}$
along their special fibers, then we claim that it suffices to show that the generic fiber of the torsor 
$\hat{f}: \hat{Y} \times_{\hat{Z}} \mc{D} \to \mc{D}$ splits into 
$p^{\nu - j - 1}$ $\mu_p$-torsors, each of which has \'{e}tale reduction with conductor $2$. 

We prove the claim.  Suppose $\mc{D}$ is such that $\hat{f}$ splits as desired.
Since an Artin-Schreier cover of conductor $2$ has genus $\frac{p-1}{2} > 0$, 
then \cite[Lemma 4.3]{Ob:fm} shows that $\ol{X}_c$ is contained in the stable reduction of $\ol{X}$.  
Since the proposed disks corresponding to $\ol{X}_c$ do not contain $x=0$, $1$, $a$ or $\infty$, it follows that
no branch point specializes to or outward from $\ol{X}_c$.  So either $\ol{X}_c$ is a new inseparable tail,
or there exists a new inseparable tail lying outward from $\ol{X}_c$.  In cases (2) and (3), 
no new inseparable tail can lie outward from $\ol{X}_c$, by Proposition \ref{Ponly0insep}, 
Lemma \ref{Ltailetale}, and monotonicity.  In case (1), any
new inseparable tail lying outward from $\ol{X}_c$ would have to be one of the tails in (2) or (3) (again using Proposition
\ref{Ponly0insep}, Lemma \ref{Ltailetale}, and monotonicity), 
and inspection shows that this is not the case.  Thus $\ol{X}_c$ is an 
inseparable tail, proving the claim.

It remains to show that, for the disks $\mc{D}$ in the proposition, $\hat{f}$ splits as desired.
Let $z=d$ be a center of $\mc{D}$, and enlarge $K$ (if necessary) so that $K$ contains an element $e''$ 
such that $|e''|$ is the radius of $\mc{D}$.  Then we can 
choose a coordinate $t''$ on $\mc{D}$ such that $z = d + e''t''$.  Enlarge $K$ again (if necessary) so that $g(d) \in 
(K^{\times})^{p^\nu}$.
The generic fiber of $\hat{f}$ can be given by the equation
$$y^{p^{\nu - j}} = \frac{g(d + e''t'')}{g(d)}.$$

\emph{To (1):}
Here $d = \sqrt{-1}$ (either square root) and $e'' \in K$ with $v(e'') = \frac{1}{2(p-1)}$.
Also, $j = \nu - v(a)$, so $\nu - j = v(a) = v(r+s)$, by Lemma \ref{Lvaint}.  So we may multiply $\frac{g(z)}{g(d)}$ by 
$p^{v(r+s)}$th powers without changing the generic fiber of $\hat{f}$.  By (\ref{E501}), we may assume that the generic fiber of $\hat{f}$ is given by
$$y^{p^{v(a)}} = \left(\left(\frac{z+ \sqrt{1-a}}{z+1}\right)\left(\frac{z-1}{z-\sqrt{1-a}}\right)\right)^s.$$
Let $\mu = -\frac{r+s}{r} = \sqrt{1-a} - 1$.  Then $v(\mu) = v(a)$.  Thus we can write 
\begin{equation}\label{Ebetterg}
y^{p^{v(a)}} = h(z) = \left((1 + \frac{\mu}{z+1})(1 + \frac{\mu}{z-1-\mu})\right)^s = 1 + 2s\mu\frac{z}{z^2 -1} +
O(\mu^2),
\end{equation}
where $O(\mu^2)$ signifies terms in $z$ whose coefficients have valuation at least $v(\mu^2)$.

After a possible finite extension, assume $h(d) \in (K^{\times})^{p^{v(a)}}.$
Thus we may replace $h(z)$ by $\frac{h(z)}{h(d)}$, without changing $\hat{f}$. 
Expanding $\frac{h(z)}{h(d)}$ out in terms of $t''$ gives
$$1 + \frac{h'(d)}{h(d)1!}e''t'' + \frac{h''(d)}{h(d)2!}(e''t'')^2 + \cdots.$$
For all $i > 0$, Equation (\ref{Ebetterg}) shows that 
$$v(\frac{h^{(i)}(d)}{i!h(d)}) \geq v(\mu) = v(a) = \nu - j.$$  
So for $i > 2$, $$v(\frac{h^{(i)}(d)}{i!h(d)}(e'')^i) > \nu - j + \frac{1}{p-1}.$$  
For $i = 1$, a direct calculation shows $\frac{h'(d)}{h(d)} = O(\mu^2)$, 
so $$v(\frac{h'(d)}{h(d)}e'') > 2v(\mu) = 2v(a) > \nu - j + \frac{1}{p-1}.$$  
For $i = 2$, a direct calculation shows $$h''(z) = 2s\mu\frac{2z}{(z^2-1)^3}\left(-4(1+z^2) + 2z(z^2-1)\right) + O(\mu^2).$$  Then
$v(h''(d)) = v(\mu) = \nu - j$.  Since $v(h(d)) = 0$, we have that 
$$v(\frac{h''(d)}{h(d)2!}(e'')^2) = \nu - j + \frac{1}{p-1}.$$  
By \cite[Lemma 3.1 (i)]{Ob:fm}, the generic fiber of $\hat{f}$ 
splits into $p^{v(a) - 1}$ $\mu_p$-torsors, each of which has \'{e}tale reduction with conductor 2.

\emph{To (2):} Here $d = \pm \left(\frac{5^{v(a)+1}}{r+s}\right)^{2/5}$ (any 5th root) and $e'' \in K$ with $v(e'') = \frac{17}{40}$.
Since $v(r+s) = v(a)$, then $v(d) = \frac{2}{5}$.  Also, $\nu - j = v(a) + 1$.  Then the generic fiber of $\hat{f}$ can be given
by $y^{p^{v(a) + 1}} = \frac{g(d+e''t'')}{g(d)}$.  By Lemma \ref{L5insep} (1), this is equivalent to
$$y^{p^{v(a) + 1}} \approx 1 + \left(2(r+s)d^2e''t'' + 2(r+s)d(e''t'')^2 + \frac{32(r+s)}{5}(e''t'')^5\right),$$
where ``$\approx$" means we have equality up to terms with coefficients of valuation $> v(a) + \frac{5}{4}$.  Note that, since
$v(g(d) - (-1)^{r+s}) = v(a) + 1$, dividing out by $g(d)$ is the same as dividing the coefficients of positive powers of $t''$ by 
$(-1)^{r+s}$, up to $\approx$.

The valuation of the coefficient of $(t'')^2$ is $v(a) + \frac{5}{4}$.  If $c_1''$ and $c_5''$ are the coefficients of $t''$ and $(t'')^5$, respectively,
then plugging in $d$ shows $c_5'' - \frac{(c_1'')^5}{5^{4v(a) + 5}} = 0$.  By \cite[Lemma 3.1 (ii)]{Ob:fm}, 
the special fiber of $\hat{f}$ splits into $p^{v(a)}$ $\mu_p$-torsors, each of which has \'{e}tale reduction with conductor $2$.

\emph{To (3):} Here $d = \pm 2\frac{s}{r}\left(\frac{5^{v(\sqrt{1-a})+1}}{s}\right)^{2/5}$ (any 5th root) and $e'' \in K$ with 
$v(e'') = v(\sqrt{1-a}) + \frac{17}{40}$.
Since $v(s) = v(\sqrt{1-a})$ by Lemma \ref{Lvcorrect}, then $v(d) = v(\sqrt{1-a}) + \frac{2}{5}$.  
Also, $\nu - j = v(a) + 1$.  Then the generic fiber of $\hat{f}$ can be given by $y^{p^{v(a) + 1}} = \frac{g(d+e''t'')}{g(d)}$.  
By Lemma \ref{L5insep} (2), this is equivalent to
$$y^{p^{v(a) + 1}} \approx   1 + \left(- 8\frac{r^3}{s^2}d^2e''t'' - 8 \frac{r^3}{s^2}d(e''t'')^2 - \frac{32r^5}{5s^4}(e''t'')^5\right),$$
where ``$\approx$" is as in (2).  As in (2), dividing out by $g(d)$ is the same as dividing the coefficients of positive powers of $t''$ by $(-1)^{r+s}$, up to 
$\approx$.

The valuation of the coefficient of $(t'')^2$ is $v(\sqrt{1-a}) + \frac{5}{4}$.  If $c_1''$ and $c_5''$ are the coefficients of $t''$ and $(t'')^5$, respectively,
then plugging in $d$ shows $$c_5'' - \frac{(c_1'')^5}{5^{4v(\sqrt{1-a}) + 5}} = (2^{25} - 2^5)\frac{r^5}{5s^4}(e'')^{5},$$ which
has valuation $v(\sqrt{1-a}) + \frac{25}{8} > v(\sqrt{1-a}) + \frac{5}{4}$.  
By \cite[Lemma 3.1 (i)]{Ob:fm}, the generic fiber of $\hat{f}$ 
splits into $p^{v(a) - 1}$ $\mu_p$-torsors, each of which has \'{e}tale reduction with conductor 2.
\end{proof}

\subsubsection{A field of definition of the stable model}\label{Sstablemodel}
We first determine a field of definition of $f^{str}$.  Recall that $f^{str}$ is branched at $0$, $1$, $\infty$, and $a$, and that
$K_{\nu} = K_0(\zeta_{p^{\nu}})$.

\begin{prop}\label{Pfstrdef}
The cover $f^{str}$ is defined (as a $G^{str}$-cover) over $K^{str} := K_{\nu}(a, \sqrt{1-a}) = K_{\nu}(a)$.
\end{prop}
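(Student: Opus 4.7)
The plan is to use the explicit equations (\ref{Efstr1}) and (\ref{Efstr2}) for $f^{str}$, which have coefficients in the proposed field $K^{str}$, and to verify that the $G^{str}$-action descends to $K^{str}$.

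First I would verify the equality $K_{\nu}(a, \sqrt{1-a}) = K_{\nu}(a)$ case by case using the explicit values of $a$ produced by Propositions \ref{Panot01}, \ref{Pa0}, and \ref{Pa1}. In every case the chosen $a$ has the form $a = 1 - \beta^2$ for an explicit $\beta \in K_{\nu}$: in the generic cases $\beta = s/r \in \rats$, while in the exceptional $p=5$ subcases $\beta$ is a rational combination of fifth roots of elements of $K_0^{\times}$. Hence $\sqrt{1-a} = \pm\beta$ already lies in $K_{\nu}(a)$ (in fact typically in a strictly smaller subfield), so $K_{\nu}(a, \sqrt{1-a}) = K_{\nu}(a)$.

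Next, the degree-two subcover $Z^{str} \to X$ is defined over $K_0(a) \subseteq K^{str}$ by $z^2 = (x-a)/x$, and the Kummer cover $Y^{str} \to Z^{str}$ is given by $y^{p^{\nu}} = g(z)$ with $g(z)$ as in (\ref{Efstr2}). The coefficients of $g(z)$ lie in $K_0(a, \sqrt{1-a}) \subseteq K^{str}$, and equipping this with the standard $\ints/p^{\nu}$-action $y \mapsto \zeta_{p^{\nu}} y$ requires exactly the cyclotomic extension $K_{\nu}$. So $Y^{str} \to X$ together with its $\ints/p^{\nu}$-action is realized over $K^{str}$.

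Finally, I would lift the involution $z \mapsto -z$ of $Z^{str}/X$ to $Y^{str}$ and check it combines with the $\ints/p^{\nu}$-action into a $G^{str} \cong \ints/p^{\nu} \rtimes \ints/2$-action. A direct computation shows $g(-z) = g(z)^{-1}$ (each factor $\frac{z + \gamma}{z - \gamma}$ is inverted), so $(y,z) \mapsto (y^{-1}, -z)$ is an involution of $Y^{str}$ defined over $K_0(a, \sqrt{1-a}) \subseteq K^{str}$. Conjugating the $\ints/p^{\nu}$-generator $\tau\colon y \mapsto \zeta_{p^{\nu}}y$ by this involution yields $\tau^{-1}$, matching the (unique nontrivial) action of $\ints/2$ on $\ints/p^{\nu}$ in $G^{str}$. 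Comparing branch data and Galois group with the original $f^{str}$ shows that this model over $K^{str}$ base-changes to $f^{str}$ over $\ol{K_0}$, proving the proposition. The only non-formal step is the bookkeeping verification of $\sqrt{1-a} \in K_{\nu}(a)$, which rests entirely on the explicit shapes of $a$ secured in \S\ref{Setaletails}.
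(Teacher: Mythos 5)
Your overall strategy coincides with the paper's: take the explicit equations (\ref{Efstr1}), (\ref{Efstr2}), observe that the mere cover is defined over $K_0(a,\sqrt{1-a})$, check that the $\ints/p^{\nu}$-generator acts by $y \mapsto \zeta_{p^{\nu}}y$ and that the involution satisfies $g(-z) = g(z)^{-1}$ so the $G^{str}$-action lives over $K_{\nu}$, and then show $\sqrt{1-a} \in K_{\nu}(a)$. The one place where you diverge from the paper is the last step, and there is a small but genuine gap in your version.

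You assert that in every case $a = 1-\beta^2$ with ``$\beta \in K_{\nu}$,'' and then immediately qualify this by saying that in the exceptional $p=5$ subcases $\beta$ involves fifth roots of elements of $K_0^{\times}$. Those two statements are inconsistent: in Propositions \ref{Pa0}(ii) and \ref{Pa1}(ii), $\beta$ contains a factor $\sqrt[5]{5^{4\nu+1}\binom{\cdot}{5}}$ whose argument has valuation $\not\equiv 0 \pmod{5}$, so $\beta \notin K_{\nu}$, and ``$\sqrt{1-a}=\pm\beta$ already lies in $K_{\nu}(a)$'' does not follow for free. What you actually need is that $\beta \in K_{\nu}(\beta^2) = K_{\nu}(a)$; this is true, but requires the observation that $[K_{\nu}(\beta):K_{\nu}]$ is odd (it is $1$ or $5$), so $[K_{\nu}(\beta):K_{\nu}(\beta^2)]$, which a priori is $1$ or $2$, must be $1$. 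With that parity argument inserted, your case-by-case verification goes through. The paper instead avoids any case inspection of the explicit formulas for $a$: it notes $v(1-a) \in 2\ints$ (trivially if $a \not\equiv 1 \pmod{\pi}$, and by Lemma \ref{Lvcorrect} otherwise), and since $K_{\nu}(a)$ is a complete DVR with algebraically closed residue field of characteristic $p \neq 2$ and $v(K_{\nu}(a)^{\times}) \supset \ints$, Hensel's lemma supplies a square root of $1-a$. That route is shorter and insulated from the shapes of $a$ produced in \S\ref{Setaletails}, which is why the paper prefers it; your approach is sound once the odd-degree remark is added.
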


\begin{proof}
The explicit equations (\ref{Efstr1}) and (\ref{Efstr2}) give the cover $f^{str}$.  So it is immediate that $f^{str}$ is 
defined as a mere cover (i.e., without the $G^{str}-action$) over $K_0(a, \sqrt{1-a})$.

Let $\alpha$ be a generator of $\ints/p^{\nu} \leq G^{str}$, let $\beta$ be an element of order $2$ in $G^{str}$, and let $\zeta_{p^{\nu}}$ be a 
$p^{\nu}$th root of unity.
Since $\alpha^*$ fixes $z$, Equation (\ref{Efstr2}) shows that $\alpha^*(y) = \zeta_{p^{\nu}}^iy$ for some $i \in \ints$.  
Also, Equation (\ref{Efstr1}) shows that $\beta^*(z) = -z$.  Then
we see that $\beta^*(g(z)) = g(z)^{-1}$. 
Thus $\beta^*(y) = \zeta_{p^{\nu}}^{\ell} y^{-1}$ for some $\ell \in \ints$.  This shows that the action of
$G^{str}$ is defined over $K_0(\zeta_{p^{\nu}}) = K_{\nu}$.  So $f^{str}$ is defined over $K_{\nu}(a, \sqrt{1-a})$ as a
$G^{str}$-cover.

To conclude the proof, note that either $v(1-a) = 0$ or $v(1-a) = 2v(s) \in 2\ints$ (Lemma
\ref{Lvcorrect}).  Since $v(K_{\nu}(a)^{\times}) \supset \ints$ and $p \ne 2$, it follows that $\sqrt{1-a} \in K_{\nu}(a)$.
\end{proof}

Recall that we have fixed $a = 1 - \frac{s^2}{r^2}$, unless we are in the situation
of Propositions \ref{Pa0} (2) or \ref{Pa1} (2).  In these cases, $a = 1 - \left(\frac{s - \sqrt[5]{5^{4\nu +1}\binom{r+s}{5}}}{r}\right)^2$ or
$a = 1 - \left(\frac{s - \sqrt[5]{5^{4\nu +1}\binom{s}{5}}}{r}\right)^2$, respectively (see Propositions \ref{Panot01}, \ref{Pa0}, and \ref{Pa1}).
\begin{prop}\label{Pfstrstdef}
\begin{enumerate}
\item If $v(a) = v(a-1) = 0$, then  $(f^{str})^{st}$ can be defined over a tame extension $(K^{str})^{st}$ of $K_{\nu}$.
\item Suppose $v(a) > 0$ or $v(a-1) > 0$.  
\begin{enumerate}
\item
We have $\nu > 1$.
\item
If $p > 5$, $v(a) = \nu - 1$, or $v(\sqrt{1-a}) = \nu -1$, then
$(f^{str})^{st}$ can be defined over a tame extension $(K^{str})^{st}$ of $K_{\nu}(a)(\sqrt[p]{1+u})$, where $u \in K_0(a)$ has valuation $1$. 
\item
If $p = 5$ and both $v(a)$ and $v(\sqrt{1-a})$ are less than $\nu -1$, then $(f^{str})^{st}$ can be defined over a tame extension $(K^{str})^{st}$ of 
$K_{\nu}(\sqrt[p]{\eta})(\sqrt[p]{1+u}, \sqrt[p]{1+u'}),$ where $\eta \in \rats$ has prime-to-$p$ valuation, $u \in K_0$ has valuation $1$, 
and $u' \in K_0(\sqrt[p]{\eta})$ has valuation $1$.  
\end{enumerate}
\end{enumerate}
\end{prop}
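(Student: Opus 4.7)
The plan is to construct the stable model of $f^{str}$ explicitly by blowing up $X_{R^{str}}$ at the centers of the open disks corresponding to the irreducible components of $\ol{X}$ (as identified in Lemmas \ref{Ltailradius} and \ref{Lnewinsepradius} and Proposition \ref{Pinseptailloc}) and normalizing in $K(Y^{str})$. Since $f^{str}$ itself is already defined over $K^{str} = K_{\nu}(a)$ by Proposition \ref{Pfstrdef}, it suffices to check that every center lies in the claimed field, that every disk-radius is the absolute value of some element of that field, and that the torsor equations on each chart admit the required semistable reduction over that field.

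For part (i), with $a = 1 - s^2/r^2 \in K_0$ by Proposition \ref{Panot01}, all four branch points of $f^{str}$ are $K_0$-rational, and by Proposition \ref{Ponly0insep} $\ol{X}$ has no inseparable tails. The components along the monotone paths from $\ol{X}_0$ out to $\ol{X}_b$ and $\ol{X}_{b'}$ all have $K_0$-rational centers, and the radii obtained by combining Lemma \ref{Ltailradius}(i) with Lemma \ref{Lsigmaeffcompatibility}(iii) have valuations in $\tfrac{1}{6(p-1)}\ints$, which are attained in a tame extension of $K_{\nu}$ (as $p \neq 2, 3$). Remark \ref{Rnot01conductor3} then shows that on each chart the torsor equation reduces to an \'etale cover of conductor $3$ after at most tame normalization.

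For part (ii)(a), suppose for contradiction that $\nu = 1$. By Lemma \ref{Lfullbranch} the branching index at $\infty$ equals $p$, so $v(r) = 0$; and since $f^{str}$ is truly branched at $x=1$, also $v(s) = 0$. Then $a \equiv 0 \pmod \pi$ contradicts Lemma \ref{Lvaint} (which gives $v(a) \leq \nu - 1 = 0$), and $a \equiv 1 \pmod \pi$ contradicts Lemma \ref{Lvcorrect} (which gives $v(\sqrt{1-a}) = v(s) = 0$). Hence $\nu > 1$.

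For parts (ii)(b) and (ii)(c), the center of $\ol{X}_b$ lies in $K_{\nu}(a)$ by Propositions \ref{Pa0} and \ref{Pa1}, and any new inseparable tail $\ol{X}_c$ has its two preimages in $\ol{Z}^{str}$ centered either at $\pm\sqrt{-1}$ (the ``regular'' case of Proposition \ref{Pinseptailloc}(i), lying in $K_{\nu}$) or at points involving $\sqrt[p]{\eta}$ for $\eta \in \rats$ with prime-to-$p$ valuation (the exceptional $p=5$ cases of Proposition \ref{Pinseptailloc}(ii),(iii)); in both situations the radii are attainable in a tame extension. The decisive step is writing the $\mu_{p^{\nu-j}}$-torsor on the chart for $\ol{X}_c$---analogous to (\ref{Ebetterg}) or its $p=5$ counterpart from Lemma \ref{L5insep}---in a form exhibiting the conductor-$2$ Artin-Schreier reduction predicted by Proposition \ref{Pinseptailloc}. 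After dividing out by a suitable $p^{\nu-j}$th power of a unit coming from the value $h(d)$ at the center, a single residual Kummer factor of shape $1+u$ with $v(u) = 1$ remains, whose $p$th root accounts for the $\sqrt[p]{1+u}$ term (and in (ii)(c) for the second $\sqrt[p]{1+u'}$ attached to the exceptional $p=5$ tail, or to the second preimage in $\ol{Z}^{str}$ above it). The main obstacle will be verifying that $u$ (and $u'$) can indeed be chosen in the small subfields $K_0(a)$ and $K_0(\sqrt[p]{\eta})$ claimed in the statement, which requires careful bookkeeping of the Taylor expansions of $g$ computed in Lemma \ref{L5insep}.
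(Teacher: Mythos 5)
Your proposal replaces the paper's use of the criterion of \cite[Proposition 2.12]{Ob:fm} with a direct, chart-by-chart construction of the stable model. That is a heavier approach, and as written there are several gaps that keep it from being a proof.

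The opening claim that ``it suffices to check that every center lies in the claimed field, that every disk-radius is the absolute value of some element of that field, and that the torsor equations on each chart admit the required semistable reduction over that field'' is not automatic. Rationality of centers and radii ensures the semistable model $X^{st}$ of the base is defined over the claimed field $L$, but it does not by itself control the Galois action of $G_L$ on the special fiber $\ol{Y}^{str}$ of the normalization. To conclude that $(f^{str})^{st}$ descends (up to a tame extension), you must show $G_L$ acts trivially on $\ol{Y}^{str}$, and this requires in particular that $G_L$ fix points of the fiber over each tail. The paper's entire argument is routed through this: using monotonicity and \cite[Proposition 2.12]{Ob:fm}, it reduces the whole problem to exhibiting a smooth $L$-rational point on each tail of $\ol{X}$ together with an $L$-rational point of $\ol{Y}^{str}$ above it. Your proposal never addresses the rationality of points in the fibers. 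For (i), for instance, the paper explicitly observes that the fibers of $f^{str}$ above $x=0$ and $x=a$ are $K^{str}$-rational points fixed by $G_{K^{str}}$ --- you would need the analogous check (or the criterion that makes it sufficient).

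For (iib) and (iic) you acknowledge yourself that ``the main obstacle will be verifying that $u$ (and $u'$) can indeed be chosen in the small subfields,'' and you do not carry it out. But this is precisely the computational heart of the proposition. The paper does it by evaluating $g$ (or the intermediate quotient $h$) at the rational center $d$ of each inseparable tail, extracting a $p^{j-1}$st root by the binomial series to land on an element of the form $1+u$ with $v(u)=1$ in $K_0(a)$ (resp.\ $K_0(\sqrt[p]{\eta})$), and only then adjoining $\sqrt[p]{1+u}$ (and $\sqrt[p]{1+u'}$). Since one needs to track that $g(d) - (\pm 1)$ has the exact valuation produced by Lemma \ref{L5insep}, this cannot be waved away. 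Your argument for (iia) is fine (and indeed essentially the paper's, via Lemmas \ref{Lvaint}, \ref{Lvsmall}, \ref{Lvcorrect}), though the initial remarks about $v(r)$ and $v(s)$ are unnecessary once you invoke those lemmas directly.
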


\begin{proof}
Let $K^{str} = K_{\nu}(a)$, as in Proposition \ref{Pfstrdef}.

We will use the criterion of \cite[Proposition 4.9]{Ob:fm}, which states that if $f$ has monotonic stable reduction, if 
$L/K^{str}$ is such that $G_L$ fixes a smooth point of $\ol{X}$ on each tail
of $\ol{X}$, and if $G_L$ fixes a smooth point of $\ol{Y}^{str}$ above each tail of $\ol{X}$, then $(f^{str})^{st}$ can be defined over a tame extension of 
$L$.

\emph{To (1):}
It is clear that there is a $K^{str}$-rational point specializing to each \'{e}tale tail (namely, $x=0$ and $x=a$).  
By (\ref{Efstr1}) and (\ref{Efstr2}), the fibers of $f^{str}$ above $x=0$ and $x=a$ consist of $K^{str}$-rational points.  These points are fixed by
$G_{K^{str}}$.  Now, if $v(a) = v(a-1) = 0$, there are no inseparable tails (Proposition \ref{Ponly0insep}) and $K^{str} = K_{\nu}$.
This, together with the criterion of \cite[Proposition 4.9]{Ob:fm}, proves (1).

\emph{To (2a):}
If $v(a) > 0$, then Lemma \ref{Lvaint} shows that $\nu > 1$.  If $v(a-1) > 0$, 
then Lemmas \ref{Lvsmall} and \ref{Lvcorrect} show that $\nu > 1$.

\emph{To (2b):}
Suppose we are in the situation of (2) and $v(a) > 0$.  
By Proposition \ref{Pinseptailloc} (1), there is a $p^{\nu - v(a)}$-inseparable tail $\ol{X}_c$ to which the 
$K^{str}$-rational point $x=\frac{a}{2}$ specializes.
Then each component of $\ol{Z}^{str}$ above $\ol{X}_c$ contains the specialization of one of $z = \pm\sqrt{-1}$.
Consider the cover $(Y^{str})' \to Z^{str}$, where $(Y^{str})' = Y^{str}/Q$ with $Q$ the unique
subgroup of $G^{str}$ of order $p^{\nu - v(a)}$.  Equation (\ref{Ebetterg}) shows that this cover can be given by the
equation $$y^{p^{v(a)}} = 1 + 2s\mu\frac{z}{z^2-1} + O(\mu^2),$$ where $v(\mu) = v(a)$, and where the terms on the
right-hand side are all in $K_0(a)$.  Plugging in $z = \pm \sqrt{-1}$,
we get that $y^{p^{v(a)}} = 1 + \alpha$, with $v(\alpha) = v(a)$.  By a binomial expansion (cf.\ proof of Lemma \ref{Lartinschreier}), 
$1 + \alpha$ has a $p^{v(a) -1}$st root in $K_0(a)$, which is of the form $1 + u$ with $v(u) = 1$.  So
$G_{K^{str}(\sqrt[p]{1+ u})}$ fixes the fiber above the specialization of $x = \frac{a}{2} \in \ol{X}_c$ for this cover.  Since the quotient by $Q$
is radicial above $\ol{X}_c$, it follows that $G_{K^{str}(\sqrt[p]{1+ u})}$ fixes the fiber of $\ol{Y}^{str}$ 
above the specialization of $x = \frac{a}{2} \in \ol{X}_c$ pointwise.  In particular, it fixes a point above $\ol{X}_c$.

If, instead, $v(a-1) > 0$, then there is a (not new) $p^{\nu - v(s)}$-inseparable tail
$\ol{X}_c$ containing the specialization of $x=1$.
Then each component of $\ol{Z}^{str}$ above $\ol{X}_c$ contains the specialization of one of $z = \pm\sqrt{1-a}$.
Consider the cover $(Y^{str})' \to Z^{str}$, where $(Y^{str})' = Y^{str}/Q$ and $Q$ is the unique
subgroup of $G^{str}$ of order $p^{\nu - v(s)}$.  After multiplying by $p^{v(s)}$th powers, this cover can be given by the
equation $$y^{p^{v(s)}} = \left(\frac{z+1}{z-1}\right)^r = \left(\frac{2z}{z-1} - 1\right)^r.$$  Recall that, by Lemma
\ref{Lvcorrect}, we have $v(s) = v(\sqrt{1-a})$. Plugging in $z = \pm \sqrt{1-a}$ and multiplying by $(-1)^r$, which is a 
$p^{v(s)}$th power in $K^{str}$, 
we get that $y^{p^{v(s)}} = 1 + \alpha$, with $v(\alpha) = v(s) = v(\sqrt{1-a})$.  As in the previous paragraph,
we conclude that there exists $u \in K_0(a)$ with $v(u) = 1$ such that $G_{K^{str}(\sqrt[p]{1+ u})}$ fixes a point above $\ol{X}_c$.

By Proposition \ref{Pinseptailloc} (1), these are the only inseparable tails in the situation of (2b).  Applying the criterion
of \cite[Proposition 4.9]{Ob:fm} finishes the proof of (2b).

\emph{To (2c):}
Assume we are in the situation of (2c).  By Propositions \ref{Pa0} and \ref{Pa1}, we have $a \in \rats \subset K_{\nu}$, so
$K^{str} = K_{\nu}$ and $u$ (from (2b)---the inseparable tail in that case still exists in this case) is in $K_0$.  

Suppose $v(a) > 0$.  Then there is a new inseparable $p^{\nu - v(a) - 1}$-tail $\ol{X}_{c'}$ 
containing the specialization of a $K' := K_0(\sqrt[p]{\eta})$-rational point, where $\eta \in \rats$ and $p \nmid v(\eta)$ 
(Proposition \ref{Pinseptailloc} (2)).   
Each component of $\ol{Z}^{str}$ above $\ol{X}_{c'}$ contains the specialization of the $K'$-rational point $z=d$ of Proposition
\ref{Pinseptailloc} (2).
Consider the cover $(Y^{str})' \to Z^{str}$, where $(Y^{str})' = Y^{str}/Q$ and $Q$ is the unique
subgroup of $G^{str}$ of order $p^{\nu - v(a) - 1}$.  This cover is given by the equation
$y^{p^{v(a) + 1}} = g(z)$.  Now, by Lemma \ref{L5insep} (1), $g(d) = (-1)^{r+s} + \alpha'$, with $v(\alpha') = v(a) + 1$. 
Since $(-1)^{r+s}$ is a $p^{v(a) + 1}$st power in $K_{\nu}$, we may assume $g(d) = 1 \pm \alpha'$.
By the binomial expansion, $1 \pm \alpha'$ has a $p^{v(a)}$th root in 
$K'$, which is of the form $1 + u'$ with $v(u') = 1$.  So
$G_{K'(\sqrt[p]{1+ u'})}$ fixes a point above $\ol{X}_c$ for this cover.  Since the quotient by $Q$
is radicial above $\ol{X}_c$, it follows that $G_{K'(\sqrt[p]{1+ u'})}$ fixes a point above $\ol{X}_{c'}$.

If, instead, $v(a-1) > 0$, the exact same proof (using Proposition \ref{Pinseptailloc} (3) instead of (2) and Lemma \ref{L5insep} (2)
instead of (1)) shows that we can find $\eta \in \rats$ with $p \nmid v(\eta)$ and $u' \in K' := K_0(\sqrt[p]{\eta})$ such that 
$G_{K'(\sqrt[p]{1+u'})}$ fixes a point above the new inseparable tail $\ol{X}_{c'}$.  We have now addressed all the inseparable tails 
(Proposition \ref{Pinseptailloc}), so we can apply the criterion of \cite[Lemma 4.9]{Ob:fm} to complete the proof of (2c).
\end{proof}

\begin{prop}\label{Pfstdef}
In all cases of Proposition \ref{Pfstrstdef}, the stable model $f^{st}$ of $f$ can be defined over $(K^{str})^{st}$.
\end{prop}

\begin{proof}  Since the branch loci of $f^{aux}$ and $f^{str}$ are the same, all branch points of 
$Y^{aux} \to Y^{str}$ are ramification points of $f^{str}$.  Thus their specializations do not coalesce on $\ol{f}^{str}$, and 
$G_{(K^{str})^{st}}$ must permute them trivially.  So they are each defined over $(K^{str})^{st}$.  By Lemma \ref{Lstr2orig}, $f^{st}$ is defined over
$(K^{str})^{st}$.
\end{proof}

\subsubsection{Higher ramification groups}\label{Shigherram}
In this section, we calculate the bounds on the conductors of the fields in Proposition \ref{Pfstrstdef}, considered as extensions of $K_0$.  
Recall that, for any finite Galois extension $L/K$ ($K$ a finite extension of $K_0$), we write $h_{L/K}$ for the conductor of $L/K$.

\begin{prop}\label{Pconductor}
\begin{enumerate}
\item If $L$ is a tame extension of $K_{\nu}$ ($\nu \geq 1$), then $L/K_0$ is Galois and $h_{L/K_0} = \nu - 1$.
\item 
Let $\eta \in K_0$ be such that $p \nmid v(\eta)$, let $u \in K_0$ such that $v(u) = 1$, and let
$u' \in K_0(\sqrt[p]{\eta})$ such that $v(u') = 1$.  
Let $\nu > 1$, and let $K'$ be a tame extension of $K := K_{\nu}(\sqrt[p]{\eta})(\sqrt[p]{1+u}, \sqrt[p]{1+u'})$.  
If $L$ is the Galois closure of $K'$ over $K_0$, then $h_{L/K_0} = \max(\nu - 1, \frac{p}{p-1})$.
\end{enumerate}
\end{prop}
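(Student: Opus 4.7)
The plan reduces both parts to the classical cyclotomic computation $h_{K_n/K_0}=n-1$ combined with three standard tools: compatibility of the upper numbering with quotients (\cite[IV, Prop.\ 14]{Se:lf}), Herbrand transitivity $\phi_{L/K_0}=\phi_{K'/K_0}\circ\phi_{L/K'}$ for an intermediate field $K'$, and the fact that the conductor of a compositum of finite Galois extensions of $K_0$ equals the maximum of the individual conductors (which follows since $\mathrm{Gal}(M_1M_2/K_0)$ embeds into $\mathrm{Gal}(M_1/K_0)\times\mathrm{Gal}(M_2/K_0)$ via restriction, and each projection is compatible with upper numbering).

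For part (i), I first verify $L/K_0$ is Galois. Every tame Galois extension of $K_{\nu}$ has the form $K_{\nu}(\pi_{K_{\nu}}^{1/e})$ with $e$ prime to $p$; since $k$ is algebraically closed, Hensel's lemma supplies all $e$th roots of units inside $K_0\subset K_{\nu}$, and $K_0$ contains the $e$th roots of unity, so every $K_0$-conjugate of $\pi_{K_{\nu}}^{1/e}$ stays in $L$. Then $h_{L/K_0}\geq\nu-1$ follows from the surjection $\mathrm{Gal}(L/K_0)^u\twoheadrightarrow\mathrm{Gal}(K_{\nu}/K_0)^u$ and the cyclotomic conductor; the reverse holds because for $u>\nu-1$ this surjection forces $\mathrm{Gal}(L/K_0)^u$ into the prime-to-$p$ group $\mathrm{Gal}(L/K_{\nu})$, while it is itself a $p$-group, hence trivial.

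For part (ii), decompose $L$ as the compositum of $K_{\nu}$, the Galois closures $L_1:=K_1(\sqrt[p]{\eta})$ of $K_0(\sqrt[p]{\eta})$, $L_2:=K_1(\sqrt[p]{1+u})$ of $K_0(\sqrt[p]{1+u})$, and $L_3$ of $K_0(\sqrt[p]{1+u'})$, together with a tame piece (which contributes nothing by the part (i) argument). For $L_1/K_1$, choosing uniformizer $\pi_{L_1}:=\pi_{K_1}/\sqrt[p]{\eta}$ and using $\sigma(\sqrt[p]{\eta})=\zeta_p\sqrt[p]{\eta}$ gives $v_{L_1}(\sigma(\pi_{L_1})-\pi_{L_1})=p+1$, so the lower (and upper) jump of $L_1/K_1$ is $p$; transitivity with $\phi_{K_1/K_0}(u)=u/(p-1)$ yields $h_{L_1/K_0}=p/(p-1)$. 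A parallel computation for $L_2/K_1$, taking $z:=\sqrt[p]{1+u}-1$ (so $v_{L_2}(z)=p-1$) and $\pi_{L_2}:=\pi_{K_1}/z$, gives lower jump $1$ and $h_{L_2/K_0}=1/(p-1)$.

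The principal obstacle is $L_3$. Over $K''':=K_1(\sqrt[p]{\eta})$ the Kummer element $u'$ has valuation $p(p-1)$, divisible by $p$, so the naive uniformizer method gives a lower jump that would overshoot the claimed bound; the trick is to absorb a $p$th power in $K'''$ first. Using that $k$ is algebraically closed, pick $\delta\in\mathcal{O}_{K'''}$ with $\delta^p\equiv u'/\pi_{K'''}^{p(p-1)}\pmod{\pi_{K'''}}$; expanding $(1+\pi_{K'''}^{p-1}\delta)^p=1+\pi_{K'''}^{p(p-1)}\delta^p+p\pi_{K'''}^{p-1}\delta+O(\pi_{K'''}^{(p-1)(p+2)})$ and dividing into $1+u'$ produces $1+\gamma$ with $v_{K'''}(\gamma)=p^2-p+1$, which is coprime to $p$. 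Hence $K'''(\sqrt[p]{1+u'})=K'''(\sqrt[p]{1+\gamma})$, and the uniformizer computation parallel to $L_2$ (with Kummer-element valuation $p^2-p+1$ in place of $p-1$) yields lower jump $p-1$ for $L_3/K'''$. Combining via $\phi_{K'''/K_0}(p-1)=1<p/(p-1)=h_{K'''/K_0}$ gives $h_{L_3/K_0}=p/(p-1)$; the conjugate subextensions $K'''(\sqrt[p]{1+\sigma(u')})$ contribute the same bound by symmetry. Taking the maximum over all pieces yields $\max(\nu-1,p/(p-1))$, with the matching lower bound supplied by $K_{\nu}\cdot L_1\subseteq L$.
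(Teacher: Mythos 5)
Your proof follows the same overall strategy as the paper's: reduce to the Galois closure of the wild part, decompose it into a compositum of Galois extensions of $K_0$, compute the conductor of each piece, and take the maximum. The difference is that you replace the paper's citations of Lemmas C.1, C.6 and Remark C.7 of \cite{Ob:fm} (which package exactly the Kummer-conductor computations and the transitivity formula $h_{M/K_0}=\max(h_{K'''/K_0},\phi_{K'''/K_0}(h_{M/K'''}))$) with explicit uniformizer calculations; this is more self-contained, and your identity at the end is precisely the content of Lemma C.1.

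Two small imprecisions are worth flagging, though neither is fatal. First, $\pi_{K_1}/\sqrt[p]{\eta}$ is a uniformizer of $K_1(\sqrt[p]{\eta})$ only when $v(\eta)=1$; for general $\eta$ with $p\nmid v(\eta)$ you should first replace $\eta$ by $\eta^j\pi_{K_1}^{pk}$ for suitable $j,k$ with $p\nmid j$ to reduce to that case (the same normalization step you use for $\gamma$). Second, your claim that $v_{K'''}(\gamma)=p^2-p+1$ exactly is only a lower bound: after absorbing $(1+\pi_{K'''}^{p-1}\delta)^p$ you get $v_{K'''}(\gamma)\geq p^2-p+1$, and the leading-order term could cancel further, pushing $v_{K'''}(\gamma)$ higher or even making it divisible by $p$ again (requiring another iteration). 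What survives, and what the argument actually needs, is that after finitely many such normalizations $v_{K'''}(\gamma)$ lies in $[p^2-p+1,\,p^2-1]$ and is coprime to $p$ (or the extension is unramified/trivial), so that the lower jump of $L_3/K'''$ is $\leq p-1<p$. Since $\phi_{K'''/K_0}$ is increasing and $\phi_{K'''/K_0}(p-1)=1<\frac{p}{p-1}$, the conclusion $h_{L_3/K_0}=\frac{p}{p-1}$ is unaffected.
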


\begin{proof}
\emph{To (1):} By \cite[IV, Corollary to Proposition 18]{Se:lf}, $h_{K_{\nu}/K_0} = \nu -1$.  
Note that any tame extension of a Galois extension of $K_0$ is, in fact, Galois over $K_0$.  So $L/K_0$ is Galois.
By \cite[Lemma 2.2]{Ob:fm}, its conductor is also $\nu -1$.

\emph{To (2):} 
Fix an algebraic closure $\ol{K_0}$ of $K_0$.
Let $K''$ be the Galois closure of $K$ over $K_0$.  Then, because any tame extension of a Galois extension 
of $K_0$ is Galois over $K_0$, we see that the compositum $K'K''$ 
is Galois over $K_0$, thus $L = K'K''$.  In particular, $L/K''$ is tame.  By \cite[Lemma 2.2]{Ob:fm}, it suffices to show 
that $h_{K''/K_0}  = \max(\nu-1, \frac{p}{p-1})$.

Let $u'_i$, $1 \leq i \leq c$, be the distinct Galois conjugates of $u'$ in 
an algebraic closure of $K_0$.  Then $K''$ is the compositum of $K_{\nu}$ with 
$$M := K_1(\sqrt[p]{\eta})(\sqrt[p]{1+u}, \sqrt[p]{1+ u'_1}, \ldots\sqrt[p]{1 + u'_c}).$$  Note that $M/K_0$ is Galois.
The conductor of a compositum is the maximum of the conductors (\cite[Lemma 2.3]{Ob:fm}), so 
$h_{K''/K_0} = \max(\nu - 1, h_{M/K_0})$.  It will suffice to show that $h_{M/K_0} = \frac{p}{p-1}$.

Note that the absolute ramification index of $K_1$ is $p-1$.
By \cite[Lemma 3.2(ii)]{Ob:ce}, $h_{K_1(\sqrt[p]{\eta})/K_1} = p$.  Since the lower numbering is invariant under subgroups,
the greatest lower jump for the higher ramification filtration of $G(K_1(\sqrt[p]{\eta})/K_0)$ is $p$. 
Then $h_{K_1(\sqrt[p]{\eta})/K_0} = \frac{p}{p-1}$ by the definition of the upper numbering.

Now, by \cite[Lemma 3.2, Remark 3.4]{Ob:ce}, 
$$h_{K_1(\sqrt[p]{\eta})(\sqrt[p]{1+u'_i})/K_1(\sqrt[p]{\eta})} < \frac{p}{p-1}(p(p-1)) - p(p-1) = p.$$
The same holds for $K_1(\sqrt[p]{\eta})(\sqrt[p]{1+u})/K_1(\sqrt[p]{\eta})$.  Thus, again using \cite[Lemma 2.3]{Ob:fm},
we see that $h_{M/K_1(\sqrt[p]{\eta})} < p$.  By \cite[Lemma 2.1]{Ob:ce} (with our $K_0$, $K_1(\sqrt[p]{\eta})$, and 
$M$ playing the roles of $K$, $L$, and $M$ in \cite{Ob:ce}), either $h_{M/K_0} = \frac{p}{p-1}$ or $h_{M/K_0} > \frac{p}{p-1}$
and $$\frac{1}{p(p-1)}(h_{M/K_1(\sqrt[p]{\eta})} - p) = h_{M/K_0} - \frac{p}{p-1}.$$
Since the left hand side is negative whereas the right hand side is positive, the second option cannot hold.
So $h_{M/K_0} = \frac{p}{p-1}$.
\end{proof}    

\proof[Proof of Proposition \ref{Pm2tau1}.]
Note that in Proposition \ref{Pfstrstdef} (2b), either $a \in K_0$ or $a \in K_0(\sqrt[p]{\eta})$, for some $\eta \in \rats$ with $p \nmid v_p(\eta)$.
Thus Proposition \ref{Pconductor} shows that the Galois closures (over $K_0$) of all of the extensions $(K^{str})^{st}$ in 
Proposition \ref{Pfstrstdef} have conductor $< \nu$ over $K_0$.  This, along with Proposition \ref{Pfstdef} and the fact that
$\nu \leq n$, completes the proof of Proposition \ref{Pm2tau1}. \qed

\proof[Proof of Theorem \ref{Tmain}]
By Proposition \ref{Pconductor} (1), the extensions in Propositions \ref{Pm2tau3} and \ref{Pm2tau2} have conductor 
$< n$ over $K_0$.  This fact, combined with Proposition \ref{Pm2tau1} and 
Remark \ref{Rlocaltoglobal}, proves Theorem \ref{Tmain}. \qed

\section{Further questions} \label{CHfurther}

\begin{question}\label{Qgap}
Does Theorem \ref{Tmain} hold even if we allow $p=3$ or no prime-to-$p$ branch points?
\end{question}

If there are no prime-to-$p$ branch points ($\tau = 0$, in the language of \S\ref{CHmain}), then $\ol{X}$ can have up to two new (\'{e}tale) tails. 
This allows for a greater proliferation of subcases (for
instance, the two new tails could branch out from the same point of the original component).
Techniques similar to those used in \S\ref{Stauequals1} should work, and we have worked out some of the easier cases (unpublished).  However, keeping track of all the possibilities will be quite tedious, and 
we do not pursue the calculation here.  

If we allow $p = 3$ (even if we assume that there is a branch point of $f$ with prime-to-$p$ branching index), then the
stable reduction of $f^{str}$ looks very different than what we determine in \S\ref{Stauequals1}.  In particular, Lemma
\ref{Lnobigjumps} does not hold.  In fact, if $\ol{X}_b$ is an \'{e}tale tail with ramification invariant $\sigma_b = \frac{3}{2}$, then it
\emph{must} intersect a $p^2$-component (because the conductor of a $\ints/3$-extension in residue characteristic $3$ cannot be $3$).
To pursue this case using the techniques of \S\ref{Stauequals1} would require good, explicit conditions characterizing
the following extensions, in analogy with Lemma \ref{Lartinschreier}:  
Say $R$ is a complete discrete valuation ring with fraction field $K$ of
characteristic $0$, residue field $k$ algebraically closed of characteristic $3$, and uniformizer $\pi$.  
Suppose further that $R$ contains
the $9$th roots of unity.  Write $A = R\{t\}$ and $L = \Frac(A)$.  We wish to characterize $\ints/9$-extensions $M$ of 
$L$ such that the normalization $B$ of $A$ in $M$ satisfies the following conditions:
\begin{itemize}
\item $\Spec B/\pi \to \Spec A/\pi$ is an \'{e}tale extension 
with conductor 3. 
\item $\Spec B/\pi$ is integral.
\end{itemize}
For a similar statement in residue characteristic $2$, see \cite[Proposition C.1]{Ob:fm}.

\begin{question}\label{Qdeformation}
What if the condition $m_G = 2$ is relaxed in Theorem \ref{Tmain}?
\end{question}

Allowing arbitrary $m_G$ will require some new techniques, as the strong
auxiliary cover is in general no longer a $p^{\nu}$-cyclic extension of
$\proj^1$, making it difficult to perform explicit computations.  
However, it turns out to be true that, if the cover has bad reduction, the deformation data above the original component 
of $\ol{X}$ are all
\emph{multiplicative}, even for arbitrary $m_G$ (this holds for $m_G=2$, by adapting Lemma \ref{Lfullbranch} to the $\tau = 0$ and $\tau = 2$ cases).  
One might hope to obtain results 
using variations on the deformation theory of torsors under multiplicative group schemes that Wewers develops in
\cite{We:def}.  In the case where $v_p(|G|) = 1$, such deformation theory leads to a conceptual understanding of where the disks corresponding
to the \'{e}tale tails of the stable reduction (as in \S\ref{Setaletails}) are located, in particular showing that such disks must contain rational points over
a small field.  It also does not rely at all on having $m_G = 2$.  
If one considers a more generalized version of Theorem \ref{Tmain} where no restrictions on $m_G$ or the branching indices are required,  
then generalizing \cite{We:def} to the case of larger cyclic $p$-Sylow subgroups might provide a more conceptual proof (in particular, with regards
to the analog of \S\ref{Setaletails}).
 
\appendix
\section{An Example of Wild Monodromy}\label{Awildexample}

In \cite[Theorem 1.1]{Ob:vc}, it is proven that if $f: Y \to X$ is a three-point $G$-cover defined over a complete discrete valuation 
field $K$ of mixed characteristic $(0,p)$, where $G$ has a cyclic $p$-Sylow subgroup of order $p^n$ and $p$ does not divide the order of the center of $G$, 
then the wild monodromy group $\Gamma_w$ of $f$  (i.e., the $p$-Sylow subgroup of $\Gal(K^{st}/K)$, notation of \S\ref{Sstable}) has
exponent dividing $p^{n-1}$.  In particular, if $n = 1$, then $\Gamma_w$ is trivial (this is \cite[Th\'{e}or\`{e}me 4.2.10]{Ra:sp}).
In this appendix, we exhibit an example showing that the wild monodromy can be nontrivial when $n > 1$.  This 
is surprisingly difficult (see Remark \ref{Rnosolvable}).  Our example is based on the calculations of \S\ref{Stauequals1}.
   
Throughout the appendix, let $G = SL_2(251)$, and let $k$ be an algebraically closed field of characteristic $p = 5$.  
Let $R_0 = W(k)$ and $K_0 = \Frac(R_0)$.  Lastly, let $K = K_0(\mu_{5^{\infty}})$ (that is, we adjoin all $5$th-power roots of unity
to $K$), and let $R$ be the valuation ring of $K$.  Note that $G$ has a cyclic $5$-Sylow subgroup of order $5^3 = 125$ and $m_G = 2$.
We normalize all valuations on $R_0$, $K_0$, or any extensions thereof so that $v(5) = 1$.

\begin{prop}\label{P251cover}
There exists a three-point $G$-cover $f: Y \to X = \proj^1_K$, defined over $K$, such that the branching indices of the three branch points are
$e_1$, $e_2$, and $e_3$, with $v_5(e_1) = 0$, $v_5(e_2) = 2$, and $v_5(e_3) = 3$.
\end{prop}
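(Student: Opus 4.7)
The plan is to combine Riemann's existence theorem with the explicit descent machinery of \S\ref{Stauequals1}.

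First, I would exhibit a generating triple $(g_1,g_2,g_3) \in G^3$ with $g_1 g_2 g_3 = 1$, $\ord(g_i) = e_i$, and $v_5(e_1) = 0$, $v_5(e_2) = 2$, $v_5(e_3) = 3$. The split torus of $G = SL_2(251)$ has order $2\cdot 5^3$, so $G$ contains cyclic subgroups of orders $125$ and $25$. Take $g_3$ of order $125$ inside a split torus $T$, $g_2$ of order $25$ inside a $G$-conjugate torus $T' \not\subset N_G(T)$, and $g_1 := (g_2 g_3)^{-1}$. By Dickson's classification of subgroups of $PSL_2(p)$, every proper subgroup containing an element of order $125$ lies in the normalizer of a unique split torus, so $\langle g_1,g_2,g_3\rangle = G$ as long as $g_2 \notin N_G(T)$. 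The existence of such a triple with $g_1$ of prescribed prime-to-$5$ order can be verified via the Frobenius structure-constant formula applied to the character table of $SL_2(251)$.

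By Riemann's existence theorem together with \cite{gaga}, this triple produces a three-point $G$-cover $f_{\complex}: Y_{\complex} \to \proj^1_{\complex}$ with branch locus $\{0,1,\infty\}$ and branching indices $e_1,e_2,e_3$, which descends to $\ol{\rats}$. Fix an embedding $\ol{\rats} \hookrightarrow \ol{K_0}$ so as to view $f$ over $\ol{K_0}$. By \cite[Proposition 2.5]{CH:hu}, the field of moduli of $f$ over $K_0$ is a field of definition; by Theorem \ref{Tmain} (case $\tau = 1$, $n = 3$), this field has vanishing third upper-numbering ramification group over $K_0$. Inspection of the proof (Proposition \ref{Pfstrstdef}(iic) and Corollary \ref{Cfstdef}) shows that the stable model is defined over a tame extension of $K_3(\sqrt[5]{\eta}, \sqrt[5]{1+u}, \sqrt[5]{1+u'})$ for explicit $\eta \in \rats$ and principal units $1+u, 1+u'$.

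The remaining step, and the main obstacle, is to show this field of definition embeds in $K = K_0(\mu_{5^\infty})$. My approach would be to argue that the Hurwitz class $(C_1,C_2,C_3)$ can be chosen so that the explicit $\eta, u, u'$ produced by the auxiliary-cover construction of \S\ref{Setaletails}--\S\ref{Sstablemodel} lie in the Lubin-Tate tower $K$, which contains $K_3$ together with the $5$-power roots of the principal units appearing in its filtration. If this matching cannot be achieved for any admissible triple, one instead base-changes to $\ol{K}$ and twists by a cocycle in $H^1(G_K, \Aut_G(f)) = H^1(G_K, Z(G))$; since $Z(G) = \{\pm I\}$ has order prime to $5$, such a twist leaves the wild ramification structure intact, and Galois descent produces a $G$-cover defined over $K$ with the prescribed branching data.
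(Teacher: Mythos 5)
Your approach diverges fundamentally from the paper's, and the divergence introduces a gap you do not close. The paper does not use Theorem \ref{Tmain} or any stable-reduction machinery to prove this proposition. Instead it exhibits a \emph{rigid} triple in $G = SL_2(251)$ with orders $(251, 250, 50)$ — built from $\alpha = \left(\begin{smallmatrix}1&1\\0&1\end{smallmatrix}\right)$ and a carefully chosen $\beta$ of order $250$, with generation checked via Huppert — and then invokes the rigidity criterion of Malle--Matzat to obtain a cover defined over $\rats^{ab}$. Since $K_0 = W(\ol{\FF}_5)[1/5]$ contains all prime-to-$5$ roots of unity, $\rats^{ab} = \rats(\mu_\infty)$ embeds into $K = K_0(\mu_{5^\infty})$, and the proposition follows. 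Note that the paper deliberately takes generators of maximal cyclic subgroups (orders $250$ and $50$, not $125$ and $25$) so that rigidity is available.

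Your proof has a genuine gap at the final step. Theorem \ref{Tmain} gives that the $n$th (here $3$rd) upper-numbering ramification group of the field of moduli over $K_0$ vanishes, and Proposition \ref{Pfstrstdef}(iic) gives the explicit shape $K_\nu(\sqrt[5]{\eta})(\sqrt[5]{1+u},\sqrt[5]{1+u'})$. But none of this implies containment in $K = K_0(\mu_{5^\infty})$. An extension of $K_0$ by $\sqrt[5]{\eta}$ with $5 \nmid v(\eta)$, or by $\sqrt[5]{1+u}$ with $v(u)=1$, is generically \emph{not} a subfield of the cyclotomic tower — vanishing of the $3$rd ramification group is a much weaker constraint than being contained in $K_0(\mu_{5^\infty})$. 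The hope expressed in your phrase ``If this matching cannot be achieved'' is where the argument actually lives, and the fallback via $H^1(G_K, Z(G))$ cannot save it: twisting a $G$-cover by a cocycle in $H^1(G_K, Z(G))$ replaces $f$ by a $G$-cover with the \emph{same} field of moduli over $K$ (twisting only addresses the field-of-moduli versus field-of-definition discrepancy, which is irrelevant here since $\mathrm{cd}(G_K) \leq 1$ already makes the field of moduli a field of definition). So if the field of moduli fails to embed in $K$, twisting does nothing. Without rigidity — or some other reason the cover descends to a concretely identified abelian extension — you cannot conclude the cover is defined over $K$.
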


\begin{proof}
We show that such a cover can be defined over $\rats^{ab}$.  Since $\rats^{ab} \hookrightarrow K$, this will prove the proposition.

Let $\alpha = \matrix{1}{1}{0}{1} \in G$.  This has order $251$.
We claim there exists $\beta = \matrix{a}{b}{c}{d} \in G$ satisfying the following properties:
\begin{itemize}
\item The order of $\beta$ is 250.
\item The order of $\alpha\beta$ is 50.
\item The matrices $\alpha$ and $\beta$ generate $SL_2(251)$.
\end{itemize}

To prove the claim, first note that any $GL_2(251)$-conjugacy class in $G$ is determined by the trace of the matrices it contains, 
unless the trace is $\pm 2$.  In particular, the trace of a matrix determines its order if it is not $\pm 2$. 
Let $\tau$ be the trace of the matrices in some conjugacy class of order $250$, and let $\rho$ be the trace of the matrices in some conjugacy
class of order $50$.  Then $\tau$, $\rho$, $2$, and $-2$ are pairwise distinct.
Choose $a$, $b$, $c$, and $d$ in $\FF_{251}$ solving the (clearly solvable) system of equations:
\begin{align*}
a+d =& \ \tau \\
a + c+ d =& \ \rho \\
ad - bc =&\  1
\end{align*}
Since the trace of $\alpha\beta$ is $a + c + d$, these equations ensure that $\beta$ and $\alpha\beta$ have the desired orders.  Let $\ol{\alpha}$ and
$\ol{\beta}$ be the images of $\alpha$ and $\beta$ in $H := PSL_2(251)$.  Since $c \ne 0$,
one checks that $\ol{\beta}$ does not normalize the subgroup generated by $\ol{\alpha}$.  
Then, by \cite[II, Hauptsatz 8.27]{Hu:eg}, we have that $\ol{\alpha}$ and $\ol{\beta}$ generate $H$.  Furthermore, since $\beta$ is diagonalizable
over $GL_2(251)$ and has eigenvalues of order $250$, we have $\beta^{125} = -I_2$.  Since $\ol{\alpha}$ and $\ol{\beta}$ generate $H$, and 
$\beta$ generates $\ker(G \to H)$, then $\alpha$ and $\beta$ generate $G$.

Consider the triple $([\alpha], [\beta], [\alpha\beta]^{-1})$ of conjugacy classes of $G$.  By \cite[I, Theorem 5.10 and Remark afterward]{MM:ig}, 
this triple is rigid.  By \cite[I, Theorem 4.8]{MM:ig}, there exists a three-point $G$-cover of $\proj^1$, defined over $\rats^{ab}$, with branching indices
$e_1 = \ord(\alpha) = 251$, $e_3 = \ord(\beta) = 250$, and $e_2 = \ord((\alpha\beta)^{-1}) = 50$.  This completes the proof of the proposition.
\end{proof}

\begin{prop}
If $f: Y \to X = \proj^1_K$ is a cover satisfying the properties of Proposition \ref{P251cover}, then $f$ has nontrivial wild monodromy $\Gamma_w$.
\end{prop}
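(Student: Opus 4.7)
The plan is to apply the detailed analysis of \S\ref{Stauequals1} to this specific cover $f$ and show that the stable model forces a wildly ramified extension of $K$. Since $v_5(e_1) = 0$, the cover has $\tau = 1$ prime-to-$p$ branch point, and since the largest $5$-adic valuation of a branching index is $n = 3$, one reads off $\nu = n = 3$ (the branching index $e_3$ is exactly $5^3$ times a prime-to-$p$ integer, so by Lemma \ref{Lfullwildspec} applied to $f^{str}$, $\nu = 3$). Placing the branch points so that $x = 0, 1, \infty$ correspond to indices $e_1, e_2, e_3$, the equations (\ref{Efstr1})-(\ref{Efstr2}) for the strong auxiliary cover $f^{str}$ give $v(r) = 0$ and $v(s) = 1$. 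Since $v(\sqrt{1-a}) = v(s) = 1 < \nu - 1 = 2$, Proposition \ref{Pa1}(i) applies and we take $a = 1 - s^2/r^2 \in \rats$, so $v(1-a) = 2$ and $a \equiv 1 \pmod{\pi}$. With $p = 5$, this places us in case (iic) of Proposition \ref{Pfstrstdef}, and Proposition \ref{Pinseptailloc}(iii) produces a new inseparable $p$-tail $\ol{X}_{c'}$ of $\ol{X}$ whose corresponding disk is centered at a point defined over $K_0(\sqrt[5]{\eta})$ but not $K_0$, where $\eta := (25/s)^2 \in \rats^\times$ satisfies $v(\eta) = 2$, prime to $p$.

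The key algebraic input is the claim that $\sqrt[5]{\eta} \notin K$. Indeed, if $\sqrt[5]{\eta} \in K_n$ for some $n$, then $K_n$ would contain the Galois closure of $K_0(\sqrt[5]{\eta})/K_0$, which is $K_1(\sqrt[5]{\eta})$. But $\Gal(K_1(\sqrt[5]{\eta})/K_0) \cong \ints/5 \rtimes (\ints/5)^\times$ is non-abelian: the factor $(\ints/5)^\times \cong \Gal(K_1/K_0)$ acts on $\mu_5 \cong \Gal(K_1(\sqrt[5]{\eta})/K_1)$ faithfully via the cyclotomic character. This contradicts the abelianness of $\Gal(K_n/K_0) \cong (\ints/5^n)^\times$. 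Hence $K(\sqrt[5]{\eta})/K$ is a nontrivial Kummer extension of degree $p = 5$, necessarily wildly ramified.

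To conclude I would show that every field of definition of $f^{st}$ must contain $\sqrt[5]{\eta}$ (up to a tame extension of $K$). Tracing through the proof of Proposition \ref{Pfstrstdef}(iic) in the $a \equiv 1 \pmod{\pi}$ case, the cover $Y^{str}/Q \to Z^{str}$ (with $Q$ the unique subgroup of order $p^{\nu - v(\sqrt{1-a}) - 1} = p$ in $G^{str}$) has equation $y^{p^2} = g(z)$, and by Lemma \ref{L5insep}(ii) its fiber above $z = d$ is parametrized by a fifth root of $g(d) = \pm 1 + \alpha'$ with $v(\alpha') = 2$; describing this fiber at all requires $\sqrt[5]{\eta}$ simply to name the point $d$, and then the further root $\sqrt[5]{1+u'}$ to trivialize the fiber. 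The main obstacle is bridging from this requirement for the stable model of $f^{str}$ to the corresponding requirement for $f^{st}$: Lemma \ref{Lstr2orig} gives only the forward direction, so the remaining work is to argue directly, e.g.\ by selecting $\sigma \in G_K$ with $\sigma(\sqrt[5]{\eta}) \ne \sqrt[5]{\eta}$ and verifying that $\sigma$ acts nontrivially on the component of the stable reduction $\ol{Y}$ lying above $\ol{X}_{c'}$ (even though $\sigma$ preserves the disk of $\ol{X}_{c'}$ itself, since $v(\sigma(x_0) - x_0) > v(\rho')$). Such a $\sigma$ then lies outside $\Gamma^{st}$ and contributes a nontrivial $p$-power element to $\Gamma_w$.
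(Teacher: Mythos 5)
Your setup faithfully matches the paper's: you correctly read off $\nu = 3$, $v_5(r)=0$, $v_5(s)=1$, identify $a\equiv 1\pmod\pi$ with $v(\sqrt{1-a})=1<\nu-1$, invoke Propositions \ref{Pa1}(i), \ref{Pfstrstdef}(iic), and \ref{Pinseptailloc}(iii), and land on the new inseparable $5$-tail whose disk is centered at $d$ with $d\in K_0(\sqrt[5]{\eta})$, $\eta=25$. Your observation that $\sqrt[5]{\eta}\notin K$ (because $\Gal(K_1(\sqrt[5]{\eta})/K_0)$ is non-abelian while $\Gal(K_n/K_0)$ is abelian for all $n$) is a correct and clean argument. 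However, the proposal stops short of the actual crux and, worse, the route it sketches to finish is not the one that works.

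Two concrete problems. First, the claim that ``describing this fiber at all requires $\sqrt[5]{\eta}$ simply to name the point $d$'' does not give you wild monodromy: the disk corresponding to $\ol{X}_{c'}$ is already $\Gal(\ol{K}/K)$-stable, hence defined over $K$, because the Galois conjugates of $d$ all lie in that disk. Indeed $v(d)=7/5$, a conjugate differs from $d$ by a factor $\zeta_5^j$, so $v(d-\zeta_5^j d) = v(d)+v(1-\zeta_5) = 7/5 + 1/4 = 33/20$, which exceeds the radius exponent $v(\sqrt{1-a})+\tfrac{17}{40} = \tfrac{57}{40}$. So the tail is visible over $K$; what may fail to be defined over $K$ (or a tame extension of $K$) is the cover $\ol{Y}^{str}\to\ol{X}$ above that tail, and that is where the work lies. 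Second, your proposed $\sigma$ (one with $\sigma(\sqrt[5]{\eta})\ne\sqrt[5]{\eta}$) moves $d$ itself, so tracking its action on the $25$-point fiber of $\ol{Y}^{str}$ above $\ol{d}$ requires comparing sections over $d$ with sections over $\sigma(d)$ and is genuinely delicate; it is not the $\sigma$ the paper uses. The paper instead takes $\sigma\in G_{K(d)}$ — so $\sigma$ fixes $d$ and $K(\sqrt[5]{\eta})$ — and shows that $\sigma$ moves $\sqrt[25]{g(d)}$, which directly permutes the fiber in orbits of size $5$. The substance that makes this work is the explicit $5$-adic computation of Lemma \ref{Lpthpower}: one shows $g(d)=\pm(1-3\cdot5^{11/5}-4\cdot5^2)+o(5^{9/4})$ is a $5$th power in $K(\sqrt[5]{5})$, with $5$th root $\delta=\pm(19+3\cdot5^{6/5})+o(5^{5/4})$, and then verifies by equating coefficients of $1$ and $5^{6/5}$ that $\delta$ is not itself a $5$th power. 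Your proposal never carries out (or replaces) this computation, and without it there is no verified element of $p$-power order acting nontrivially on $\ol{Y}$; the observation $\sqrt[5]{\eta}\notin K$ is true but not a substitute for it.

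Finally, you are right that bridging from $f^{str}$ to $f$ cannot use Lemma \ref{Lstr2orig} in reverse, but the paper's solution is simpler than the ``every field of definition must contain\ldots'' argument you gesture at: since $(f^{str})^{st}$ is a quotient of $(f^{aux})^{st}$, and above an \'etale neighborhood of $\ol{X}_{c}$ the cover $f^{st}$ is induced from $(f^{aux})^{st}$, any $\sigma$ acting nontrivially on the fiber of $\ol{Y}^{str}$ above $\ol{d}$ also acts nontrivially on $\ol{Y}$. The whole argument is a direct ``find a $\sigma$'' computation, not an argument about all fields of definition.
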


\begin{proof}
To fix notation, we assume $f$ is branched at $x=0$, $x=1$, and $x=\infty$ of index $e_1$, $e_2$, and $e_3$, respectively, with $v_5(e_1) = 0$, $v_5(e_2) = 2$, and $v_5(e_3) = 3$.
By \cite[Lemma 3.2]{Ob:ac}, the stable reduction of $f$ has both a primitive \'{e}tale tail and a new \'{e}tale tail. 
Construct the strong auxiliary cover $f^{str}: Y^{str} \to X$ of $f$ (\S\ref{Saux}).  
This is a four-point $G^{str}$-cover, with $G^{str} \cong \ints/125 \rtimes \ints/2$ such that the
action of $\ints/2$ is faithful.  As in (\ref{Efstr1}) and (\ref{Efstr2}), this cover is given by
\begin{align}
z^2 =& \ \frac{x-a}{x} \label{E1} \\
y^{125} =& \ g(z) := \left(\frac{z+1}{z-1}\right)^r \left(\frac{z + \sqrt{1-a}}{z- \sqrt{1-a}}\right)^s, \label{E2}
\end{align}
where $r$ and $s$ are integers satisfying $v_5(r) = 0$ and $v_5(s) = 1$.  Replacing $y$ with a prime-to-$5$ power, we can assume $s = 5$.
By Lemma \ref{Lfullwildspec}, we have $v(1-a) > 0$ in $K(a)/K$, and then
Lemma \ref{Lvcorrect} and Proposition \ref{Pa1} (1) show that we can take $a = 1 - \frac{25}{r^2}$.  In particular, $f^{str}$ is defined over $K$.
By Proposition \ref{Pinseptailloc}, the stable model $(f^{str})^{st}: (Y^{str})^{st} \to X^{st}$ of $f^{str}$ has a new inseparable 
$5$-tail $\ol{X}_c$.  We claim that there is an extension $L/K$ such that
$\Gal(L/K)$ fixes $\ol{X}_c$ pointwise, and acts nontrivially of order $5$ on the stable reduction 
$\ol{f}^{str}: \ol{Y}^{str} \to \ol{X}$ of $f^{str}$ above $\ol{X}_c$.  
Since $(f^{str})^{st}$ is a quotient of the stable model $(f^{aux})^{st}$ of the (standard) 
auxiliary cover $f^{aux}$ (\S\ref{Saux}), then $\Gal(L/K)$ will act nontrivially of order divisible by $5$
above $\ol{X}_c$ in $(f^{aux})^{st}$ as well.
Lastly, since, above an \'{e}tale neighborhood of $\ol{X}_c$, the stable model $f^{st}$ of $f$ is isomorphic to a set of disconnected copies
of $(f^{aux})^{st}$, the same holds true over a formal neighborhood $\hat{X} \subseteq X^{st}$ of $\ol{X}_c$.  That is, 
$$Y^{st} \times_{X^{st}} \hat{X} \cong \Ind_{G^{aux}}^G (Y^{aux})^{st} \times_{X^{st}} \hat{X}.$$  Since $f$ is defined over $K$,
the $\Gal(L/K)$-action 
on $Y^{st} \times_{X^{st}} \hat{X}$ is determined by the action on $(Y^{aux})^{st} \times_{X^{st}} \hat{X}$ and 
the fact that it commutes with the $G$-action.
So the action of $\Gal(L/K)$ on $\ol{Y} \times_{\ol{X}} \ol{X}_c$, thus on $\ol{Y}$, is nontrivial of order divisible by $5$.  
This means that $f$ has nontrivial wild monodromy.

It remains to prove the claim.  Let $Z^{str} = Y^{str}/(\ints/125)$, with stable model $(Z^{str})^{st}$ and stable reduction $\ol{Z}^{str}$. 
Then $z$ is a coordinate on $Z^{str}$, and by Proposition \ref{Pinseptailloc} (3), there is a component of $\ol{Z}^{str}$ above $\ol{X}_c$
containing the specialization $\ol{d}$ of $$z = d := \frac{2 \cdot 5^{7/5}}{r},$$ where we can use any choice
of $5$th root.  Since $\ol{X}_c$ is a $p$-component, there are $25$ points of $\ol{Y}^{str}$ above $\ol{d}$.  
If $g(d)$ is a $5$th power, but not a $25$th power, in $K(d) = K(\sqrt[5]{5})$, 
and if $L = K(d, \sqrt[25]{g(d)})$, then the action of $\Gal(L/K(d))$ will permute these 25 points in orbits of order $5$, and we will be done 
(it turns out that $\Gal(K(d)/K)$ fixes $\ol{d}$, even though it clearly does not fix $d$).
This follows from Lemma \ref{Lpthpower} below.
\end{proof}

\begin{lemma}\label{Lpthpower}
Let $d = \frac{2 \cdot 5^{7/5}}{r}$, where $r$ is a prime-to-$p$ integer and we choose any $p$th root of $5$.  Let $g$ be the rational function in
(\ref{E2}), with $s=5$ and $a = 1 - \frac{25}{r^2}$.  Then $g(d)$ is a $5$th power, but not a $25$th power,
in $K(\sqrt[5]{5})$.
\end{lemma}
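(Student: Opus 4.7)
The plan is to apply Lemma \ref{L5insep}(ii) to obtain a concrete expansion for $g(d)$, then resolve the two 5th-power questions by a valuation analysis in $L := K(\sqrt[5]{5})$.

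For the first assertion (5th power), I would substitute $s = 5$, $a = 1 - 25/r^2$, $d = 2 \cdot 5^{7/5}/r$ (so $v(\sqrt{1-a}) = 1$) into Lemma \ref{L5insep}(ii); computing the two explicit terms yields $g(d) = (-1)^{r+1}(1 + \delta)$ where $\delta = -25600 - \tfrac{64}{3} \cdot 5^{11/5} + O(5^{9/4+\epsilon})$, so $v(\delta) = 2$. Since $v(\delta) = 2 > p/(p-1) = 5/4$, the binomial series $(1+\delta)^{1/5}$ converges in $L$ (by the standard estimate $v(\binom{1/5}{n}\delta^n) = n - v(n!) \to \infty$), producing $w \in L$ with $w^5 = 1+\delta$. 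Since $(-1)^{r+1}$ is trivially a 5th power, $g(d) = \bigl((-1)^{r+1}w\bigr)^5$ is a 5th power in $L$.

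For the second assertion it suffices to show that $w = 1 + \mu$ is not a 5th power in $L$, where the first two terms of the binomial expansion give $\mu = -5120 - \tfrac{64}{3} \cdot 5^{6/5} + O(5^{5/4+\epsilon})$; in particular $v(\mu) = 1$ and the val-$6/5$ coefficient $-\tfrac{64}{3}$ is nonzero. Suppose for contradiction that $v^5 = 1 + \mu$ for some $v \in L^\times$. Since $x^5 = 1$ in the characteristic-5 residue field $k$ forces $x = 1$, we get $\bar v = 1$; writing $v = 1 + \pi_L V$ with $\pi_L := \sqrt[5]{5}$ and $V \in R_L$, a val-1 analysis of the expansion
\[
(1+\pi_L V)^5 - 1 = 5 V^5 + 5\pi_L V + 10 \pi_L^2 V^2 + 10 \pi_L^3 V^3 + 5 \pi_L^4 V^4
\]
(only $5V^5$ contributes at val $1$) forces $V^5 \equiv -1024 \equiv 1 \pmod{\pi_L}$, and hence $V = 1 + V'$ with $b := v(V') > 0$.

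The main obstacle is now the final contradiction. Regrouping
\[
(1 + \pi_L(1+V'))^5 - 1 = A + B, \qquad A := (1+\pi_L)^5 - 1, \qquad B := \sum_{j=1}^{5} \binom{5}{j}(1+\pi_L)^{5-j}(\pi_L V')^j,
\]
the equation becomes $B = \mu - A$. A direct computation gives $v(\mu - A) = 6/5$: the val-$1$ contributions cancel because $-5120 - 5 = -5^3 \cdot 41$ has valuation $3$, while the val-$6/5$ contributions sum to $-\tfrac{64}{3} 5^{6/5} - 5^{6/5} = -\tfrac{67}{3} \cdot 5^{6/5} \neq 0$. On the other hand, the five summands of $B$ have valuations $6/5 + b$, $7/5 + 2b$, $8/5 + 3b$, $9/5 + 4b$, $1 + 5b$, and an elementary case check over $b = k/5$, $k \in \ints_{\geq 1}$, shows that the minimum is uniquely achieved and $v(B) \geq 7/5$ in every case. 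This contradicts $v(B) = v(\mu - A) = 6/5$, so no such $v$ exists and the lemma follows.
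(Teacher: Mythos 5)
Your argument for the first assertion (that $g(d)$ is a $5$th power in $L := K(\sqrt[5]{5})$) is essentially the same as the paper's and is correct. The gap is in the second assertion, and it comes from treating $L$ as a discrete valuation field. Since $K = K_0(\mu_{5^\infty})$, the value group of $K$, and hence of $L$, is $\tfrac{1}{4}\ints[\tfrac{1}{5}]$, which is dense in $\rats$. Thus $b := v(V')$ need not lie in $\tfrac{1}{5}\ints$, and the ``elementary case check over $b = k/5$, $k \in \ints_{\geq 1}$'' does not exhaust the possibilities. Concretely, $b = \tfrac{1}{25}$ \emph{does} lie in the value group of $L$, and for $b = \tfrac{1}{25}$ one gets $v(B_5) = 1 + 5b = \tfrac{6}{5} = v(\mu - A)$ while $v(B_j) > \tfrac{6}{5}$ for $j < 5$, so $v(B) = \tfrac{6}{5}$ and the intended contradiction disappears. (A smaller point: ``$\bar v = 1$'' only yields $v(v-1) > 0$; to get $v(v-1) = \tfrac{1}{5}$, hence that $v = 1 + \pi_L V$ with $V$ a unit, you should factor $v^5 - 1 = \prod_i (v - \zeta_5^i)$ and compare $v(\mu) = 1$ with $v(1 - \zeta_5^i) = \tfrac{1}{4}$.)

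The paper avoids the dense value group entirely: it first checks that the $5$th root $\delta$ already lies in $K_0(\sqrt[5]{5})$, then observes that $K(\sqrt[5]{5})/K_0(\sqrt[5]{5})$ is abelian while $\zeta_5 \notin K_0(\sqrt[5]{5})$, so a $5$th root of $\delta$ in $K(\sqrt[5]{5})$ would generate a Galois subextension of $K_0(\sqrt[5]{5})$, forcing $\delta$ to already be a $5$th power in $K_0(\sqrt[5]{5})$. That field \emph{is} a complete DVR with value group $\tfrac{1}{5}\ints$, where the obstruction is then detected by an explicit mod-$25$ coefficient computation (writing $\epsilon = \alpha + \beta\cdot 5^{1/5} + \cdots$ and matching the constant and $5^{1/5}$ coefficients). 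To rescue your direct approach you would either need to rule out $b = \tfrac{1}{25}$ by going to higher order, or --- more cleanly --- insert the paper's Galois-theoretic reduction to $K_0(\sqrt[5]{5})$ before starting the valuation count, after which your style of argument is on solid ground.
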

 
\begin{proof}
Fix a $5$th root of $5$ in $\ol{K}$, which we will denote by either $\sqrt[5]{5}$ or $5^{1/5}$.  
We first note that $g(d) \in K_0(\sqrt[5]{5})$.  
By (\ref{E513}), we have 
$$g(d) = \pm\left(1 - \frac{8r^3}{75}d^3 - \frac{32r^5}{5^5}d^5\right) + o(5^{9/4}),$$
where the $o$ represents terms of valuation greater than $\frac{9}{4}$.
Upon plugging in $d$ and simplifying, this gives
$$g(d) = \pm(1 - 3\cdot 5^{11/5} - 4 \cdot 5^2) + o(5^{9/4}).$$
Using the binomal theorem, we see that $g(d)$ has a $5$th root $\eta$ in $K_0(\sqrt[5]{5})$, and
$$\eta = \pm(1 - 3\cdot 5^{6/5} - 20) + o(5^{5/4}).$$  We wish to show that $\eta$ is not a $5$th power in $K(\sqrt[5]{5})$.

Now, since $K(\sqrt[5]{5})/K_0(\sqrt[5]{5})$ is abelian, any subextension is Galois.  So if $\eta$ is a $5$th power in $K(\sqrt[5]{5})$,
then taking a $5$th root of $\eta$ must generate a Galois extension of $K_0(\sqrt[5]{5})$.  This is clearly not the case unless 
$\eta$ is already a $5$th power in $K_0(\sqrt[5]{5})$, so it suffices to show that $\eta$ is not a $5$th power in $K_0(\sqrt[5]{5})$.
Since $-1$ is a $5$th power in $K_0$, we may assume that $\eta = 19 + 3 \cdot 5^{6/5} + o(5^{5/4})$.

Suppose that $\theta \in K_0(\sqrt[5]{5})$ such that $\theta^5 = \eta$, and write 
$$\theta = \alpha + \beta \cdot 5^{1/5} + \gamma \cdot 5^{2/5} + \delta \cdot 5^{3/5} + \epsilon \cdot 5^{4/5},$$
where $\alpha$, $\beta$, $\gamma$, $\delta$, and $\epsilon$ are in $K_0$.  Comparing valuations, we see that 
$\theta \in R_0[\sqrt[5]{5}]$.  Equating coefficients of $1$ and $5^{6/5}$ gives the equations
\begin{align*}
\alpha^5 + 5\beta^5 &\equiv 19 \pmod{25}\\
\alpha^4\beta &\equiv 3 \pmod{5}
\end{align*}
The second equation yields $\alpha \equiv 4 \pmod{5}$, and then the first equation yields $\beta \equiv 3 \pmod{5}$.  But then
$\alpha^5 + 5\beta^5 \equiv 24 + 5 \cdot 18 \equiv 14 \not \equiv 19 \pmod{25}$.  So $\epsilon$ cannot exist, and we are done. 
\end{proof}

\begin{remark}\label{Rnosolvable}
The example above is quite complicated, and is not generalizable in any meaningful way (for instance, it depends critically on having $p=5$). 
One hopes for easier examples, but they are difficult to come by.  
For instance, results of \cite[\S7.1]{Ob:fm} show that no examples of three-point $G$-covers with
nontrivial wild monodromy can exist when $G$ is $p$-solvable and $m_G > 1$.  So if one wants to find an easier example where $p$ does not
divide the order of the center of $G$, one needs to look either at a group that is not $p$-solvable, or at a group where $m_G = 1$. 
By Burnside's theorem (see, e.g., \cite[Lemma 2.2]{Ob:vc}),
having $m_G = 1$ implies that $G$ is of the form $G \cong H \rtimes \ints/p^n$, where the action of $\ints/p^n$ on $H$ is faithful.
\end{remark}

\end{document}